\newtheorem{thm}{Theorem}[section]
\newtheorem{cor}[thm]{Corollary}
\newtheorem{lem}[thm]{Lemma}
\newtheorem{pro}[thm]{Proposition}
\theoremstyle{definition}
\newtheorem{dfn}[thm]{Definition}
\newtheorem{exa}[thm]{Example}
\newtheorem{rmk}[thm]{Remark}
\def\can{\textrm{can}}
\def\div{\textrm{div}}
\def\Diff{\textrm{Diff} \, }
\def\F{{\mathcal F}}
\def\hi{{\hat \iota}}
\def\It{{\widetilde I}}
\def\O{{\mathcal O}}
\def\P{{\mathcal P}}
\def\R{{\mathbb R}}
\def\Symp{\textrm{Symp} \, }
\def\wv{{\widehat \varphi}}
\def\Z{{\mathbb Z}}
\title[$C^0$-characterization of symplectic and contact]{$C^0$-characterization of symplectic and contact embeddings and Lagrangian rigidity}
\author{Stefan M\"uller}
\address{Georgia Southern University, Department of Mathematical Sciences, 65 Georgia Ave.\ Room 3008, P.O.\ Box 8093, Statesboro, GA 30460, USA}
\email{smueller@georgiasouthern.edu}
\subjclass[2010]{53D05, 53D10, 53D12, 53D35, 57R17}
\keywords{Symplectic, contact, embedding, diffeomorphism, $C^0$-characterization, $C^0$-rigidity, shape, Lagrangian, non-displaceable, displacement energy, non-constant holomorphic disk, non-Lagrangian, immediately displaceable, symplectization, coisotropic, pre-Lagrangian, neighborhood theorem, convex surface, strictly contact, homeomorphism, topological Lagrangian, symplectic capacity}
\begin{document}
\thispagestyle{plain}

\begin{abstract}
We present a novel $C^0$-characterization of symplectic embeddings and diffeomorphisms in terms of Lagrangian embeddings.
Our approach is based on the shape invariant, which was discovered by J.-C.~Sikorav and Y.~Eliashberg, intersection theory and the displacement energy of Lagrangian submanifolds, and the fact that non-Lagrangian submanifolds can be displaced immediately.
This characterization gives rise to a new proof of $C^0$-rigidity of symplectic embeddings and diffeomorphisms.
The various manifestations of Lagrangian rigidity that are used in our arguments come from $J$-holomorphic curve methods.
An advantage of our techniques is that they can be adapted to a $C^0$-characterization of contact embeddings and diffeomorphisms in terms of coisotropic (or pre-Lagrangian) embeddings, which in turn leads to a proof of $C^0$-rigidity of contact embeddings and diffeomorphisms.
We give a detailed treatment of the shape invariants of symplectic and contact manifolds, and demonstrate that shape is often a natural language in symplectic and contact topology.
We consider homeomorphisms that preserve shape, and propose a hierarchy of notions of Lagrangian topological submanifold.
Moreover, we discuss shape related necessary and sufficient conditions for symplectic and contact embeddings, and define a symplectic capacity from the shape.
\end{abstract}

\maketitle

\section{Introduction and main results} \label{sec:intro}
Let $(W, \omega)$ be a symplectic manifold of dimension $2 n$.
We assume for simplicity but without loss of generality that $W$ is connected.
One goal of the present paper is to give a proof of the following characterization theorem for symplectic embeddings.
Denote by $B_r^{2 n}$ the open ball of radius $r > 0$ (centered at the origin) in Euclidean space $\R^{2 n}$ with its standard symplectic structure $\omega_0 = \sum_{i = 1}^n dx_i \wedge dy_i$.

\begin{thm} \label{thm:symp-shape-preserving}
An embedding $\varphi \colon B_r^{2 n} \to W$ is symplectic if and only if it preserves the shape invariant.
\end{thm}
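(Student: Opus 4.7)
My plan is to prove both directions with the Liouville class of small Lagrangian tori as the bridge between shape data and the pointwise behaviour of $\varphi^*\omega$.

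\emph{Forward direction.} Assume $\varphi^*\omega = \omega_0$. Any Lagrangian embedding $\iota \colon L \hookrightarrow B_r^{2 n}$ of a closed manifold composes to a Lagrangian embedding $\varphi \circ \iota$ in $W$. Choosing any primitive $\lambda$ of $\omega$ on the contractible open set $\varphi(B_r^{2 n})$, the $1$-form $\varphi^*\lambda - \lambda_0$ is closed on $B_r^{2 n}$ and hence exact, so $[(\varphi \circ \iota)^*\lambda] = [\iota^*\lambda_0]$ in $H^1(L;\R)$. This yields the equality of shape invariants.

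\emph{Reverse direction.} Assume $\varphi$ preserves shape. I would argue in three steps that progressively localise the information and conclude $\varphi^*\omega = \omega_0$. \emph{Step 1 (Lagrangian preservation).} Every closed Lagrangian $L \subset B_r^{2 n}$ has Lagrangian image $\varphi(L) \subset W$. By Weinstein's theorem a tubular neighbourhood $U$ of $L$ is symplectomorphic to a neighbourhood of the zero section in $T^*L$, and its Lagrangian sections realise a full open neighbourhood of $[\iota_L^*\lambda_0]$ in $H^1(L;\R)$. Shape preservation forces the same neighbourhood of classes to be realised by Lagrangians sitting in $\varphi(U)$; combined with positivity of the displacement energy of closed Lagrangians and the fact that non-Lagrangian submanifolds are immediately displaceable, this rules out any deviation of $\varphi(L)$ from being Lagrangian. \emph{Step 2 (linearisation).} Every Lagrangian $n$-plane in $T_p \R^{2 n}$ is tangent at $p$ to some Lagrangian submanifold of $B_r^{2 n}$ (e.g.\ a linear torus in a Darboux chart), so Step 1 forces $d_p\varphi$ to send $\omega_0$-Lagrangian subspaces to $\omega$-Lagrangian subspaces. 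A linear isomorphism with this property is conformally symplectic, yielding $\varphi^*\omega|_p = c(p)\,\omega_0|_p$ for a smooth nowhere-vanishing function $c$ on $B_r^{2 n}$. \emph{Step 3 (fixing the scalar).} Apply shape to small Clifford tori $T(\rho_1, \ldots, \rho_n)$ concentrated near $p$: the standard Liouville classes $(\pi\rho_1^2, \ldots, \pi\rho_n^2)$ must equal the Liouville classes of $\varphi(T(\rho_1, \ldots, \rho_n))$, which to leading order in $\rho$ scale by $c(p)$, and positivity of the standard classes rules out $c(p) < 0$. Hence $c \equiv 1$ and $\varphi^*\omega = \omega_0$.

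I expect Step 1 to be the main obstacle. Shape preservation is a set-level equality of cohomology classes and is a priori strictly weaker than Lagrangian preservation, so the reduction relies precisely on the Lagrangian rigidity inputs highlighted in the introduction: displacement energy, immediate displaceability of non-Lagrangian submanifolds, and the $J$-holomorphic disc methods behind them. Once Lagrangian preservation is in hand, Steps 2 and 3 reduce to standard symplectic linear algebra together with the Clifford-torus computation, and the dimension $2n=2$ case follows the same template with small circles replacing Clifford tori.
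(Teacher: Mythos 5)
Your overall architecture (shape $\Rightarrow$ Lagrangian preservation $\Rightarrow$ conformally symplectic $\Rightarrow$ $c=1$) matches the paper's, and Steps 1 and 2 are essentially the paper's Lemma~\ref{lem:non-Lagrangian} together with Theorem~\ref{thm:rig-lag-hom-class} and Lemma~\ref{lem:lagrangian}, argued contrapositively. One caution on Step 1: ``positivity of the displacement energy of closed Lagrangians'' is not the right input. Since non-Lagrangian submanifolds are \emph{immediately} displaceable, a shrinking sequence of tubular neighborhoods $N_k$ of $\varphi(L)$ has displacement energy tending to $0$, and to derive a contradiction you need a \emph{uniform} positive lower bound for the displacement energy of all Lagrangians in the prescribed homotopy class sitting inside the $N_k$ --- a bound depending only on the topology of a fixed neighborhood, not on the individual Lagrangian. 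That uniform bound is exactly what the holomorphic-disk argument of Laudenbach--Sikorav supplies; mere positivity for each Lagrangian separately proves nothing. You correctly flag that the homotopy class must be fixed (otherwise small split Darboux tori always exist nearby and the claim is false), but the uniformity is the other half of the engine.

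Step 3, however, has a genuine gap. Shape preservation is a set-level inclusion $I(U,L,\tau)\subset I(V,L,\tau\circ\varphi^*)$, not a torus-by-torus matching of Liouville classes, so you cannot conclude that the class of $\varphi(T(\rho_1,\dots,\rho_n))$ ``must equal'' $(\pi\rho_1^2,\dots,\pi\rho_n^2)$. Worse, for a Darboux ball $U$ the induced homomorphism $\tau$ is trivial, and then $I(U,0)$ contains all vectors with small positive coordinates \emph{and} all irrational vectors (Lemmas~\ref{lem:shape-non-empty} and \ref{lem:irrational}), and is invariant under $GL(n,\Z)$, in particular under $z\mapsto -z$ (Proposition~\ref{pro:L-diffeos}). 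Such a set cannot detect either the sign or the magnitude of the conformal factor: the inclusion of shapes can hold for $c\neq 1$ and even $c<0$ when $\tau=0$. This is precisely why the paper pins down $c$ using a tubular neighborhood $T^n\times A$ of an embedded Lagrangian torus with $\tau=\iota_0^*$ an \emph{isomorphism}, where Sikorav's Theorem~\ref{thm:torus-shape} (via Gromov's intersection theorem for exact Lagrangians in $T^*T^n$) computes the shape to be exactly $A$; choosing $A$ not symmetric about the origin and not rescaling-invariant up to translation forces $c=1$ (Lemma~\ref{lem:local-shape} and Proposition~\ref{pro:conf-symp-shape}). The sign is ruled out because reflecting the torus by $q\mapsto -q$ reverses $\tau$ as well, so $I(W,\tau)=-I(W,-\tau)$ --- an effect invisible when $\tau=0$. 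Finally, note that once Step 2 gives $\varphi^*\omega=c(p)\,\omega_0$, closedness of $\varphi^*\omega$ forces $c$ to be constant only when $2n>2$; the two-dimensional case needs the separate area-preservation argument, as the paper notes in Remark~\ref{rmk:dim}.
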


See section~\ref{sec:symp-shape} for the definition and for properties of the shape invariant that are needed in the proof, and section~\ref{sec:symp} for the proof and for necessary results concerning intersection and displacement of (non-)Lagrangian submanifolds.
As corollaries, we obtain the well-known $C^0$-rigidity of symplectic embeddings and diffeomorphisms.

\begin{cor} [\cite{ekeland:sth89}] \label{cor:rig-symp-emb}
Let $\varphi_k \colon B_r^{2 n} \to W$ be a sequence of symplectic embeddings that converges uniformly on compact subsets to an embedding $\varphi \colon B_r^{2 n} \to W$.
Then $\varphi$ is symplectic, that is, $\varphi^* \omega = \omega_0$.
\end{cor}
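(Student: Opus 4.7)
The plan is to deduce the corollary from Theorem~\ref{thm:symp-shape-preserving} via a $C^0$-continuity property of shape preservation. Since each $\varphi_k$ is symplectic, the forward direction of the theorem gives that each $\varphi_k$ preserves the shape invariant. If I can show that ``preserves shape'' is closed under uniform convergence of embeddings on compact subsets, then the limit $\varphi$ also preserves shape, and the converse direction of the theorem immediately yields that $\varphi$ is symplectic, i.e.\ $\varphi^* \omega = \omega_0$.

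To establish this closedness, I would fix a closed manifold $L$ and a Lagrangian embedding $\iota \colon L \to B_r^{2 n}$ representing a shape class, encoded as a cohomology class $[\iota^* \lambda] \in H^1(L; \R)$ for $\lambda$ a primitive of $\omega$ on a suitable neighborhood of $\iota(L)$. For each $k$, the composition $\iota_k = \varphi_k \circ \iota$ is a Lagrangian embedding of $L$ into $W$ representing the matching class, and since $L$ is compact, $\iota_k$ converges uniformly to $\varphi \circ \iota$, which is itself a smooth embedding. The critical intermediate step is then Lagrangian $C^0$-rigidity: a smooth embedding of a closed manifold that is a uniform limit of Lagrangian embeddings must itself be Lagrangian. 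This rigidity comes from the displacement energy estimate for Lagrangian submanifolds combined with the fact, central to the approach advertised in the abstract, that non-Lagrangian submanifolds are immediately displaceable by $C^0$-small Hamiltonian isotopies. Once $\varphi \circ \iota$ is known to be Lagrangian, continuity of the cohomology class of a local primitive under $C^0$-convergence of its domain gives that $\varphi \circ \iota$ represents the same shape class as $\iota$, so $\varphi$ preserves shape.

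The main obstacle is the Lagrangian $C^0$-rigidity step above, which is the nontrivial input requiring $J$-holomorphic techniques. However, the necessary tools are precisely those developed in the body of the paper to prove Theorem~\ref{thm:symp-shape-preserving}, so no machinery beyond what is already in place is required, and the corollary follows as a bookkeeping consequence of the characterization.
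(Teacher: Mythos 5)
Your overall skeleton is the same as the paper's: each $\varphi_k$ preserves shape by Proposition~\ref{pro:symp-shape}, shape preservation passes to the uniform limit, and Theorem~\ref{thm:symp-shape-preserving} finishes. But your proof of the middle step is not the paper's, and it is where the trouble lies. The paper's Proposition~\ref{pro:continuous-symp-shape} is entirely soft: by Definition~\ref{dfn:symp-shape-preserving}, shape preservation of a pair $U \subset W_1$, $V \supset \varphi(\overline{U})$ is the inclusion $I(U,L,\tau) \subset I(V,L,\tau\circ\varphi_k^*)$, and the only dependence on the map is through the induced homomorphism $\varphi_k^*$ on $H^1$, a homotopy invariant. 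For $k$ large, $\varphi_k(\overline{U}) \subset V$ and $\varphi_k$ is isotopic to $\varphi$, so $\tau\circ\varphi_k^* = \tau\circ\varphi^*$ and the inclusion carries over verbatim. No Lagrangian rigidity, no $J$-holomorphic input, is needed for the continuity step; all of that is confined to the converse direction of Theorem~\ref{thm:symp-shape-preserving}. You instead push each Lagrangian embedding $\iota$ forward under the limit map and invoke Lagrangian $C^0$-rigidity (Theorem~\ref{thm:rig-lag}) to see that $\varphi\circ\iota$ is Lagrangian, which both imports hard machinery where none is required and leaves you needing a statement the paper never proves in that form.

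That needed statement is your final clause: ``continuity of the cohomology class of a local primitive under $C^0$-convergence of its domain.'' This is a genuine gap. The $\lambda$-period $[(\varphi_k\circ\iota)^*\lambda]$ is computed by integrating a fixed one-form over cycles, and such integrals are not continuous under $C^0$-convergence of the cycles (they involve derivatives); establishing that the Liouville class survives a $C^0$-limit of Lagrangian embeddings is itself a rigidity statement of displacement-energy type, not bookkeeping. Moreover, even granting it, you must produce a single translation vector $b$ with $[(\varphi\circ\iota)^*\lambda_2] = [\iota^*\lambda_1] + b$ for \emph{all} Lagrangian embeddings $\iota$ into $U$ simultaneously, since the shape inclusion is only defined up to one global translation; your phrase ``represents the same shape class'' glosses over this. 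Finally, note that if you do succeed in showing that $\varphi$ maps Lagrangian tori to Lagrangian submanifolds, then routing back through Theorem~\ref{thm:symp-shape-preserving} is redundant: Lemma~\ref{lem:lagrangian} already gives that $\varphi$ is conformally symplectic, and this is exactly the alternative proof the paper sketches in the remark following Theorem~\ref{thm:rig-lag}, where the remaining ambiguity (symplectic versus anti-symplectic) is removed by volume preservation and the standard dimension-raising trick rather than by period considerations.
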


\begin{cor}[\cite{eliashberg:rsc82,eliashberg:tsw87,gromov:pdr86}] \label{cor:rig-symp-diff}
The group $\Symp (W, \omega)$ of symplectic diffeomorphisms is closed in the group $\Diff (W)$ of diffeomorphisms of $W$ in the $C^0$-topology.
That is, if $\varphi_k \colon W \to W$ is a sequence of symplectic diffeomorphisms that converges uniformly on compact subsets to a diffeomorphism $\varphi \colon W \to W$, then $\varphi$ is symplectic.
\end{cor}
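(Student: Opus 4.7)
The plan is to deduce the diffeomorphism statement from Corollary~\ref{cor:rig-symp-emb} by a purely local argument, using Darboux charts to reduce to balls in $\R^{2n}$. Since the identity $\varphi^* \omega = \omega$ is a pointwise condition on the differential form, it suffices to verify it on a neighborhood of each point $p \in W$.

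First I would fix $p \in W$ and apply Darboux's theorem to obtain a symplectic embedding $\iota \colon B_r^{2 n} \to W$ with $\iota(0) = p$, so that $\iota^* \omega = \omega_0$. Choosing $r' < r$ with $\iota(\overline{B_{r'}^{2 n}}) \subset W$ compact, I would consider the sequence $\varphi_k \circ \iota \colon B_{r'}^{2 n} \to W$. Each $\varphi_k \circ \iota$ is a symplectic embedding, and because $\varphi_k \to \varphi$ uniformly on the compact set $\iota(\overline{B_{r'}^{2 n}})$, the compositions $\varphi_k \circ \iota$ converge uniformly on compact subsets of $B_{r'}^{2 n}$ to $\varphi \circ \iota$. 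Since $\varphi$ is assumed to be a diffeomorphism, $\varphi \circ \iota$ is an embedding.

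Now Corollary~\ref{cor:rig-symp-emb}, applied on $B_{r'}^{2 n}$, yields $(\varphi \circ \iota)^* \omega = \omega_0$. Combining this with $\iota^* \omega = \omega_0$ gives $\iota^* (\varphi^* \omega) = \iota^* \omega$, and since $\iota$ is a diffeomorphism onto its image we conclude $\varphi^* \omega = \omega$ on the open neighborhood $\iota(B_{r'}^{2 n})$ of $p$. As $p \in W$ was arbitrary, this establishes $\varphi^* \omega = \omega$ everywhere.

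There is no serious obstacle here; the content has been pushed into the embedding version. The only small point to get right is that $C^0$-convergence is preserved under post-composition with a fixed embedding only on compact subsets, which is why one passes from $B_r^{2n}$ to the slightly smaller ball $B_{r'}^{2n}$ before invoking Corollary~\ref{cor:rig-symp-emb}. Everything else is the standard reduction of a local symplectic statement to a statement in a Darboux chart.
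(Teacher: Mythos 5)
Your argument is correct and is essentially the paper's own proof: both reduce to a Darboux chart around an arbitrary point and invoke Corollary~\ref{cor:rig-symp-emb} for the restricted embeddings. Your extra care about shrinking to $B_{r'}^{2n}$ so that uniform convergence on compact subsets carries over to the compositions is a fair (and welcome) elaboration of a detail the paper leaves implicit.
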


All three results imply analogous versions for anti-symplectic and conformally symplectic embeddings and diffeomorphisms on the one hand, and for embeddings and diffeomorphisms that rescale or reverse the shape invariant on the other hand.
See the end of section~\ref{sec:symp} for precise statements and their proofs.

An advantage of our methods is that they adapt to contact embeddings and diffeomorphisms.
Let $(M, \xi)$ be a contact manifold of dimension $2 n - 1$.
We may again assume for simplicity that $M$ is connected.
Denote by $B_r^{2 n - 1}$ the open ball of radius $r > 0$ (centered at the origin) in $\R^{2 n - 1}$ with its standard contact structure $\xi_0 = \ker \alpha_0$, where $\alpha_0 = dz - \sum_{i = 1}^{n - 1} y_i \, dx_i$.
See Remark~\ref{rmk:coorientation} as well as section~\ref{sec:non-exact} for important remarks concerning coorientation related to the following results.

\begin{thm} \label{thm:contact-shape-preserving}
An embedding $\varphi \colon B_r^{2 n - 1} \to M$ is contact if and only if it preserves the (modified) shape invariant.
\end{thm}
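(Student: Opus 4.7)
The plan is to parallel the proof of Theorem~\ref{thm:symp-shape-preserving}, substituting pre-Lagrangian (equivalently, $n$-dimensional coisotropic) submanifolds for Lagrangian submanifolds, and contact isotopies for Hamiltonian isotopies. The symplectization $(SM, d(e^t \alpha))$ of $(M, \xi = \ker \alpha)$ provides a convenient bridge between the two settings: a pre-Lagrangian in $M$ lifts to a Lagrangian in $SM$, and a contact embedding lifts to an $\R_{+}$-equivariant symplectic embedding.

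For the ``only if'' direction, assume $\varphi$ is a contact embedding, so that $\varphi^{*}\xi = \xi_0$. Then $\varphi$ sends pre-Lagrangian submanifolds to pre-Lagrangian submanifolds, and the associated cohomological data transforms in the precise way built into the \emph{modified} shape invariant, the word ``modified'' presumably reflecting the freedom to rescale the contact form by a positive function. Hence $\varphi$ preserves the modified shape.

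For the ``if'' direction, I proceed by contrapositive. Suppose $\varphi^{*}\xi \neq \xi_0$ on some open set, so that at some point $p \in B_r^{2n-1}$ the hyperplane $d\varphi_p(\xi_0(p))$ does not coincide with $\xi_{\varphi(p)}$. Working in a Darboux chart around $\varphi(p)$, one constructs a small pre-Lagrangian submanifold $L \subset B_r^{2n-1}$ through $p$ whose image $\varphi(L)$ fails to be pre-Lagrangian at $\varphi(p)$. Two ingredients then drive the contradiction. First, a submanifold of $(M,\xi)$ of dimension $n$ that fails to be pre-Lagrangian at some point can be displaced from itself by an arbitrarily $C^0$-small contact isotopy (immediate displacement of non-pre-Lagrangians). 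Second, a closed pre-Lagrangian in $(M, \xi)$ has positive contact displacement energy (contact rigidity of pre-Lagrangians, obtained via the symplectization and $J$-holomorphic curve methods, in analogy with the Lagrangian case). Since $\varphi$ preserves the modified shape, the class of $L$ is realized by a genuine pre-Lagrangian arbitrarily $C^0$-close to $\varphi(L)$; the immediate displacement of $\varphi(L)$ then also displaces this approximating pre-Lagrangian, contradicting its positive displacement energy.

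The main obstacle will be establishing the two ingredients above in a form robust enough to run this comparison. The non-exactness of the contact setting and the rescaling freedom of contact forms require careful bookkeeping, and the condition ``not pre-Lagrangian'' is more delicate than ``not Lagrangian'', involving both transversality to $\xi$ and the foliation-theoretic condition on the characteristic foliation, so the immediate-displacement step must handle several distinct failure modes. Lifting to the symplectization and importing the corresponding results from the symplectic case is the natural route, but the translation between Hamiltonian displacement in $SM$ and contact displacement in $M$, respecting the $\R_{+}$-action and maintaining control of $C^0$-size, is where most of the technical work lies.
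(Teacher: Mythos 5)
Your overall architecture (contrapositive, displace the image of a carefully chosen coisotropic torus, contradict non-displaceability of coisotropics via the shape) matches the paper, and your ``only if'' direction and the final contradiction are essentially Propositions~\ref{pro:contact-shape}/\ref{pro:tilde-contact-shape} and Theorems~\ref{thm:rig-coiso-hom-class}/\ref{thm:rig-coiso-arnold-conjecture}. But your first ``ingredient'' --- that an $n$-dimensional submanifold which fails to be pre-Lagrangian can always be displaced from itself by a $C^0$-small contact isotopy --- is false, and this is precisely the point where the contact case diverges from the symplectic one. The paper exhibits an explicit counterexample: in $(T^2\times\R,\ \ker(d\theta+z\,d\varphi))$, the graph $S=\{z=\epsilon\sin(\zeta(\theta))\}$ with $\zeta$ constant on one open set and strictly monotone on another is neither coisotropic nor immediately displaceable. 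The obstruction is structural: in the symplectization, $dF = dF(R)\,\alpha - d\alpha(X_F,\cdot)$, so the Laudenbach--Sikorav condition $dF(X)>0$ for a section $X$ of the symplectic orthogonal complement no longer forces the contact vector field $X_F$ to be transverse to $C$. So the trichotomy you are implicitly relying on (coisotropic, or else immediately displaceable) simply does not hold, and ``handling the several failure modes'' of a given non-coisotropic image cannot work in general.

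The paper's fix is to shift the genericity from the displacement step to the \emph{choice} of the initial coisotropic torus: using the freedom in Lemma~\ref{lem:existence-coisotropic-tori} (transverse knot tangent to a prescribed direction, choice of the Lagrangian factor), one arranges that the image $(\varphi\circ\iota)(T^n)$ is not merely non-coisotropic but \emph{convex} in the sense of Giroux --- its characteristic foliation admits a dividing set --- which is equivalent to the existence of a transverse contact vector field, i.e.\ immediate displaceability. This is Lemma~\ref{lem:imm-displ}, proved first in dimension $3$ via divergence computations for the vector field directing the characteristic foliation and then extended to higher dimensions by a fibered-product construction. Your proposal is missing this convex-surface-theoretic step entirely, and without it the contrapositive does not close. (A smaller omitted point: one must first dispose of the case where $\varphi$ is contact but reverses coorientation, which the paper does via Proposition~\ref{pro:coorient-contact-shape} before assuming $\varphi$ is not contact at a point.)
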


This theorem is another main objective of this paper.
See sections~\ref{sec:coisotropic} and \ref{sec:contact-shape} for details concerning the shape invariants of contact manifolds, and section~\ref{sec:contact} for the proof.
In addition to $J$-holomorphic curve methods, much of the proof uses purely contact topological arguments.

\begin{cor} \label{cor:rig-contact-emb}
Let $\varphi_k \colon B_r^{2 n - 1} \to M$ be a sequence of contact embeddings that converges uniformly on compact subsets to an embedding $\varphi \colon B_r^{2 n - 1} \to M$.
Then $\varphi$ is contact, that is, $\varphi_* \xi_0 = \xi$.
\end{cor}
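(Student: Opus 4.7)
The plan is to deduce this corollary directly from Theorem~\ref{thm:contact-shape-preserving} by showing that the property of preserving the modified shape invariant is itself stable under $C^0$-limits of embeddings. Each $\varphi_k$ is contact, so by Theorem~\ref{thm:contact-shape-preserving} it preserves the modified shape. If I can show the same for the limit $\varphi$, then a second application of Theorem~\ref{thm:contact-shape-preserving} yields $\varphi_* \xi_0 = \xi$.

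To formulate shape-preservation precisely: for every sufficiently small open $U \subset B_r^{2n-1}$, shape-preservation asserts an equality between the shape of $U$ and the shape of $\varphi(U)$ under the map induced on cohomology classes by the embedding. The ``easy'' inclusion is transported to the limit as follows. Fix a pre-Lagrangian (or coisotropic) submanifold $L \subset U$ representing a class in the modified shape of $U$. For each $k$, the image $\varphi_k(L) \subset \varphi_k(U)$ is pre-Lagrangian and represents the corresponding class in the shape of $\varphi_k(U)$. Since $L$ is compact and $\varphi_k \to \varphi$ uniformly, $\varphi_k(L) \to \varphi(L)$ in the $C^0$-sense; using that the class in question is a topological invariant of the image together with its coorientation data (cf.\ Remark~\ref{rmk:coorientation}), one concludes that $\varphi(L)$ carries the same class, so the class lies in the shape of $\varphi(U)$.

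The reverse inclusion requires transporting an arbitrary pre-Lagrangian $L' \subset \varphi(U)$ back to $U$. By uniform convergence and compactness, $L' \subset \varphi_k(U)$ for all sufficiently large $k$, and $\varphi_k^{-1}(L') \subset U$ is pre-Lagrangian because $\varphi_k$ is contact. A subsequential Hausdorff limit of $\varphi_k^{-1}(L')$ lies in $\overline{U}$, and one has to argue that after shrinking $U$ slightly (and using the naturality and openness/compactness properties of the modified shape developed in section~\ref{sec:contact-shape}) the relevant cohomology class is realized by a genuine pre-Lagrangian in $U$. I expect this step to be the main obstacle, since one needs enough regularity of the limiting object to read off its class in the modified shape; the resolution should parallel the symplectic argument used for Corollary~\ref{cor:rig-symp-emb}, where Lagrangian rigidity under $C^0$-limits (together with control of the primitive up to cohomology) does the analogous work.

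Once both inclusions are in place, $\varphi$ preserves the modified shape invariant, and Theorem~\ref{thm:contact-shape-preserving} gives $\varphi_* \xi_0 = \xi$. As in the symplectic setting, no additional input beyond the characterization theorem and the (by now established) $C^0$-stability of the modified shape is needed; the contact-specific ingredients are absorbed into the proof of Theorem~\ref{thm:contact-shape-preserving} itself.
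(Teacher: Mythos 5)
Your overall strategy is the paper's: show that the limit $\varphi$ preserves the (modified) shape and then invoke Theorem~\ref{thm:contact-shape-preserving}. But there is a genuine gap in how you try to establish shape-preservation for $\varphi$, and it stems from misreading Definition~\ref{dfn:contact-shape-preserving}. Shape-preservation is \emph{not} an equality between the shape of $U$ and the shape of $\varphi(U)$; it is the one-sided inclusion $I_C(U,L,\tau)\subset I_C(V,L,\tau\circ\varphi^*)$ (resp.\ $\It_C$) for every pair of open sets $U,V$ with $\overline{U}$ compact and $\varphi(\overline{U})\subset V$. The slack between $\varphi(\overline{U})$ and $V$ is precisely what makes the continuity statement (Proposition~\ref{pro:continuous-contact-shape}, the contact analogue of Proposition~\ref{pro:continuous-symp-shape}) essentially formal: by uniform convergence, $\varphi_k(\overline{U})\subset V$ for all large $k$, so the inclusion $I_C(U,L,\tau)\subset I_C(V,L,\tau\circ\varphi_k^*)$ already holds because $\varphi_k$ is contact, and $\tau\circ\varphi_k^*=\tau\circ\varphi^*$ for large $k$ since $\varphi_k$ is then homotopic to $\varphi$. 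No reverse inclusion is needed, so the step you identify as ``the main obstacle'' --- transporting a coisotropic submanifold of $\varphi(U)$ back through $\varphi_k^{-1}$ and controlling a Hausdorff limit --- simply does not arise. (It is also not provable by soft means, which is exactly why the definition was set up one-sidedly.)

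Your ``easy inclusion'' is also not quite right as written: you pass the class through $\varphi(L)$, but the image of a coisotropic $L$ under the merely continuous limit $\varphi$ need not be a smooth coisotropic submanifold, so it cannot directly represent an element of the shape of $\varphi(U)$. The correct routing is through $\varphi_k(L)\subset V$ for large $k$, which is honestly coisotropic because $\varphi_k$ is a contact embedding. With Proposition~\ref{pro:continuous-contact-shape} in hand the corollary is immediate, exactly as in the proof of Corollary~\ref{cor:rig-symp-emb}; the only contact-specific addendum in the paper is that if the $\varphi_k$ are not assumed to preserve coorientation one passes to a subsequence and argues as for anti-symplectic embeddings.
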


\begin{cor}[\cite{ms:gae14}] \label{cor:rig-contact-diff}
The group $\Diff (M, \xi)$ of contact diffeomorphisms is closed in the group $\Diff (M)$ of diffeomorphisms of $M$ in the $C^0$-topology.
That is, if $\varphi_k \colon M \to M$ is a sequence of contact diffeomorphisms that converges uniformly on compact subsets to a diffeomorphism $\varphi \colon M \to M$, then $\varphi$ is contact, i.e.\ $\varphi_* \xi = \xi$.
If $\xi$ is coorientable, then the group $\Diff_+ (M, \xi)$ of contact diffeomorphisms that in addition preserve coorientation is also $C^0$-closed in the group $\Diff (M)$.
\end{cor}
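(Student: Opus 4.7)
The plan is to deduce both assertions from the local embedding version, Corollary \ref{cor:rig-contact-emb}, by a standard Darboux-chart localization; the coorientation part additionally relies on the fact (cf.\ Remark \ref{rmk:coorientation} and section \ref{sec:non-exact}) that the \emph{modified} shape invariant encodes coorientation data.

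For the first assertion I would argue pointwise. Fix $p \in M$, and choose contact Darboux charts $\psi \colon B_r^{2 n - 1} \to U \ni p$ and $\psi' \colon B_{r'}^{2 n - 1} \to U' \ni \varphi(p)$, i.e.\ diffeomorphisms with $\psi_* \xi_0 = \xi|_U$ and $\psi'_* \xi_0 = \xi|_{U'}$. Using continuity of $\varphi$ and $\varphi_k \to \varphi$ uniformly on compacts, I can shrink to a radius $s > 0$ so that $\overline{\psi(B_s^{2 n - 1})}$ is mapped into $U'$ by $\varphi$ and by every $\varphi_k$ with $k$ large. The conjugates
\[
\tilde\varphi_k \;=\; (\psi')^{-1} \circ \varphi_k \circ \psi \colon B_s^{2 n - 1} \longrightarrow B_{r'}^{2 n - 1}
\]
are contact embeddings of standard contact balls that converge uniformly on compact subsets to the smooth embedding $\tilde\varphi = (\psi')^{-1} \circ \varphi \circ \psi$. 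Corollary \ref{cor:rig-contact-emb} then gives $\tilde\varphi_* \xi_0 = \xi_0$, so $\varphi_* \xi = \xi$ holds on the neighborhood $\psi(B_s^{2 n - 1})$ of $p$. Since $p \in M$ is arbitrary, this proves $\varphi_* \xi = \xi$ globally.

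For the coorientation statement, I would fix a global contact form $\alpha$ with $\ker \alpha = \xi$ representing the chosen coorientation, and write $\varphi_k^* \alpha = f_k \alpha$ with $f_k > 0$ and $\varphi^* \alpha = f \alpha$ with $f$ nowhere vanishing (by the first part). Connectedness of $M$ makes the sign of $f$ constant, so it suffices to check $f(p) > 0$ at a single $p$. The plan is to rerun the Darboux argument above with charts $\psi, \psi'$ chosen to preserve coorientation (replacing $\alpha_0$ by $-\alpha_0$ if necessary), so that each $\tilde\varphi_k$ preserves the standard coorientation of $\xi_0$. The main obstacle here is that $C^0$-convergence of the $\tilde\varphi_k$ carries no direct derivative information, so the positivity of the conformal factor does not pass obviously to the limit. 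I expect to resolve this by appealing to the modified shape invariant: it records coorientation (Remark \ref{rmk:coorientation}), its preservation is $C^0$-closed (the mechanism underlying Corollary \ref{cor:rig-contact-emb}), and Theorem \ref{thm:contact-shape-preserving} identifies shape-preserving embeddings with cooriented contact embeddings. Together these force $f(p) > 0$ and finish the proof.
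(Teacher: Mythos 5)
Your proposal is correct and follows essentially the same route as the paper: localize to contact Darboux charts and invoke Corollary~\ref{cor:rig-contact-emb}, with the coorientation statement handled by the shape machinery (this is exactly the content of Remark~\ref{rmk:coorientation} together with Propositions~\ref{pro:coorient-contact-shape} and \ref{pro:continuous-contact-shape}). The only cosmetic difference is that you leave the coorientation step as a plan, whereas those two propositions already supply it: each $\tilde\varphi_k$ preserves the (modified) shape, shape preservation survives $C^0$-limits, and a contact embedding preserving shape must preserve coorientation.
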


The first known proof of Corollary~\ref{cor:rig-symp-diff} is due to Y.~Eliashberg \cite{eliashberg:rsc82,eliashberg:tsw87}, and uses the analysis of wave fronts.
Another proof is based on M.~Gromov's alternative and his non-squeezing theorem \cite{gromov:pcs85,gromov:pdr86}.
Later it was realized by I.~Ekeland and H.~Hofer \cite{ekeland:sth89} that symplectic capacities can be used for a $C^0$-characterization of symplectic embeddings, which gives rise to a proof of Corollary~\ref{cor:rig-symp-emb} and thus of Corollary~\ref{cor:rig-symp-diff}.
See any of the monographs \cite{hofer:sih94,mcduff:ist98,mcduff:hcs04} for a summary.
It is also possible to give a proof of Corollary~\ref{cor:rig-symp-diff} based on the transformation law in topological Hamiltonian dynamics and uniqueness of the topological Hamiltonian isotopy that is associated to a topological Hamiltonian function \cite{mueller:ghh07}.

Theorem~\ref{thm:symp-shape-preserving} and Theorem~\ref{thm:contact-shape-preserving} give rise to $C^0$-characterizations of symplectic and contact embeddings and diffeomorphisms in terms of Lagrangian and coisotropic embeddings, respectively, via the shape invariants.
Both theorems, as well as the first corollary to Theorem~\ref{thm:contact-shape-preserving}, are new results.
A proof of Corollary~\ref{cor:rig-contact-diff} using global methods (via Gromov's alternative) is given in the recent paper \cite{ms:gae14}.
Another version of $C^0$-rigidity of contact diffeomorphisms (that also takes into account the conformal factors of the diffeomorphisms) can be found in the article \cite{ms:tcd1}.

Another advantage of our approach via the shape invariant instead of capacities is that it avoids the cumbersome distinction between symplectic and anti-symplectic.
Moreover, a proof of $C^0$-rigidity via symplectic capacities cannot possibly work in the contact setting, since the capacity of the symplectization of a contact manifold is always infinite.
See section~\ref{sec:contact-shape} for details.

The paper is organized as follows.
Section~\ref{sec:symp-shape} introduces the shape invariants of exact symplectic manifolds, and section~\ref{sec:symp} contains the proof of Theorem~\ref{thm:symp-shape-preserving} and its corollaries.
Sections~\ref{sec:contact-shape} and \ref{sec:contact} are the corresponding sections in the contact case, that is, on the contact shapes and on the proof of Theorem~\ref{thm:contact-shape-preserving} and its corollaries, respectively.
Section~\ref{sec:coisotropic} presents a detailed treatment of coisotropic submanifolds (of maximal dimension), including existence and neighborhood theorems;
these are also called pre-Lagrangian submanifolds in the literature.
Section~\ref{sec:strictly-contact} explains an analogous characterization and rigidity of strictly contact embeddings and diffeomorphisms, and section~\ref{sec:non-exact} generalizes the shape invariants to non-exact symplectic manifolds and contact manifolds that are not necessarily coorientable.
Section~\ref{sec:shape-preserving} discusses identical shapes as a necessary and sometimes sufficient condition for the existence of symplectic and contact embeddings, and section~\ref{sec:homeos} is concerned with homeomorphisms that preserve shape.
In section~\ref{sec:lagrangians} we propose several notions of Lagrangian topological submanifold, and finally section~\ref{sec:capacity} defines a symplectic capacity that is built from (a special case of) the shape invariant.

\section{The Sikorav-Eliashberg symplectic shape invariants} \label{sec:symp-shape}
In this section we review the symplectic shape invariants defined and studied in the papers \cite{eliashberg:nio91} and \cite{sikorav:qpp91, sikorav:rsc89}.
The properties of these invariants that are needed to characterize symplectic embeddings in the next section are rather elementary, with the exception of a theorem of J.~C.~Sikorav regarding the shapes of certain products in the cotangent bundle of a torus.
This last result is only needed to distinguish symplectic from anti-symplectic and conformally symplectic embeddings.

Throughout this paper, let $L$ be a closed and connected $n$-dimensional manifold.
Let $(W, \omega)$ be an exact symplectic manifold of dimension $2 n$, and $\lambda$ be a primitive of $\omega$, i.e.\ a one-form so that $d\lambda = \omega$.
An embedding $\iota \colon L \hookrightarrow W$ is called Lagrangian if $\iota^* \omega = 0$.
The cohomology class $[\iota^* \lambda] \in H^1(L, \R)$ is called its $\lambda$-period.

\begin{dfn}[{\cite{eliashberg:nio91}}] \label{dfn:lambda-shape}
Let $\tau \colon H^1 (W, \R) \to H^1 (L, \R)$ be a homomorphism.
The $(\lambda, L, \tau)$-shape of $W$ is the subset $I (W, \lambda, L ,\tau)$ of $H^1 (L, \R)$ that consists of all points $z \in H^1 (L, \R)$ such that there exists a Lagrangian embedding $\iota \colon L \hookrightarrow W$ with $\iota^* = \tau$ and $z = [\iota^* \lambda]$. \qed
\end{dfn}

Note that this set may be empty.
(See Theorem~\ref{thm:rig-lag-hom-class} for an example.)
The shape invariant depends on the choice of primitive one-form $\lambda$ with $d\lambda = \omega$.

\begin{lem}[{\cite{eliashberg:nio91}}] \label{lem:translation}
If $\lambda' = \lambda + \theta$ is another choice of primitive one-form (that is, $d\theta = 0$), then $I (W, \lambda', L ,\tau) = I (W, \lambda, L ,\tau) + \tau ([\theta])$.
\end{lem}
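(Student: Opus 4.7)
The plan is to argue directly from the definition. Fix a Lagrangian embedding $\iota \colon L \hookrightarrow W$ satisfying $\iota^* = \tau$ on $H^1$. The key observation is that the Lagrangian condition $\iota^* \omega = 0$ is a condition on $\omega$, not on the chosen primitive, so the collection of embeddings over which one takes the union in Definition~\ref{dfn:lambda-shape} is the \emph{same} for $\lambda$ and for $\lambda' = \lambda + \theta$. Thus only the period assigned to each such embedding changes.

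To compute the change, I would simply use naturality of pullback: $\iota^* \lambda' = \iota^* \lambda + \iota^* \theta$. Since $d\theta = 0$, the one-form $\iota^*\theta$ is closed on $L$, and its de Rham class is $[\iota^*\theta] = \iota^*[\theta] \in H^1(L,\R)$. By assumption $\iota^* = \tau$ on first cohomology, so $[\iota^*\theta] = \tau([\theta])$. Consequently
\[
  [\iota^* \lambda'] \;=\; [\iota^* \lambda] + \tau([\theta]).
\]
This shows that the map $z \mapsto z + \tau([\theta])$ sends the $\lambda$-period of any admissible $\iota$ to its $\lambda'$-period, and hence sends $I(W,\lambda,L,\tau)$ into $I(W,\lambda',L,\tau)$.

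For the reverse inclusion, I would run exactly the same computation with the roles of $\lambda$ and $\lambda'$ reversed, writing $\lambda = \lambda' + (-\theta)$ and noting that $-\theta$ is still closed. This yields $I(W,\lambda,L,\tau) \subseteq I(W,\lambda',L,\tau) - \tau([\theta])$, equivalently $I(W,\lambda',L,\tau) \supseteq I(W,\lambda,L,\tau) + \tau([\theta])$, completing the equality.

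There is no real obstacle here: everything is formal once one recognizes that the set of Lagrangian embeddings with fixed induced map on $H^1$ is primitive-independent and that $\iota^*$ acts on de Rham cohomology as $\tau$ by hypothesis. The only point worth emphasizing in a written proof is this invariance of the underlying set of embeddings, which is what allows the argument to reduce to the one-line cohomology calculation above.
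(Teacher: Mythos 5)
Your argument is correct and is essentially the paper's own proof: the paper performs exactly the computation $[\iota^*\lambda'] = [\iota^*\lambda] + \iota^*[\theta] = [\iota^*\lambda] + \tau([\theta])$ for each admissible Lagrangian embedding. Your additional remarks — that the set of admissible embeddings is primitive-independent and that the reverse inclusion follows by symmetry — are sound and merely make explicit what the paper leaves implicit.
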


\begin{proof}
If $\iota$ is a Lagrangian embedding with $\iota^* = \tau \colon H^1 (W, \R) \to H^1 (L, \R)$, then $[\iota^* \lambda'] = [\iota^* \lambda] + [\iota^* \theta] = [\iota^* \lambda] + \iota^* [\theta] = [\iota^* \lambda] + \tau ([\theta])$.
\end{proof}

That is, the shape as an invariant of the symplectic structure is only defined up to translation (by an element of the image of the homomorphism $\tau$) in $H^1 (L, \R)$.
This lemma is a first indication why it is necessary to fix the induced homomorphism $\iota^* = \tau$ on the first cohomology groups.
Additional profound consequences appear later in the proofs of Lemma~\ref{lem:irrational}, Theorem~\ref{thm:torus-shape}, and Theorem~\ref{thm:rig-lag-hom-class}.

\begin{dfn}[{\cite{eliashberg:nio91}}] \label{dfn:symp-shape}
The $(L, \tau)$-shape $I (W, \omega, L ,\tau)$ of $W$ is defined to be the shape $I (W, \lambda, L ,\tau)$ for any choice of primitive one-form $\lambda$, defined up to translation.
We usually omit the symplectic structure $\omega$ from the notation when its choice is understood, and write $I (W, L ,\tau)$ for the shape invariant. \qed
\end{dfn}

\begin{lem} \label{lem:open}
The shape $I (W, L, \tau)$ is an open subset of $H^1 (L, \R)$.
More precisely, the subset $I (W, \lambda, L ,\tau)$ is open, where $H^1 (L, \R)$ carries the natural topology as a finite dimensional real vector space, and this property is independent of the choice of primitive one-form $\lambda$.
\end{lem}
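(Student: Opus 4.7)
The plan is to prove openness by a Weinstein neighborhood / graph of closed one-form construction. Fix a primitive $\lambda$ of $\omega$, a Lagrangian embedding $\iota \colon L \hookrightarrow W$ with $\iota^* = \tau$, and $z = [\iota^* \lambda] \in I (W, \lambda, L, \tau)$. By Weinstein's Lagrangian neighborhood theorem, there exist an open neighborhood $U$ of $\iota (L)$ in $W$, an open neighborhood $V$ of the zero section $L_0$ in $T^* L$, and a symplectomorphism $\Psi \colon U \to V$ with $\Psi \circ \iota = i_0$, where $i_0 \colon L \hookrightarrow T^* L$ is the zero section.

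For every closed one-form $\beta$ on $L$ whose graph $s_\beta \colon L \to T^* L$, $s_\beta (x) = (x, \beta_x)$, is contained in $V$, set $\iota_\beta = \Psi^{-1} \circ s_\beta$. Since $s_\beta$ is Lagrangian (indeed $s_\beta^* \omega_{\can} = -d(s_\beta^* \lambda_{\can}) = -d \beta = 0$ for the canonical Liouville form $\lambda_{\can}$) and $\Psi$ is a symplectomorphism, $\iota_\beta$ is a Lagrangian embedding into $W$. The straight-line homotopy $t \mapsto \iota_{t \beta}$ connects $\iota_\beta$ to $\iota_0 = \iota$ through Lagrangian embeddings, so $\iota_\beta^* = \iota^* = \tau$ on cohomology. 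It remains to compute $[\iota_\beta^* \lambda]$. Write $\lambda' = (\Psi^{-1})^* \lambda$ on $V$, a primitive of $\omega_{\can}$, and set $\eta = \lambda' - \lambda_{\can}$, which is closed. Then
\begin{equation*}
\iota_\beta^* \lambda = s_\beta^* \lambda' = s_\beta^* \lambda_{\can} + s_\beta^* \eta = \beta + s_\beta^* \eta.
\end{equation*}
Since $s_\beta$ is smoothly homotopic to $s_0 = i_0$ through sections, and $\eta$ is closed, $[s_\beta^* \eta] = [i_0^* \eta] = [\iota^* \lambda] = z$. Therefore $[\iota_\beta^* \lambda] = [\beta] + z$.

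Now given any $z' \in H^1 (L, \R)$ sufficiently close to $z$, choose a closed one-form $\beta$ with $[\beta] = z' - z$ and $C^1$-norm small (e.g.\ the harmonic representative with respect to any auxiliary Riemannian metric on $L$, whose norm is controlled linearly by $\|z' - z\|$). For $\|z' - z\|$ small enough, $s_\beta$ lies in $V$ and is $C^1$-close enough to $i_0$ to remain an embedding. Hence $z' = z + [\beta] \in I (W, \lambda, L, \tau)$, which shows that $z$ is an interior point. This proves openness of $I (W, \lambda, L, \tau)$, and by Lemma~\ref{lem:translation} the shape for a different primitive is a translate, hence equally open; openness is therefore independent of the choice of $\lambda$.

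The only genuinely delicate step is the cohomology computation for $\iota_\beta^* \lambda$: one must be careful to distinguish the primitive $\lambda'$ transported from $W$ from the canonical primitive $\lambda_{\can}$ on $T^* L$, and then to observe that the discrepancy $\eta$ contributes a fixed cohomology class (equal to $z$) because $s_\beta$ is homotopic to the zero section. Once this is in place, the map $[\beta] \mapsto [\iota_\beta^* \lambda]$ is simply the translation $w \mapsto w + z$, and openness is immediate.
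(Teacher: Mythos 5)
Your proposal is correct and follows essentially the same route as the paper's proof: apply the Weinstein Lagrangian Neighborhood Theorem, realize nearby Lagrangian embeddings as graphs of closed one-forms, and compute that the resulting $\lambda$-period is $z + [\beta]$ by splitting the transported primitive as $\lambda_{\can}$ plus a closed correction term whose class is pinned down by homotoping the section to the zero section. The only cosmetic differences are that you spell out the homotopy argument for $\iota_\beta^* = \tau$ and the choice of a small harmonic representative explicitly (and your worry that $s_\beta$ might fail to be an embedding is unnecessary, since the graph of any one-form is automatically embedded).
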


\begin{proof}
Let $\lambda$ be a primitive one-form of $\omega$, i.e.\ $d\lambda = \omega$, and let $\iota \colon L \hookrightarrow W$ be a Lagrangian embedding such that $\iota^* = \tau$ and $[\iota^* \lambda] = z \in H^1 (L,\R)$.
By the Weinstein Lagrangian Neighborhood Theorem, a neighborhood $V$ of the image of $\iota$ can be identified with a neighborhood $U$ of (the image of the) zero section $\iota_0 \colon L \hookrightarrow T^* L$ via a symplectic diffeomorphism $\varphi \colon U \to V$ so that $\varphi \circ \iota_0 = \iota$, i.e.\ $\iota$ corresponds to the zero section, and nearby Lagrangian embeddings correspond to graphs of closed one-forms.
This identification corresponds to a translation in $H^1 (L,\R)$ by $\iota_0^* ([\varphi^* \lambda - \lambda_\can])$, which is equal to $z$ since the $\lambda_\can$-period of $\iota$ is zero.
Here $\lambda_\can$ denotes the canonical one-form on $T^*L$.
If $\sigma \colon L \to T^*L$ is a closed one-form, then $\varphi \circ \sigma$ is a Lagrangian embedding with $(\varphi \circ \sigma)^* = (\varphi \circ \iota_0)^* = \iota^* = \tau$, and $[(\varphi \circ \sigma)^* \lambda] = [\sigma^* ((\varphi^* \lambda - \lambda_\can) + \lambda_\can)] = z + [\sigma^* \lambda_\can] = z + [\sigma]$.
\end{proof}

\begin{rmk} \label{rmk:exactness}
A Lagrangian embedding $\iota$ is called exact if the (closed) one-form $\iota^* \lambda$ is exact.
If $\tau$ is an isomorphism, then any Lagrangian embedding $\iota$ with $\iota^* = \tau$ is exact with respect to the proper choice of primitive of $\omega$, namely $\lambda' = \lambda - \tau^{-1} ([\iota^* \lambda])$.
More generally, there is a choice $\lambda'$ that makes $\iota$ exact if and only if $[\iota^* \lambda]$ lies in the image of the homomorphism $\tau$, independent of the initial choice of primitive $\lambda$. \qed
\end{rmk}

\begin{rmk}
On the other hand, if $\tau = 0$, then the shape is defined without any freedom of translation.
This is the case for instance when $H^1 (W, \R) = 0$.
When $(W, \omega) = (\R^{2 n}, \omega_0)$, the $\lambda$-period $[\iota^* \lambda]$ is also called the Liouville class \cite{polterovich:ggs01} or the symplectic area class \cite{audin:srl94}; it is independent of the choice of primitive $\lambda$ of $\omega_0$. \qed
\end{rmk}

\begin{rmk}
An open subset of an exact symplectic manifold is again an exact symplectic manifold.
Moreover, every point in an arbitrary symplectic manifold has a neighborhood on which the symplectic form is exact.
Indeed, if a subset $U$ of a (not necessarily exact) symplectic manifold is diffeomorphic to an open ball, then the restriction of the symplectic form to $U$ is exact by the Poincar\'e Lemma.
(By making the set $U$ smaller if necessary, one may also invoke Darboux's Theorem.)
Therefore the restriction of $\omega$ to the image of the embedding $\varphi$ in Theorem~\ref{thm:symp-shape-preserving} and Corollary~\ref{cor:rig-symp-emb} is exact, and we may assume without loss of generality that the symplectic manifold $W$ itself is exact.
In fact, since the statement is local, we may assume that $(W, \omega) = (\R^{2 n}, \omega_0)$.
See section~\ref{sec:non-exact} for the definition of the shape invariants of non-exact symplectic manifolds.
Thus Theorem~\ref{thm:symp-shape-preserving} and Corollary~\ref{cor:rig-symp-emb} make sense for arbitrary symplectic manifolds. \qed
\end{rmk}

In this paper, we are mostly interested in the situation in which $L$ is an $n$-dimensional torus $T^n = \R^n / \Z^n$, and $W$ is an open subset either of $(\R^{2 n}, \omega_0)$ or of the cotangent bundle $T^* T^n = T^n \times \R^n$ with its canonical symplectic structure $\omega_\can = d \lambda_\can$, where $\lambda_\can = \sum_{i = 1}^n p_i \, dq_i$, and where $(q_1, \ldots, q_n)$ and $(p_1, \ldots, p_n)$ denote coordinates on the base $T^n = \R^n / \Z^n$ and the fiber $\R^n$, respectively.
We therefore often omit the manifold $T^n$ from the notation.

\begin{dfn}
The $(\lambda, \tau)$-shape of $W$ is the subset $I (W, \lambda, \tau) = I (W, \lambda, T^n, \tau)$ of $H^1 (T^n, \R)$, and the $\tau$-shape $I (W, \tau) = I (W, \omega, \tau)$ of $W$ is the set $I (W, \lambda, \tau)$ for any primitive one-form $\lambda$ of $\omega$, defined up to translation. \qed
\end{dfn}

\begin{rmk}
We choose the cohomology classes $[dq_1], \ldots, [dq_n]$ as a basis of $H^1 (T^n, \R)$ to identify it with the fiber $\R^n$ of the fibration $T^* T^n \to T^n$. \qed
\end{rmk}

\begin{exa} \label{exa:plane}
Let $W$ be a connected open subset of $\R^2$, $\omega_0$ be the standard symplectic form on $\R^2$, and $\lambda$ be a one-form on $\R^2$ with $d\lambda = \omega_0$.
Then every embedding $\iota \colon S^1 \hookrightarrow W$ is Lagrangian, and $[\iota^* \lambda] \in \R = H^1 (S^1, \R)$ is equal to $\pm$ the area enclosed by the image of $\iota$ (with respect to the area form $\omega_0$ and the standard identification of $H^1 (S^1, \R)$ with $\R$, and where the $\pm$ sign depends on whether the orientation of the image of $\iota$ agrees with its orientation as the boundary of the enclosed domain).
Thus $I (W, 0) = (-a, 0) \cup (0, a) \subset \R$, where $a$ is the area of $W$ (which may be infinite).
As remarked above, this shape is independent of the choice of primitive one-form $\lambda$ (i.e.\ there is no freedom of translation).

Suppose that $H_1 (W, \R)$ is non-trivial, and let $\iota \colon S^1 \hookrightarrow W$ be an embedding that represents a generator of $H_1 (W, \R)$.
In other words, the complement of $W$ has at least one bounded component (which are all contractible since $W$ is connected), and the image of $\iota$ is homologous to a union of boundaries of such components.
Then $I (W, \lambda, \iota^*) = (a, a + b)$ or $(- a - b, - a) \subset \R$ (since the orientation is fixed by $\iota^*$), where $a \ge 0$ is the sum of the areas of the enclosed bounded components, and $b$ is the area of $W$ (possibly $\infty$).
Since this shape is defined only up to translation (by $\tau ([\theta]) = [\iota^* \theta]$, where $\theta$ is a closed one-form on $W$, which is not defined globally on $\R^2$ if the vector $\tau ([\theta])$ is non-zero), the only invariant is the length $b$ of the interval, i.e.\ the area of $W$ or $\infty$ if $W$ is unbounded. \qed
\end{exa}

\begin{rmk}
By the preceding example, an embedding $W_1 \to W_2$, where $W_1$ and $W_2$ are open and connected subset of $\R^2$, preserves shape (see Definition~\ref{dfn:symp-shape-preserving}) if and only if it is area preserving.
Theorem~\ref{thm:symp-shape-preserving} is a generalization to symplectic embeddings in higher dimensions. \qed
\end{rmk}

\begin{exa} \label{exa:annulus}
Let $W$ be a non-contractible open and connected subset of the cotangent bundle $T^* S^1 = S^1 \times \R$, and $\tau = \iota_0^*$, where $\iota_0$ is the inclusion of the zero section.
Again every circle embedding is Lagrangian (for dimension reasons).
Embeddings that represent elements of the same shape are all homologous.
Choose a representative cycle; this corresponds to the choice of a primitive one-form $\lambda$ for $\omega_\can$.
If this choice is the zero section (corresponding to the choice $\lambda_\can$), then $[\iota^* \lambda_\can]$ is the signed area enclosed by the image $\iota (S^1)$ and the zero section.
The translation in $H^1 (S^1, \R)$ induced by a different choice of primitive one-form $\lambda$ is the real number $\int_{S^1 \times 0} \lambda$ obtained by integrating $\lambda$ over the zero section (and $[\iota^* \lambda]$ is the signed area between $\iota (S^1)$ and some cycle that is homologous to the zero section).
The shape $I (W, \iota_0^*)$ is an interval $(a, b)$ (where $a$ and $b$ may be negative and infinite), defined up to translation, and its length $b - a$ gives the area of $W$ (possibly $\infty$) independently of the choice of primitive $\lambda$.
The discussion of the shape corresponding to other homomorphisms $\tau$ is analogous to (a combination of the above with) the one in the previous example. \qed
\end{exa}

\begin{lem} \label{lem:shape-non-empty}
If $W$ is any symplectic manifold and $\tau = 0$, then the shape $I (W, \tau)$ is non-empty.
In fact, the vector $z = (z_1, \ldots, z_n)$ is contained in $I (W, 0)$, provided that its coordinates $z_i$ are all positive and sufficiently small.
\end{lem}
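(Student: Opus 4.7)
The plan is to reduce to the standard symplectic linear model via Darboux's theorem and then exhibit the sought-after Lagrangian torus as a product of small round circles, one in each symplectic coordinate plane.

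More precisely, I would first pick any point $p \in W$ and apply Darboux's theorem to obtain a symplectic embedding $\psi \colon B_r^{2 n} \hookrightarrow W$ (for some $r > 0$) whose image is a neighborhood of $p$. Since $B_r^{2 n}$ is contractible, $H^1 (B_r^{2 n}, \R) = 0$. Hence any Lagrangian embedding $\iota_0 \colon T^n \hookrightarrow B_r^{2 n}$ automatically satisfies $(\psi \circ \iota_0)^* = \iota_0^* \circ \psi^* = 0$ on the first cohomology of $W$, so the composition $\iota = \psi \circ \iota_0$ realizes the homomorphism $\tau = 0$ regardless of $W$. This reduces the problem to constructing, for every $z = (z_1, \ldots, z_n)$ with $z_i > 0$ sufficiently small, a Lagrangian embedding $\iota_0 \colon T^n \hookrightarrow B_r^{2 n} \subset (\R^{2 n}, \omega_0)$ with $\lambda$-period equal to $z$.

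For this I would use the standard product torus. Fix the primitive $\lambda = -\sum_{i = 1}^n y_i \, dx_i$ of $\omega_0$. Set $r_i = \sqrt{z_i / \pi}$ and define
\[ \iota_0 (q_1, \ldots, q_n) = \bigl( r_1 \cos (2 \pi q_1), r_1 \sin (2 \pi q_1), \ldots, r_n \cos (2 \pi q_n), r_n \sin (2 \pi q_n) \bigr), \]
where $(q_1, \ldots, q_n) \in T^n = \R^n / \Z^n$. A direct computation shows that $\iota_0^* \omega_0 = 0$, so $\iota_0$ is Lagrangian. Integrating $\lambda$ over the $i$-th basis circle $\{q_j = \mathrm{const}\}_{j \ne i}$ yields
\[ \int_0^1 -r_i \sin (2 \pi q_i) \cdot \bigl( - 2 \pi r_i \sin (2 \pi q_i) \bigr) \, d q_i = \pi r_i^2 = z_i, \]
so $[\iota_0^* \lambda] = (z_1, \ldots, z_n)$ in the basis $[dq_1], \ldots, [dq_n]$ of $H^1 (T^n, \R)$. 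The image of $\iota_0$ is contained in a Euclidean ball of radius $\sqrt{r_1^2 + \cdots + r_n^2} = \sqrt{(z_1 + \cdots + z_n) / \pi}$, so the embedding lies inside $B_r^{2 n}$ as soon as the $z_i$ are sufficiently small (e.g.\ $z_i < \pi r^2 / n$).

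There is essentially no serious obstacle here; the only point requiring a moment of care is that, because $\tau = 0$, the freedom of translation in the definition of the shape collapses (cf.\ Lemma~\ref{lem:translation}), so $[\iota^* \lambda] = z$ is a genuine invariant of the Lagrangian embedding and not merely of its translate class. Combining the three bullets above, the vector $z$ lies in $I (W, 0)$ whenever $z_1, \ldots, z_n > 0$ are small enough to make the product torus fit inside a Darboux chart, which is the claim.
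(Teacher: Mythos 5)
Your proof is correct and follows essentially the same route as the paper: embed a split torus $S^1(r_1) \times \cdots \times S^1(r_n)$ with $\pi r_i^2 = z_i$ into a Darboux chart and note that since $\tau = 0$ the $\lambda$-period is independent of the choice of primitive. The explicit parametrization and integral computation you supply are a correct elaboration of what the paper leaves implicit.
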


\begin{proof}
By Darboux's Theorem, the inclusion of a split torus $S^1 (r_1) \times \cdots \times S^1 (r_n)$ into $\R^2 \times \cdots \times \R^2 = \R^{2 n}$ gives rise to a Lagrangian embedding into $W$, provided the radii $r_i > 0$ of the circles are sufficiently small.
The $\lambda$-period $[\iota^* \lambda]$ of this embedding is $(\pi r_1^2, \ldots, \pi r_n^2) \in \R^n$ for any primitive one-form $\lambda$ of $\omega$ (since $\tau =0$).
\end{proof}

Recall that $GL (n, \Z)$ denotes the group of unimodular matrices, i.e.\ the group of matrices with integer coefficients and determinant equal to $+ 1$ or $- 1$.
In particular, $GL (1, \Z) = \{ \pm 1 \}$ (corresponding to orientation, cf.\ Example~\ref{exa:plane} above).
Every matrix $A \in GL (n, \Z)$ gives rise to a diffeomorphism $A \colon T^n \to T^n$.

\begin{pro} \label{pro:L-diffeos}
If $\phi \colon L \to L$ is a diffeomorphism, then the shape satisfies $I (W, \lambda, L, \phi^* \circ \tau) = \phi^* (I (W, \lambda, L, \tau))$.
In particular, $I (W, \lambda, A \circ \tau) = A (I (W, \lambda, \tau))$ provided that $A \in GL (n, \Z)$.
\end{pro}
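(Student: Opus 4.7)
The plan is a direct naturality argument. If $\iota \colon L \hookrightarrow W$ is a Lagrangian embedding and $\phi \colon L \to L$ is a diffeomorphism, then $\iota \circ \phi$ is an embedding (since $\phi$ is a diffeomorphism) with $(\iota \circ \phi)^* \omega = \phi^* \iota^* \omega = 0$, so it is again Lagrangian. Thus precomposition with $\phi$ permutes the set of Lagrangian embeddings of $L$ into $W$, and I will use this permutation to match the two shapes.

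To prove $\phi^*(I(W, \lambda, L, \tau)) \subseteq I(W, \lambda, L, \phi^* \circ \tau)$, I take $z \in I(W, \lambda, L, \tau)$ realized by a Lagrangian embedding $\iota$ with $\iota^* = \tau$ and $[\iota^* \lambda] = z$. Setting $\widetilde\iota = \iota \circ \phi$, naturality of pullback on $H^1$ gives $\widetilde\iota^* = \phi^* \circ \iota^* = \phi^* \circ \tau$, and $[\widetilde\iota^* \lambda] = \phi^*[\iota^* \lambda] = \phi^*(z)$. Hence $\phi^*(z) \in I(W, \lambda, L, \phi^* \circ \tau)$. The reverse inclusion follows by running the same argument with $\phi^{-1}$ in place of $\phi$: given $w \in I(W, \lambda, L, \phi^* \circ \tau)$ realized by some $\iota'$, form $\iota = \iota' \circ \phi^{-1}$, which has $\iota^* = \tau$ and $[\iota^* \lambda] = (\phi^{-1})^*(w)$, so $w = \phi^*[\iota^* \lambda] \in \phi^*(I(W, \lambda, L, \tau))$.

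For the \emph{in particular} claim, I specialize to $L = T^n$ and let $\phi \colon T^n \to T^n$ be the self-diffeomorphism induced by $A \in GL(n, \Z)$, which is well-defined on $T^n = \R^n/\Z^n$ because $A$ preserves $\Z^n$. With the basis $[dq_1], \ldots, [dq_n]$ identifying $H^1(T^n, \R)$ with $\R^n$, a short computation (essentially $\phi^* dq_i = \sum_j A_{ij} \, dq_j$) shows that $\phi^*$ is represented by a matrix in $GL(n, \Z)$. Applying the general identity and then relabeling this matrix back as an element of $GL(n, \Z)$ (the group is closed under the operation in question) yields the stated form $I(W, \lambda, A \circ \tau) = A(I(W, \lambda, \tau))$.

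The argument has no genuine obstacle: the content is pure functoriality of pullback combined with the observation that being Lagrangian is stable under reparametrization of the domain. The only place where care is required is in the torus identification, where one must check that the induced action on $H^1$ again lies in $GL(n, \Z)$ so that the specialization makes sense as written.
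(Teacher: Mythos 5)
Your argument is correct and is essentially the paper's own proof: precompose a Lagrangian embedding with $\phi$, use functoriality of pullback for both $\iota^*$ and the $\lambda$-period, and obtain the reverse inclusion from $\phi^{-1}$; the paper handles the \emph{in particular} clause by simply taking $\phi = A^t$, which is the same bookkeeping you do when noting that the induced map on $H^1$ again lies in $GL(n,\Z)$.
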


\begin{proof}
Let $\iota \colon L \hookrightarrow W$ be a Lagrangian embedding with $\iota^* = \tau$ and $[\iota^* \lambda] = z$.
Then $\iota \circ \phi \colon L \hookrightarrow W$ is again a Lagrangian embedding, with $(\iota \circ \phi)^* = \phi^* \circ \tau$ and $[(\iota \circ \phi)^* \lambda] = \phi^* (z)$.
That proves the inclusion $I (W, \lambda, L, \phi^* \circ \tau) \supset \phi^* (I (W, \lambda, L, \tau))$.
Since $\phi$ is a diffeomorphism, the same argument applies to its inverse (with $\tau$ replaced by $\phi^* \circ \tau$), and thus equality holds.
The last part of the lemma is the special case $L = T^n$ and $\phi = A^t$.
\end{proof}

Recall that a vector $z \in H^1 (L, \R)$ is called rational if the image of $H_1 (L, \Z)$ under the homomorphism $z \colon H_1 (L, \R) \to \R$, $\sigma \mapsto \int_\sigma z$ is a discrete subgroup, and irrational otherwise.

\begin{lem} [Viterbo \cite{sikorav:qpp91}] \label{lem:irrational}
If $W$ is any symplectic manifold of dimension greater than two, then $I (W, 0)$ contains all irrational vectors $z \in H^1 (T^n, \R) = \R^n$.
\end{lem}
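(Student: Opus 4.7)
The plan is to combine Lemma~\ref{lem:shape-non-empty} and Proposition~\ref{pro:L-diffeos} with a Diophantine normalization. First, applying Proposition~\ref{pro:L-diffeos} with $\tau = 0$ gives $I(W, A \circ 0) = A \cdot I(W, 0)$, and since $A \circ 0 = 0$ for every $A \in GL(n, \Z)$, the shape $I(W, 0) \subset H^1(T^n, \R) = \R^n$ is invariant under the natural $GL(n, \Z)$-action. Lemma~\ref{lem:shape-non-empty} supplies the base case: every vector with sufficiently small strictly positive coordinates belongs to $I(W, 0)$.

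It therefore suffices to prove the following purely number-theoretic claim: for every irrational $z \in \R^n$ with $n \ge 2$ and every $\epsilon > 0$, there exists $A \in GL(n, \Z)$ that carries $z$ to a vector all of whose coordinates lie in $(0, \epsilon)$. Granting this, $A \cdot z \in I(W, 0)$ by Lemma~\ref{lem:shape-non-empty}, and $GL(n, \Z)$-invariance then gives $z = A^{-1}(A \cdot z) \in I(W, 0)$.

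To establish the Diophantine claim, irrationality of $z$ together with $n \ge 2$ ensures that, after relabeling coordinates, $z_1/z_2 \notin \mathbb{Q}$. I would apply the Euclidean algorithm (equivalently, the continued fraction expansion of $z_1/z_2$) as a sequence of $2 \times 2$ unimodular matrices, extended to $GL(n, \Z)$ by the identity on the remaining coordinates; iterating drives both $z_1$ and $z_2$ strictly toward $0$ while keeping them positive, and irrationality is precisely the condition preventing the algorithm from terminating at zero. After this normalization, the new $z_1, z_2$ lie in $(0, \epsilon)$ and still have irrational ratio, so the subgroup $\Z z_1 + \Z z_2$ is dense in $\R$. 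For each $k \ge 3$, the upper-triangular unimodular shear $z_k \mapsto z_k + m_k z_1 + n_k z_2$ (fixing all other coordinates) leaves $z_1, z_2$ unchanged; by density I can choose integers $m_k, n_k$ so that the new $z_k$ also lies in $(0, \epsilon)$. Composing all of these unimodular operations yields the required matrix.

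The main (and essentially only) obstacle is the Diophantine manipulation; the symplectic geometric content is fully absorbed by Proposition~\ref{pro:L-diffeos} together with the Darboux/split-torus construction of Lemma~\ref{lem:shape-non-empty}. The hypothesis $\dim W > 2$ enters in exactly the step that required two coordinates with irrational ratio: if $n = 1$, then $\Z \cdot z_1$ is already discrete in $\R$, no irrational vectors exist, and the statement is vacuous.
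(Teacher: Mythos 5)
Your proposal is correct and follows essentially the same route as the paper: $GL(n,\Z)$-invariance of $I(W,0)$ via Proposition~\ref{pro:L-diffeos}, the split-torus base case from Lemma~\ref{lem:shape-non-empty}, and the reduction of an irrational vector to one with small positive coordinates by a unimodular matrix. The only difference is that the paper merely asserts the existence of the matrix $A$ with $Ay=z$, whereas you supply the continued-fraction/shear argument that proves it; that detail is accurate and a welcome addition.
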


\begin{proof}
For a matrix $A \in GL (n,\Z)$, the pre-composition of a Lagrangian embedding $\iota \colon T^n \hookrightarrow W$ with the diffeomorphism $A^t \colon T^n \to T^n$ is again Lagrangian, and $[(\iota \circ A^t)^* \lambda] = A ([\iota^* \lambda])$ (as in the previous lemma).
By Lemma~\ref{lem:shape-non-empty}, $I (W, 0)$ contains all $y$ whose coordinates are positive and sufficiently small.
For every irrational vector $z$ there exists such a vector $y$ and a matrix $A \in GL (n,\Z)$ so that $A y = z$, and the claim follows.
\end{proof}

On the other hand, the shape may not contain all rational vectors.
This is the case for instance if $W$ is a subset of the (symplectic) cylinder $B_r^2 \times \R^{2 n - 2} \subset \R^{2 n}$ by a theorem of Gromov and Sikorav \cite[Theorem~1]{sikorav:qpp91}.

\begin{thm}[\cite{sikorav:qpp91}] \label{thm:sikorav}
If $z$ is rational and the positive generator $\gamma$ of $z (H_1 (L, \Z))$ satisfies $\gamma \ge \pi r^2$, then there exists no Lagrangian embedding $\iota \colon L \hookrightarrow B_r^2 \times \R^{2 n - 2}$ such that $z = [\iota^* \lambda]$.
In other words, $z \notin I (B_r^2 \times \R^{2 n - 2}, \omega_0, L, 0)$.
\end{thm}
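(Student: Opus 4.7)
The plan is to argue by contradiction using $J$-holomorphic disks: assuming a Lagrangian embedding $\iota \colon L \hookrightarrow Z_r := B_r^2 \times \R^{2 n - 2}$ with $z = [\iota^* \lambda]$ and $z(H_1(L, \Z)) = \gamma \Z$ exists, I aim to produce a non-constant holomorphic disk with boundary on $\iota(L)$ whose symplectic area is simultaneously $< \pi r^2$ and a positive element of $\gamma \Z$, forcing $\gamma < \pi r^2$.

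The first step is to bound the Hofer displacement energy $e(\iota(L))$ in $\R^{2 n}$. Since $L$ is compact and $\iota(L)$ is contained in the open cylinder $Z_r$, its projection to $B_r^2$ lies in a smaller disk $B_{r'}^2$ with $r' < r$. A compactly supported Hamiltonian on $\R^{2 n}$ of the form $H(x, y, p, q) = h(x, y)$, where $h$ is chosen to displace $B_{r'}^2$ off itself in $\R^2$ (possible with Hofer norm arbitrarily close to $\pi (r')^2$), then also displaces $\iota(L)$ in $\R^{2 n}$. This yields the strict bound
\[
e(\iota(L)) \le \pi (r')^2 < \pi r^2.
\]

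The second and most substantial step is the production of a suitable holomorphic disk. By Chekanov's theorem, which sharpens Gromov's original Lagrangian disk-filling theorem, for any $\omega_0$-tame almost complex structure $J$ on $\R^{2 n}$ there exists a non-constant $J$-holomorphic map $u \colon (D, \partial D) \to (\R^{2 n}, \iota(L))$ with symplectic area
\[
0 < \int_D u^* \omega_0 \le e(\iota(L)).
\]
I expect this to be the main obstacle; it rests on Gromov compactness, careful control of sphere and disk bubbling, and the Chekanov-Polterovich comparison between the minimal area of a bounding holomorphic disk and the Hofer displacement energy of the Lagrangian.

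The final step is a direct Stokes computation. Writing $u|_{\partial D} = \iota \circ v$ for some $v \colon S^1 \to L$, and using that $\iota^* \lambda$ is a closed one-form on $L$ with cohomology class $z$, one obtains
\[
\int_D u^* \omega_0 = \int_{\partial D} u^* \lambda = \int_{S^1} v^* (\iota^* \lambda) = \langle z, [v] \rangle \in z(H_1(L, \Z)) = \gamma \, \Z.
\]
Positivity of the area forces $\langle z, [v] \rangle \ge \gamma$, and combining with the bound from the second step yields $\gamma \le \langle z, [v] \rangle \le e(\iota(L)) < \pi r^2$, contradicting the hypothesis $\gamma \ge \pi r^2$.
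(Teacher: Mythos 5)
Your argument is correct, and its overall architecture coincides with the paper's: produce a non-constant holomorphic disk with boundary on $\iota(L)$ whose area is strictly less than $\pi r^2$, then use Stokes' theorem to see that this area lies in $z(H_1(L,\Z)) = \gamma\,\Z$ and hence is at least $\gamma$, contradicting $\gamma \ge \pi r^2$. Where you differ is in how the disk is obtained. The paper's sketch simply invokes the Gromov--Sikorav existence result for Lagrangians in the split cylinder $B_r^2 \times \R^{2n-2}$, where the area bound $a < \pi r^2$ comes directly out of Gromov's compactness/bubbling argument with the standard split almost complex structure. You instead route through Hofer's displacement energy: you first bound $e(\iota(L)) \le \pi (r')^2 < \pi r^2$ by displacing a compact sub-disk of the $B_r^2$-factor, and then apply Chekanov's theorem (the bound of the minimal holomorphic disk area by the displacement energy, with sphere bubbles excluded since $\R^{2n}$ is exact) to extract the disk. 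Your route uses a strictly stronger input than Gromov's original disk-filling theorem, but it is cleanly quantitative and meshes well with the displacement-energy arguments used elsewhere in this paper (e.g.\ in the sketch of Theorem~\ref{thm:rig-lag}); the paper's route is more economical in that it needs only Gromov's 1985 result. Both are complete and yield the identical final Stokes computation.
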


\begin{proof}[Sketch of proof]
In order to derive a contradiction, suppose that there exists such a Lagrangian embedding $\iota \colon L \hookrightarrow B_r^2 \times \R^{2 n - 2}$ with $z = [\iota^* \lambda]$.
Then there exists a (non-constant) holomorphic disk $D$ with boundary on the image of $L$ and area $a$ so that $0 < a < \pi r^2$.
But $a = \int_D \omega_0 = \int_{\partial D} \lambda \in z (H_1 (L, \Z))$, and thus $a \ge \gamma$.
\end{proof}

Denote by $\iota_a \colon T^n = T^n \times a \hookrightarrow T^n \times \R^n = T^* T^n$ the canonical embedding.
It is obviously Lagrangian with $[\iota_a^* \lambda_\can] = a$.
It follows immediately from the definition that for $A \subset \R^n$ open and connected, $A \subset I (T^n \times A, \lambda_\can, \iota_0^*)$.
That equality holds is a theorem of Sikorav \cite{sikorav:qpp91}, see also \cite{eliashberg:nio91}.

\begin{thm}[{\cite{eliashberg:nio91}}] \label{thm:torus-shape}
If $A \subset \R^n$ is open and connected, $I (T^n \times A, \lambda_\can, \iota_0^*) = A$.
\end{thm}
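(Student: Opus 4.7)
The plan is to establish the nontrivial inclusion $I(T^n\times A,\lambda_\can,\iota_0^*)\subset A$, since the reverse inclusion is already noted via the embeddings $\iota_a$ for $a\in A$. So let $\iota\colon T^n\hookrightarrow T^n\times A$ be an arbitrary Lagrangian embedding with $\iota^*=\iota_0^*$, and set $z:=[\iota^*\lambda_\can]\in\R^n$. The goal is to prove $z\in A$.

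The key idea is a translation trick that reduces the claim to a Lagrangian intersection statement. The fiberwise translation $\phi_z\colon T^*T^n\to T^*T^n$, $(q,p)\mapsto(q,p-z)$, is a symplectomorphism, and a direct computation gives $\phi_z^*\lambda_\can=\lambda_\can-\sum_i z_i\,dq_i$. Set $\iota':=\phi_z\circ\iota$. Then $\iota'(T^n)\subset T^n\times(A-z)$, one still has $(\iota')^*=\iota_0^*$, and, using that $\iota^*[dq_i]=\iota_0^*[dq_i]=[dq_i]$,
\[
[(\iota')^*\lambda_\can]=[\iota^*\lambda_\can]-\sum_i z_i[dq_i]=z-z=0.
\]
Thus $\iota'$ is an exact Lagrangian embedding into $T^*T^n$ inducing $\iota_0^*$ on cohomology (cf.\ Remark~\ref{rmk:exactness}).

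At this point the plan is to invoke the Gromov--Laudenbach--Sikorav Lagrangian intersection theorem: every exact Lagrangian embedding $T^n\hookrightarrow T^*T^n$ that induces $\iota_0^*$ on $H^1$ must meet the zero section $T^n\times\{0\}$. Applying this to $\iota'$, we conclude $\iota'(T^n)\cap(T^n\times\{0\})\neq\emptyset$; but $\iota'(T^n)\subset T^n\times(A-z)$, so this forces $0\in A-z$, i.e.\ $z\in A$, as desired.

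The main obstacle is precisely the appeal to the exact Lagrangian intersection theorem in $T^*T^n$, which is the deep input and rests on $J$-holomorphic disk techniques (or equivalently Sikorav's generating function arguments, as in \cite{sikorav:qpp91}). The remaining pieces — the fiber translation, the period computation, and the identification of $\iota'$ as an exact Lagrangian inside $T^*T^n$ — are purely formal, and the containment $\iota'(T^n)\subset T^n\times(A-z)$ is the only reason the intersection statement translates into the desired conclusion $z\in A$.
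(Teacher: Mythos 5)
Your proposal is correct and is essentially the paper's own argument: the same fiber translation $(q,p)\mapsto(q,p-z)$, the same use of $\iota^*=\iota_0^*$ to see that the translated embedding is exact for $\lambda_\can$, and the same appeal to Gromov's theorem on exact Lagrangian embeddings into cotangent bundles intersecting the zero section. The paper phrases the conclusion as ``$\iota(T^n)$ must intersect $T^n\times\{z\}$, hence $z\in A$,'' which is the same deduction as your $0\in A-z$.
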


\begin{rmk}
We only need this theorem here to distinguish symplectic from anti-symplectic and conformally symplectic embeddings and diffeomorphisms.
Its proof is a simple consequence of Gromov's Theorem on the intersection of the image of an exact Lagrangian embedding into a cotangent bundle with the zero section.
We give the proof here to make it transparent to the reader why the homotopy class of the Lagrangian embedding must be fixed, that is, $\iota^* = \iota_0^*$ in the definition of the shape.
We would like to remark that the only known proofs of Gromov's Theorem use $J$-holomorphic curve techniques \cite{mcduff:ist98, mcduff:hcs04}. \qed
\end{rmk}

\begin{proof}
Let $a = (a_1, \ldots, a_n) \in I (T^n \times A, \lambda_\can, \iota_0^*) \subset \R^n$.
By definition, there exists a Lagrangian embedding $\iota \colon T^n \hookrightarrow T^n \times A$ such that $\iota^* = \iota_0^*$ (as a homomorphism $H^1 (T^n \times A, \R) \to H^1 (T^n, \R)$) and $[\iota^* \lambda_\can] = a$.
The translation $\sigma \colon (q, p) \mapsto (q, p - a)$ in the fiber is a symplectic diffeomorphism that interchanges the one-forms $\lambda_\can$ and $\lambda = \lambda_\can - \sum_{i = 1}^n a_i \, dq_i$, and maps the section $T^n \times a$ to the zero section of $T^* T^n$.
Since the difference $\lambda - \lambda_\can$ is a closed one-form and $\iota^* = \iota_0^*$, the Lagrangian embedding $\sigma \circ \iota \colon T^n \hookrightarrow T^n \times \R^n = T^* T^n$ is exact with respect to the canonical one-form $\lambda_\can$.
Thus by Gromov's Theorem \cite[2.3.B$_4$'']{gromov:pcs85}, see also \cite[Theorem~11.19]{mcduff:ist98} or \cite[Corollary~9.2.15]{mcduff:hcs04}, the image of $\sigma \circ \iota$ intersects the zero section.
Equivalently, the image $\iota (T^n) \subset T^n \times A$ of $\iota$ must intersect $T^n \times a$, and hence $a \in A$.
\end{proof}

\begin{rmk} \label{rmk:torus-shape-hyp}
In our application of Sikorav's Theorem below, we may choose the set $A$ to be contractible (or more generally, to have trivial first and second homotopy groups).
In that case, the full force of Gromov's Theorem is not required, and an alternate and perhaps more elementary proof goes as follows.
It is well-known that Arnold's conjecture holds if $\pi_2 (T^n \times A, T^n) = 0$, see for instance \cite[Section~11.3]{mcduff:ist98} or \cite[Theorem~9.2.14]{mcduff:hcs04}.
(The proof of Arnold's conjecture in this case is due to M.~Chaperon.)
Consider the long exact sequence
	\[ \cdots \rightarrow \pi_2 (T^n) \rightarrow \pi_2 (T^n \times A) \rightarrow \pi_2 (T^n \times A, T^n) \rightarrow \pi_1 (T^n) \stackrel{\! \! \iota_*}{\rightarrow} \pi_1 (T^n \times A) \rightarrow \cdots \]
of homotopy groups of the pair $(T^n \times A, T^n)$.
Since $T^n \times A$ deformation retracts onto $T^n$ times a point, and $\pi_2 (T^n) = 0$, it suffices to show that $\iota_* \colon \pi_1 (T^n) \to \pi_1 (T^n \times A)$ is injective.
But $\pi_1 (T^n) \cong H_1 (T^n, \Z) \cong H^1 (T^n, \Z) \cong \Z^n$, and by hypothesis $\iota^* \colon H^1 (T^n \times A, \R) \to H^1 (T^n, \R)$ is the identity, which implies that $\iota$ induces an isomorphism $\pi_1 (T^n) \to \pi_1 (T^n \times A)$ of the fundamental groups. \qed
\end{rmk}

\begin{pro}[\cite{eliashberg:nio91}] \label{pro:symp-shape}
Let $(W_1, \omega_1)$ and $(W_2, \omega_2)$ be exact symplectic manifolds of the same dimension, and let $\varphi \colon W_1 \to W_2$ be a symplectic embedding.
Then $I (W_1, L, \tau) \subset I (W_2, L, \tau \circ \varphi^*)$ (up to translation).
In fact, if $d\lambda_2 = \omega_2$ and $\lambda_1 = \varphi^* \lambda_2$, then $I (W_1, \lambda_1, L, \tau) \subset I (W_2, \lambda_2, L, \tau \circ \varphi^*)$.
In particular, if $\varphi$ is a symplectic diffeomorphism, then $I (W_1, \lambda_1, L, \tau) = I (W_2, \lambda_2, L, \tau \circ \varphi^*)$.
\end{pro}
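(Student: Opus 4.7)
The plan is to verify directly that post-composition with $\varphi$ sends Lagrangian witnesses for $I(W_1, \cdot)$ to Lagrangian witnesses for $I(W_2, \cdot)$ with the stated transformations of $\tau$ and of the period class. Given $z \in I(W_1, \lambda_1, L, \tau)$, I would pick a Lagrangian embedding $\iota_1 \colon L \hookrightarrow W_1$ with $\iota_1^* = \tau$ and $[\iota_1^* \lambda_1] = z$, and set $\iota_2 = \varphi \circ \iota_1$. Then $\iota_2$ is an embedding (as $\varphi$ is), Lagrangian because
\[
\iota_2^* \omega_2 = \iota_1^* \varphi^* \omega_2 = \iota_1^* \omega_1 = 0,
\]
and induces $\iota_2^* = \iota_1^* \circ \varphi^* = \tau \circ \varphi^*$ on first cohomology. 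When $\lambda_1 = \varphi^* \lambda_2$, the one-forms $\iota_2^* \lambda_2 = \iota_1^* \varphi^* \lambda_2 = \iota_1^* \lambda_1$ agree on $L$, so $[\iota_2^* \lambda_2] = z$, which establishes the precise inclusion $I(W_1, \lambda_1, L, \tau) \subset I(W_2, \lambda_2, L, \tau \circ \varphi^*)$ stated in the ``in fact'' clause.

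For the first assertion, in which $\lambda_1$ is not assumed to equal $\varphi^* \lambda_2$, I would invoke Lemma~\ref{lem:translation}: the one-form $\theta = \lambda_1 - \varphi^* \lambda_2$ is closed on $W_1$, so $I(W_1, \lambda_1, L, \tau) = I(W_1, \varphi^* \lambda_2, L, \tau) + \tau([\theta])$. Combined with the inclusion already proved for the primitive $\varphi^* \lambda_2$, this shows that $I(W_1, L, \tau) \subset I(W_2, L, \tau \circ \varphi^*)$ up to a translation, exactly the ambiguity built into Definition~\ref{dfn:symp-shape}.

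For the diffeomorphism case, I would apply the first part to $\varphi^{-1} \colon W_2 \to W_1$, using the primitives $\lambda_2$ on $W_2$ and $\lambda_1 = \varphi^* \lambda_2 = (\varphi^{-1})_* \lambda_2$ on $W_1$, so that the naturality hypothesis $(\varphi^{-1})^* \lambda_1 = \lambda_2$ holds. Taking $\tau' = \tau \circ \varphi^*$, one gets $I(W_2, \lambda_2, L, \tau') \subset I(W_1, \lambda_1, L, \tau' \circ (\varphi^{-1})^*)$, and since $\tau' \circ (\varphi^{-1})^* = \tau \circ \varphi^* \circ (\varphi^{-1})^* = \tau$, this is exactly the reverse inclusion, giving equality.

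I do not anticipate a substantive obstacle: the statement is essentially the formal naturality of pullback of differential forms combined with the fact that the Lagrangian condition is intrinsic. The only point needing mild care is the interplay between the primitives on the two sides, which is entirely controlled by Lemma~\ref{lem:translation}.
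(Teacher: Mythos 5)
Your proposal is correct and follows exactly the route the paper takes: the paper's proof is the one-line observation that the statements follow from the definitions by composing every Lagrangian embedding into $W_1$ with $\varphi$, and your write-up simply spells out the verification of the Lagrangian condition, the induced map on cohomology, the period class, the translation via Lemma~\ref{lem:translation}, and the application to $\varphi^{-1}$ for the diffeomorphism case.
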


\begin{proof}
The statements follow immediately from the definitions by composing every Lagrangian embedding into $W_1$ with the symplectic embedding $\varphi$.
\end{proof}

\begin{rmk}
In the special case $(W_1, \omega_1) = (T^*L, d\lambda_\can)$ and $\tau = \iota_0^*$, the previous proposition gives rise to an alternate proof of Lemma~\ref{lem:open} (choose $\iota$ and $\varphi$ as in the above proof of Lemma~\ref{lem:open}). \qed
\end{rmk}

The proposition implies that the shape is a symplectic invariant, and thus an obstruction to symplectic embedding.
Gromov's proof of the existence of exotic symplectic structures on $\R^{2 n}$ for instance can be restated in terms of the shape.
Recall that for $W = \R^{2 n}$, the homomorphism $\tau$ is automatically trivial, and there is no freedom of translation in the definition of the shape invariant.

\begin{exa} [Gromov \cite{gromov:pcs85}] \label{exa:no-exact}
$I (\R^{2 n}, \omega_0, 0) = \R^n - \{ 0 \}$.
Indeed, embedding split tori (cf.\ the proof of Lemma~\ref{lem:shape-non-empty}) shows that every vector with positive coordinates is contained in $I (\R^{2 n}, \omega_0, 0)$, and thus $\R^n - \{ 0 \} \subset I (\R^{2 n}, \omega_0, 0)$ by Proposition~\ref{pro:L-diffeos}.
On the other hand, Gromov showed that there are no exact Lagrangian embeddings into $(\R^{2 n}, \omega_0)$ \cite[2.3.B$_2$]{gromov:pcs85}, and thus equality holds above.
Gromov also proved that there exist so-called exotic symplectic structures $\omega_{\textrm ex}$ on $\R^{2 n}$ that do admit exact Lagrangian embeddings \cite[2.3.B$_5$]{gromov:pcs85}.
In terms of the shape invariant, this means that $I (\R^{2 n}, \omega_{\textrm ex}, 0)$ contains the zero vector.
The existence of a symplectic embedding $(\R^{2 n}, \omega_{\textrm ex}) \to (\R^{2 n}, \omega_0)$ would therefore contradict Proposition~\ref{pro:symp-shape}. \qed
\end{exa}

\begin{rmk}
Regarding the freedom of translation in the definition of the shape invariant, a statement of the form $I (W_1, L, \tau_1) \subset I (W_2, L, \tau_2)$, without explicit choices of primitive one-forms, will always mean that the inclusion holds up to translation, and likewise for equality.
More precisely, it means that (for every choice of primitive one-form $\lambda_2$ of the symplectic form $\omega_2$) there exists a (corresponding) primitive one-form $\lambda_1$ of $\omega_1$ so that inclusion holds, provided that the shape is computed with respect to these specific one-form(s).

On the other hand, the shape with respect to a specific primitive one-form $\lambda$ (Definition~\ref{dfn:lambda-shape}) is defined as a genuine subset of $H^1 (L, \R)$ (not up to translation), so a statement of the form $I (W_1, \lambda_1, L, \tau_1) \subset I (W_2, \lambda_2, L, \tau_2)$ means genuine inclusion, and likewise for equality. \qed
\end{rmk}

\begin{rmk}
Combining Proposition~\ref{pro:symp-shape} with Lemma~\ref{lem:translation}, we see that if there exists a symplectic embedding $\varphi \colon W_1 \to W_2$, then $I (W_1, \lambda_1, L, \tau) + b$ is a (genuine) subset of $I (W_2, \lambda_2, L, \tau \circ \varphi^*)$, where $b = \tau ([\varphi^* \lambda_2 - \lambda_1])$.
If $W_1 \subset W_2$ and $\varphi$ is isotopic to the identity, then $\tau \circ \varphi^* = \tau$, and if in addition $\varphi$ is Hamiltonian, then the translation vector $b = 0$. \qed
\end{rmk}

The following properties of the symplectic shape invariant are not mentioned in \cite{eliashberg:nio91}, but follow almost directly from the definition, with the exception that Sikorav's Theorem is applied to prove Proposition~\ref{pro:conf-symp-shape}.

\begin{rmk}
More precisely, the proof calls for an explicit computation of the shape of some subset of a given manifold.
We only need the fact that there exists a subset $U \subset W$, an $n$-dimensional manifold $L$, and a homomorphism $\tau$, such that the shape $I (U, L, \tau)$ is not symmetric about the zero vector and not rescaling invariant (up to translation).
At present, Sikorav's Theorem is the only known explicit computation of the shape of an open subset of a symplectic manifold of dimension greater than two.
(There is a generalization of Sikorav's Theorem to arbitrary cotangent bundles $T^* L$, see section~\ref{sec:shape-preserving}, but the statement is most natural for tori since they are parallelizable.) \qed
\end{rmk}

\begin{lem} \label{lem:local-shape}
Let $V \subset W$ be an open subset.
Then for any sufficiently small numbers $a_i > 0$ and any sufficiently small open and connected neighborhood $A$ of $a = (a_1, \ldots, a_n)$ in $\R^n$, there exists a subset $U \subset V$ and a Lagrangian embedding $\iota \colon T^n \hookrightarrow U$, such that $I (U, \iota^*) = A$.
\end{lem}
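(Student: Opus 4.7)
The plan is to combine Darboux's Theorem, the Weinstein Lagrangian Neighborhood Theorem, and Sikorav's explicit computation (Theorem~\ref{thm:torus-shape}) to reduce the problem to a shape computation in a neighborhood of the zero section of $T^*T^n$. The role of the freedom of translation is to move the shape from a neighborhood of the origin to a neighborhood of the prescribed vector $a$.

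First, I would pick a point $w \in V$ and apply Darboux's Theorem to identify an open neighborhood $V_0 \subset V$ of $w$ symplectically with an open subset of $(\R^{2n}, \omega_0)$ that contains a ball about the origin. Provided the numbers $a_i > 0$ are sufficiently small, the split torus
\[ T(r) = S^1(r_1) \times \cdots \times S^1(r_n) \subset \R^{2n}, \qquad r_i = \sqrt{a_i/\pi}, \]
is contained in this Darboux chart and gives a Lagrangian embedding $\iota \colon T^n \hookrightarrow V_0 \subset V$ whose $\lambda$-period with respect to the primitive pulled back from $\lambda_0 = \tfrac{1}{2}\sum (x_i \, dy_i - y_i \, dx_i)$ equals $a$. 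Next I would invoke the Weinstein Lagrangian Neighborhood Theorem to obtain an open connected neighborhood $B \subset \R^n$ of the origin and a symplectic diffeomorphism $\varphi \colon T^n \times B \to U_0$ onto an open neighborhood $U_0$ of $\iota(T^n)$ in $V_0$, satisfying $\varphi \circ \iota_0 = \iota$, where $\iota_0 \colon T^n \hookrightarrow T^n \times B$ denotes the zero section. Given any open connected neighborhood $A$ of $a$ small enough that $B' := A - a$ is contained in $B$, set $U := \varphi(T^n \times B') \subset V$.

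It remains to compute $I(U, \iota^*)$. Theorem~\ref{thm:torus-shape} gives $I(T^n \times B', \lambda_\can, \iota_0^*) = B'$, since $B'$ is open and connected. Define $\lambda_U := (\varphi^{-1})^* \lambda_\can$, which is a primitive one-form for $\omega$ on $U$ satisfying $\varphi^* \lambda_U = \lambda_\can$. From $\iota = \varphi \circ \iota_0$ we get $\iota^* = \iota_0^* \circ \varphi^*$, so applying Proposition~\ref{pro:symp-shape} to the symplectic diffeomorphism $\varphi$ yields
\[ I(U, \lambda_U, \iota^*) = I(T^n \times B', \lambda_\can, \iota_0^*) = B' = A - a. \]
Because the shape $I(U, \iota^*)$ is defined only up to translation in $H^1(T^n, \R) = \R^n$, one can translate by the vector $a$ to conclude $I(U, \iota^*) = A$, as required.

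The main obstacle is essentially bookkeeping: one must verify that $\iota^*$ corresponds to $\iota_0^*$ under the Weinstein identification (which is immediate once one writes $\iota = \varphi \circ \iota_0$) and keep track of how different choices of primitive one-form translate the shape. All the substantive content is packaged inside Theorem~\ref{thm:torus-shape}, which is why this lemma comes almost for free once that result and the standard neighborhood theorems are available.
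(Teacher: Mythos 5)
Your proposal is correct and follows essentially the same route as the paper: Darboux's Theorem to produce the split Lagrangian torus, the Weinstein Lagrangian Neighborhood Theorem to reduce to a product neighborhood in $T^*T^n$, and Theorem~\ref{thm:torus-shape} to compute the shape. The only (harmless) bookkeeping difference is that the paper takes $U = T^n \times A$ and replaces $\iota$ by the section $\iota_a$, whereas you take $U = T^n \times (A - a)$, keep $\iota$ as the zero section, and absorb the shift into the translation freedom of the shape; since $\iota^*$ is an isomorphism on the tubular neighborhood, that translation is indeed available.
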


\begin{proof}
The existence of the Lagrangian embedding $\iota$ follows directly from Darboux's Theorem.
By Weinstein's Lagrangian Neighborhood Theorem, we may identify a neighborhood of $\iota (T^n)$ in $V$ with a neighborhood of the zero section in the cotangent bundle $T^* T^n$, which contains $U = T^n \times A$ provided that the numbers $a_i > 0$ and the neighborhood $A$ are sufficiently small.
After replacing $\iota$ with $\iota_a$ for some $a \in A$, the claim follows from Theorem~\ref{thm:torus-shape}.
\end{proof}

\begin{dfn} \label{dfn:symp-shape-preserving}
Let $(W_1, \omega_1)$ and $(W_2, \omega_2)$ be exact symplectic manifolds of the same dimension.
We say that an embedding $\varphi \colon W_1 \to W_2$ preserves the shape invariants (or for short, preserves the shape) of two open subsets $U \subset W_1$ and $V \subset W_2$ such that $\overline{U} \subset W_1$ is compact and $\varphi (\overline{U}) \subset V$ if $I (U, L, \tau) \subset I (V, L, \tau \circ \varphi^*)$ for every closed and connected $n$-dimensional manifold $L$ and every homomorphism $\tau \colon H^1 (U, \R) \to H^1 (L, \R)$.
An embedding is said to preserve shape if it preserves the shape of all open subsets $U \subset W_1$ and $V \subset W_2$ as above. \qed
\end{dfn}

\begin{rmk}
By Proposition~\ref{pro:symp-shape}, symplectic embeddings preserve the shape invariant.
In fact, symplectic embeddings preserve the shape of subsets without the compactness assumption.
However, our definition has the advantage that it is preserved by uniform limits, see Proposition~\ref{pro:continuous-symp-shape} below.
The restriction of a shape preserving embedding to an open subset by definition again preserves shape, and symplectic is a pointwise condition.
Then by virtue of Theorem~\ref{thm:symp-shape-preserving}, every shape preserving embedding is symplectic, and thus a fortiori preserves the shape of all subsets regardless of whether the closures of the domains are compact or not.
A discussion of this relationship in dimension two appeared in Example~\ref{exa:plane}.
Thus shape preserving is a generalization to higher dimensions of area preserving that also makes sense for homeomorphisms.
This last remark is elaborated in section~\ref{sec:homeos}. \qed
\end{rmk}

\begin{pro} \label{pro:conf-symp-shape}
Let $(W_1, \omega_1)$ and $(W_2, \omega_2)$ be exact symplectic manifolds of the same dimension, and $\varphi \colon W_1 \to W_2$ be a conformally symplectic embedding, i.e.\ $\varphi^* \omega_2 = c \, \omega_1$ for a constant $c \not= 0$.
Then $\varphi$ preserves shape if and only if $c = 1$.
\end{pro}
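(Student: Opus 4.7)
The ``if'' direction is immediate from Proposition~\ref{pro:symp-shape}, since $c=1$ makes $\varphi$ symplectic. For the converse, the plan is to compute both sides of a shape-preservation inclusion around a carefully chosen Lagrangian torus with nonzero Liouville class, and then extract a quantitative inequality on $c$ that forces $c=1$.

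Since shape preservation restricts to any pair of open subsets and $c$ is a global constant, I begin by shrinking $W_1$ to a Darboux ball around a chosen point $p\in W_1$ and $W_2$ to a Darboux ball around $\varphi(p)$ containing the image. Both balls are contractible, so $H^1(W_i,\R)=0$; the translation ambiguity in the definition of the shape then collapses, and in particular the closed one-form $\varphi^*\lambda_2-c\lambda_1$ is exact, $dh$ for some function $h$ on $W_1$. Next, pick a Lagrangian torus $\iota_0\colon T^n\hookrightarrow W_1$ with nonzero Liouville class $\vec v:=\iota_0^*[\lambda_1]\in\R^n$; a split torus of positive radii in the Darboux chart gives $\vec v=(\pi r_1^2,\ldots,\pi r_n^2)\neq 0$. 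Using the Weinstein neighborhood theorem around $\iota_0(T^n)$, take $U=T^n\times A$ and $U'=T^n\times A'$ with $A=B_\epsilon(0)\Subset A'=B_{\epsilon'}(0)$ inside the Weinstein chart. By Theorem~\ref{thm:torus-shape}, together with the computation $\iota_0^*[\lambda_1|_U-\lambda_\can]=\vec v$, the shapes equal $I(U,\lambda_1|_U,\iota_0^*)=A+\vec v$ and $I(U',\lambda_1|_{U'},\iota_0^*)=A'+\vec v$. The bijection between Lagrangians in $\varphi(U')$ and Lagrangians in $U'$, combined with $\varphi^*\lambda_2=c\lambda_1+dh$, gives
\[ I(\varphi(U'),\lambda_2|_{\varphi(U')},\iota_0^*\circ\varphi^*)=cA'+c\vec v. \]
Shape preservation---a genuine inclusion now that there is no translation freedom---thus forces $A+\vec v\subset cA'+c\vec v$, which for these balls unwinds to the estimate $|c-1|\,|\vec v|+\epsilon\le|c|\,\epsilon'$.

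The main obstacle I expect is precisely this bookkeeping of the translation vectors; once the inequality is in hand, the conclusion follows quickly. Since shape preservation must hold for every $A'\supset\bar A$, I may let $\epsilon'\to\epsilon^+$ to obtain
\[ |c-1|\,|\vec v|\;\le\;(|c|-1)\,\epsilon. \]
A short case check now rules out each $c\neq 1$: for $0<|c|<1$ the right-hand side is strictly negative while the left-hand side is nonnegative; for $c=-1$ the right-hand side vanishes while the left-hand side equals $2|\vec v|>0$; and for $c>1$ or $c<-1$ the right-hand side is a fixed positive fraction of $\epsilon$, so choosing $\epsilon$ small enough compared with $|\vec v|$ violates the inequality. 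Only $c=1$ survives, completing the proof.
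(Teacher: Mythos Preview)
There is a genuine gap in the argument, and it is precisely at the step you flagged as the delicate bookkeeping. Your claim that ``there is no translation freedom'' because $H^1(W_i,\R)=0$ conflates the ambient manifold with the subset on which the shape is computed. In Definition~\ref{dfn:symp-shape-preserving} the inclusion $I(U,L,\tau)\subset I(V,L,\tau\circ\varphi^*)$ is understood up to translation by an element of the image of $\tau\colon H^1(U,\R)\to H^1(L,\R)$ (see the remark following Proposition~\ref{pro:symp-shape} and Lemma~\ref{lem:translation}). Your $U=T^n\times A$ has $H^1(U,\R)\cong\R^n$ and $\tau=\iota_0^*$ is an isomorphism, so the translation vector $b$ ranges over all of $\R^n$, regardless of whether the ambient $W_i$ are contractible. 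Concretely, shape preservation only yields the existence of some $b\in\R^n$ with $A+\vec v+b\subset cA'+c\vec v$; choosing $b=(c-1)\vec v$ reduces this to $\epsilon\le|c|\epsilon'$, and letting $\epsilon'\to\epsilon^+$ gives merely $|c|\ge 1$. Your case analysis then breaks down: the cases $c=-1$ and $|c|>1$ are no longer excluded.

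The paper's proof deals with this translation freedom differently: rather than balls centred on a fixed Liouville class, it exploits the freedom in Lemma~\ref{lem:local-shape} to realise as $I(U,\tau)$ essentially arbitrary small open sets $A$, and then chooses $A$ with a shape that is not preserved under reflection or rescaling even after an arbitrary translation. Balls are too symmetric for this purpose; the content of the proof is precisely that one can pick $A$ whose geometry is incompatible with $A\subset cA'+t$ for $A'$ only slightly larger and $t$ free, unless $c=1$.
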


\begin{proof}
That symplectic embeddings preserve shape is Proposition~\ref{pro:symp-shape}.
The proof of the converse follows along the same lines, with the exception that $\varphi$ rescales the shape of every subset by the factor $| c |$, and reflects it about the zero vector if $c < 0$, with translation vector $b = \tau ([\varphi^* \lambda_2 - \lambda_1])$.
By Lemma~\ref{lem:local-shape}, for any sufficiently small numbers $a_i > 0$ and any sufficiently small open and connected neighborhood $A$ of $a = (a_1, \ldots, a_n)$ in $\R^n$, there exists a subset $U \subset W_1$ and a homomorphism $\tau$ such that $I (U, \tau) = A$.
Choosing $A$ to be a subset of $\R^n$ that is not symmetric about the origin (up to the above translation) shows that $c > 0$, and taking $A$ so that it is not rescaling invariant (e.g.\ a sufficiently small ball so that the position vector of its center is not parallel to $b$) implies that $c = 1$.
\end{proof}

\begin{rmk} \label{rmk:distinguish-anti-symp}
By the proposition, in contrast to symplectic capacities, the shape invariant is able to distinguish between symplectic and anti-symplectic embeddings.
The underlying key to this fact is that while the diffeomorphism $q \mapsto -q$ of the torus causes a reflection of the shape about the origin in $\R^n$, it also reverses the sign of the homomorphism $\tau$, that is, $I (W, \tau) = - I (W, - \tau)$ (cf.\ Proposition~\ref{pro:L-diffeos}). \qed
\end{rmk}

\begin{rmk}
It is crucial to note that the definition requires an embedding $\varphi$ to preserve the shape of open subsets of its domain and not just the shape of the domain itself, and likewise for open subsets of its target.
In the case $n =1$, this amounts to the difference between being area preserving and merely preserving total area (Example~\ref{exa:plane}).
In general, the induced homomorphism $\iota^*$ on the first cohomology groups, and therefore also the subgroup of possible translations of the shape, both depend on these choices.
As seen above, a small tubular neighborhood $N$ of a (small) Lagrangian torus contains a lot of useful information.
On the other hand, in a contractible Darboux neighborhood $U$, the induced homomorphism $\iota^*$ is trivial, and the shape $I (U, \iota^*)$ contains too many $\lambda$-periods of embedded Lagrangian tori $L$ with $\pi_2 (N, L) \not= 0$, see Lemmas~\ref{lem:shape-non-empty} and \ref{lem:irrational}.
In practice, $U$ will often be a tubular neighborhood of the image $L$ of an embedded Lagrangian torus in $W_1$, $V$ will be a tubular neighborhood of $\varphi (L)$ in $W_2$, and $\iota^*$ will be an isomorphism (in fact, the identity with respect to the usual identifications). \qed
\end{rmk}

One of the key ingredients in the proof of Corollary~\ref{cor:rig-symp-emb} is the following continuity property of the shape invariants.

\begin{pro} \label{pro:continuous-symp-shape}
Let $(W_1, \omega_1)$ and $(W_2, \omega_2)$ be exact symplectic manifolds of the same dimension.
Suppose that $\varphi_k \colon W_1 \to W_2$ is a sequence of embeddings that converges uniformly on compact subsets to an embedding $\varphi \colon W_1 \to W_2$, and that $\varphi_k$ preserves the shape invariants for every $k$.
Then $\varphi$ preserves shape.
\end{pro}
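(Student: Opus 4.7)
The plan is to reduce the problem to comparing the induced maps on first cohomology for large $k$. Fix an open set $U \subset W_1$ with compact closure, an open set $V \subset W_2$ with $\varphi(\overline{U}) \subset V$, a closed connected $n$-manifold $L$, and a homomorphism $\tau \colon H^1(U, \R) \to H^1(L, \R)$; I need to show $I(U, L, \tau) \subset I(V, L, \tau \circ \varphi^*)$ up to translation. Because $\varphi(\overline{U})$ is compact inside the open set $V$, one can choose an intermediate open set $V_0$ with $\varphi(\overline{U}) \subset V_0$ and $\overline{V_0}$ compact in $V$. Uniform convergence on the compact set $\overline{U}$ then forces $\varphi_k(\overline{U}) \subset V_0 \subset V$ for all sufficiently large $k$, so applying the shape-preserving hypothesis for $\varphi_k$ to the pair $(U, V)$ yields $I(U, L, \tau) \subset I(V, L, \tau \circ \varphi_k^*)$ (up to translation) for all such $k$.

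The key step is then to show that $\varphi_k^* = \varphi^*$ as homomorphisms $H^1(V, \R) \to H^1(U, \R)$ once $k$ is large enough. For this I would fix an auxiliary Riemannian metric on $W_2$ and let $\epsilon > 0$ be a lower bound for the injectivity radius on $\overline{V_0}$, shrunk if necessary so that every geodesic ball of radius $\epsilon$ centered at a point of $\overline{V_0}$ is contained in $V$. Uniform convergence gives $d_{W_2}(\varphi_k(x), \varphi(x)) < \epsilon$ for all $x \in \overline{U}$ once $k$ is large, and the standard geodesic straight-line homotopy
\[ H_t(x) = \exp_{\varphi(x)} \! \bigl( t \, \exp_{\varphi(x)}^{-1}(\varphi_k(x)) \bigr), \quad t \in [0,1], \]
is then a well-defined continuous map $\overline{U} \times [0,1] \to V$ connecting $\varphi|_{\overline{U}}$ to $\varphi_k|_{\overline{U}}$. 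Homotopic maps induce the same homomorphism on cohomology, so $\varphi_k^* = \varphi^*$ on $H^1(V,\R)$.

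Combining the two steps, for all sufficiently large $k$ one has $I(U, L, \tau) \subset I(V, L, \tau \circ \varphi_k^*) = I(V, L, \tau \circ \varphi^*)$ up to translation, the equality holding because the two homomorphisms $\tau \circ \varphi_k^*$ and $\tau \circ \varphi^*$ literally agree. Since $U$, $V$, $L$, and $\tau$ were arbitrary, this is exactly the statement that $\varphi$ preserves shape in the sense of Definition~\ref{dfn:symp-shape-preserving}.

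I expect the main obstacle to be the cohomological equality $\varphi_k^* = \varphi^*$. The underlying fact that $C^0$-close maps of compact sets into a manifold are homotopic is standard, but it must be executed carefully so that the homotopy stays inside the prescribed target $V$, not merely inside $W_2$; this is precisely why the compactness assumption on $\overline{U}$ (and the resulting availability of an intermediate open set $V_0$ with $\overline{V_0}$ compact in $V$) enters the definition of shape-preserving in the first place, as anticipated in the remark following Definition~\ref{dfn:symp-shape-preserving}.
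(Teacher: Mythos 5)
Your proposal is correct and follows essentially the same route as the paper's proof: uniform convergence on the compact set $\overline{U}$ forces $\varphi_k(\overline{U}) \subset V$ for large $k$, the hypothesis gives $I(U, L, \tau) \subset I(V, L, \tau \circ \varphi_k^*)$, and then $\tau \circ \varphi_k^* = \tau \circ \varphi^*$ because $\varphi_k$ is homotopic to $\varphi$ for large $k$. The paper simply asserts this last homotopy (noting it is in fact an isotopy), whereas you supply the explicit geodesic construction keeping the homotopy inside $V$ --- a worthwhile but not essentially different elaboration.
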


\begin{proof}
Let $U \subset W_1$ and $V \subset W_2$ be open subsets such that $\overline{U} \subset W_1$ is compact and $\varphi (\overline{U}) \subset V$.
Since $\varphi_k$ converges to $\varphi$ uniformly on compact subsets, the image $\varphi_k (\overline{U})$ is contained in $V$ for $k$ sufficiently large.
By hypothesis, $I (U, L, \tau) \subset I (V, L, \tau \circ \varphi_k^*)$, and the latter equals $I (V, L, \tau \circ \varphi^*)$ for large $k$, since then $\varphi_k$ is homotopic (in fact, isotopic) to $\varphi$.
\end{proof}

\begin{rmk} \label{rmk:generalized-shape}
There are other meaningful shape invariants one can define by further restricting the homotopy type of the Lagrangian embeddings that contribute to the shape.
It is sometimes useful to consider only Lagrangian embeddings that induce prescribed homomorphisms on the first and second homotopy groups, or are (weakly) homotopic to a given map, cf.\ Remark~\ref{rmk:arnold-conjecture} and Section~\ref{sec:lagrangians} below.
All of the results in this paper continue to hold for such shape invariants, provided only that the additional assumptions on the Lagrangian embedding depend only on its homotopy type (see for instance the proofs of Theorem~\ref{thm:torus-shape}, Proposition~\ref{pro:symp-shape}, and Proposition~\ref{pro:continuous-symp-shape} in this section).
The shape in Definition~\ref{dfn:symp-shape} we are working with in this paper is sufficient for our purposes, and in order to streamline the exposition of this paper as much as possible, we do not formally define these other shape invariants or restate the corresponding results in this more general context. \qed
\end{rmk}

\section{$C^0$-characterization of symplectic embeddings} \label{sec:symp}
In this section we give proofs of the results concerning symplectic embeddings and diffeomorphisms that are stated in section~\ref{sec:intro}.

\begin{rmk} \label{rmk:dim}
For most of this section, we need to assume that $\dim W > 2$; this dimensional restriction is due to the fact that in dimension two there are ``too many'' Lagrangian submanifolds (namely, every one-dimensional submanifold is automatically Lagrangian).
If $\dim W = 2$, then a symplectic form is just an area form, and an embedding is symplectic if and only if it is area preserving.
Thus in dimension two Theorem~\ref{thm:symp-shape-preserving} holds by Example~\ref{exa:plane} (use Darboux's Theorem as in the proof given below for higher dimensions).
Corollaries~\ref{cor:rig-symp-emb} and \ref{cor:rig-symp-diff} are well-known (and easy to prove) for surfaces. \qed
\end{rmk}

An $n$-dimensional submanifold $L$ of $W$ is Lagrangian if the restriction $\omega |_L = 0$; the image of a Lagrangian embedding is of course a Lagrangian submanifold.
A proof of Theorem~\ref{thm:symp-shape-preserving} cannot possibly work without the next lemma.
It says in essence that by Darboux's Theorem, Lagrangian submanifolds (in fact, embeddings) are abundant enough to distinguish (conformally) symplectic embeddings.
The proof uses nothing other than symplectic linear algebra.

\begin{lem} \label{lem:lagrangian}
Let $(W_1, \omega_1)$ and $(W_2, \omega_2)$ be two symplectic manifolds of the same dimension.
An embedding $\varphi \colon W_1 \to W_2$ is conformally symplectic if and only if it preserves Lagrangian submanifolds.
The latter means that the image $\varphi (L)$ is a Lagrangian submanifold whenever $L$ is Lagrangian.
The same statement holds if one restricts to embedded Lagrangian tori that are contained in elements of any given open cover of $W_1$.
\end{lem}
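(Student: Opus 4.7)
The forward direction is immediate: if $\varphi^* \omega_2 = c \, \omega_1$ and $L \subset W_1$ is Lagrangian, then for tangent vectors $u, v$ to $L$ one has $\omega_2 (d\varphi (u), d\varphi (v)) = c \, \omega_1 (u, v) = 0$, so $\varphi (L)$ is Lagrangian.

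For the converse, my plan is to reduce to pointwise symplectic linear algebra, assuming $\dim W > 2$ as permitted by Remark~\ref{rmk:dim}. At each $p \in W_1$, the differential $d\varphi_p$ is a linear isomorphism, and I first want to show it maps every Lagrangian subspace $V \subset T_p W_1$ to a Lagrangian subspace. Given $V$, Darboux's Theorem provides coordinates $(x_i, y_i)$ centered at $p$ in which $\omega_1$ is standard and $V = \mathrm{span} (\partial_{x_i})$. The product of small circles in the $(x_i, y_i)$-planes, each passing through the origin with tangent $\partial_{x_i}$, is then a Lagrangian torus through $p$ with tangent space $V$, and shrinking the circles places it inside any prescribed neighborhood of $p$. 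This simultaneously handles the strengthening to embedded Lagrangian tori subordinate to a given open cover.

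The main step is then the linear-algebraic claim that two nondegenerate skew-symmetric forms $\omega, \omega'$ on a $2 n$-dimensional vector space that share all Lagrangian subspaces are proportional. I plan to fix a Darboux basis $e_1, \ldots, e_n, f_1, \ldots, f_n$ for $\omega$. The coordinate Lagrangians $\mathrm{span} (e_i)$ and $\mathrm{span} (f_i)$ force $\omega' (e_i, e_j) = \omega' (f_i, f_j) = 0$; the mixed Lagrangians $\mathrm{span} (f_k, e_i : i \ne k)$ force $\omega' (e_i, f_k) = 0$ for $i \ne k$; and the twisted Lagrangian $\mathrm{span} (e_1 + f_2, e_2 + f_1, e_3, \ldots, e_n)$ forces $\omega' (e_1, f_1) = \omega' (e_2, f_2)$. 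Iterating across all index pairs shows the diagonal entries $\omega' (e_i, f_i)$ are all equal to a single constant $c$, whence $\omega' = c \, \omega$, with $c \ne 0$ by nondegeneracy. I expect this linear-algebra lemma to be the main obstacle: the coordinate Lagrangians alone determine the vanishing entries but not the overall scaling, and one must invoke the twisted Lagrangians — a step that is easy to overlook — to pin down that a single constant works throughout.

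Combined, these two ingredients yield $\varphi^* \omega_2 = c \cdot \omega_1$ for a smooth function $c \colon W_1 \to \R \setminus \{ 0 \}$. To promote $c$ to a constant I apply $d$, obtaining $0 = \varphi^* (d\omega_2) = d (c \, \omega_1) = dc \wedge \omega_1$. A one-line Darboux-basis computation shows that $\alpha \mapsto \alpha \wedge \omega_1$ is injective on $1$-forms as soon as $2 n \ge 4$, so $dc = 0$, and hence $c$ is constant on the connected manifold $W_1$.
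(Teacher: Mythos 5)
Your proposal is correct and follows essentially the same route as the paper: the same construction of Lagrangian tori through a point tangent to a prescribed Lagrangian subspace via products of small circles in Darboux coordinates, the same test subspaces (including the ``twisted'' ones that equate the diagonal entries $\omega'(e_i,f_i)$, which correspond to the paper's choice $v_1 = \partial/\partial x_i + \partial/\partial x_j$, $v_2 = \partial/\partial y_i - \partial/\partial y_j$), and the same final step deducing $dc = 0$ from closedness of $\varphi^*\omega_2$ in dimension greater than two. The only difference is organizational: you isolate the pointwise statement as a standalone linear-algebra lemma about skew forms sharing Lagrangian subspaces, whereas the paper works directly with the coefficient functions $f_{ij}, g_{ij}, h_{ij}$.
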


\begin{proof}
The fact that a conformally symplectic embedding preserves Lagrangian submanifolds is obvious; we will prove the converse.

Let $x \in W$.
By Darboux's Theorem, we may assume that $x$ is the origin in $\R^{2 n}$, and that $\omega_1 = \omega_0 = \sum_{i = 1}^n dx_i \wedge dy_i$.
In these local coordinates,
	\[ \varphi^* \omega_2 = \sum_{i, j = 1}^n \left( f_{i j} \, dx_i \wedge dx_j + g_{i j} \, dx_i \wedge dy_j + h_{i j} \, dy_i \wedge dy_j \right). \]
Any two vectors $v_1$ and $v_2$ that lie in an isotropic subspace of $\R^{2 n}$ can be extended to a basis $\{ v_1, \ldots, v_n \}$ of a Lagrangian subspace, which in turn can be extended to a symplectic basis $\{ v_1, \ldots, v_n, w_1, \ldots, w_n \}$ of $\R^{2 n}$.
Let $S_i$ be a circle in the linear (symplectic) subspace spanned by $v_i$ and $w_i$ that is tangent to $v_i$ at the origin.
Then the product $S_1 \times \ldots \times S_n$ is an embedded Lagrangian torus.
If $v_1 = \partial / \partial x_i$ and $v_2 = \partial / \partial x_j$, then by hypothesis
	\[ 0 = \omega_2 \left( d\varphi (v_1), d\varphi (v_2) \right) = \varphi^* \omega_2 \, (v_1, v_2) = f_{i j} (0), \]
i.e.\ the smooth function $f_{i j}$ vanishes at the origin.
Similarly, with $v_1 = \partial / \partial x_i$ and $v_2 = \partial / \partial y_j$, we obtain $g_{i j} (0) = 0$ for $i \not= j$, and the choice $v_1 = \partial / \partial y_i$ and $v_2 = \partial / \partial y_j$ yields $h_{i j} (0) = 0$.
Moreover, if we let $v_1 = \partial / \partial x_i + \partial / \partial x_j$ and $v_2 = \partial / \partial y_i - \partial / \partial y_j$, we see that $g_{i i} (0) = g_{j j} (0)$.

Since $x \in W$ was arbitrary, we have proved that $\varphi^* \omega = g \, \omega$ for a smooth function $g$ on $W$.
Since $\varphi^* \omega$ is closed and $\dim W_1 > 2$, the function $g$ must be constant.
\end{proof}

From the above proof of the lemma, we see that in one direction the following stronger statement holds.
An $n$-dimensional submanifold $L$ is non-Lagrangian if at least one tangent space $T_x L$ is not a Lagrangian subspace of $T_x M$.

\begin{lem} \label{lem:non-Lagrangian}
Let $(W_1, \omega_1)$ and $(W_2, \omega_2)$ be two symplectic manifolds of the same dimension.
Suppose that $\varphi \colon W_1 \to W_2$ is an embedding that is not conformally symplectic at a point $x \in W_1$, and let $U$ be a neighborhood of $x$.
Then there exists a Lagrangian embedding $\iota \colon T^n \hookrightarrow W_1$ through $x$ whose image $L$ is contained in $U$, and so that $\varphi (L)$ is non-Lagrangian (at the point $\varphi (x)$).
If $\lambda$ is a one-form on $W_1$ with $d\lambda = \omega_1$, and $a = (a_1, \ldots, a_n) \in H^1 (T^n, \R) = \R^n$ with $a_i > 0$ sufficiently small, then we may in addition assume that $[\iota^* \lambda] = a$.
In particular, we may assume that $\iota$ is a rational Lagrangian embedding.
\end{lem}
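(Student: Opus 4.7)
The plan is to extract the Lagrangian torus from the symplectic linear algebra in the proof of Lemma~\ref{lem:lagrangian} and to control its $\lambda$-period on top.

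First I would reduce to a Darboux chart. Since the conclusion is local near $x$, I shrink $U$ to a contractible Darboux neighborhood identified with a ball in $(\R^{2n}, \omega_0)$ centered at the origin (corresponding to $x$), on which $\omega_1 = \omega_0$ has the tautological primitive $\lambda_0 = \sum x_i\,dy_i$. The hypothesis that $\varphi$ is not conformally symplectic at $x$ means $\varphi^*\omega_2|_x$ is not a scalar multiple of $\omega_0|_x$. Reading the symmetric bilinear form argument in the proof of Lemma~\ref{lem:lagrangian} in reverse, this produces a Lagrangian subspace $V_x \subset T_x\R^{2n}$ and vectors $v_1, v_2 \in V_x$ with $\omega_2(d\varphi_x(v_1), d\varphi_x(v_2)) \neq 0$; equivalently, $d\varphi_x(V_x)$ fails to be a Lagrangian subspace of $T_{\varphi(x)}W_2$.

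Next I would extend a basis $v_1, \ldots, v_n$ of $V_x$ to a symplectic basis $v_1, \ldots, v_n, w_1, \ldots, w_n$ and repeat the construction from the proof of Lemma~\ref{lem:lagrangian}: in each symplectic $2$-plane spanned by $v_i$ and $w_i$, take a small circle $S_i(r_i)$ tangent to $v_i$ at the origin, and set $L = S_1(r_1) \times \cdots \times S_n(r_n)$. Then $L$ passes through $x$, is Lagrangian, has $T_xL = V_x$, and lies in $U$ provided the $r_i$ are small. Its $\lambda_0$-period equals $(\pi r_1^2, \ldots, \pi r_n^2)$ up to signs dictated by how the circles are oriented; choosing radii appropriately (and flipping $v_i \mapsto -v_i$ when needed, which does not change $V_x$), one realizes any prescribed $a = (a_1, \ldots, a_n)$ with positive coordinates sufficiently small that $L \subset U$.

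To pass from $\lambda_0$ to the given primitive $\lambda$, observe that $\theta := \lambda - \lambda_0$ is closed on the contractible Darboux chart, hence $\theta = df$, so $\iota^*\theta = d(f \circ \iota)$ is exact and $[\iota^*\lambda] = [\iota^*\lambda_0] = a$. Rationality is then immediate: choosing the $a_i$ to be small positive rationals makes $a(H_1(T^n, \Z)) \subset \Q$ a discrete subgroup. The only point requiring care is realizing the prescribed Lagrangian direction $V_x$ and the prescribed period $a$ simultaneously, but these are governed by independent pieces of data (the choice of symplectic basis $v_i, w_i$ controls $T_xL = V_x$ while the radii $r_i$ control $[\iota^*\lambda]$), so this is no genuine obstacle beyond the linear algebra already imported from Lemma~\ref{lem:lagrangian}.
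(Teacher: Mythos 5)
Your proposal is correct and follows essentially the paper's own route: the paper derives Lemma~\ref{lem:non-Lagrangian} directly from the symplectic-linear-algebra construction in the proof of Lemma~\ref{lem:lagrangian} (taking the contrapositive to get an isotropic $2$-plane on which $\varphi^*\omega_2$ does not vanish, extending to a Lagrangian subspace, and building the product-of-circles torus tangent to it), with the period $(\pi r_1^2,\ldots,\pi r_n^2)$ computed exactly as for the split tori in Lemma~\ref{lem:shape-non-empty}. Your extra bookkeeping (exactness of $\lambda-\lambda_0$ on the Darboux chart, adjusting radii and orientations, choosing rational $a_i$) is the intended content of the final two sentences of the statement.
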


\begin{rmk}
A similar argument applies to two symplectic structures on the same smooth manifold.
That is, two symplectic structures $\omega$ and $\omega'$ on a smooth manifold $W$ are conformally equivalent, i.e.\ there exists a (necessarily non-zero) constant $c$ such that $\omega' = c \, \omega$, if and only if every Lagrangian submanifold with respect to $\omega$ is also a Lagrangian submanifold with respect to $\omega'$.
The same statement holds if one restricts to embedded Lagrangian tori that are contained in elements of any given open cover of $W$.
The proof is essentially the same as the one for Lemma~\ref{lem:lagrangian}, and thus is omitted.
It is also a direct consequence of Lemma~\ref{lem:lagrangian} by considering overlapping Darboux coordinate charts of the two symplectic forms. \qed
\end{rmk}

To complete the proof of Theorem~\ref{thm:symp-shape-preserving}, we need a result of F.~Laudenbach and Sikorav on immediate displacement of non-Lagrangian submanifolds \cite{laudenbach:hdl94}.
It is needed in the proof of Theorem~\ref{thm:symp-shape-preserving} only for embeddings of half-dimensional tori.

\begin{thm}[\cite{laudenbach:hdl94}] \label{thm:displacement}
Let $(W, \omega)$ be a symplectic manifold of dimension $2 n$, and $L$ be a closed and connected submanifold of dimension $n$.
Assume that $L$ is non-Lagrangian, and that the normal bundle of $L$ in $W$ has a non-vanishing section.
Then there exists a Hamiltonian vector field $X_F$ on $W$ that is nowhere tangent to $L$.
In particular, $L$ can be disjoined from itself by a Hamiltonian diffeomorphism.
\end{thm}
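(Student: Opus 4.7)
The plan is to recast ``$X_F$ nowhere tangent to $L$'' as a condition on a single section of a vector bundle over $L$, and then separate how the two hypotheses enter. The central pointwise observation is that, since $(T_xL^\omega)^\omega = T_xL$, we have $X_F(x) \in T_xL$ if and only if $dF_x$ annihilates $T_xL^\omega$; equivalently, the symplectic form induces a canonical bundle isomorphism
\[
\Phi \colon NL \xrightarrow{\ \sim\ } (TL^\omega)^{*}, \qquad [v] \mapsto \omega(v,\cdot)|_{TL^\omega},
\]
along $L$, under which the ``normal trace'' $[X_F|_L]$ of a Hamiltonian vector field corresponds to the section $dF|_{TL^\omega}$ of $(TL^\omega)^*$. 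Our task therefore reduces to producing $F \in C^\infty(W)$ such that $dF|_{TL^\omega}$ is a nowhere vanishing section of $(TL^\omega)^*$ over $L$.

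Encoding the non-vanishing normal section. The hypothesis on $NL$ gives a target: set $\sigma := \Phi(s) \in \Gamma((TL^\omega)^*)$, which is nowhere vanishing. I would aim to realize $\sigma$ as $dF|_{TL^\omega}$. Extending $\sigma$ to a section $\tilde\sigma$ of $T^*W|_L$, a smooth function $F$ on $W$ with $dF|_L = \tilde\sigma$ exists precisely when the tangential part $\tilde\sigma|_{TL}$ is exact on $L$; then $F|_L$ is taken to be a primitive, and $F$ is extended off $L$ with the prescribed first-order normal derivative via a tubular neighborhood. Two extensions of $\sigma$ differ by a section of the annihilator $(TL^\omega)^{\circ} \subset T^*W|_L$, which under $\omega$ is identified with $TL$; replacing $\tilde\sigma$ by $\tilde\sigma + \iota_Y\omega$ for $Y \in \Gamma(TL)$ shifts $\tilde\sigma|_{TL}$ by $\iota_Y\omega|_{TL}$.

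The main obstacle and the role of non-Lagrangianness. The core difficulty is to kill the class $[\tilde\sigma|_{TL}] \in H^1(L;\R)$ by such corrections. At any Lagrangian point of $L$, $TL \subset TL^\omega$, so $\iota_Y\omega|_{TL}$ vanishes and no correction can be made there; but by the non-Lagrangian hypothesis the non-Lagrangian locus $U \subset L$ is a non-empty open subset, and on $U$ the map $Y \mapsto \iota_Y\omega|_{TL}$ has positive pointwise rank. This is the crux: one must represent every class in $H^1(L;\R)$ by a form of the type $\iota_Y\omega|_{TL}$ with $Y$ supported in $U$. When $U$ is topologically small this is delicate, and most likely one must simultaneously exploit the freedom to rescale $\sigma$ by a positive function, or to replace $s$ within its homotopy class of nowhere-vanishing sections of $NL$, in order to bring $[\tilde\sigma|_{TL}]$ into the image of the correction map. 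I expect this cohomological bookkeeping to be the main obstacle.

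Once $\tilde\sigma|_{TL}$ has been made exact and the corresponding $F$ constructed, I would multiply $F$ by a compactly supported cutoff (preserving the property that $dF|_{TL^\omega}$ is nowhere zero on $L$) to obtain a complete Hamiltonian flow $\phi_{X_F}^t$ on $W$. By construction $X_F$ is nowhere tangent to $L$, and compactness of $L$ then yields the final clause: $\phi_{X_F}^t(L) \cap L = \emptyset$ for all sufficiently small $t > 0$, so $L$ is disjoined from itself by the Hamiltonian diffeomorphism $\phi_{X_F}^t$.
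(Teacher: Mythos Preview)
Your opening reduction is correct and essentially matches the paper: the identification $NL \cong TL^\omega$ via $\omega$ (your $\Phi$) and the observation that $X_F$ is transverse to $L$ exactly when $dF|_{TL^\omega}$ is nowhere zero are the right starting points.

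The divergence comes next, and there is a genuine gap in your plan. You attempt to \emph{prescribe} $dF|_{TL^\omega}$ to equal a chosen nowhere-vanishing section $\sigma$, and this forces the tangential restriction $\tilde\sigma|_{TL}$ to be exact. The obstruction you isolate is real: the available corrections are $\iota_Y\omega|_{TL}$ with $Y$ tangent to $L$, and these vanish identically on the Lagrangian locus. If the non-Lagrangian set $U\subset L$ is, say, a small ball while $H^1(L;\R)\neq 0$, no closed $1$-form supported in $U$ can represent a nontrivial class, so your correction map misses most of $H^1(L;\R)$. The extra freedoms you list (positive rescaling of $\sigma$, homotopy of $s$) do not obviously fix this, and I do not see how to close the argument along these lines in general. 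In effect you are solving a first-order PDE for $F$ along $L$ with a fully prescribed conormal symbol, which is more than the theorem requires.

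The paper's route (following Laudenbach--Sikorav) avoids this by asking for much less: rather than realizing a specific $\sigma$, one seeks a non-vanishing section $X$ of $TL^\omega$ and an $F$ with the single scalar inequality $dF(X)>0$ along $L$. Since $dF(X)=\omega(X_F,X)$ and $X\in TL^\omega$, this already forces $X_F\notin TL$. The existence of such an $F$ is equivalent to a dynamical condition on $X$: some neighborhood of $L$ has no compact subset invariant under the flow of $X$. The heart of the argument is then to \emph{modify} $X$ within non-vanishing sections of $TL^\omega$ to achieve this; the non-Lagrangian hypothesis enters precisely here, because at a non-Lagrangian point $TL^\omega\cap TL\subsetneq TL^\omega$, so $X$ can be chosen transverse to $L$ somewhere, which is what allows one to destroy compact invariant sets. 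This is the step the paper defers to \cite{laudenbach:hdl94}, and it replaces your cohomological obstacle with a (solvable) dynamical one.
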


For later reference in section~\ref{sec:contact}, we provide a brief sketch of the proof. 

\begin{proof}[Sketch of proof]
Denote by $E = TL^\perp$ the symplectic orthogonal complement of $TL$, which is isomorphic to the normal bundle of $L$ in $W$.
By hypothesis, the $n$-dimensional bundle $E$ thus has a non-vanishing section.
Laudenbach and Sikorav modify such a given section to a non-vanishing section $X$ of $E$ such that there exists a neighborhood of $L$ without compact subset that is invariant by the flow of $X$.
The latter is equivalent to the existence of a smooth function $F$ defined near $L$ such that $d F (X) > 0$.
Of course $dF (X) = \omega (X_F, X)$, so that its non-vanishing combined with the fact that $X$ is a section of $E = TL^\perp$ implies that $X_F$ is nowhere tangent to $L$.
We refer to the short paper \cite{laudenbach:hdl94} for details.
\end{proof}

\begin{rmk}[\cite{laudenbach:hdl94}]
The conclusion of the preceding theorem is actually quite a bit stronger than just the fact that the submanifold $L$ can be displaced from itself by a Hamiltonian diffeomorphism.
If $U$ and $V$ are arbitrary neighborhoods of $L$ with $\overline{U} \subset V$, $\eta$ is a cut-off function with $\eta = 1$ on $U$ that vanishes outside $V$, and $\epsilon > 0$, then the Hamiltonian vector field of the function $\epsilon \eta F$ is also nowhere tangent to $L$.
That is, the manifold $L$ can be displaced from itself by a $C^\infty$-small Hamiltonian isotopy with support in an arbitrarily small neighborhood of $L$.
In particular, the displacement energy of $L$ is zero.
It follows from compactness of $L$ that given such a Hamiltonian isotopy, a sufficiently small neighborhood of $L$ is also displaced from itself by the same Hamiltonian isotopy. \qed
\end{rmk}

As a consequence, Laudenbach and Sikorav prove the following rigidity theorem for (embedded) Lagrangian submanifolds.

\begin{thm}[\cite{laudenbach:hdl94}] \label{thm:rig-lag}
An (embedded) closed $n$-dimensional submanifold of $(\R^{2 n}, \omega_0)$ that is the uniform limit of (embedded) Lagrangian submanifolds is itself Lagrangian.
\end{thm}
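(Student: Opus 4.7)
The plan is to argue by contradiction, combining the Laudenbach--Sikorav immediate displacement theorem (Theorem~\ref{thm:displacement}) with the theorem, proved via $J$-holomorphic curve methods (Chekanov), that every closed Lagrangian submanifold of $(\R^{2n},\omega_0)$ has strictly positive displacement energy. Suppose for contradiction that the limit $L$ is a closed smooth $n$-dimensional submanifold which is itself non-Lagrangian.

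Before invoking Theorem~\ref{thm:displacement}, I first verify its hypothesis on the existence of a non-vanishing normal section to $L$. For $k$ sufficiently large, Weinstein's Lagrangian neighborhood theorem identifies a neighborhood of $L_k$ in $\R^{2n}$ symplectomorphically with a neighborhood of the zero section in $T^*L_k$; since $L_k\to L$ uniformly, $L$ is contained in this neighborhood for large $k$. The vertical vector field on $T^*L_k$ then restricts to a vector field along $L$ which is nowhere tangent to $L$ (the fiber direction remains transverse to any sufficiently $C^0$-close perturbation of the zero section), providing the required non-vanishing normal section.

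With the hypothesis of Theorem~\ref{thm:displacement} verified, the remark immediately after that theorem supplies, for any prescribed $\epsilon>0$ and any prescribed small neighborhood $V$ of $L$, a Hamiltonian $H_\epsilon$ supported in $V$ of Hofer norm less than $\epsilon$ whose time-$1$ flow $\phi^1_{H_\epsilon}$ displaces an entire (possibly smaller) open neighborhood $V'$ of $L$ from itself. By uniform convergence $L_k\subset V'$ for all $k$ large enough, and hence $\phi^1_{H_\epsilon}(L_k)\cap L_k=\emptyset$ for all such $k$; that is, the closed Lagrangian $L_k$ is displaced by a Hamiltonian of arbitrarily small Hofer energy.

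The contradiction follows from the Chekanov positive displacement energy theorem: every closed Lagrangian submanifold of $(\R^{2n},\omega_0)$ has strictly positive displacement energy, and since all $L_k$ for large $k$ are eventually trapped in a fixed compact set, the Hofer--Zehnder or Gromov capacity of a ball containing them provides a uniform lower bound $e(L_k)\geq c>0$. Choosing $\epsilon<c$ at the outset then forces $\|H_\epsilon\|<e(L_k)$ for large $k$, contradicting the displacement of $L_k$; thus $L$ must be Lagrangian. The principal obstacle is the positive-displacement-energy input in the final step, which is the deepest ingredient and genuinely relies on holomorphic curve methods; the normal-bundle verification in the first step is more elementary but depends crucially on the $C^0$-proximity of $L$ to a Lagrangian $L_k$.
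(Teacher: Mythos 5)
Your overall architecture is the same as the paper's (and Laudenbach--Sikorav's): argue by contradiction, use Theorem~\ref{thm:displacement} and the remark following it to displace a whole neighborhood of the non-Lagrangian limit by a Hamiltonian isotopy of arbitrarily small Hofer norm, trap the Lagrangians $L_k$ in that neighborhood for large $k$, and contradict a lower bound on their displacement energy. But the final step, which you yourself identify as the deepest ingredient, contains a genuine gap. Chekanov's theorem gives $e(L_k)>0$ for each fixed $k$, with a lower bound of the form $e(L_k)\ge \hbar(L_k)$, where $\hbar(L_k)$ is the minimal area of a non-constant holomorphic disk with boundary on $L_k$; this quantity depends on $L_k$ and can perfectly well tend to $0$ along the sequence. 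Your claim that the Gromov or Hofer--Zehnder capacity of a fixed ball containing all the $L_k$ supplies a uniform lower bound $e(L_k)\ge c>0$ is false: a tiny split Lagrangian torus inside a large ball has displacement energy roughly the area of its smallest circle factor, which bears no relation to the capacity of the ambient ball. Without a $k$-independent lower bound the contradiction with $\|H_\epsilon\|<\epsilon$ evaporates. The actual content of the Laudenbach--Sikorav argument (sketched in the paper after the statement) is precisely to produce such a uniform bound: one applies the Lagrangian suspension construction to the displacing isotopy and shows that the areas of the resulting non-constant holomorphic disks are bounded below by a \emph{topological} invariant of a fixed tubular neighborhood $N_{\epsilon_0}$ of the limit $L$ (because the disk boundaries carry non-trivial first homology in $N_{\epsilon_0}$ and hence fixed $\lambda$-periods); this is also the mechanism behind Theorems~\ref{thm:rig-lag-hom-class} and~\ref{thm:rig-lag-period}. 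Some such argument must replace your capacity claim.

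A secondary flaw: your verification of the normal-bundle hypothesis does not work as stated. The fiber direction of $T^*L_k$ is \emph{not} transverse to an arbitrary $C^0$-close perturbation of the zero section; $C^0$-proximity gives no control on tangent planes (indeed the whole point is that $L$ is non-Lagrangian while the nearby $L_k$ are Lagrangian, so their tangent planes are far apart). So the vertical field may well be tangent to $L$ somewhere. The paper sidesteps this by assuming the non-vanishing section without loss of generality, noting that the case actually needed is $L=T^n$ (where the relevant normal bundle has vanishing Euler class) and referring to \cite{laudenbach:hdl94} for the general case.
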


\begin{rmk}
The preceding theorem is already enough to prove Corollary~\ref{cor:rig-symp-emb}:
by shrinking the domain of $\varphi$ if necessary, we may assume that its image is contained in a Darboux chart in $W$.
If $L$ is a Lagrangian submanifold of $B_r^{2 n}$ and $\varphi_k$ is a sequence of symplectic embeddings that converges uniformly on compact subsets to $\varphi$, then $\varphi (L)$ is Lagrangian by Theorem~\ref{thm:rig-lag}.
Then by Lemma~\ref{lem:lagrangian}, the embedding $\varphi$ is conformally symplectic, and since it must be volume preserving, $\varphi$ is either symplectic or anti-symplectic.
A standard argument using orientation if $n$ is odd or increasing dimension by $1$ shows that $\varphi$ must be symplectic \cite[Section~12.2]{mcduff:ist98}. \qed
\end{rmk}

We choose to follow a different argument that proves Corollary~\ref{cor:rig-symp-emb} as a genuine corollary to Theorem~\ref{thm:symp-shape-preserving}, since the latter also gives rise to a $C^0$-characterization of symplectic embeddings and diffeomorphisms.
In addition, the present approach avoids the cumbersome argument needed above to distinguish between symplectic and anti-symplectic and conformally symplectic embeddings.
Since it will be needed shortly, we again provide a brief sketch of the proof of the theorem.

\begin{rmk}
Unless explicitly stated otherwise, a tubular neighborhood $N$ of a submanifold $L$ shall mean a tubular neighborhood (with respect to a fixed auxiliary Riemannian metric) that is open and deformation retracts onto $L$.
The specific choice of Riemannian metric is unimportant.
A compact tubular neighborhood is a compact subset whose interior is an open tubular neighborhood $N$ as above. \qed
\end{rmk}

\begin{proof}[Sketch of proof of Theorem~\ref{thm:rig-lag}]
Let $\iota_k \colon L \hookrightarrow \R^{2 n}$ be a sequence of Lagrangian embeddings that converges uniformly to an embedding $\iota \colon L \hookrightarrow \R^{2 n}$, and suppose that the latter is non-Lagrangian.
For notational convenience, we identify $L$ with the image $\iota (L)$ in $\R^{2 n}$.
We may assume without loss of generality that the normal bundle of $L$ in $\R^{2 n}$ has a non-vanishing section (we only need the argument in the case $L = T^n$; see \cite{laudenbach:hdl94} for the general argument).
Let $N_\epsilon$ be a tubular neighborhood of $L$ in $\R^{2 n}$ and $X_F$ be a Hamiltonian vector field defined on a neighborhood of $N_\epsilon$, so that its Hamiltonian flow displaces $N_\epsilon$ and $\| F \| < \epsilon$; these exist by virtue of Theorem~\ref{thm:displacement}.
For $k$ sufficiently large, the image of the Lagrangian embedding $\iota_k$ is contained in this neighborhood $N_\epsilon$.

By a theorem of Gromov and Sikorav \cite{sikorav:qpp91}, and by the Lagrangian suspension construction on the (double of the) Hamiltonian isotopy $\{ \phi_F^t \circ \iota_k \}$, the displacement energy of $\iota_k$ is at least half the area of a non-constant holomorphic disk in $\R^{2 n + 2}$ with boundary on the Lagrangian suspension; see Chapters~3 and 4 of \cite{polterovich:ggs01}.
The proof then follows by showing that the areas of the holomorphic disks remain bounded from below by a topological invariant of the tubular neighborhood $N_{\epsilon_0}$ for a fixed parameter $\epsilon_0 > 0$.
Choosing $\epsilon < \epsilon_0$ sufficiently small yields a contradiction, and therefore $L$ has to be Lagrangian.
See \cite[page~165]{laudenbach:hdl94} for details.
\end{proof}

The hypothesis regarding uniform convergence in Theorem~\ref{thm:rig-lag} can be replaced by an assumption on the homotopy class of the Lagrangian embeddings.
We provide several versions of that result.
Either one of them can be used to prove Theorem~\ref{thm:symp-shape-preserving}.
An embedding $\iota \colon L \hookrightarrow W$ is non-Lagrangian if $\iota^* \omega \not= 0$, or equivalently, its image is a non-Lagrangian submanifold.

\begin{thm} \label{thm:rig-lag-hom-class}
Let $\iota \colon L \hookrightarrow (\R^{2 n}, \omega_0)$ be a non-Lagrangian embedding.
Then there exists a tubular neighborhood $N$ of $\iota (L)$ that admits no Lagrangian embedding $\jmath \colon L \hookrightarrow N$ so that the homomorphism $\jmath_* \colon H_1 (L, \R) \to H_1 (N, \R)$ is injective (i.e.\ an isomorphism).
In particular, the shape $I (N, L, \iota^*)$ is empty.
\end{thm}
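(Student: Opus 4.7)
The plan is to adapt the sketch of Theorem~\ref{thm:rig-lag} almost word for word, substituting the control of the relevant homotopy class by uniform convergence with the algebraic hypothesis on $\jmath_*$.

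First I would invoke Theorem~\ref{thm:displacement} together with the subsequent remark: since $\iota$ is non-Lagrangian and (as in \cite{laudenbach:hdl94}) one may arrange that the normal bundle of $\iota(L)$ in $\R^{2n}$ has a nowhere zero section, for every $\epsilon>0$ there exists a Hamiltonian $F_\epsilon$ with $\|F_\epsilon\|<\epsilon$ supported in an arbitrarily small tubular neighborhood $N_\epsilon$ of $\iota(L)$, whose time-one map displaces $N_\epsilon$. Fix a larger tubular neighborhood $N=N_{\epsilon_0}$, with $\epsilon_0>0$ to be pinned down at the end, and assume for contradiction that a Lagrangian embedding $\jmath\colon L\hookrightarrow N$ with $\jmath_*\colon H_1(L,\R)\to H_1(N,\R)$ injective exists. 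Since $N$ deformation retracts onto $\iota(L)\cong L$, the two groups have equal finite dimension, so $\jmath_*$ is automatically an isomorphism.

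Second, I would shrink $\epsilon<\epsilon_0$ so that $\jmath(L)\subset N_\epsilon$, apply the doubled Lagrangian suspension construction to the isotopy $\{\phi_{F_\epsilon}^t\circ\jmath\}_{t\in[0,1]}$ to obtain a closed Lagrangian embedding $\tilde\jmath\colon L\times S^1\hookrightarrow(\R^{2n+2},\omega_0)$, and then appeal to Gromov--Sikorav (Theorem~\ref{thm:sikorav}, together with \cite[Ch.~3--4]{polterovich:ggs01}). Since $\phi_{F_\epsilon}^1$ displaces $\jmath(L)$, the theorem yields a non-constant $J$-holomorphic disk $D\subset\R^{2n+2}$ with $\partial D\subset\tilde\jmath(L\times S^1)$ and $\operatorname{area}(D)\leq 2\|F_\epsilon\|<2\epsilon$.

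The hard part will be the third step: producing a strictly positive lower bound on $\operatorname{area}(D)$ that depends only on the fixed neighborhood $N$ and on $\iota$, and in particular is independent of $\epsilon$ and of the choice of $\jmath$. The area of $D$ equals the $\lambda$-period of $\partial D$ along some homology class $(\sigma,k)\in H_1(L,\Z)\oplus\Z$, which splits as the $\jmath^*\lambda$-period of $\sigma$ plus a Hamiltonian contribution of size at most $|k|\,\|F_\epsilon\|$, so the issue is to bound the former from below. The hypothesis that $\jmath_*$ is an isomorphism is exactly what forces $\jmath(L)\subset N$ to be cobordant to $\iota(L)$ through a surface whose symplectic area is detected by the non-vanishing of $\iota^*\omega_0$; this is the topological input that replaces the uniform-convergence control in the argument of \cite[p.~165]{laudenbach:hdl94}, which otherwise applies verbatim. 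Once a positive invariant $c(N,\iota)>0$ bounding $\operatorname{area}(D)$ from below is in hand, choosing $\epsilon<\tfrac12\,c(N,\iota)$ contradicts the upper bound of the previous step, completing the argument.

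The final assertion is immediate: any Lagrangian embedding contributing to $I(N,L,\iota^*)$ satisfies $\jmath^*=\iota^*$ on $H^1$, which under the canonical identification $H^1(N,\R)\cong H^1(L,\R)$ is the identity; hence $\jmath^*$ (and dually $\jmath_*$ on $H_1$) is an isomorphism, and the first part applies to rule out the existence of such $\jmath$.
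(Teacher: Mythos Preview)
Your overall strategy matches the paper's: argue by contradiction, displace with arbitrarily small energy via Theorem~\ref{thm:displacement}, produce a non-constant holomorphic disk through the Lagrangian suspension, and contradict a fixed topological lower bound on its area coming from the Laudenbach--Sikorav argument. However, your quantifiers are set up incorrectly, and this is a genuine gap.

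You fix $N=N_{\epsilon_0}$ as the neighborhood asserted in the theorem, assume a Lagrangian $\jmath\colon L\hookrightarrow N_{\epsilon_0}$ exists, and then write ``shrink $\epsilon<\epsilon_0$ so that $\jmath(L)\subset N_\epsilon$.'' But the $N_\epsilon$ are nested with $N_\epsilon\subset N_{\epsilon_0}$ for $\epsilon<\epsilon_0$, so there is no reason $\jmath(L)$ lies in any smaller $N_\epsilon$; a fixed $\jmath$ only gives displacement energy bounded by $\epsilon_0$, not arbitrarily small. The contradiction you want requires the upper bound $2\epsilon$ to undercut the fixed lower bound $c$, which forces $\epsilon$ to be small \emph{before} $\jmath$ is chosen. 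The paper handles this correctly by taking a sequence of shrinking neighborhoods $N_k$ with displacement energies tending to zero and assuming, for contradiction, that \emph{each} $N_k$ contains such a Lagrangian $\jmath_k$; equivalently, one first fixes a reference neighborhood to compute the lower bound $c$, then takes the neighborhood $N$ in the theorem to be some $N_\epsilon$ with $\epsilon<c/2$.

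Your description of the third step is also off. The role of the injectivity of $\jmath_*$ is not to produce a cobordism whose area detects $\iota^*\omega_0\neq 0$; rather, as the paper states, it guarantees that the boundary of the holomorphic disk, which is non-trivial in $H_1(\jmath(L),\R)$, has non-trivial image in $H_1(N,\R)$. Once that is known, the Laudenbach--Sikorav lower bound (a topological invariant of the fixed outer tubular neighborhood) applies verbatim. The non-vanishing of $\iota^*\omega_0$ enters only at the very start, to make Theorem~\ref{thm:displacement} available.
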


\begin{proof}
The proof is almost verbatim the same as the one by Laudenbach and Sikorav that is sketched above.
Again arguing by contradiction, let $N = N_k$ be a sequence of shrinking tubular neighborhoods of $\iota (L)$ with displacement energies converging to zero, and suppose there exists a sequence of Lagrangian embeddings $\iota_k \colon L \hookrightarrow \R^{2 n}$ so that the image of $\iota_k$ is contained in $N_k$.
The fact that the holomorphic disks have non-trivial boundary in $H_1 (N, \R)$ follows in this case, using the same argument, from the assumption that $(\iota_k)_*$ is injective, and the proof that these areas are bounded from below independent of $k$ is verbatim the same.
\end{proof}

\begin{rmk} \label{rmk:arnold-conjecture}
In the proof of Theorem~\ref{thm:symp-shape-preserving} below, we only need Theorem~\ref{thm:rig-lag-hom-class} with $L = T^n$.
In this case there is a more direct proof that goes as follows.
Choose a compact tubular neighborhood $K$ and an open tubular neighborhood $N \subset K$ of $\iota (T^n)$ that is displaced by a Hamiltonian diffeomorphism whose Hamiltonian is compactly supported in $K$.
Suppose that $\jmath \colon T^n \hookrightarrow N$ is a Lagrangian embedding so that $\jmath_* \colon H_1 (L, \R) \to H_1 (N, \R)$ is injective.
Arguing as in Remark~\ref{rmk:torus-shape-hyp}, the latter implies that $\pi_2 (K, \jmath (T^n)) = 0$.
But by (a known case of) the Arnold conjecture (again see \cite[Section~11.3]{mcduff:ist98} or \cite[Theorem~9.2.14]{mcduff:hcs04}), $\jmath (T^n)$ is non-displaceable.
This contradiction shows that no such Lagrangian embedding can exist.

In fact, this argument extends to arbitrary (closed and connected) manifolds $L$ under additional hypotheses that guarantee that $\pi_2 (K, \jmath (L)) = 0$.
This can be achieved for instance by working with a different shape invariant that further restricts the homotopy type of the Lagrangian embedding, see Remark~\ref{rmk:generalized-shape}. \qed
\end{rmk}

\begin{thm} \label{thm:rig-lag-period}
Let $\iota \colon L \hookrightarrow (\R^{2 n}, \omega_0)$ be a non-Lagrangian embedding, $\lambda$ be a one-form on $\R^{2 n}$ with $d\lambda = \omega_0$, and $z \in H^1 (L, \R)$ be rational.
Then there exists a neighborhood $N$ of $\iota (L)$ that does not contain any Lagrangian embeddings $\jmath \colon L \hookrightarrow N$ such that $[\jmath^*\lambda] = z$.
That is, $z \notin I (N, \lambda, L, \tau)$ for any homomorphism $\tau$.
\end{thm}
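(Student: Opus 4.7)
The plan is to adapt the Laudenbach--Sikorav displacement/suspension argument sketched for Theorems~\ref{thm:rig-lag} and~\ref{thm:rig-lag-hom-class}, but to replace the topological lower bound on the areas of the auxiliary holomorphic disks --- which in those theorems came from injectivity of $(\iota_k)_*$ or from Arnold non-displaceability via $\pi_2$ --- by an arithmetic lower bound coming from the rationality of $z$ together with Theorem~\ref{thm:sikorav}.

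First I would argue by contradiction: suppose every neighborhood of $\iota(L)$ contains a Lagrangian embedding of $L$ with $\lambda$-period $z$. After the usual reduction to the case in which the normal bundle of $\iota(L)$ admits a non-vanishing section (as in the sketch of Theorem~\ref{thm:rig-lag}), Theorem~\ref{thm:displacement} and the remark following it yield a fixed compact tubular neighborhood $K$ of $\iota(L)$ and a shrinking sequence of open tubular neighborhoods $N_k \subset K$, each displaced by a compactly supported Hamiltonian $F_k$ with $\|F_k\| \to 0$. Then I would pick Lagrangian embeddings $\jmath_k \colon L \hookrightarrow N_k$ with $[\jmath_k^* \lambda] = z$ and note that their displacement energies $e_k \le \|F_k\|$ tend to zero.

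Next I would apply the Lagrangian suspension construction to the doubled Hamiltonian isotopy $\phi_{F_k}^t \ast (\phi_{F_k}^{1-t})$ acting on $\jmath_k(L)$, producing a closed embedded Lagrangian $\widetilde L_k \cong L \times S^1$ inside $(\R^{2n} \times T^* S^1, \omega_0 \oplus \omega_\can)$; compare Chapters~3--4 of~\cite{polterovich:ggs01}. Two quantitative properties are essential. First, doubling the isotopy makes its closed action functional vanish, so the $\lambda$-period group of $\widetilde L_k$ on $H_1(\widetilde L_k, \Z) = H_1(L, \Z) \oplus \Z$ coincides with $z(H_1(L, \Z))$ and has positive generator equal to the constant $\gamma > 0$ from the statement. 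Second, the $T^* S^1$-image of $\widetilde L_k$ is contained in an annulus of symplectic area controlled by a multiple of $e_k$; composing with a symplectic embedding of that annulus into a planar disk places $\widetilde L_k$ inside $B_{r_k}^2 \times \R^{2n}$ with $\pi r_k^2 \to 0$.

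Finally, applying Theorem~\ref{thm:sikorav} to $\widetilde L_k \hookrightarrow B_{r_k}^2 \times \R^{2n}$ --- whose $\lambda$-period group has positive generator $\gamma$ --- forces $\pi r_k^2 \ge \gamma$, contradicting $\pi r_k^2 \to 0$ for large $k$ and establishing the claim (note that no homomorphism $\tau$ appears in the argument, which is why the conclusion holds for every $\tau$). The main obstacle will be the quantitative bookkeeping on the Lagrangian suspension: verifying rigorously that the doubling cancels the $S^1$-contribution to the $\lambda$-period, and that the ambient suspension genuinely fits into a planar disk whose area is controlled by the Hofer norm of $F_k$. Both are standard but delicate inputs from Hofer geometry, and they are precisely the place where the rationality hypothesis on $z$ is converted, via Theorem~\ref{thm:sikorav}, into the uniform positive area lower bound that defeats the shrinking neighborhoods.
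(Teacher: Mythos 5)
Your proposal is correct and follows essentially the same route as the paper, which likewise runs the Laudenbach--Sikorav shrinking-neighborhood displacement argument with the doubled Lagrangian suspension and replaces the topological lower bound on the auxiliary disk areas by the arithmetic one coming from rationality of $z$; the one point the paper treats explicitly that you omit is the case $z = 0$, which must be excluded at the outset (via Gromov's theorem that $(\R^{2n}, \omega_0)$ admits no exact Lagrangian embeddings, Example~\ref{exa:no-exact}), since otherwise the positive generator $\gamma$ does not exist. The only other difference is cosmetic: the paper quotes the disk-area estimate directly (the area of the holomorphic disk equals $\int_{\partial D} \lambda \in z(H_1(L, \Z))$ and hence is at least $\gamma$) rather than repackaging it as an application of Theorem~\ref{thm:sikorav} to the suspension squeezed into a thin cylinder, and it defers exactly the delicate bookkeeping you flag to \cite{sikorav:qpp91} and \cite{polterovich:ggs01}.
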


\begin{proof}
Recall from Example~\ref{exa:no-exact} that there are no exact Lagrangian embeddings into $(\R^{2 n}, \omega_0)$ \cite{gromov:pcs85}, so $z \not= 0$.
The proof of the theorem is then again almost verbatim the same as for the previous two theorems.
The only exception is that in this case the lower bound for the areas of the holomorphic disks follows directly from Theorem~\ref{thm:sikorav} by Gromov and Sikorav: the area of a holomorphic disk coincides with its symplectic area, which in turn equals integration of the primitive one-form $\lambda$ over the boundary of the curve.
In particular, the area of the disk is contained in the image of $H_1 (L, \Z)$ under the homomorphism $z \colon H_1 (L, \R) \to \R$; see \cite{sikorav:qpp91} for details.
\end{proof}

\begin{rmk}
Note that in the above theorems, $N$ is a tubular neighborhood of $L$ and $\iota^*$ is the identity, so that the homomorphism $\iota^* \colon H^1 (N, \R) \to H^1 (L, \R)$ is non-trivial when $H^1 (L, \R) \not= 0$.
Thus in contrast to $I (\R^{2 n}, L, \iota^*)$, the shape $I (N, L, \iota^*)$ is defined only up to translation.
This however does not cause any difficulties in later applications.
Theorem~\ref{thm:rig-lag-period} does not extend to cohomology classes $z$ that are not rational.
Although the displacement energy is still positive (see the end of Subsection~3.2.G in \cite{polterovich:ggs01} for some references), a lower bound depends on more than just the $\lambda$-period; compare to Lemma~\ref{lem:irrational}. \qed
\end{rmk}

\begin{rmk}
We would like to point out that the cohomology class $[\iota^* \lambda]$ does not appear in Theorem~\ref{thm:rig-lag-hom-class}, while Theorem~\ref{thm:rig-lag-period} on the other hand makes no mention of the homomorphism $\iota^*$ on the first cohomology groups.
Either of these results by itself is sufficient to give a proof of Theorem~\ref{thm:symp-shape-preserving}, which we are now in a position to do.
Note that neither Theorem~\ref{thm:rig-lag-hom-class} nor Theorem~\ref{thm:rig-lag-period} mean that there are no Lagrangian embeddings nearby a given non-Lagrangian embedding; by Darboux's Theorem, there are many such Lagrangian embeddings, but they are in a different homotopy class (in fact, in the trivial class, and their relative second fundamental groups are non-trivial), and have small (or irrational) $\lambda$-periods.
The statements mean that there are no Lagrangian embeddings nearby the given non-Lagrangian embedding of a certain shape (the given induced homomorphism on cohomology) or size (the given rational $\lambda$-period). \qed
\end{rmk}

\begin{proof}[Proof of Theorem~\ref{thm:symp-shape-preserving}]
The fact that a symplectic embedding preserves shape is Proposition~\ref{pro:symp-shape}.
We only need to prove the converse.
By Remark~\ref{rmk:dim}, we may assume that $n > 1$.

If $\varphi$ is conformally symplectic at every point $x \in B_r^{2 n}$, then $\varphi^* \omega = c \, \omega_0$ for a constant $c \not= 0$ since $\varphi^* \omega$ is a closed form.
By Proposition~\ref{pro:conf-symp-shape}, we must have $c = 1$.

Suppose then that the embedding $\varphi \colon B_r^{2 n} \to W$ is not conformally symplectic at $x \in B_r^{2 n}$.
By replacing $B_r^{2 n}$ by a small ball centered at $x$, we may assume that the image of $\varphi$ is contained in a Darboux chart in $W$.
By Lemma~\ref{lem:non-Lagrangian}, there exists a Lagrangian embedding $\iota \colon T^n \hookrightarrow B_r^{2 n}$ so that the composition $\varphi \circ \iota \colon T^n \hookrightarrow \R^{2 n}$ is non-Lagrangian.
Let $z = [\iota^* \lambda]$, where $\lambda$ is a one-form on $B_r^{2 n}$ with $d\lambda = \omega_0$.
In particular, if $U$ is any neighborhood of $\iota (T^n)$ in $B_r^{2 n}$, then the shape $I (U, \iota^*)$ is non-empty.
In fact, by Lemma~\ref{lem:open}, it is open.

By Theorem~\ref{thm:rig-lag-hom-class} there exists a neighborhood $V$ of $(\varphi \circ \iota) (L)$ such that there is no Lagrangian embedding $\jmath \colon T^n \hookrightarrow V$ with $\jmath^* = (\varphi \circ \iota)^* \colon H^1 (V, \R) \to H^1 (T^n, \R)$.
That is, the shape $I (V, (\varphi \circ \iota)^*)$ is empty.
By shrinking $U$ if necessary, we may assume that $\overline{U} \subset B_r^{2 n}$ is compact and $\varphi (\overline{U}) \subset V$.
Thus $\varphi$ does not preserve shape.
\end{proof}

\begin{rmk}
For an alternate argument (that replaces the final paragraph of the preceding proof), observe that we may assume that the cohomology class $z$ is rational, and then Theorem~\ref{thm:rig-lag-period} guarantees the existence of a neighborhood $V$ of $(\varphi \circ \iota) (L)$ so that there exists no Lagrangian embedding $\jmath \colon T^n \hookrightarrow V$ with $[\jmath^* \lambda] = z$.
A different choice of primitive one-form $\lambda$ on $V$ with $d\lambda = \omega$ causes a translation of the shape by a vector $b$.
After a small perturbation of $\iota$, i.e.\ by composing with a $C^1$-small symplectic diffeomorphism (corresponding to a closed one-form in $T^* T^n$), we may assume that $z + b$ is rational.
This modification may affect the size of the (tubular) neighborhood $V$ (since the generator of the group $(z + b) (H_1 (L, \Z))$ may be different in general), but it does not affect the argument or conclusion.
Thus for any homomorphism $\tau$ one can choose $z$ so that $z \notin I (V, \tau \circ \varphi^*)$ (up to the above translation determined by a choice of primitive one-forms).
Again by shrinking $U$ if necessary, we may assume that $\overline{U} \subset B_r^{2 n}$ is compact and $\varphi (\overline{U}) \subset V$.
Thus $\varphi$ does not preserve the shape invariant. \qed
\end{rmk}

\begin{proof}[Proof of Corollary~\ref{cor:rig-symp-emb}]
By Propositions~\ref{pro:symp-shape} and \ref{pro:continuous-symp-shape} in the previous section, the hypotheses imply that the embedding $\varphi$ preserves the shape invariant, and then by Theorem~\ref{thm:symp-shape-preserving}, $\varphi$ is a symplectic embedding.
\end{proof}

We state two further corollaries to Corollary~\ref{cor:rig-symp-emb}.
The first one is a special case of the second one, but is stated separately for emphasis and to divide the proofs.

\begin{cor} \label{cor:rig-anti-symp-emb}
Let $\varphi_k \colon B_r^{2 n} \to W$ be a sequence of anti-symplectic embeddings that converges uniformly on compact subsets to an embedding $\varphi \colon B_r^{2 n} \to W$.
Then $\varphi$ is anti-symplectic, that is, $\varphi^* \omega = - \omega_0$.
\end{cor}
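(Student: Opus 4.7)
The plan is to reduce Corollary~\ref{cor:rig-anti-symp-emb} to Corollary~\ref{cor:rig-symp-emb} by composing with a fixed anti-symplectic involution of $(\R^{2n},\omega_0)$ that preserves the ball. Concretely, I would take the linear map $\sigma \colon \R^{2n} \to \R^{2n}$ defined by $\sigma(x_1,y_1,\ldots,x_n,y_n) = (x_1,-y_1,\ldots,x_n,-y_n)$. Since $\sigma$ is a Euclidean isometry one has $\sigma(B_r^{2n}) = B_r^{2n}$, and a direct computation gives $\sigma^*\omega_0 = -\omega_0$, so $\sigma$ is anti-symplectic and involutive.

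Next, I would set $\psi_k := \varphi_k \circ \sigma$ and observe that each $\psi_k$ is a symplectic embedding of $B_r^{2n}$ into $W$, because
\[ \psi_k^*\omega = \sigma^*\varphi_k^*\omega = \sigma^*(-\omega_0) = -(-\omega_0) = \omega_0. \]
Because $\sigma$ is a homeomorphism of $B_r^{2n}$ onto itself that maps compact sets to compact sets, uniform convergence of $\varphi_k \to \varphi$ on compact subsets of $B_r^{2n}$ implies uniform convergence of $\psi_k \to \psi := \varphi \circ \sigma$ on compact subsets. Moreover $\psi$ is an embedding, being the composition of the embedding $\varphi$ with the diffeomorphism $\sigma$.

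I would then apply Corollary~\ref{cor:rig-symp-emb} to the sequence $\psi_k$ to conclude that $\psi^*\omega = \omega_0$. Using $\sigma^2 = \mathrm{id}$, so $(\sigma^{-1})^* = \sigma^*$, this yields
\[ \omega_0 = \psi^*\omega = \sigma^*\varphi^*\omega, \qquad \text{hence} \qquad \varphi^*\omega = \sigma^*\omega_0 = -\omega_0, \]
as required. There is no real obstacle here: the only thing to be careful about is choosing an anti-symplectic map that is globally defined and preserves the round ball, which is precisely why the coordinate-wise sign flip $y_i \mapsto -y_i$ is convenient. This reduction has the virtue of avoiding any direct analysis of how anti-symplectic embeddings act on the shape invariant, deferring all the rigidity content to Corollary~\ref{cor:rig-symp-emb}.
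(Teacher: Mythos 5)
Your proof is correct and is essentially the paper's own argument: the paper reduces to Corollary~\ref{cor:rig-symp-emb} by composing with a fixed anti-symplectic involution $i$, and explicitly notes the variant $\varphi_k \circ i$ that you use. Composing on the domain side with the sign flip $y_i \mapsto -y_i$ (which preserves the round ball) even lets you skip the paper's preliminary step of arranging the images $\varphi_k(B_r^{2n})$ inside a single Darboux chart of $W$.
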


\begin{proof}
We may assume without loss of generality that a neighborhood of the image of $\varphi$ is contained in a Darboux chart.
Then for $k$ sufficiently large, the ball $\varphi_k (B_r^{2 n})$ is contained in the same chart.
Let $i$ denote the anti-symplectic involution induced by the reflection about the origin in $\R^{2 n}$.
The sequence $i \circ \varphi_k$ is symplectic and converges to the embedding $i \circ \varphi$, and thus the conclusion follows from Corollary~\ref{cor:rig-symp-emb}.
Equivalently, one can consider the symplectic embeddings $\varphi_k \circ i$.
\end{proof}

\begin{cor} \label{cor:rig-conf-symp-emb}
Let $\varphi_k \colon B_r^{2 n} \to W$ be a sequence of embeddings that converges to an embedding $\varphi \colon B_r^{2 n} \to W$ uniformly on compact subsets, and suppose that $\varphi_k^* \omega = c_k \, \omega_0$.
Then $\varphi$ is conformally symplectic.
Moreover, the numbers $c_k$ converge to a non-zero constant $c$, and $\varphi^* \omega = c \, \omega_0$.
\end{cor}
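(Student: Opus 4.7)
The approach is to rescale each $\varphi_k$ into a symplectic embedding and then invoke Corollary~\ref{cor:rig-symp-emb}.

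First I would show that $|c_k|$ is bounded above and below by positive constants. For the upper bound, fix a closed ball $\bar B' \subset B_r^{2n}$; the identity $\varphi_k^*\omega^n = c_k^n\,\omega_0^n$ yields
\[ |c_k|^n \int_{\bar B'} \omega_0^n \;=\; \int_{\varphi_k(\bar B')} |\omega^n|, \]
and the right-hand side is bounded for large $k$ because uniform convergence of $\varphi_k$ to $\varphi$ places $\varphi_k(\bar B')$ inside a relatively compact subset of $W$. For the lower bound, invariance of domain guarantees that $\varphi(B_r^{2n})$ is open in $W$, so one can fix a small Darboux ball $B_\delta$ centered at $\varphi(0)$ that lies in $\varphi(B_{r/2}^{2n})$ and is disjoint from $\varphi(\partial B_{r/2}^{2n})$. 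A standard Brouwer-degree argument, using uniform convergence on $\overline{B_{r/2}^{2n}}$, then ensures $\varphi_k(B_{r/2}^{2n}) \supset B_{\delta/2}$ for $k$ large, giving a positive lower bound on $|c_k|$.

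Next I would pass to a subsequence along which $c_k \to c$ with definite sign. Composing with the anti-symplectic reflection $i(x,y)=(-x,y)$ if necessary (as in the proof of Corollary~\ref{cor:rig-anti-symp-emb}), one may assume $c_k, c > 0$. Define $\psi_k(y) = y/\sqrt{c_k}$, so that $\psi_k^*\omega_0 = c_k^{-1}\omega_0$ and $\tilde\varphi_k := \varphi_k \circ \psi_k$ is a symplectic embedding of $B_{r\sqrt{c_k}}^{2n}$ into $W$. For any $r' < r\sqrt{c}$ and $k$ sufficiently large, $\tilde\varphi_k|_{B_{r'}^{2n}}$ is defined and converges uniformly on compact subsets to the embedding $\tilde\varphi(y) := \varphi(y/\sqrt{c})$. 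Corollary~\ref{cor:rig-symp-emb} then gives $\tilde\varphi^*\omega = \omega_0$ on $B_{r'}^{2n}$, and pulling back by the linear map $s(x) = \sqrt{c}\,x$ yields $\varphi^*\omega = c\,\omega_0$ on $B_{r'/\sqrt{c}}^{2n}$. Letting $r' \uparrow r\sqrt{c}$ extends this to all of $B_r^{2n}$.

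Uniqueness of $c$ is then immediate: any other subsequential limit $c'$ of $(c_k)$ would similarly yield $\varphi^*\omega = c'\omega_0$, and since $\omega_0$ is non-vanishing and $c \neq 0$, one must have $c' = c$. Combined with the two-sided boundedness of Step 1, this forces the full sequence $c_k$ to converge to $c$. The main obstacle is the lower bound on $|c_k|$: without it the rescaled sequence $\tilde\varphi_k$ would either collapse or blow up and Corollary~\ref{cor:rig-symp-emb} could not be applied on a domain of definite size. Overcoming this is the one point where the merely $C^0$ nature of the limit $\varphi$ requires more than formal manipulation of the identity $\varphi_k^*\omega = c_k\,\omega_0$, and it relies on invariance of domain together with a degree argument for uniform limits of embeddings between equidimensional manifolds.
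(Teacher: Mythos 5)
Your proof is correct and follows essentially the same route as the paper's: rescale $\varphi_k$ by multiplication by $1/\sqrt{c_k}$ to obtain symplectic embeddings, apply Corollary~\ref{cor:rig-symp-emb}, handle the sign via the anti-symplectic involution, and pin down $c_k$ through the volume identity $\operatorname{vol}(\varphi_k(B_{r'}^{2n})) = |c_k|^n \operatorname{vol}(B_{r'}^{2n})$. The only difference is that you spell out the invariance-of-domain and degree argument giving the positive lower bound on $|c_k|$, a point the paper leaves implicit when it asserts that the volumes converge to the (positive) volume of $\varphi(B_{r'}^{2n})$.
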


\begin{proof}
We may again assume without loss of generality that a neighborhood of the image of $\varphi$ is contained in a Darboux chart in $W$.
Then $\varphi_k (B_r^{2 n})$ is contained in the same Darboux chart for $k$ sufficiently large.
Denote by $m_s$ multiplication by $s \not= 0$ in $\R^{2 n}$.
These conformally symplectic diffeomorphisms depend continuously on the parameter $s$.
The sequence $m_{c_k}^{-1} \circ \varphi_k$ (or the sequence $\varphi_k \circ m_{c_k}^{-1}$) is symplectic, and thus the proof follows from Corollary~\ref{cor:rig-symp-emb} in the same way Corollary~\ref{cor:rig-anti-symp-emb} does, once we show that the numbers $c_k$ form a Cauchy sequence.

Choose a subsequence of $\varphi_k$ such that the numbers $c_k$ all have the same sign.
Composing with the anti-symplectic involution $i$ from the proof of Corollary~\ref{cor:rig-anti-symp-emb} if necessary, we may assume that $c_k > 0$.
Let $r' < r$ be positive.
Since the volume of $\varphi_k (B_{r'}^{2 n})$ is $c_k^n$ times the volume of $B_{r'}^{2 n}$, and the embeddings $\varphi_k$ converge uniformly on (the closure of) $B_{r'}^{2 n}$, the numbers $c_k$ converge to a number $c \not= 0$ (where $c^n$ is the volume of the ball $\varphi (B_{r'}^{2 n})$).
\end{proof}

\begin{proof}[Proof of Corollary~\ref{cor:rig-symp-diff}]
By Darboux's Theorem, a neighborhood of a point $x \in W$ can be identified with a ball $B_r^{2 n}$ in $(\R^{2 n},\omega_0)$.
Restricting $\varphi_k$ and $\varphi$ to $B_r^{2 n}$ and applying Corollary~\ref{cor:rig-symp-emb} yields $\varphi^* \omega = \omega$ at (and near) $x$.
Since the point $x$ was arbitrary, the proof is complete.
\end{proof}

\begin{cor} \label{cor:rig-anti-symp-diff}
Let $\varphi_k \colon W \to W$ be anti-symplectic diffeomorphisms that converge uniformly on compact subsets to a diffeomorphism $\varphi \colon W \to W$.
Then $\varphi$ is anti-symplectic, that is, $\varphi^* \omega = - \omega$.
In other words, the set of anti-symplectic diffeomorphisms is $C^0$-closed in the group of all diffeomorphism.
\end{cor}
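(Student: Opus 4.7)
The plan is to mirror verbatim the proof of Corollary~\ref{cor:rig-symp-diff}, replacing the appeal to Corollary~\ref{cor:rig-symp-emb} with an appeal to Corollary~\ref{cor:rig-anti-symp-emb}. Since being anti-symplectic is a pointwise condition, it suffices to verify the identity $\varphi^*\omega = -\omega$ at an arbitrary point $x \in W$.

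First, invoke Darboux's Theorem to identify an open neighborhood of $x$ in $W$ with an open ball $B_r^{2n} \subset (\R^{2n}, \omega_0)$ so that the pullback of $\omega$ agrees with $\omega_0$ under this identification. The restrictions of the $\varphi_k$ to this ball then form a sequence of anti-symplectic embeddings $B_r^{2n} \to W$ (since the hypothesis $\varphi_k^*\omega = -\omega$ localizes to $\varphi_k^*\omega = -\omega_0$ on $B_r^{2n}$), and this sequence converges uniformly on compact subsets to the restriction of $\varphi$, which is itself an embedding because $\varphi$ is a diffeomorphism.

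Now apply Corollary~\ref{cor:rig-anti-symp-emb} to this sequence: the limit embedding satisfies $\varphi^*\omega = -\omega_0$ on $B_r^{2n}$, which under the Darboux identification reads $\varphi^*\omega = -\omega$ at (and near) $x$. Since $x \in W$ was arbitrary, $\varphi$ is anti-symplectic on all of $W$, which is the desired $C^0$-closedness.

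There is essentially no obstacle to overcome here, since the real work was already carried out in the proof of Corollary~\ref{cor:rig-anti-symp-emb} (which itself reduced to Corollary~\ref{cor:rig-symp-emb} via composition with the anti-symplectic involution $i$ of $\R^{2n}$). The only tiny point of care is that the target of $\varphi_k$, namely $W$, need not coincide with the Darboux ball, but this is irrelevant for Corollary~\ref{cor:rig-anti-symp-emb}, whose statement already allows the target to be an arbitrary symplectic manifold.
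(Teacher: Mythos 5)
Your proposal is correct and matches the paper's own proof, which simply cites Corollary~\ref{cor:rig-anti-symp-emb} together with Darboux's Theorem exactly as in the proof of Corollary~\ref{cor:rig-symp-diff}; you have merely spelled out the localization step in full. (The paper also notes an alternative route via Corollary~\ref{cor:rig-symp-diff} and composition with a fixed anti-symplectic diffeomorphism, but your chosen route is the one the paper lists first.)
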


\begin{proof}
This follows from Corollary~\ref{cor:rig-anti-symp-emb} and Darboux's Theorem (cf.\ the proof of Corollary~\ref{cor:rig-symp-diff}), or from Corollary~\ref{cor:rig-symp-diff} as in the proof of Corollary~\ref{cor:rig-anti-symp-emb} (if the set of anti-symplectic diffeomorphisms is empty, then there is nothing to prove.
\end{proof}

\begin{cor} \label{cor:rig-conf-symp-diff}
Let $\varphi_k \colon W \to W$ be a sequence of conformally symplectic diffeomorphisms that converges to a diffeomorphism $\varphi \colon W \to W$ uniformly on compact subsets.
Then $\varphi$ is conformally symplectic.
If $\varphi_k^* \omega = c_k \, \omega$, then the numbers $c_k$ converge to a non-zero constant $c$, and $\varphi^* \omega = c \, \omega$.
In particular, the group of conformally symplectic diffeomorphisms is $C^0$-closed in the group $\Diff (W)$.
The subgroup of diffeomorphisms for which $c > 0$ and the subset of diffeomorphisms for which $c < 0$ are also $C^0$-closed in the group $\Diff (W)$.
\end{cor}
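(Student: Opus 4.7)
The plan is to deduce this global statement from Corollary~\ref{cor:rig-conf-symp-emb} by a Darboux chart argument, in exact analogy with how Corollary~\ref{cor:rig-symp-diff} is derived from Corollary~\ref{cor:rig-symp-emb}. The starting observation is that since $W$ is connected and $\varphi_k^*\omega = c_k\,\omega$, the number $c_k$ is a single global constant for each $k$: indeed, $\varphi_k^*\omega$ is a closed two-form, and if $\varphi_k^*\omega = f_k\,\omega$ for some smooth function $f_k$, then $df_k \wedge \omega = 0$ together with $\dim W > 2$ (the case $\dim W = 2$ being classical) forces $f_k$ to be locally and thus globally constant.

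First, I would fix a point $x \in W$ and invoke Darboux's Theorem to identify a neighborhood of $x$ with a ball $B_r^{2n} \subset (\R^{2n},\omega_0)$. By shrinking $r$ if necessary, one can also arrange that $\varphi(\overline{B_r^{2n}})$ lies in a Darboux chart of $W$, and then, by uniform convergence on compact subsets, that $\varphi_k(\overline{B_r^{2n}})$ lies in the same chart for all sufficiently large $k$. The restrictions $\varphi_k|_{B_r^{2n}}$ form a sequence of embeddings with $\varphi_k^*\omega_0 = c_k\,\omega_0$ (using the same global constants $c_k$) that converges uniformly on compact subsets to $\varphi|_{B_r^{2n}}$. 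Corollary~\ref{cor:rig-conf-symp-emb} then gives at once that $c_k$ converges to a nonzero constant $c$ and that $\varphi^*\omega = c\,\omega$ on this chart.

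Second, since the constants $c_k$ are global and the limit $c = \lim c_k$ is a single number that does not depend on the choice of $x$, the identity $\varphi^*\omega = c\,\omega$ extends to all of $W$ by covering $W$ with Darboux charts as above. This proves that $\varphi$ is conformally symplectic with factor $c$, and hence that the group of conformally symplectic diffeomorphisms is $C^0$-closed in $\Diff(W)$. For the last sentence, note that if all $c_k$ are positive, then their limit $c$ is nonnegative; combined with the nonvanishing $c \neq 0$ from Corollary~\ref{cor:rig-conf-symp-emb}, this forces $c > 0$, and symmetrically for $c < 0$. Hence both the subgroup $\{c > 0\}$ and the subset $\{c < 0\}$ are closed in $\Diff(W)$.

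Essentially no step is a genuine obstacle: the analytic core — convergence of the conformal factors to a nonzero limit — has already been carried out in Corollary~\ref{cor:rig-conf-symp-emb}. The only mild subtlety worth noting is the compatibility between the global constant $c_k$ attached to $\varphi_k$ on $W$ and the local constant extracted in a Darboux chart, which is automatic once one observes, as in the first paragraph, that conformally symplectic diffeomorphisms of a connected symplectic manifold of dimension greater than two have a single constant conformal factor.
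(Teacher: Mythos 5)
Your proposal is correct and follows essentially the same route as the paper, which simply invokes Corollary~\ref{cor:rig-conf-symp-emb} together with Darboux's Theorem and the chart-by-chart argument used for Corollary~\ref{cor:rig-symp-diff}. Your additional remarks (that the conformal factor of each $\varphi_k$ is a single global constant, and the sign argument for the last sentence) are accurate elaborations of details the paper leaves implicit.
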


\begin{proof}
This follows directly from Corollary~\ref{cor:rig-conf-symp-emb} and Darboux's Theorem by the same argument as in the proof of Corollary~\ref{cor:rig-symp-diff}.
\end{proof}

An embedding $\varphi \colon W_1 \to W_2$ is said to be shape rescaling if there exists a non-zero constant $c$ such that $I (U, L, \tau) \subset c \, I (V, L, \tau \circ \varphi^*)$ for all open subsets $U \subset W_1$ and $V \subset W_2$ such that $\overline{U} \subset W_1$ is compact and $\varphi (\overline{U}) \subset V$, and for every closed and connected $n$-dimensional manifold $L$ and homomorphism $\tau \colon H^1 (V, \R) \to H^1 (U, \R)$.
If $c = - 1$, we also say that $\varphi$ reverses shape.
It can be shown along the same lines as the proof of Proposition~\ref{pro:conf-symp-shape} that the number $c$ is unique.

The following two results are almost immediate corollaries of Theorem~\ref{thm:symp-shape-preserving} along the same lines as the proofs of Corollaries~\ref{cor:rig-anti-symp-emb} and \ref{cor:rig-conf-symp-emb}.
Again the first one is really a special case of the second one.

\begin{cor} \label{cor:anti-symp-shape-preserving}
An embedding $\varphi \colon B_r^{2 n} \to W$ is anti-symplectic if and only if it reverses the shape invariant.
\end{cor}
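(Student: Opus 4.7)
The plan is to reduce the corollary to Theorem~\ref{thm:symp-shape-preserving} by pre-composing with the fixed anti-symplectic involution
\[
i(x_1,\ldots,x_n,y_1,\ldots,y_n) = (x_1,\ldots,x_n,-y_1,\ldots,-y_n)
\]
of $B_r^{2n}$, which satisfies $i^*\omega_0 = -\omega_0$ and $i \circ i = \mathrm{id}$, and which maps $B_r^{2n}$ to itself.

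For the forward direction, I would mimic the proof of Proposition~\ref{pro:symp-shape}. Suppose $\varphi^*\omega = -\omega_0$. If $\lambda_2$ is a primitive of $\omega$ on an open subset of the target, then $\lambda_1 := -\varphi^*\lambda_2$ is a primitive of $\omega_0$ on the corresponding subset of $B_r^{2n}$. For any Lagrangian embedding $\iota\colon L \hookrightarrow U$, the composition $\varphi\circ\iota\colon L \hookrightarrow V$ is again Lagrangian (since $(\varphi\circ\iota)^*\omega = -\iota^*\omega_0 = 0$), with induced homomorphism $\iota^*\circ\varphi^*$ and $\lambda_2$-period
\[
[(\varphi\circ\iota)^*\lambda_2] = [\iota^*\varphi^*\lambda_2] = -[\iota^*\lambda_1].
\]
This yields the inclusion $I(U,\lambda_1,L,\tau) \subset -I(V,\lambda_2,L,\tau\circ\varphi^*)$, so $\varphi$ reverses the shape invariant.

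For the converse, observe that the forward direction applied to $i$ itself shows that $i$ reverses shape. If $\varphi$ also reverses shape, then $\psi := \varphi\circ i\colon B_r^{2n}\to W$ is the composition of two shape-reversing maps. Applying the reversing inclusions successively, first to $U$ and $i(U)$ via $i$, then to $i(U)$ and $V$ via $\varphi$, the two factors of $-1$ cancel and one obtains
\[
I(U,L,\tau) \subset I(V, L, \tau\circ i^*\circ\varphi^*) = I(V, L, \tau\circ\psi^*)
\]
up to translation. Thus $\psi$ preserves shape, so by Theorem~\ref{thm:symp-shape-preserving} it is symplectic. Since $i^2 = \mathrm{id}$, one has $\varphi = \psi\circ i$, hence $\varphi^*\omega = i^*\psi^*\omega = i^*\omega_0 = -\omega_0$, i.e., $\varphi$ is anti-symplectic.

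The main obstacle is the bookkeeping of the translation freedom in the shape inclusions across the composition of two shape-reversing maps. Just as in Proposition~\ref{pro:symp-shape} and the remarks following it, the two reversing inclusions must be stated with respect to compatible choices of primitive one-forms on $U$, $i(U)$, and $V$, so that the translations at each step combine into the single translation vector required by Definition~\ref{dfn:symp-shape-preserving}. Once the correct primitives are selected (as in the forward direction above), this bookkeeping is routine.
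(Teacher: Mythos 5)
Your proof is correct and follows essentially the same route as the paper: the forward direction is read off from the definition (as in Proposition~\ref{pro:symp-shape}, with the sign flipped by the choice of primitive $\lambda_1 = -\varphi^*\lambda_2$), and the converse composes with a fixed anti-symplectic involution $i$ to produce a shape-preserving embedding $\varphi\circ i$ and then invokes Theorem~\ref{thm:symp-shape-preserving}, which is exactly the reduction the paper performs by imitating the proof of Corollary~\ref{cor:rig-anti-symp-emb}.
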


\begin{cor} \label{cor:conf-symp-shape-preserving}
An embedding $\varphi \colon B_r^{2 n} \to W$ is conformally symplectic if and only if it rescales the shape invariant.
Moreover, the rescaling constant coincides with the conformal factor of $\varphi$.
\end{cor}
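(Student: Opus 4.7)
The plan is to follow the structure of the proof of Theorem~\ref{thm:symp-shape-preserving}, with Proposition~\ref{pro:conf-symp-shape} playing the role of Proposition~\ref{pro:symp-shape}, and to pin down the constant at the end by the same uniqueness argument that already appears in the proof of Proposition~\ref{pro:conf-symp-shape}. The case $n=1$ follows from Example~\ref{exa:plane} and Darboux's theorem exactly as in Remark~\ref{rmk:dim}, so throughout we may assume $n>1$.

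For the forward direction, I would assume $\varphi^{*}\omega = c\,\omega_{0}$ with $c\neq 0$ and argue as in the first half of the proof of Proposition~\ref{pro:conf-symp-shape}. Given a Lagrangian embedding $\iota\colon L\hookrightarrow U$, the composition $\varphi\circ\iota\colon L\hookrightarrow V$ satisfies $(\varphi\circ\iota)^{*}\omega = c\,\iota^{*}\omega_{0} = 0$ and hence is Lagrangian, with $(\varphi\circ\iota)^{*}=\iota^{*}\circ\varphi^{*}$. For any primitive $\lambda$ of $\omega$ on $V$, the one-form $\lambda_{0}=\varphi^{*}\lambda/c$ is a primitive of $\omega_{0}$ on $U$, and $[(\varphi\circ\iota)^{*}\lambda] = c\,[\iota^{*}\lambda_{0}]$. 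Changing the primitive on $U$ introduces only a translation, so up to translation $\varphi$ rescales the shape by the factor $c$, which is the claimed rescaling statement.

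For the converse, I would argue by contradiction, following the proof of Theorem~\ref{thm:symp-shape-preserving}. Suppose $\varphi$ rescales the shape with constant $c\neq 0$ but is not conformally symplectic at some $x\in B_{r}^{2n}$. After replacing $B_{r}^{2n}$ by a small ball around $x$, we may assume that the image of $\varphi$ lies in a Darboux chart in $W$. By Lemma~\ref{lem:non-Lagrangian} there exists a Lagrangian embedding $\iota\colon T^{n}\hookrightarrow B_{r}^{2n}$ through $x$ such that $\varphi\circ\iota$ is non-Lagrangian. Choose a small tubular neighborhood $U$ of $\iota(T^{n})$ so that, by Lemma~\ref{lem:local-shape} and Lemma~\ref{lem:open}, $I(U,T^{n},\iota^{*})$ is a non-empty open subset of $H^{1}(T^{n},\R)$. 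Apply Theorem~\ref{thm:rig-lag-hom-class} to the non-Lagrangian embedding $\varphi\circ\iota$ to obtain a tubular neighborhood $V$ of $(\varphi\circ\iota)(T^{n})$, inside the Darboux chart, such that $I(V,T^{n},\iota^{*}\circ\varphi^{*}) = I(V,T^{n},(\varphi\circ\iota)^{*})=\emptyset$. By shrinking $U$ we may arrange $\overline{U}$ compact with $\varphi(\overline{U})\subset V$. But then the rescaling inclusion $I(U,T^{n},\iota^{*})\subset c\,I(V,T^{n},\iota^{*}\circ\varphi^{*})$, even interpreted up to translation, is impossible because the right-hand side is empty while the left-hand side is non-empty. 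This contradiction forces $\varphi$ to be conformally symplectic at every point of $B_{r}^{2n}$; since $\varphi^{*}\omega$ is closed and $\dim W > 2$, the conformal factor must then be a non-zero constant $c'$.

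The final step is to identify $c'$ with $c$. By the forward direction $\varphi$ rescales the shape by $c'$, and by hypothesis it rescales it by $c$, so uniqueness of the rescaling constant follows exactly as in the last paragraph of the proof of Proposition~\ref{pro:conf-symp-shape}: using Lemma~\ref{lem:local-shape} to realize prescribed shapes $I(U,\tau)=A$ with $A$ a small open ball whose position breaks both the reflection and rescaling symmetries (modulo the allowed translation), one concludes $c=c'$. The main technical obstacle is the bookkeeping of the translation ambiguity in the shape invariant, particularly in this uniqueness step, but that difficulty has already been dealt with in Proposition~\ref{pro:conf-symp-shape} and requires no new ideas here; all the genuine symplectic input is concentrated in the two applications of Theorem~\ref{thm:rig-lag-hom-class} and Lemma~\ref{lem:non-Lagrangian}.
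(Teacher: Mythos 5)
Your proof is correct, but it takes a different route from the one the paper actually records. The paper's proof of the converse is by reduction: it composes $\varphi$ with the model anti-symplectic involution $i$ and the conformal dilations $m_s$ (exactly as in the proofs of Corollaries~\ref{cor:rig-anti-symp-emb} and \ref{cor:rig-conf-symp-emb}) so that the composition preserves shape, and then invokes Theorem~\ref{thm:symp-shape-preserving} once; the conformal factor is read off from the dilation used. You instead rerun the contradiction argument of Theorem~\ref{thm:symp-shape-preserving} directly under the weaker ``rescaling'' hypothesis, observing that the key step --- the emptiness of $I(V, T^n, (\varphi\circ\iota)^*)$ supplied by Theorem~\ref{thm:rig-lag-hom-class} --- is insensitive to rescaling and translation, and then you pin down the constant by the uniqueness argument from Proposition~\ref{pro:conf-symp-shape}. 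The paper explicitly acknowledges your route in the sentence ``A similar argument could also be applied directly in a proof that closely follows the line of argument in the proof of Theorem~\ref{thm:symp-shape-preserving},'' so this is a sanctioned alternative rather than a divergence. What the paper's reduction buys is brevity and the automatic identification of the rescaling constant with the conformal factor; what your direct argument buys is independence from the auxiliary model maps and a cleaner view of exactly which rigidity input (the empty shape of a neighborhood of a non-Lagrangian torus) does the work. One small bookkeeping caveat: with the paper's convention $I(U,L,\tau)\subset c\,I(V,L,\tau\circ\varphi^*)$, the computation $[(\varphi\circ\iota)^*\lambda]=c\,[\iota^*\lambda_0]$ in your forward direction gives the inclusion with constant $1/c$ rather than $c$; this reciprocal ambiguity is present in the paper as well and does not affect the logic, but it is worth stating which convention you are using when you assert that the rescaling constant ``is'' the conformal factor.
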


\begin{proof}[Proofs]
That  anti-symplectic and conformally symplectic embeddings reverse and rescale shape, respectively, follows directly from the definition.
The converse is proved exactly as in the proofs of Corollaries~\ref{cor:rig-anti-symp-emb} and \ref{cor:rig-conf-symp-emb}, respectively.
A similar argument could also be applied directly in a proof that closely follows the line of argument in the proof of Theorem~\ref{thm:symp-shape-preserving}.
\end{proof}

\begin{rmk}
As further corollaries one can give alternate proofs of the corollaries in this section that concern anti-symplectic and conformally symplectic embeddings and diffeomorphisms.
The precise alternate proofs are too similar to our previous arguments to be duplicated. \qed
\end{rmk}

\begin{rmk}
The proof of Proposition~\ref{pro:continuous-symp-shape} applies almost verbatim to show that rescaling shape is a property that is preserved by uniform limits (on compact subsets) with $c = \liminf c_k$, though this also follows from the previous corollaries combined with Corollary~\ref{cor:rig-conf-symp-emb} (with $c = \lim c_k$). \qed
\end{rmk}

\section{(Maximal) coisotropic embeddings} \label{sec:coisotropic}
According to Lemma~\ref{lem:lagrangian}, every symplectic manifold contains enough Lagrangian submanifolds to distinguish conformally symplectic embeddings.
In particular, if an embedding $\varphi$ is not conformally symplectic, then there exists an embedded Lagrangian torus $T^n$ such that $\varphi (T^n)$ is not Lagrangian, and thus is immediately displaceable (Lemma~\ref{lem:non-Lagrangian} and Theorem~\ref{thm:displacement}).
Lemma~\ref{lem:imm-displ} below is the counterpart for coisotropic submanifolds of a contact manifold.

This section contains a few preliminary results, which may be known to an expert in contact topology.
For everyone else, the book \cite{geiges:ict08} is a good starting point.
We adopt the guiding principle to at least sketch proofs of any result that is not explicitly stated in \cite{geiges:ict08}.
Not the entire discussion of coisotropic submanifolds below is necessary for the proof of Theorem~\ref{thm:contact-shape-preserving}, but we choose to present a systematic treatment of coisotropic submanifolds since it requires little extra effort.

Let $(M, \xi)$ be a contact manifold, that is, $\xi \subset TM$ is a completely non-integrable codimension one tangent distribution.
That means that at least locally $\xi \subset TM$ can be written as the kernel of a one-form $\alpha$ so that $\alpha \wedge (d\alpha)^{n - 1} \not= 0$.
Unless it is explicitly mentioned otherwise, we assume that $\xi$ is coorientable, i.e.\ there exists a global one-form $\alpha$ as above (and in particular, $\alpha \wedge (d\alpha)^{n - 1}$ is nowhere vanishing on $M$).
We fix a coorientation of $\xi$, so that the contact form $\alpha$ is determined up to multiplication by a positive function.
Again unless explicitly stated otherwise, we assume that all contact embeddings preserve the given coorientations.

\begin{rmk}
To simplify notation, we often suppress the contact structure and contact form from the notation, and simply write $M$ for example for a contact manifold.
If a specific contact manifold $M$ admits a canonical contact structure or form that has previously been referenced, a statement about $M$ refers to these canonical choices.
Whenever there is a potential ambiguity however, the choices will be made explicit. \qed
\end{rmk}

\begin{rmk} \label{rmk:coorientation}
A sufficiently small open subset of a contact manifold $M$ is always coorientable, so that by restricting its domain and target if necessary, the domain and target of a given contact embedding are coorientable.
Thus we may assume that the contact structure $\xi$ in Theorem~\ref{thm:contact-shape-preserving} and in Corollary~\ref{cor:rig-contact-emb} is cooriented.
A given contact embedding automatically preserves coorientation for a consistent choice of coorientation on the domain and target.
Corollary~\ref{cor:rig-contact-emb} is to be interpreted in the sense that the limit $\varphi$ maps the hyperplane bundle $\xi_0$ to $\xi$, and preserves (or reverses) coorientation when a subsequence of the sequence $\varphi_k$ does.
A contact diffeomorphism $(M, \xi) \to (M, \xi)$ of course preserves one choice of coorientation if and only if it preserves the opposite choice (on both the domain and target). \qed
\end{rmk}

\begin{dfn} \label{dfn:coisotropic}
Let $(M, \xi = \ker \alpha)$ be a cooriented contact manifold of dimension $2 n - 1$, and $L$ as before be a closed and connected $n$-dimensional manifold.
An embedding $\iota \colon L \hookrightarrow M$ is called coisotropic (or pre-Lagrangian) if
\begin{enumerate}
\item $\iota$ (or its image $\iota (L)$) is transversal to the contact structure $\xi$, and
\item the distribution $\iota^* \xi = \ker (\iota^* \alpha) \subset TL$ can be defined by a closed one-form.
\end{enumerate}
A (closed and connected) $n$-dimensional submanifold $C$ of $(M, \xi)$ is called coisotropic if $TC$ is transversal to $\xi |_C$ and the codimension one distribution $\xi |_C \cap TC \subset TC$ can be defined by a closed one-form on $C$. \qed
\end{dfn}

\begin{rmk}
Condition (1) in the definition is equivalent to the assumption that the subspaces $(\iota^* \xi)_x \subset T_x L$ have constant codimension one.
Any one-form that defines the distribution $\iota^* \xi$ must be of the form $f \, \iota^* \alpha$ for a non-zero function $f$ on $L$.
The second condition is therefore equivalent to the existence of a contact form $\alpha' = g \alpha$, where $g > 0$, so that $\iota^* \alpha' = (g \circ \iota) \iota^* \alpha$ is a closed one-form (the function $f \circ \iota^{-1}$, which is defined on the image of $\iota$, can be extended to a tubular neighborhood of $\iota (L)$, and then to a globally defined everywhere non-zero function $g$ on $M$; after replacing $g$ by $- g$ if necessary, we may assume that $g$ is positive).
As a consequence of condition (2), the distribution $\iota^* \xi \subset TL$ is integrable (and $\iota (L)$ is foliated by Legendrian submanifolds).
By the contact condition, transversality of $\iota$ and $\xi$ is a necessary condition for the latter.

Similarly, the definition of coisotropic submanifold is equivalent to the existence of a smooth positive function $g$ such that the restriction of $g \alpha$ to $C$ is closed.
Of course the image of a coisotropic embedding is a coisotropic submanifold. \qed
\end{rmk}

\begin{rmk}
The definition of coisotropic makes sense for submanifolds of larger codimension.
However, all coisotropic submanifolds in this paper are assumed to be closed, connected, and of (maximal) dimension $n$. \qed
\end{rmk}

Recall that the restriction of $d\alpha$ to the hyperplane bundle $\xi$ is a symplectic bundle structure, which does not depend on the choice of contact form $\alpha$ up to conformal rescaling.
In particular, the symplectic orthogonal complement of a subspace of $\xi$ is independent of the choice of contact form.

\begin{lem} \label{lem:description-coisotropic}
Let $C$ be a (closed and connected) $n$-dimensional submanifold of $(M, \xi)$ that is transversal to $\xi |_C$.
Then $C$ is coisotropic if and only if $\xi |_C \cap TC$ is a Lagrangian subbundle of $\xi |_C$ and $TC = (\xi |_C \cap TC) \oplus \langle R \rangle$, where $R$ is the Reeb vector field of a contact form $\alpha$ that defines $\xi$.
In fact, a closed one-form on $C$ that defines the distribution $\xi |_C \cap TC$ is given by the restriction of $\alpha$ to $C$.
\end{lem}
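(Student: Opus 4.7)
My plan is to prove both implications by working with a suitably adapted contact form, exploiting the fact that the conformal symplectic structure on $\xi$ is independent of the choice of contact form. A preliminary observation is that transversality of $TC$ and $\xi|_C$, combined with the dimension count $n + (2n-2) - (2n-1) = n-1$, forces $\xi|_C \cap TC$ to be a subbundle of rank $n-1$. Since this is half the rank of $\xi$, being Lagrangian and being isotropic in $\xi|_C$ are equivalent for this subbundle.

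For the backward implication I would assume the decomposition $TC = (\xi|_C \cap TC) \oplus \langle R \rangle$ together with the Lagrangian property, and verify that $\alpha|_C$ is the required closed one-form. Its nonvanishing and the identification of its kernel with $\xi|_C \cap TC$ are immediate from transversality. For closedness I expand $d\alpha(v_1 + t_1 R, v_2 + t_2 R)$ with $v_i \in \xi|_C \cap TC$: all terms involving $R$ drop by the Reeb identity $\iota_R d\alpha = 0$, and the surviving term $d\alpha(v_1, v_2)$ vanishes by the Lagrangian hypothesis.

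For the forward implication I would start with a rescaled contact form $\alpha' = g\alpha_0$ (with $g > 0$) such that $\alpha'|_C$ is closed, furnished by the coisotropic hypothesis. For $v_1, v_2 \in T_xC \cap \xi_x$, expanding $d\alpha' = dg \wedge \alpha_0 + g\, d\alpha_0$ gives $d\alpha'(v_1, v_2) = g\, d\alpha_0(v_1, v_2)$ since $\alpha_0(v_i) = 0$. Closedness of $\alpha'|_C$ forces the left-hand side to vanish, so $\xi|_C \cap TC$ is $d\alpha_0$-isotropic, hence Lagrangian in $\xi|_C$ by the dimension count above.

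The step I expect to be the least transparent is showing the Reeb vector field $R'$ of $\alpha'$ is actually tangent to $C$; this is what makes the asserted direct sum decomposition meaningful, with the contact form $\alpha$ in the statement taken to be $\alpha'$. Working pointwise at $x \in C$, I would choose $u \in T_xC$ with $\alpha'(u) = 1$, which exists by transversality; then $u - R'_x \in \ker \alpha' = \xi_x$. For any $v \in T_xC \cap \xi_x$, closedness of $\alpha'|_C$ gives $d\alpha'(u, v) = 0$, while the Reeb identity $\iota_{R'} d\alpha' = 0$ gives $d\alpha'(R'_x, v) = 0$; so $u - R'_x$ lies in the $d\alpha'$-orthogonal complement of $T_xC \cap \xi_x$ inside $\xi_x$. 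Because a Lagrangian subspace is self-orthogonal, $u - R'_x \in T_xC \cap \xi_x$, whence $R'_x \in T_xC$, and $\alpha'(R'_x) = 1$ ensures $R'_x$ spans a complement to $\xi|_C \cap TC$ inside $TC$. The ``in fact'' clause is then the content of the backward direction applied to $\alpha = \alpha'$.
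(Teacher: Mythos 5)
Your proof is correct and follows essentially the same route as the paper's: both directions hinge on the observation that closedness of the (rescaled) contact form along $C$ kills $d\alpha$ on $TC$, the dimension count making $\xi|_C \cap TC$ Lagrangian, and the self-orthogonality of a Lagrangian subspace forcing the Reeb field to be tangent to $C$ (your $u - R'_x$ is exactly the paper's auxiliary vector $v$ with $R(x)+v \in T_xC$). The only difference is cosmetic: you carry the conformal factor $g$ explicitly, while the paper simply starts from a contact form whose restriction to $C$ is already closed.
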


\begin{proof}
Suppose that $C$ is coisotropic, and let $\alpha$ be a contact form on $(M, \xi)$ whose restriction to $C$ is closed.
In particular, $d\alpha$ restricted to $\xi |_C \cap TC$ vanishes, and therefore the latter is Lagrangian.
Denote by $R$ the Reeb vector field of $\alpha$.
Let $x \in C$ and $v \in \xi_x$ be a vector so that $R (x) + v \in T_x C$.
Since $d\alpha$ is zero when restricted to $TC$, $v$ belongs to the Lagrangian complement of $\xi_x \cap T_x C$.
We have already shown the latter to be a Lagrangian subspace of $\xi_x$, and thus $v \in \xi_x \cap T_x C$.
Thus $TC = (\xi |_C \cap TC) \oplus \langle R \rangle$ as claimed.
In the converse direction, the one-form $d\alpha |_C$ is clearly closed, and therefore $C$ is coisotropic.
\end{proof}

\begin{lem} \label{lem:existence-coisotropic-tori}
Let $(M, \xi)$ be a contact manifold, $x \in M$ be a point, $v \notin \xi_x$ be a vector, $W \subset \xi_x$ be an isotropic subspace, and $U \subset M$ be a neighborhood of $x$.
Then there exists a coisotropic embedding $T^n \hookrightarrow M$ through the point $x$, whose image $C$ is contained in $U$.
Moreover, we may assume that $v$ and $W$ are tangent to $C$, and in fact, that $v = R (x)$ for a Reeb vector field as in Lemma~\ref{lem:description-coisotropic}.
\end{lem}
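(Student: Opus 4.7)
The plan is to build the coisotropic torus explicitly in a Darboux model adapted to the prescribed data $(v, W)$, and then transport it back to $M$.

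First, I would use the freedom to rescale the contact form to produce a contact form $\alpha$ for $\xi$ near $x$ whose Reeb vector field $R$ satisfies $R(x) = v$. Starting from any contact form $\beta$ for $\xi$ and writing $\alpha = f \beta$ with $f > 0$, a direct computation in local coordinates shows that $R(x)$ is determined by $f(x)$ together with the values of the first partial derivatives of $f$ at $x$; since $v \notin \xi_x$, these can be prescribed so that $R(x) = v$. Applying Darboux's theorem to this $\alpha$ then produces a chart around $x$ sending $x$ to $0 \in \R^{2n-1}$ and $\alpha$ to $\alpha_0 = dz - \sum_{i=1}^{n-1} y_i \, dx_i$, in which $v$ corresponds to $\partial / \partial z = R_{\alpha_0}(0)$.

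Second, I would extend $W \subset \xi_0|_0$ to a Lagrangian subspace $\Lambda$ of $(\xi_0|_0, d\alpha_0|_0)$. Since the linear symplectic group acts transitively on Lagrangian subspaces, there is a linear symplectomorphism of the $(x, y)$-variables sending $\Lambda$ to the standard Lagrangian $\mathrm{span}(\partial/\partial x_1, \ldots, \partial/\partial x_{n-1})$. This transformation extends to a strict contactomorphism of $(\R^{2n-1}, \alpha_0)$ of the form $(x, y, z) \mapsto (x', y', z + h(x, y))$, where $h$ is a primitive of the closed difference between the corresponding Liouville forms; in particular $\partial / \partial z$ is preserved. After this change of coordinates one still has $v = \partial / \partial z$ at the origin, and in addition $W \subset \mathrm{span}(\partial/\partial x_1, \ldots, \partial/\partial x_{n-1})$.

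Finally, in these normalized coordinates and for sufficiently small $\delta > 0$, the set
\[ C_\delta = \bigl\{ (x_1, \ldots, x_{n-1}, 0, \ldots, 0, z) : x_1, \ldots, x_{n-1}, z \in \R / 2 \delta \Z \bigr\} \]
is an embedded $n$-torus through the origin that lies inside (the Darboux image of) $U$. Since $y_i \equiv 0$ on $C_\delta$, one has $\alpha_0|_{C_\delta} = dz$, a closed nowhere vanishing one-form, so by Lemma~\ref{lem:description-coisotropic} the torus $C_\delta$ is coisotropic. Its tangent space at the origin is $\mathrm{span}(\partial/\partial x_1, \ldots, \partial/\partial x_{n-1}, \partial/\partial z)$, which contains both $W$ and $v$, and the Reeb field of $\alpha_0$ agrees with $v$ at the origin. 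Transporting $C_\delta$ back to $M$ yields the required coisotropic embedding. The main obstacle is the coordinated bookkeeping in the first two steps: one has to verify that the rescaling used to place the Reeb field at $v$ and the linear symplectic transformation used to place $W$ inside $\mathrm{span}(\partial/\partial x_i)$ are compatible and can be carried out in sequence without undoing one another. Everything else is a direct computation in the Darboux model.
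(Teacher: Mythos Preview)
Your first two steps are fine: one can rescale the contact form so that the Reeb field at $x$ equals $v$, apply Darboux, and then use a linear symplectomorphism of the $(x,y)$-plane lifted to a strict contactomorphism to move $W$ into $\mathrm{span}(\partial/\partial x_i)$. The problem is entirely in the third step: your $C_\delta$ is not an embedded torus. The Darboux chart is an open subset of $\R^{2n-1}$, so the coordinates $x_i$ and $z$ are genuine real numbers, not elements of $\R/2\delta\Z$; the locus $\{y=0\}$ is an open piece of an $n$-plane, and there is no way to quotient it by a lattice while remaining an embedded submanifold of $\R^{2n-1}$. Put differently, in your model the Reeb flow is $\partial/\partial z$, which has no closed orbits on $\R^{2n-1}$, so there is no coisotropic torus of the form ``Reeb direction times a flat Lagrangian plane.''

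The paper's proof supplies exactly the missing closing-up mechanism. It first produces a small embedded \emph{transverse knot} through $x$ tangent to $v$ (this is where one $S^1$-factor comes from), and then applies the Contact Neighborhood Theorem to identify a neighborhood of this knot with $S^1 \times V \subset S^1 \times \R^{2n-2}$, contact form $dz + \tfrac{1}{2}\sum r_i^2\, d\theta_i$, with $z$ now genuinely periodic. The remaining $T^{n-1}$ is obtained as a split Lagrangian torus $S^1(r_1)\times\cdots\times S^1(r_{n-1})$ in $\R^{2n-2}$; these are honest embedded circles in the $(x_i,y_i)$-planes. The product $S^1 \times T^{n-1}$ is then coisotropic, passes through $x$, and can be arranged tangent to $W$ by choosing the Lagrangian torus appropriately. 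To fix your argument you would need to replace the flat ``torus'' $C_\delta$ by a construction of this type.
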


\begin{proof}
It is an elementary fact in contact topology that any open subset $U \subset M$ contains an embedded transverse knot, i.e.\ an embedding $S^1 \hookrightarrow M$ that is transverse to $\xi$.
This can easily be seen from the fact that $\xi$ is a hyperplane bundle that is nowhere integrable, see a picture of the standard contact structure on $\R^3$ (which canonically embeds into $(\R^{2 n - 1}, \xi_0)$) for example on page 4 of \cite{geiges:ict08}.
By Darboux's Theorem, we can identify a neighborhood of the origin with a neighborhood of $x$ in $M$ so that $x$ corresponds to the origin and the vector $v$ to $\partial / \partial z$.
In particular, we may choose the transverse knot tangent to $v$.
(We would like to alert the reader that the contact structures defined by all of the following contact forms are referred to as the standard contact structure on $\R^{2 n - 1}$ in various places in the literature: $dz - \sum_{i = 1}^{n - 1} y_i \, dx_i$, $dz + \sum_{i = 1}^{n - 1} x_i \, dy_i$, $dz + \sum_{i = 1}^{n - 1} (x_i \, dy_i - y_i \, dx_i)$, and $dz + \frac{1}{2} \sum_{i = 1}^{n - 1} (x_i \, dy_i - y_i \, dx_i)$.
However, all of these contact forms are easily seen to be mutually diffeomorphic by writing down explicit diffeomorphisms.)

By the Contact Neighborhood Theorem, a sufficiently small neighborhood of this knot (inside the set $U$) is contact diffeomorphic to an open neighborhood $S^1 \times V$ of $S^1 \times 0 \subset S^1 \times \R^{2 n - 2}$ with the contact structure (still denoted by $\xi$) induced by the contact form
	\[ \alpha = dz + \frac{1}{2} \sum_{i = 1}^{n -1} (x_i \, dy_i - y_i \, dx_i) = dz + \frac{1}{2} \sum_{i = 1}^{n -1} r_i^2 \, d\theta_i, \]
where $z \in S^1$, and $x_i = r_i \cos \theta_i$ and $y_i = r_i \sin \theta_i$ are coordinates on $\R^{2 n - 2}$, see Theorem~2.5.15 and Example~2.5.16 in \cite{geiges:ict08}.
Let $L \subset \R^{2 n - 2}$ be an (embedded) Lagrangian torus with respect to the standard symplectic structure $d\alpha |_\xi = \omega_0$.
Then the $n$-torus $S^1 \times L$ is an (embedded) coisotropic submanifold of $M$.
(This notation is not meant to suggest that $L$ is everywhere tangent to $\xi$, which is of course impossible by the contact condition, i.e.\ the twisting of the hyperplane bundle $\xi$.
For instance, if $L$ is a split torus where $r_i = c_i > 0$ is constant for all $i$, then $\xi_{(z, r, \theta)}$ is spanned by the vectors $\partial / \partial r_i$ and $\partial / \partial \theta_i - \frac{1}{2} c_i^2 \, \partial / \partial z$ for $i = 1, \ldots, n - 1$.)
If $L$ contains the origin, then the image of the embedding contains $x$, and the vector $v = \partial / \partial z = R (x)$.
Finally, choose $L$ so that $W$ is tangent to $C$ (cf.\ the proof of Lemma~\ref{lem:lagrangian}).
\end{proof}

\begin{rmk} \label{rmk:small-coiso}
For later reference, we point out that if the numbers $r_i > 0$, $i = 1, \dots, n - 1$ are sufficiently small, then the image of the standard (up to translation) Lagrangian embedding $\jmath$ of the split torus $S^1 (r_1) \times \cdots \times S^1 (r_{n -1})$ into $\R^{2 n - 2}$ is contained in $V$; the embedding $\iota \colon T^n \hookrightarrow S^1 \times V$, $\iota (z, x) = (z, \jmath (x))$ is coisotropic, the one-form $\iota^* \alpha$ is closed, and $[\iota^* \alpha] = (2 \pi, \pi r_1^2, \ldots, \pi r_{n -1}^2) \in \R^n = H^1 (T^n, \R)$.
Here we identify the first $S^1$-factor with $\R / (2 \pi \Z)$.
In general, the first coordinate depends on the size of the transverse knot, and can be chosen to be any sufficiently small non-zero number.
If $V = \R^{2 n - 2}$, this holds for all positive numbers $r_i$. \qed
\end{rmk}

The following theorem is the analog of Weinstein's Lagrangian Neighborhood Theorem for coisotropic submanifolds.
(On the other hand, the symplectic version of Lemma~\ref{lem:existence-coisotropic-tori} is an immediate consequence of Darboux's Theorem.)

\begin{thm} \label{thm:neighborhood}
Let $\iota \colon L \hookrightarrow (M, \xi)$ be a coisotropic embedding, and $\alpha$ be a contact form on $(M, \xi)$ so that $\iota^* \alpha$ is closed.
Let $\beta = p \circ \iota^* \alpha \colon L \hookrightarrow ST^*L$ be the section defined by $\iota^* \alpha$, where $p \colon T^*L \backslash L_0 \to ST^*L$ is the obvious map, and $L_0$ denotes the zero section.
Then there exists a neighborhood $U$ of $\beta$ in $ST^*L$, a neighborhood $V$ of $\iota (L)$ in $M$, and a contact diffeomorphism $\varphi \colon U \to V$ that restricts to the identity on $L$, that is, $\iota = \varphi \circ \beta$.
\end{thm}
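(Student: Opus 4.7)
The plan is to reduce the theorem to a Moser-type deformation argument in the spirit of Weinstein's Lagrangian neighborhood theorem, with the contact-geometric twist supplied by Gray stability. First I would produce a canonical contact form on $ST^*L$ near $\beta(L)$ whose pullback under $\beta$ equals $\iota^*\alpha$; then build a smooth tubular-neighborhood diffeomorphism $\Phi$ that sends $\beta$ to $\iota$ and matches the two contact forms pointwise along $\beta(L)$; and finally correct $\Phi$ to a contact diffeomorphism via Gray stability, using the matching along the submanifold to ensure that the correcting isotopy fixes $\beta(L)$ pointwise.

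For the first step, fix an auxiliary Riemannian metric $g$ on $L$ and identify $ST^*L$ with the unit cotangent bundle. The restriction of the canonical Liouville form is then a contact form $\alpha_g$ for $\xi_{\can}$, and the section $\beta$ takes the explicit form $\beta_g(x) = \iota^*\alpha(x)/|\iota^*\alpha(x)|_g$, so that $\beta_g^*\alpha_g = \iota^*\alpha/|\iota^*\alpha|_g$. Extending the positive function $|\iota^*\alpha|_g$ from $\beta(L)$ to a neighborhood in $ST^*L$ by composition with a tubular projection, and multiplying $\alpha_g$ by that extension, produces a contact form $\alpha_0$ for $\xi_{\can}$ on a neighborhood of $\beta(L)$ with $\beta^*\alpha_0 = \iota^*\alpha$.

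Next I would construct a smooth diffeomorphism $\Phi$ from a neighborhood of $\beta(L)$ in $ST^*L$ onto a neighborhood of $\iota(L)$ in $M$ satisfying $\Phi\circ\beta = \iota$ and $\Phi^*\alpha = \alpha_0$ at every point of $\beta(L)$. By Lemma~\ref{lem:description-coisotropic} applied on both sides, the normal bundle of $\iota(L)$ in $M$ is a Lagrangian complement inside $\xi|_{\iota(L)}$ to the characteristic foliation $\xi\cap T\iota(L)$, and similarly for $\beta(L)$ in $ST^*L$; the symplectic pairing $d\alpha$ identifies each normal bundle with the dual of $\xi\cap TL$, and this dual depends only on $L$ together with the kernel of $\iota^*\alpha$. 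Choosing any vector bundle isomorphism of the two normal bundles that realizes this common abstract model, and then composing with exponential-type maps on both sides, yields $\Phi$; a final fiberwise linear adjustment over $L$ enforces $\Phi^*\alpha|_{\beta(L)} = \alpha_0|_{\beta(L)}$.

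With $\Phi$ in hand, apply Moser's trick to the family $\alpha_t = (1-t)\,\alpha_0 + t\,\Phi^*\alpha$ on a possibly smaller neighborhood of $\beta(L)$. Since both endpoints coincide pointwise along $\beta(L)$ and the contact condition is open, $\alpha_t$ remains contact near $\beta(L)$ for all $t\in[0,1]$, and the time-dependent Gray-stability vector field vanishes along $\beta(L)$. Its flow $\psi_t$ therefore exists on a neighborhood of $\beta(L)$ for $t\in[0,1]$, fixes $\beta(L)$ pointwise, and satisfies $\psi_1^*\Phi^*\alpha = f\,\alpha_0$ for a positive function $f$; the composition $\varphi = \Phi\circ\psi_1$ is the desired contact diffeomorphism. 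The main obstacle is the second step, where one must show that the coisotropic neighborhood germ is determined up to contact isomorphism by the pair $(L,\iota^*\alpha)$: once this pointwise matching along $\beta(L)$ is achieved the third step follows essentially mechanically, but without it the Moser vector field would not vanish on the submanifold and the ambient isotopy would fail to preserve it.
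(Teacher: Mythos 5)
Your overall strategy --- identify a tubular neighborhood of $\beta(L)$ in $ST^*L$ with one of $\iota(L)$ in $M$ so that the contact data agree along the submanifold, then correct by a Gray/Moser isotopy whose generating vector field vanishes on the submanifold --- is the same as the paper's, which likewise reduces to the Gray stability argument from the first paragraph of the proof of Theorem~2.5.15 in \cite{geiges:ict08} after matching the data along $L$.

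There is, however, a genuine gap in your final step. You justify that $\alpha_t=(1-t)\,\alpha_0+t\,\Phi^*\alpha$ remains a contact form near $\beta(L)$ by saying that the two endpoints coincide pointwise along $\beta(L)$ and that the contact condition is open. Agreement of the $1$-forms at points of $\beta(L)$ gives $\alpha_t=\alpha_0$ there, but $d\alpha_t=(1-t)\,d\alpha_0+t\,d(\Phi^*\alpha)$ is a convex combination of two $2$-forms, and a convex combination of two non-degenerate forms on the $(2n-2)$-dimensional bundle $\ker\alpha_0$ can be degenerate as soon as $n>2$; openness of the contact condition does not help if the interpolation already fails \emph{at} points of $\beta(L)$. (The vanishing of the Gray vector field along $\beta(L)$ also presupposes that $d\alpha_t|_{\ker\alpha_t}$ is non-degenerate there.) What is needed is the stronger matching $d(\Phi^*\alpha)=d\alpha_0$ on all of $T(ST^*L)|_{\beta(L)}$, and arranging this is precisely the substantive content of the paper's proof: there the normal bundle is identified as $NL=J(\xi|_L)$ for a compatible metric and almost complex structure, $ST^*L$ near $\beta$ is identified with $1\oplus NL$, and one checks, using the splitting $T(ST^*L)=TL\oplus ST^*L$ and the closedness of $\iota^*\alpha$, that both the forms and their differentials coincide along $L$. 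Your step~2 contains the right seed --- you use the pairing $d\alpha\colon(\xi\cap TL)\times N\to\R$ to choose the normal bundle isomorphism --- but you never verify that this choice, together with the vanishing of $d\alpha$ on the remaining blocks (the Reeb direction paired with $N$, and $N\times N$ by Lagrangianity), forces the full $2$-forms to agree along $\beta(L)$. Without that verification the Moser step does not go through as written.
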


\begin{rmk}
Recall that in Weinstein's Lagrangian Neighborhood Theorem, a neighborhood of (the image of) a Lagrangian embedding $\iota \colon L \hookrightarrow W$ of a compact manifold is identified with a neighborhood of the zero section in $T^*L$.
To see why $\iota (L)$ can be identified with the zero section (and not only a specific section of $T^*L$), note that for a sufficiently small tubular neighborhood $U$ of $\iota (L)$ in $M$, the inclusion $\iota \colon L \hookrightarrow U$ induces an isomorphism $H^1 (U, \R) \to H^1 (L, \R)$, and thus there exists a choice of primitive one-form $\lambda$ of the symplectic form on $U$ that makes $\iota^* \lambda$ exact, see Remark~\ref{rmk:exactness}.

On the other hand, there exist contact invariants of coisotropic submanifolds that are not present for (or correspond to any symplectic invariants of) Lagrangian submanifolds.
In dimension $3$, the characteristic foliation (see section~\ref{sec:contact} for the definition) is one example, but there are others (in all dimensions).
Possibly the simplest one occurs when $C = T^n \times a \subset T^n \times S^{n - 1}$ (the unit cotangent bundle of a torus with its standard contact structure), where $a = (a_1, \ldots, a_n) \in S^{n - 1}$ is a point.
As already pointed out above, $T^n \times a$ is a coisotropic submanifold, and $\iota_a^* \alpha_\can$ is a closed one-form on $T^n$.
Suppose that $f$ is a (positive) smooth function on $T^n$ so that $f \, \iota_a^* \alpha_\can$ is also closed.
Then $df (v) = 0$ for every vector $v$ that is tangent to $\xi |_C \cap TC$, and $f$ must be of the form $f (q) = g (\langle a, q \rangle)$ for a function $g$ of a single variable, where $q = (q_1, \dots, q_n) \in T^n$.
If $a$ is irrational, then $f$ must be constant, and thus in this case the function $f$ in Definition~\ref{dfn:tilde-contact-shape} is unique (up to rescaling by a positive constant).
In particular, the Reeb foliation on $T^n \times a$ is unique and provides a contact invariant of the coisotropic submanifold.
If $a$ is rational, $g$ can be identified with a smooth function on $S^1$ (and the space of functions $f$ as in Definition~\ref{dfn:tilde-contact-shape} with $C^\infty (S^1, \R)$).
The cohomology class $[f \, \iota_a^* \alpha_\can]$ lies on the oriented line through $[\iota_a^* \alpha_\can] \in H^1 (T^n, \R)$, and thus defines the same element in $PH^1 (T^n, \R)$. \qed
\end{rmk}

\begin{proof}[Proof of Theorem~\ref{thm:neighborhood}]
It is possible to give a proof of the theorem using Weinstein's Lagrangian Neighborhood Theorem on the symplectization.
We prefer to give a purely contact geometric proof here.
The two proofs are to a large extend dual to one another.
For convenience, we identify both $\iota (L)$ and the section $\beta$ with $L$.

Let $g$ be a Riemannian metric on $L$ and $J$ be an almost complex structure on $\xi$ that are compatible with $\alpha$ in the sense that $g = \alpha \otimes \alpha + d\alpha |_\xi (\cdot, J \cdot)$ \cite{blair:rgc10}.
We assume that the unit cotangent bundle $ST^*L$ is determined by this metric.
As a contact manifold, the latter does not depend on the choice of Riemannian metric.
In fact, we may identify $ST^*L$ (as a contact manifold with its canonical contact structure) with the oriented projectivization $PT^*L = (T^*L \backslash L_0) / \R_+$, where $L_0$ again denotes the zero section.
That is, an element of $ST^*L$ can be considered as an oriented line in $T^*L$.
A choice of Riemannian metric on $L$ is required however to define the canonical contact form $\alpha_\can = \lambda_\can |_{ST^*L}$, and any two choices yield naturally diffeomorphic contact manifolds.

Since $L$ is coisotropic, the normal bundle of $L$ in $TM$ is given by $NL = J (\xi |_L)$.
Thus the tangent bundle $TL$ is isomorphic to the direct sum bundle $\R \oplus NL$, and the Riemannian metric induces an isomorphism of the latter with $T^*L$.
(In the symplectization, the factor $\R$ is generated by the one-form $dt$.)
In a neighborhood of $\iota^* \alpha$, the $\R$-component is always non-zero (an oriented line in $ST^*L$ is not orthogonal to the $\R$-factor), and thus we may identify $ST^*L$ with $NL = 1 \oplus NL$.

The Riemannian metric induces an isomorphism $T (ST^*L) = TL \oplus ST^*L$, see Exercises~3.10 and 3.11 in \cite{mcduff:ist98}.
The canonical contact form $\alpha_\can$ restricted to the section $\beta$ is by definition $\beta \circ d\pi$, where $\pi \colon ST^*L \to L$ is the canonical projection, and thus $\alpha$ and $\alpha_\can$ coincide on $L$.
By construction, the two-forms $d\alpha$ and $d\alpha_\can$ also agree on $L$: the above isomorphism identifies the restriction of the latter to $\xi |_L$ with the canonical two-form on $\xi |_L \oplus ST^*L \subset TL \oplus T^*L$ (again see Exercise~3.10 in \cite{mcduff:ist98}), which in turn is the two-form $d\alpha |_\xi = g |_\xi (J \cdot, \cdot)$ on $\xi |_L \oplus NL$ under the previous identifications.
The conclusion then follows from a Gray stability argument verbatim as in the first paragraph of the proof of Theorem~2.5.15 in \cite{geiges:ict08}.
\end{proof}

Combining the previous results yields the following proposition.

\begin{pro}[\cite{ms:gae14}] \label{pro:coisotropic-darboux-weinstein}
Let $(M, \xi)$ be a contact manifold, $U \subset M$ be an open subset, and $a = (a_1, \ldots, a_n) \in S^{n -1}$ be a point with $0 < a_i < 1$ for all $i$.
If $a_i$, $i = 2, \ldots, n$, are sufficiently small, then there exists a neighborhood $A$ of $a$ in $S^{n - 1}$ and a contact embedding of $T^n \times A$ with its standard contact structure into $U$.
If $x \in M$, we may in addition assume that $x$ lies in the image of $T^n \times a$.
\end{pro}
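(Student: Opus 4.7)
The plan is to construct an explicit coisotropic torus through $x$ whose pullback contact form has cohomology class pointing in the direction of $a$, and then apply the coisotropic neighborhood theorem (Theorem~\ref{thm:neighborhood}) to transport a neighborhood of the corresponding section of $ST^\ast T^n$ into $U$. Since $T^n$ is parallelizable, the unit cotangent bundle $ST^\ast T^n$ is canonically diffeomorphic to $T^n \times S^{n-1}$ as a contact manifold, and under this identification a ``constant'' section $\beta \colon T^n \to ST^\ast T^n$ corresponds precisely to a slice $T^n \times \{v\}$ for some $v \in S^{n-1}$.

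First, I would shrink $U$ if necessary so that it is contained in a neighborhood of $x$ of the form provided by the Contact Neighborhood Theorem applied to a transverse knot through $x$, as in the proof of Lemma~\ref{lem:existence-coisotropic-tori}: a neighborhood $S^1 \times V$ of $S^1 \times 0$ in $S^1 \times \R^{2n-2}$ with contact form $\alpha = dz + \tfrac{1}{2}\sum_{i=1}^{n-1} r_i^2\, d\theta_i$. Set $c = 1/a_1 > 0$ (note $a_1 > 0$ since all $a_i > 0$) and choose $r_i = \sqrt{2 a_{i+1}/a_1}$ for $i = 1, \ldots, n-1$. The hypothesis that $a_2, \ldots, a_n$ are sufficiently small then guarantees that each $r_i$ is small enough that the split Lagrangian torus $L = S^1(r_1) \times \cdots \times S^1(r_{n-1})$ lies in $V$ and passes through the origin (after a translation, which can be arranged so that $x$ lies on the resulting coisotropic torus, as in Lemma~\ref{lem:existence-coisotropic-tori}). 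By Remark~\ref{rmk:small-coiso}, the embedding $\iota \colon T^n \hookrightarrow S^1 \times V \subset U$, $\iota(z, \theta) = (z, r_1 \cos\theta_1, r_1 \sin\theta_1, \ldots)$, is coisotropic, and the one-form $\iota^\ast \alpha = dz + \tfrac{1}{2}\sum r_i^2\, d\theta_i$ is already closed with cohomology class a positive multiple of $(1, r_1^2/2, \ldots, r_{n-1}^2/2) = (1/a_1)(a_1, a_2, \ldots, a_n)$. In particular, its class projects to $a \in S^{n-1}$ under $p$.

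Next, I would apply Theorem~\ref{thm:neighborhood} to the coisotropic embedding $\iota$ and the contact form $c\,\alpha$ (rescaled so the closed form $\iota^\ast(c\alpha)$ projects to $a$): there exist a neighborhood $U'$ of the section $\beta = p \circ \iota^\ast(c\alpha) \colon T^n \to ST^\ast T^n$ and a contact diffeomorphism $\varphi \colon U' \to V' \subset U$ onto a neighborhood of $\iota(T^n)$ with $\varphi \circ \beta = \iota$. Because the pulled-back form is a translation-invariant one-form on the torus, the section $\beta$ lies entirely in the single slice $T^n \times \{a\}$ under the identification $ST^\ast T^n = T^n \times S^{n-1}$ (using $[dq_i]$ as the basis fixed earlier). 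Therefore $U'$ contains a neighborhood of $T^n \times \{a\}$ of the form $T^n \times A$ for some sufficiently small open neighborhood $A$ of $a$ in $S^{n-1}$ (using compactness of $T^n$), and restricting $\varphi$ to this subset produces the desired contact embedding into $U$ whose image contains $x$.

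The main technical issue is ensuring the hypothesis of Theorem~\ref{thm:neighborhood} is met, namely that $\iota^\ast\alpha$ (not merely some conformal rescaling) be closed. This is built into the Remark~\ref{rmk:small-coiso} model, and up to the positive conformal rescaling $\alpha \mapsto c\alpha$ used to normalize the cohomology class to $a \in S^{n-1}$, it gives both conditions simultaneously. The role of the hypothesis that $a_2, \ldots, a_n$ be sufficiently small is precisely to guarantee that the chosen $r_i$ are small enough for the split torus to embed inside the model neighborhood $S^1 \times V$ coming from the Contact Neighborhood Theorem; no comparable smallness condition is needed on $a_1$ since the transverse knot in the $z$-direction can be chosen arbitrarily short.
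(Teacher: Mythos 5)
Your argument is correct and is essentially the proof the paper intends: the proposition is stated as a combination of the coisotropic torus construction of Lemma~\ref{lem:existence-coisotropic-tori} and Remark~\ref{rmk:small-coiso} (choosing the radii $r_i$ so that $[\iota^*\alpha]$ is a positive multiple of $a$) with the coisotropic neighborhood theorem, Theorem~\ref{thm:neighborhood}, exactly as you do. The only difference is that the paper's remark following the proposition additionally records an explicit formula for the contact diffeomorphism onto a subset of $T^n\times S^{n-1}$, whereas you obtain the neighborhood $T^n\times A$ abstractly from the tube lemma applied to the constant section at $a$; both routes are valid.
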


\begin{rmk}
Using the action of $GL (n, \Z)$ on $T^n$, the restrictions on the point $a$ can be relaxed to some extend (see Proposition~\ref{pro:P-L-diffeos} below).
The proof in \cite{ms:gae14} gives an explicit construction of the contact diffeomorphism between a neighborhood of $T^n \times a$ and an open subset of $S^1 \times \R^{2 n - 2}$ with the canonical contact structure induced by the contact form $dz + \frac{1}{2} \sum_{i = 1}^{n -1} r_i^2 d\theta_i$.
Let $\O$ be the open subset of $S^1 \times \R^{2 n - 2}$ on which $r_i > 0$ for all $i$, and define a function $r \colon \O \to (0,1)$ by $r = (1 + \frac{1}{4} \sum_{i = 1}^{n - 1} r_i^4)^{- \frac{1}{4}}$.
Then the map $\O \to T^n \times S^{n - 1}$ defined by
	\[ (z, r_1, \ldots, r_{n - 1}, \theta_1, \ldots, \theta_{n -1}) \mapsto \left( z, \theta_1, \ldots, \theta_{n - 1}, r^2, \frac{1}{2} (r \cdot r_1)^2, \ldots, \frac{1}{2} (r \cdot r_{n - 1})^2 \right) \]
is a contact diffeomorphism onto the subset $\P$ of $T^n \times S^{n -1}$ on which all spherical coordinates are positive.
The intersection $(S^1 \times V) \cap \O$ is open and contains the (preimage of the) point $a$ provided that $a_i > 0$ are sufficiently small for all $i > 1$.

The coisotropic submanifold $S^1 \times T^{n - 1}$ constructed in the course of the proof of Lemma~\ref{lem:existence-coisotropic-tori}, where $T^{n - 1}$ is a split torus, is mapped to the coisotropic submanifold $T^n \times a \subset T^n \times S^{n - 1}$ by this diffeomorphism.
Its shape is $[\iota^* \alpha_\can] = a$, which is consistent with the computation in Remark~\ref{rmk:small-coiso} (the two vectors differ by the constant factor $2 \pi / r^2$ and thus belong to the same oriented line). \qed
\end{rmk}

\section{Eliashberg's contact shape invariants} \label{sec:contact-shape}
We recall the definitions of two shape invariants for contact manifolds from \cite{eliashberg:nio91}.
The second version is a refinement of the first one; both are contact invariants.

Denote by $M \times \R_+$ the symplectization of $(M,\alpha)$ endowed with the symplectic structure $\omega = d\lambda$, where $\lambda = t \, \pi^* \alpha$ (the Liouville one-form), $\pi \colon M \times \R_+ \to M$ is the projection to the first factor, and $t$ is the coordinate on the factor $\R_+ = (0,\infty)$.
Up to an exact symplectic diffeomorphism (i.e.\ that interchanges the corresponding Liouville one-forms), the symplectization depends only on the contact structure $\xi$, and not on the particular choice of contact form $\alpha$ with $\ker \alpha = \xi$.
A contact embedding $\varphi \colon (M_1,\xi_1 = \ker \alpha_1) \to (M_2,\xi_2 = \ker \alpha_2)$ with $\varphi^* \alpha_2 = f \alpha_1$, where $f$ is a positive function on $M$, induces an ($\R_+$-equivariant) symplectic embedding $(M_1 \times \R_+, d(t \, \pi^* \alpha_1)) \to (M_2 \times \R_+, d(t \, \pi^* \alpha_2))$ given by $(x, t) \mapsto (\varphi (x), t / f (x))$.
That is, the lift of a contact embedding preserves not only the symplectic structure but also the Liouville one-form $\lambda$ itself.
Since the product $M \times \R_+$ deformation retracts onto the first factor $M$, a given homomorphism $\tau \colon H^1 (M,\R) \to H^1 (L, \R)$ can be identified with a homomorphism $H^1 (M \times \R_+, \R) \to H^1 (L, \R)$.
Thus the symplectic shape $I (M \times \R_+, \lambda, L, \tau)$ of the symplectization is a contact invariant of the contact manifold $(M, \xi)$, and in contrast to the symplectic case, we may define the shape as a contact invariant of the Liouville one-form $\lambda$ without any freedom of translation.

\begin{rmk}
The map $\sigma_s \colon (x, t) \mapsto (x, s \, t)$, $s > 0$, is an $\R_+$-equivariant conformal symplectic diffeomorphism that is isotopic to the identity.
It thus follows from the definition that $I (M \times \R_+, \lambda, L, \tau)$ is a cone in $H^1 (L, \R)$.
It does not contain its vertex.
That last fact is well known, but maybe the proof deserves to be repeated here.
Suppose that $\iota$ is an exact Lagrangian embedding $L \hookrightarrow M \times \R_+$.
Then the Lagrangian embedding $\iota_s = \sigma_s \circ \iota$ is also exact, and therefore there exists a Hamiltonian isotopy $\psi_t \colon M \times \R_+ \to M \times \R_+$ such that $\psi_0$ is the identity and $\iota_s = \psi_s \circ \iota$, see e.g.\ Exercise~11.26 in \cite{mcduff:ist98}.
For $s$ sufficiently large, the image of $\iota_s$ does not intersect the image of $\iota$, which contradicts Gromov's Theorem, cf.\ Remark~11.21 in \cite{mcduff:ist98}. \qed
\end{rmk}

Thus $I (M \times \R_+, \lambda, L, \tau)$ is a cone without its vertex in $H^1 (L, \R)$.
It is therefore convenient to projectivize the invariant (in the oriented sense of identifying vectors that differ by a positive scalar factor).

\begin{dfn} \label{dfn:contact-shape}
The contact $(L, \tau)$-shape of $(M, \xi)$ is the subset
	\[ I_C (M, L, \tau) = I_C (M, \xi, L, \tau) = PI (M \times \R_+, \lambda, L, \tau) = I (M \times \R_+, \lambda, L, \tau) / \R_+ \]
of $PH^1 (L, \R) = H^1 (L, \R) / \R_+$, that is, the projectivization of the set of all $z$ in $H^1 (L, \R)$ such that there exists a Lagrangian embedding $\iota \colon L \hookrightarrow M \times \R_+$ with $\iota^* = \tau$ and $[\iota^* (t \, \pi^* \alpha)] = z$. \qed
\end{dfn}

The following result is the analog of Proposition~\ref{pro:symp-shape} for contact embeddings.
We give a proof here to illustrate why the (symplectic) shape invariant is more suitable to study contact embeddings than other symplectic invariants (such as symplectic capacities) of the symplectization of a contact manifold.

\begin{pro}[{\cite{eliashberg:nio91}}] \label{pro:contact-shape}
Let $(M_1, \xi_1 = \ker \alpha_1)$ and $(M_2, \xi_2 = \ker \alpha_2)$ be two contact manifolds of the same dimension, and let $\varphi \colon M_1 \to M_2$ be a contact embedding.
Then $I_C (M_1, L, \tau) \subset I_C (M_2, L, \tau \circ \varphi^*)$.
If $\varphi$ is a contact diffeomorphism, then $I_C (M_1, L, \tau) = I_C (M_2, L, \tau \circ \varphi^*)$.
\end{pro}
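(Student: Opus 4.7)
The plan is to reduce the proposition directly to its symplectic analog, Proposition~\ref{pro:symp-shape}, applied to the canonical lift of $\varphi$ to the symplectizations. Recall from the paragraph preceding Definition~\ref{dfn:contact-shape} that a contact embedding $\varphi \colon M_1 \to M_2$ with $\varphi^* \alpha_2 = f \, \alpha_1$ ($f > 0$) lifts to a map
\[ \tilde\varphi \colon M_1 \times \R_+ \to M_2 \times \R_+, \quad (x, t) \mapsto (\varphi (x), t / f (x)), \]
and that $\tilde\varphi$ preserves the Liouville one-forms exactly. The verification is a one-line computation: writing $\lambda_i = t \, \pi_i^* \alpha_i$,
\[ \tilde\varphi^* \lambda_2 = (t / f) \cdot \pi_1^* \varphi^* \alpha_2 = (t / f) \cdot \pi_1^* (f \alpha_1) = t \, \pi_1^* \alpha_1 = \lambda_1, \]
and in particular $\tilde\varphi$ is a symplectic embedding.

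Next I would identify cohomology. Each projection $\pi_i \colon M_i \times \R_+ \to M_i$ is a homotopy equivalence (the fiber $\R_+$ is contractible), so $\pi_i^*$ identifies $H^1 (M_i, \R)$ with $H^1 (M_i \times \R_+, \R)$. Under these identifications, $\tilde\varphi^*$ on first cohomology is precisely $\varphi^*$, so the homomorphism $\tau \colon H^1 (M_1, \R) \to H^1 (L, \R)$ extends uniquely to $H^1 (M_1 \times \R_+, \R)$, and $\tau \circ \tilde\varphi^*$ agrees with $\tau \circ \varphi^*$.

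Now I would invoke the sharp form of Proposition~\ref{pro:symp-shape}: because $\lambda_1 = \tilde\varphi^* \lambda_2$ holds \emph{on the nose}, one obtains the genuine (translation-free) inclusion
\[ I (M_1 \times \R_+, \lambda_1, L, \tau) \subset I (M_2 \times \R_+, \lambda_2, L, \tau \circ \varphi^*). \]
Projectivizing yields the conclusion, once one observes that both sides are $\R_+$-invariant cones in $H^1 (L, \R)$ not containing the origin (by the remark preceding Definition~\ref{dfn:contact-shape}, using the $\R_+$-action $\sigma_s$ on the symplectization), so that projectivization is well-defined and commutes with inclusion. When $\varphi$ is a contact diffeomorphism, applying the same argument to $\varphi^{-1}$ gives the reverse inclusion and hence equality.

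I do not anticipate any substantial obstacle: all of the real work lies in setting up the symplectization and in Proposition~\ref{pro:symp-shape}. The one subtlety that deserves emphasis is the need to lift $\varphi$ to a symplectic embedding that preserves the Liouville one-form \emph{exactly}, rather than merely up to an exact form; without this, one would only obtain an inclusion up to translation by $\tau \circ \varphi^* ([\tilde\varphi^* \lambda_2 - \lambda_1])$, which would be incompatible with passing to projective cohomology. This is precisely why $\tilde\varphi$ is designed to rescale the $\R_+$-coordinate by $1/f$.
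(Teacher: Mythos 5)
Your proposal follows exactly the paper's own argument: lift $\varphi$ to the symplectization via $(x,t) \mapsto (\varphi(x), t/f(x))$, observe that this lift preserves the Liouville one-forms on the nose, apply Proposition~\ref{pro:symp-shape} to get a genuine (translation-free) inclusion of symplectic shapes, and projectivize. The additional details you supply (the explicit pullback computation, the identification of $H^1$ via the homotopy equivalence $\pi_i$, and the reverse inclusion for diffeomorphisms via $\varphi^{-1}$) are all correct and consistent with the paper's (more terse) proof.
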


\begin{proof}
Let $\wv (x, t) = (\varphi (x), t / f (x))$ be the lift of the contact embedding $\varphi$ to the symplectization; it maps the cone $M_1 \times \R_+$ into the cone $\varphi (M_1) \times \R_+ \subset M_2 \times \R_+$ (in fact, $\wv (M_1 \times \R_+) = \varphi (M_1) \times \R_+$).
Recall that we identify ${\wv}^*$ with $\varphi^*$, and likewise for the homomorphism $\tau$.
Since $\wv$ is a symplectic embedding (that preserves the one-form $\lambda$), the inclusion $I (M_1 \times \R_+, \lambda, L, \tau) \subset I (M_2 \times \R_+, \lambda, L, \tau \circ \varphi^*)$ holds by Proposition~\ref{pro:symp-shape}.
The claim now follows from the definition of the invariant $I_C$.
\end{proof}

We next recall the definition of the modified contact shape $\It_C$ from \cite{eliashberg:nio91}.
An advantage of this invariant is that it is defined in intrinsic contact terms without the use of the symplectization.
Thus for the definition of this invariant (and in fact for coisotropic embeddings as well), it is not necessary to assume that the contact structure is cooriented.
The latter is necessary however to compare the modified shape to the shape invariant $I_C$, and for convenience, we give the definition for cooriented contact structures only.

\begin{dfn} \label{dfn:tilde-contact-shape}
The modified contact $(L, \tau)$-shape $\It_C (M, L, \tau) = \It_C (M, \xi, L, \tau) \subset PH^1 (L, \R)$ of $(M, \xi)$ is by definition the projectivization of the set of all points $z \in H^1 (L, \R)$ such that there exists a coisotropic embedding $\iota \colon L \hookrightarrow M$ and a positive function $f$ on $L$, so that $\iota^* = \tau$, the one-form $f \, \iota^* \alpha$ is closed and defines the codimension one distribution $\iota^* \xi$, and $z = [f \, \iota^* \alpha]$. \qed
\end{dfn}

\begin{rmk} \label{rmk:shape-no-coorientation}
Note that if $\beta$ is closed and defines the distribution $\iota^* \xi$, then so does $s \beta$ for any $s \not= 0$.
Moreover, since a smooth function on a closed manifold must have a critical point, a codimension one distribution cannot be defined by an exact one-form.
Thus the subset of $H^1 (L, \R)$ that appears in the definition of the modified contact shape is again a cone without its vertex. \qed
\end{rmk}

\begin{rmk}
The other choice of coorientation of $\xi$ replaces the cone $\It_C (M, L, \tau)$ by its opposite $- \It_C (M, L, \tau)$.
If one chooses to ignore coorientation, the modified contact shape can be defined as the union of these two cones, and similarly for the original contact shape invariant in Definition~\ref{dfn:contact-shape}. \qed
\end{rmk}

\begin{pro} \label{pro:tilde-contact-shape}
Let $(M_1, \xi_1 = \ker \alpha_1)$ and $(M_2, \xi_2 = \ker \alpha_2)$ be two contact manifolds of the same dimension, and let $\varphi \colon M_1 \to M_2$ be a contact embedding.
Then $\It_C (M_1, L, \tau) \subset \It_C (M_2, L, \tau \circ \varphi^*)$.
If $\varphi$ is a contact diffeomorphism, then $\It_C (M_1, L, \tau) = \It_C (M_2, L, \tau \circ \varphi^*)$.
\end{pro}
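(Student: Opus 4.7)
The plan is to proceed directly from the definition of the modified contact shape by pushing forward coisotropic data via $\varphi$, in close analogy with the proof of Proposition~\ref{pro:symp-shape}. The argument is essentially mechanical once one recognizes that contact embeddings preserve everything in sight: the transversality condition with the contact distribution, the distribution $\iota^*\xi$ itself (up to identifying $\xi_1$ with $\varphi^*\xi_2$), and closedness of the relevant one-form after rescaling by the conformal factor.

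Here is how I would organize the steps. First, fix a cohomology class $z \in H^1(L,\R)$ that witnesses membership in the cone over $\It_C(M_1, L, \tau)$, provided by a coisotropic embedding $\iota \colon L \hookrightarrow M_1$ with $\iota^* = \tau$ and a positive function $f$ on $L$ such that $f\,\iota^*\alpha_1$ is closed, defines $\iota^*\xi_1$, and satisfies $[f\,\iota^*\alpha_1] = z$. Since $\varphi$ is a contact embedding with fixed coorientation, write $\varphi^*\alpha_2 = g\,\alpha_1$ for a positive function $g$ on $M_1$. Second, consider the composition $\iota' = \varphi \circ \iota \colon L \hookrightarrow M_2$ and verify it is coisotropic: transversality to $\xi_2$ follows because $\varphi_*$ is a fiberwise isomorphism $\xi_1 \to \xi_2$, and the distribution $(\iota')^*\xi_2 = \iota^*(\varphi^*\xi_2) = \iota^*\xi_1$ is literally the same subbundle of $TL$ as before. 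Third, set $f' = f/(g\circ\iota)$, a positive function on $L$; then
\[ f'\,(\iota')^*\alpha_2 = \frac{f}{g\circ\iota}\cdot\iota^*(g\,\alpha_1) = f\,\iota^*\alpha_1, \]
which is closed and defines $(\iota')^*\xi_2 = \iota^*\xi_1$. Fourth, compute $(\iota')^* = \iota^* \circ \varphi^* = \tau \circ \varphi^*$ on first cohomology, and note the period class $[f'\,(\iota')^*\alpha_2] = z$ is unchanged. Thus $z$ represents an element of $\It_C(M_2, L, \tau\circ\varphi^*)$, and passing to $PH^1(L,\R)$ gives the claimed inclusion. For the diffeomorphism case, applying the same construction to $\varphi^{-1}$ (with homomorphism $\tau\circ\varphi^*$ in place of $\tau$) yields the reverse inclusion, hence equality.

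The only nontrivial observation in the argument is the simultaneous cancellation in step three: the conformal factor $g$ from $\varphi^*\alpha_2 = g\,\alpha_1$ is absorbed into the rescaling function witnessing coisotropy, and this is precisely what makes the period class transport without distortion, which is why no translation appears (unlike in the symplectic case of Proposition~\ref{pro:symp-shape}). I would not expect any genuine obstacle, since Definition~\ref{dfn:tilde-contact-shape} has been crafted so that both the coisotropic condition and the period class behave functorially under contact embeddings preserving coorientation; the projectivization at the end is automatic from Remark~\ref{rmk:shape-no-coorientation}. The one bookkeeping point worth stating explicitly is the identification of $(\iota')^*$ with $\tau\circ\varphi^*$ on $H^1$, so that the inclusion lands in the right target set.
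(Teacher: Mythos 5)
Your proposal is correct and follows essentially the same route as the paper's own proof: introduce the conformal factor $g$ via $\varphi^*\alpha_2 = g\,\alpha_1$ and observe that $(f/(g\circ\iota))\,(\varphi\circ\iota)^*\alpha_2 = f\,\iota^*\alpha_1$, so the witnessing one-form (and hence the class $z$) transports unchanged. The extra details you supply (transversality, the computation of $(\varphi\circ\iota)^*$ on cohomology, and the inverse argument for the diffeomorphism case) are exactly what the paper leaves implicit.
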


\begin{proof}
Consider the positive function $g$ on $M_1$ defined by the relation $\varphi^* \alpha_2 = g \alpha_1$.
Let $\iota \colon L \hookrightarrow M_1$ be a coisotropic embedding, and $\beta = f \, \iota^* \alpha_1$ be a closed one-form that defines the distribution $\iota^* \xi_1$.
Then the embedding $\varphi \circ \iota \colon L \hookrightarrow M_2$ is coisotropic, and $(f / (g \circ \iota)) (\varphi \circ \iota)^* \alpha_2 = \beta$.
\end{proof}

Again we are mostly interested in the situation in which $L = T^n$ and $M$ is an open subset of either $\R^{2 n - 1}$ with its standard contact structure or of the unit cotangent bundle $ST^* T^n \subset T^* T^n$ of $T^n$ (with respect to some Riemannian metric) with its canonical contact structure $\xi_\can = \ker \alpha_\can$, where $\alpha_\can$ is the restriction of the canonical one-form $\lambda_\can$ on $T^* T^n$.
The symplectization of $ST^* T^n$ is diffeomorphic to $T^* T^n$ minus the zero section with its standard symplectic structure $\omega_\can$ via the diffeomorphism $(q, p, t) \mapsto (q, t \, p)$.
The trivialization $T^* T^n = T^n \times \R^n$ restricts to the trivialization $ST^* T^n = T^n \times S^{n - 1}$.
This gives rise to an identification of the (oriented) projectivized group $PH^1 (T^n, \R)$ with the fiber $S^{n -1}$ of the fibration $ST^* T^n = T^n \times S^{n - 1} \to T^n$.
As in the symplectic case, for brevity we often omit the manifold $T^n$ from the notation.

\begin{dfn}
Define $I_C (M, \tau) = I_C (M, T^n, \tau)$ and $\It_C (M, \tau) = \It_C (M, T^n, \tau)$.
\end{dfn}

For a given homomorphism $\Phi \colon H^1 (L, \R) \to H^1 (L, \R)$, denote by $P \Phi$ the induced homomorphism $PH^1 (L, \R) \to PH^1 (L, \R)$.
The analog of Proposition~\ref{pro:L-diffeos} holds for the contact shapes; the proof is almost verbatim the same and thus is omitted.

\begin{pro} \label{pro:P-L-diffeos}
If $\phi \colon L \to L$ is a diffeomorphism, then the two shapes satisfy $I_C (M, L, \phi^* \circ \tau) = P \phi^* (I_C (M, L, \tau))$ and $\It_C (M, L, \phi^* \circ \tau) = P \phi^* (\It_C (M, L, \tau))$.
In particular, $I_C (M, A \circ \tau) = P A (I_C (M, \tau))$ and $\It_C (M, A \circ \tau) = P A (\It_C (M, \tau))$ provided that $A \in GL (n, \Z)$.
\end{pro}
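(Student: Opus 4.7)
The plan is to imitate the proof of Proposition~\ref{pro:L-diffeos}, handling the two contact shape invariants separately, and then to extract the matrix statement as the special case $L = T^n$ and $\phi = A^t$.

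First I would treat $I_C$. Starting from a Lagrangian embedding $\iota \colon L \hookrightarrow M \times \R_+$ with $\iota^* = \tau$ and $[\iota^*(t \, \pi^* \alpha)] = z$, I would observe that $\iota \circ \phi \colon L \hookrightarrow M \times \R_+$ is again a Lagrangian embedding, with $(\iota \circ \phi)^* = \phi^* \circ \tau$ and
\[
[(\iota \circ \phi)^*(t \, \pi^* \alpha)] = \phi^*[\iota^*(t \, \pi^* \alpha)] = \phi^*(z).
\]
This gives the inclusion $\phi^*\bigl(I(M \times \R_+, \lambda, L, \tau)\bigr) \subset I(M \times \R_+, \lambda, L, \phi^* \circ \tau)$; applying the same argument to $\phi^{-1}$ (with $\phi^* \circ \tau$ in place of $\tau$) yields the reverse inclusion and hence equality. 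Passing to the projectivization $PH^1(L, \R)$ and using that $P\phi^*$ is well-defined because $\phi^*$ is linear and invertible gives $I_C(M, L, \phi^* \circ \tau) = P\phi^*(I_C(M, L, \tau))$.

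For $\It_C$ the argument runs in parallel, but at the contact level rather than on the symplectization. Given a coisotropic embedding $\iota$ and a positive function $f$ on $L$ so that $f \, \iota^* \alpha$ is closed and defines $\iota^* \xi$, the composition $\iota \circ \phi$ is coisotropic, the function $f \circ \phi$ is again positive, and
\[
(f \circ \phi) \, (\iota \circ \phi)^* \alpha = \phi^*(f \, \iota^* \alpha)
\]
is a closed one-form defining the distribution $(\iota \circ \phi)^* \xi = \phi^*(\iota^* \xi)$, with cohomology class $\phi^*[f \, \iota^* \alpha] = \phi^*(z)$. Thus $\phi^*(z)$ lies in the un-projectivized set underlying $\It_C(M, L, \phi^* \circ \tau)$, and the reverse inclusion follows as before by running the argument with $\phi^{-1}$.

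Finally, for the matrix statement I would specialize to $L = T^n$ and $\phi = A^t$. Since $(A^t)^*$ acts on $H^1(T^n, \R) = \R^n$, in the basis $[dq_1], \ldots, [dq_n]$, by the matrix $A$, the two general identities become $I_C(M, A \circ \tau) = PA(I_C(M, \tau))$ and $\It_C(M, A \circ \tau) = PA(\It_C(M, \tau))$. I do not expect any genuine obstacle here: both shape invariants are already cones in $H^1(L, \R)$, so passing between the $H^1$ and $PH^1$ formulations is automatic, and the only bookkeeping needed is to track the roles of $\phi^*$ and the positive function $f$ correctly.
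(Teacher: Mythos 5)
Your proposal is correct and is exactly the argument the paper has in mind: the paper omits the proof of this proposition precisely because it is ``almost verbatim the same'' as that of Proposition~\ref{pro:L-diffeos}, namely pre-composing Lagrangian (respectively coisotropic) embeddings with $\phi$, running the argument for $\phi^{-1}$ to get equality, and specializing to $L = T^n$, $\phi = A^t$. Your additional care in checking that $f \circ \phi$ plays the role of $f$ for $\It_C$ and that projectivization is harmless is sound bookkeeping, not a deviation.
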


The modified contact shape is related to the original contact shape by means of the following proposition and its corollary.

\begin{pro} \label{pro:lift-embedding}
Let $\iota \colon L \hookrightarrow M$ be an embedding, and let $\hi_f \colon L \hookrightarrow M \times \R_+$ denote the embedding $\hi_f (x) = (\iota (x), f (x))$ into the symplectization of $(M, \alpha)$, where $f$ is a positive function on $L$.
Then the embedding $\iota$ is coisotropic if and only if the embedding $\hi_f$ is Lagrangian for some $f > 0$.
Equivalently, there exists a contact form $\alpha'$ on $(M, \xi)$ so that the embedding $\hi_1 (x) = (\iota (x), 1)$ into the symplectization of $(M, \alpha')$ is Lagrangian.
In fact, the one-form $f \, \iota^* \alpha$ (that defines the distribution $\iota^* \xi$) is closed if and only if $\hi_f$ is Lagrangian, and $\alpha' = f \alpha$.
\end{pro}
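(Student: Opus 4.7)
The plan is to reduce both implications to the single observation that $\hi_f^* \lambda = f \, \iota^* \alpha$, which follows immediately from $\lambda = t \, \pi^* \alpha$ together with $\pi \circ \hi_f = \iota$ and $t \circ \hi_f = f$. Consequently $\hi_f^* \omega = d(f \, \iota^* \alpha)$, so the Lagrangian condition $\hi_f^* \omega = 0$ becomes the statement that $f \, \iota^* \alpha$ is a closed one-form on $L$ (note that $\dim \hi_f (L) = n$ is exactly half of $\dim (M \times \R_+)$, so isotropic is the same as Lagrangian here). With this equivalence in place, the two directions become essentially an unpacking of Definition~\ref{dfn:coisotropic}.

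For the forward direction, I would assume $\iota$ is coisotropic. Transversality of $\iota$ to $\xi$ translates into $\iota^* \alpha$ being nowhere vanishing (since $\xi$ has codimension one in $TM$ and $\dim L = n$), and by definition there is a closed one-form $\beta$ on $L$ with $\ker \beta = \ker (\iota^* \alpha) = \iota^* \xi$. Comparing two nowhere-vanishing one-forms with the same kernel, I can write $\beta = g \cdot \iota^* \alpha$ for a nowhere-vanishing smooth function $g$ on $L$; since $L$ is connected, $g$ has constant sign, and after replacing $\beta$ by $-\beta$ if necessary I may take $f = \pm g > 0$, so that $f \, \iota^* \alpha = \pm \beta$ is closed and $\hi_f$ is Lagrangian.

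The main obstacle is the reverse direction, because the subtlety is that Lagrangianity of $\hi_f$ must itself deliver the transversality of $\iota$ to $\xi$; without it, closedness of $f \, \iota^* \alpha$ would not guarantee that it defines the distribution $\iota^* \xi$. I plan to argue by contradiction: suppose $\iota^* \alpha$ vanishes at some $x_0 \in L$, so $T\iota (T_{x_0} L) \subset \xi_{\iota (x_0)}$. Expanding $\hi_f^* \omega = df \wedge \iota^* \alpha + f \, \iota^* d\alpha$ and evaluating at $x_0$ kills the first term, so the Lagrangian hypothesis (combined with $f(x_0) > 0$) forces $\iota^* d\alpha|_{x_0} = 0$ as well. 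Then $T\iota (T_{x_0} L)$ is an $n$-dimensional isotropic subspace of the $(2n-2)$-dimensional symplectic vector space $(\xi_{\iota (x_0)}, d\alpha|_\xi)$, contradicting the bound $n-1$ on the dimension of isotropic subspaces. Hence $\iota^* \alpha$ is nowhere vanishing, $\iota$ is transverse to $\xi$, and $f \, \iota^* \alpha$ is a closed defining one-form for $\iota^* \xi$, proving $\iota$ coisotropic. The equivalent formulation with $\alpha' = f \alpha$ is then immediate, since $\hi_1^*(t \, \pi^* \alpha') = \iota^* \alpha' = f \, \iota^* \alpha = \hi_f^*(t \, \pi^* \alpha)$, so the two Lagrangian conditions coincide.
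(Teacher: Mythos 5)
Your proposal is correct and follows essentially the same route as the paper: the whole statement reduces to the identity $\hi_f^* (t \, \pi^* \alpha) = f \, \iota^* \alpha$, and the reformulation with $\alpha' = f \alpha$ comes from the rescaling diffeomorphism of $M \times \R_+$, exactly as in the paper's (very terse) proof. Your additional linear-algebra argument for the reverse direction --- that the Lagrangian condition forces transversality of $\iota$ to $\xi$, since otherwise $T\iota(T_{x_0}L)$ would be an $n$-dimensional isotropic subspace of the $(2n-2)$-dimensional symplectic space $(\xi_{\iota(x_0)}, d\alpha|_\xi)$ --- is a correct and worthwhile filling-in of a step the paper treats as immediate.
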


\begin{proof}
It is immediate to verify that $\hi_f^* (t \, \pi^* \alpha) = f \, \iota^* \alpha$ and the diffeomorphism $\sigma_f (x, t) = (x, t / f (x))$ of $M \times \R_+$ satisfies $\sigma_f^* (t \, \pi^* (f \alpha)) = t \, \pi^* \alpha$.
\end{proof}

\begin{cor}[{\cite{eliashberg:nio91}}] \label{cor:contact-shape-rel}
$\It_C (M, L, \tau) \subset I_C (M, L, \tau)$ for every (cooriented) contact manifold $(M, \xi = \ker \alpha)$ and every $L$ and $\tau$ as in the definitions of the shapes.
\end{cor}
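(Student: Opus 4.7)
The plan is to deduce the inclusion directly from Proposition~\ref{pro:lift-embedding}, which provides the bridge between coisotropic embeddings into $M$ and Lagrangian embeddings into the symplectization $M \times \R_+$. The main observation is that an element of $\It_C(M, L, \tau)$ is represented by the cohomology class $[f \, \iota^* \alpha]$, and the proposition tells us precisely that this class is realized as $[\hi_f^*(t \, \pi^* \alpha)]$ for a Lagrangian embedding $\hi_f$ — which is exactly the type of data needed to witness an element of $I_C(M, L, \tau)$.

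First I would start with an arbitrary $[z] \in \It_C(M, L, \tau)$, unpacking the definition to obtain a coisotropic embedding $\iota \colon L \hookrightarrow M$ and a positive function $f$ on $L$ such that $\iota^* = \tau$, the one-form $f \, \iota^* \alpha$ is closed and defines the distribution $\iota^* \xi$, and $z = [f \, \iota^* \alpha]$. Next I would apply the final part of Proposition~\ref{pro:lift-embedding}: since $f \, \iota^* \alpha$ is closed, the lifted embedding $\hi_f \colon L \hookrightarrow M \times \R_+$, $\hi_f(x) = (\iota(x), f(x))$, is Lagrangian, and its pullback satisfies $\hi_f^*(t \, \pi^* \alpha) = f \, \iota^* \alpha$, so $[\hi_f^*\lambda] = z$.

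It then remains to check that the induced homomorphism on cohomology matches the required $\tau$. This is the routine step: the projection $\pi \colon M \times \R_+ \to M$ is a deformation retraction, so the identification $\pi^* \colon H^1(M, \R) \xrightarrow{\cong} H^1(M \times \R_+, \R)$ is the one that was already used to regard $\tau$ as a homomorphism out of $H^1(M \times \R_+, \R)$. Since $\pi \circ \hi_f = \iota$, we get $\hi_f^* = \iota^* \circ (\pi^*)^{-1}$, i.e.\ $\hi_f^*$ coincides with $\tau$ under this identification. Therefore $z$ lies in $I(M \times \R_+, \lambda, L, \tau)$, and passing to projectivizations yields $[z] \in I_C(M, L, \tau)$.

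There is really no hard step here — the entire content of the corollary is the Lagrangian/coisotropic correspondence already established in Proposition~\ref{pro:lift-embedding}, together with the matching of periods $\hi_f^*(t \, \pi^* \alpha) = f \, \iota^* \alpha$. The only point requiring some care is bookkeeping: ensuring that the conventions for identifying cohomology of $M$ and $M \times \R_+$ (via deformation retraction) make the equality $\hi_f^* = \tau$ literal, rather than merely ``up to the identification.'' Once this is noted, the inclusion is immediate; equality in general fails (and is not claimed), since Lagrangian embeddings into the symplectization need not be of the form $\hi_f$ for any section of $M \times \R_+ \to M$.
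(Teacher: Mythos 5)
Your proposal is correct and is exactly the argument the paper intends: the corollary is stated immediately after Proposition~\ref{pro:lift-embedding} with no separate proof precisely because the inclusion follows by lifting a coisotropic embedding realizing a class in $\It_C$ to the Lagrangian embedding $\hi_f$ with $\hi_f^*(t\,\pi^*\alpha) = f\,\iota^*\alpha$, matching $\tau$ via the deformation retraction $M \times \R_+ \to M$. Your bookkeeping of the cohomology identification is the same as the paper's convention, so nothing is missing.
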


In certain situations, the shapes can be (partly) calculated explicitly, as the following lemmas and proposition show.

\begin{lem}
If $(M, \xi)$ is any contact manifold, and the homomorphism $\tau$ factors through a composition $H^1 (M, \R) \to H^1 (S^1, \R) \to H^1 (T^n, \R)$, where $S^1 \subset M$ is an embedded circle, and the first map is induced by its inclusion, then $I_C (M, \tau)$ and $\It_C (M, \tau)$ are non-empty.
\end{lem}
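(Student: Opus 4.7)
The strategy is to construct an explicit coisotropic embedding $\iota \colon T^n \hookrightarrow M$ with $\iota^* = \tau$ and $[\iota^* \alpha] \neq 0$, thereby exhibiting a non-zero element of $\It_C(M, \tau)$. Non-emptiness of $I_C(M, \tau)$ then follows immediately from Corollary~\ref{cor:contact-shape-rel}, so there is nothing further to verify for that invariant.

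First I would $C^0$-perturb the given embedded circle $S^1 \subset M$ to a transverse knot $\gamma$ in the same homotopy class, so that the induced cohomology homomorphism $\gamma^* = i^*$ is unaffected by the perturbation. By the Contact Neighborhood Theorem (as used in the proof of Lemma~\ref{lem:existence-coisotropic-tori}), a tubular neighborhood of $\gamma$ is contact diffeomorphic to an open neighborhood of $S^1 \times 0$ in $S^1 \times \R^{2n-2}$ equipped with the contact form $\alpha = dz + \frac{1}{2} \sum_{i=1}^{n-1} r_i^2 \, d\theta_i$.

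Second, inside this neighborhood I would form a split Lagrangian torus $\jmath \colon T^{n-1} \hookrightarrow \R^{2n-2}$ with sufficiently small radii $r_i > 0$, producing a coisotropic embedding $\iota_0 \colon T^n \cong S^1 \times T^{n-1} \hookrightarrow M$ as in the proof of Lemma~\ref{lem:existence-coisotropic-tori}. By Remark~\ref{rmk:small-coiso}, the pullback $\iota_0^* \alpha$ is closed with $[\iota_0^* \alpha] = (c, \pi r_1^2, \ldots, \pi r_{n-1}^2) \neq 0$ for some $c > 0$. Since $\iota_0$ is homotopic (inside its tubular neighborhood) to the composition $\gamma \circ p_0$, where $p_0 \colon T^n \to S^1$ is projection onto the first factor, we obtain $\iota_0^* = p_0^* \circ i^*$.

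Finally, to realize the prescribed factorization $\tau = g \circ i^*$, I would compose $\iota_0$ with the diffeomorphism of $T^n$ coming from a matrix $A \in GL(n, \Z)$ whose first row equals the primitive integer vector $g(1)$ (extending a primitive vector to a unimodular basis via the Euclidean algorithm). By Proposition~\ref{pro:P-L-diffeos}, the cohomology map of $\iota = \iota_0 \circ A$ is $A^* \circ \iota_0^* = (A^* \circ p_0^*) \circ i^* = g \circ i^* = \tau$, and the class $[\iota^* \alpha] \neq 0$ lies in $\It_C(M, \tau)$, completing the argument in the primitive case. The main obstacle is making the $GL(n, \Z)$ action compatible with every allowed factorization: for a non-primitive integer multiple $g(1) = k v$ (with $v$ primitive), one applies the same construction starting from a transverse knot homologous to $k \cdot [S^1]$ instead of $\gamma$, while the degenerate case $\tau = 0$ (which occurs when $i^* = 0$) is handled by taking a small coisotropic torus inside a Darboux chart, again via Lemma~\ref{lem:existence-coisotropic-tori}.
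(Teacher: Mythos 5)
Your argument is correct and follows essentially the same route as the paper: $C^0$-approximate the circle by an isotopic transverse knot, build a split coisotropic torus in its standard neighborhood as in Lemma~\ref{lem:existence-coisotropic-tori} and Remark~\ref{rmk:small-coiso}, and pass from $\It_C$ to $I_C$ via Corollary~\ref{cor:contact-shape-rel}. The paper's proof consists of exactly these steps and nothing more; your additional $GL(n,\Z)$-bookkeeping via Proposition~\ref{pro:P-L-diffeos} to match a general factorization $\tau = g \circ i^*$ is a detail the paper leaves implicit (and only factorizations with $g(1)$ integral can be induced by an actual embedding, so nothing is lost by restricting to that case).
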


\begin{proof}
By Theorem~3.3.1 in \cite{geiges:ict08}, every circle embedding can be $C^0$-approximated by a transverse knot that is isotopic to the original embedding.
For the modified shape the lemma thus follows from the construction in the proof of Lemma~\ref{lem:existence-coisotropic-tori}, and for the original shape it then follows from Corollary~\ref{cor:contact-shape-rel}.
\end{proof}

\begin{lem}
$\It_C (M, 0) = I_C (M, 0) = S^{n - 1}$ for any contact manifold $(M, \xi)$.
\end{lem}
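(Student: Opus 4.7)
The plan is to establish the two equalities by verifying the chain of inclusions
\[
    S^{n - 1} \subset \It_C (M, 0) \subset I_C (M, 0) \subset S^{n - 1},
\]
whence all three coincide. The middle inclusion is Corollary~\ref{cor:contact-shape-rel}, and the rightmost is by definition of the shape invariants, so the content is to prove the leftmost inclusion.

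First I would show that $\It_C (M, 0)$ contains the open ``positive orthant''
\[
    \Delta = \{ a \in S^{n - 1} \mid a_i > 0 \text{ for all } i \}.
\]
Fix $[a] \in \Delta$, choose a Darboux chart $D$ about any point of $M$, and inside $D$ pick a small transverse knot (in particular nullhomotopic in $M$). The Contact Neighborhood Theorem, as used in the proof of Lemma~\ref{lem:existence-coisotropic-tori}, contactomorphically identifies a thin tubular neighborhood of the knot (which may be arranged to still lie in $D$) with an open set $S^1 \times V \subset (S^1 \times \R^{2 n - 2}, dz + \frac{1}{2} \sum r_i^2 \, d\theta_i)$. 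Remark~\ref{rmk:small-coiso} then supplies coisotropic embeddings
\[
    \iota \colon T^n \hookrightarrow S^1 \times V \subset D \subset M
\]
with $\iota^* \alpha$ closed and $[\iota^* \alpha] = (s, \pi r_1^2, \ldots, \pi r_{n - 1}^2)$, where $s > 0$ is the length of the transverse knot and $r_1, \ldots, r_{n - 1} > 0$ are otherwise free parameters, each of which may be made small independently. Since only the ratios among these coordinates affect the projective class, choosing $s$ and the $r_i$ appropriately realizes every prescribed $[a] \in \Delta$. Moreover $\iota (T^n) \subset D$ and $D$ is contractible, so $\iota^* \colon H^1 (M, \R) \to H^1 (T^n, \R)$ factors through $H^1 (D, \R) = 0$, giving $\iota^* = 0$ as required.

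Next I would invoke Proposition~\ref{pro:P-L-diffeos}: since $A \circ 0 = 0$ for every $A \in GL (n, \Z)$, the set $\It_C (M, 0)$ is invariant under the induced $GL (n, \Z)$-action on $PH^1 (T^n, \R) = S^{n - 1}$. The proof then concludes by showing that the $GL (n, \Z)$-orbit of $\Delta$ exhausts $S^{n - 1}$. Given $v \in \R^n \setminus \{ 0 \}$, first apply a diagonal $\pm 1$-matrix to make every coordinate of $v$ nonnegative; if some coordinate $v_i$ vanishes, pick $j$ with $v_j > 0$ (which exists since $v \neq 0$) and apply the elementary unimodular matrix $I + E_{i j}$ to replace $v_i$ by $v_i + v_j > 0$. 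Finitely many such operations place $v$ in $\Delta$.

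The main obstacle, as I see it, is the first step: simultaneously enforcing $\iota^* = 0$ and realizing a prescribed direction in $\Delta$. The former requirement confines the construction to a single contractible chart, which imposes uniform upper bounds on the parameters $s$ and $r_i$ of Remark~\ref{rmk:small-coiso}; since the target direction depends only on ratios, these upper bounds leave enough freedom to cover all of $\Delta$. The $GL (n, \Z)$-invariance and the elementary orbit argument that follow are essentially formal.
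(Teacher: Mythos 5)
Your argument is correct and follows essentially the same route as the paper: realizing the positive-orthant directions via the coisotropic split tori of Remark~\ref{rmk:small-coiso} inside a Darboux chart, extending to all of $S^{n-1}$ by the $GL(n,\Z)$-action of Proposition~\ref{pro:P-L-diffeos}, and closing the chain with Corollary~\ref{cor:contact-shape-rel}. Your explicit elementary-matrix verification that the $GL(n,\Z)$-orbit of the positive orthant exhausts $S^{n-1}$ is a detail the paper leaves implicit, but it is the same proof.
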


\begin{proof}
By Proposition~\ref{pro:tilde-contact-shape} and Corollary~\ref{cor:contact-shape-rel} (and Darboux's Theorem), it suffices to show that $S^{n - 1} \subset \It_C (\R^{2 n - 1}, 0)$.
(Here we also use the standard contact dilation $(z, x, y) \mapsto (s^2 z, s x, s y)$ of $\R^{2 n - 1}$.)
By the same argument as in Remark~\ref{rmk:small-coiso}, the element $(1 : a_1 : \ldots : a_n) \in \It_C (\R^{2 n - 1}, 0)$ for all $a_i > 0$.
Then by (the $GL (n, \Z)$-action in) Proposition~\ref{pro:P-L-diffeos}, we have $S^{n - 1} \subset \It_C (\R^{2 n - 1}, 0)$.
\end{proof}

\begin{lem}
Consider $S^1 \times \R^{2 n - 2}$ with its standard contact structure, and let $\tau \colon H^1 (S^1 \times U, \R) \to H^1 (T^n, \R)$ be the homomorphism induced by the canonical embedding $S^1 \times T^{n - 1} \to S^1 \times 0 \hookrightarrow S^1 \times \R^{2 n - 2}$, where $U$ is an open subset of $\R^{2 n - 2}$.
Then the shape $\It_C (S^1 \times U, \tau)$ equals the upper hemisphere of $S^{n - 1}$ minus the north pole.
\end{lem}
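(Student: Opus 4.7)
The plan is to prove both inclusions separately. For the $\supset$ direction I would construct explicit coisotropic embeddings realizing each projective class of the claimed set; for the $\subset$ direction I would invoke Lagrangian rigidity in the symplectization of $(S^1 \times U, \alpha = dz + \tfrac{1}{2} \sum r_i^2 \, d\theta_i)$ to rule out every other class.

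For $\supset$, I would generalize the construction of Remark~\ref{rmk:small-coiso}. Given a point $p$ in the upper hemisphere minus the north pole of $S^{n-1}$, represented by $(a_1, \ldots, a_n) \in H^1(T^n, \R)$ with $a_1 > 0$ and $(a_2, \ldots, a_n) \neq 0$, I would choose a Lagrangian embedding $\jmath \colon T^{n-1} \hookrightarrow U \subset \R^{2n-2}$ with $[\jmath^* \lambda_0] = (a_2, \ldots, a_n)$, where $\lambda_0 = \tfrac{1}{2} \sum (x_i \, dy_i - y_i \, dx_i)$. This is possible for any non-zero vector in $\R^{n-1}$ by the computation of the Lagrangian shape of $(\R^{2n-2}, \omega_0)$ (Example~\ref{exa:no-exact}); after rescaling $\jmath$, I can arrange the image to lie inside $U$ while tuning the ratio of $a_1$ to $(a_2, \ldots, a_n)$ freely. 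Setting $\iota(z, x) = (z, \jmath(x))$ produces a coisotropic embedding with $\iota^* \alpha = dz + \jmath^* \lambda_0$ (closed because $\jmath$ is Lagrangian), cohomology class $(2\pi, a_2, \ldots, a_n)$, and $\iota^* = \tau$. As $\jmath$ ranges over all Lagrangian embeddings with non-zero period, this realizes every projective class in the upper hemisphere minus the north pole. The north pole itself is excluded because it would demand $[\jmath^* \lambda_0] = 0$, i.e.\ an exact Lagrangian embedding into $\R^{2n-2}$, which Gromov ruled out.

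For $\subset$, I would apply Proposition~\ref{pro:lift-embedding} to pass to the symplectization: any admissible pair $(\iota, f)$ lifts to a Lagrangian embedding $\hi_f \colon T^n \hookrightarrow M \times \R_+$ satisfying $\hi_f^* \lambda = f \iota^* \alpha$. The first coordinate of the shape element equals $\int_{e_1} f \iota^* \alpha = \int_{\hi_f(e_1)} \lambda$ along a loop that, by $\iota^* = \tau$, projects to the $S^1$-factor with winding number one. I would then bound this integral below by a positive quantity using a Sikorav-type argument on the symplectization (Theorem~\ref{thm:sikorav}): a non-constant $J$-holomorphic disk with boundary on $\hi_f(T^n)$ furnishes the required lower area bound. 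The exclusion of the north pole is immediate from the absence of exact Lagrangian embeddings into a symplectization, recalled in the remark following Definition~\ref{dfn:contact-shape}.

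The hard part will be the $\subset$ direction. One subtlety is already visible at the level of parametrizations: pre-composing the canonical embedding with a diffeomorphism of $T^n$ whose first cohomology row is $(1, 0, \ldots, 0)$ but which is not isotopic to the identity (e.g.\ a Dehn twist in a suitable direction) yields further coisotropic embeddings with $\iota^* = \tau$ whose class $[\iota^* \alpha]$ appears to escape the upper hemisphere. Controlling these either requires an implicit homotopy restriction on the admissible embeddings in the spirit of Remark~\ref{rmk:generalized-shape}, or extracting from the holomorphic disk argument an unambiguous sign for the $\lambda$-period of $\hi_f$ along $e_1$ that holds across all parametrizations. Resolving this tension, in analogy with how Theorem~\ref{thm:torus-shape} uses Gromov's intersection theorem to pin down the shape of a cotangent bundle, is what I expect to be the main technical obstacle.
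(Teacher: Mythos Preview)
Your $\supset$ direction is essentially the paper's, though the paper first invokes a contact squeezing result of Eliashberg to reduce to $U=\R^{2n-2}$ and then reaches the full open upper hemisphere from the positive-coordinate classes of Remark~\ref{rmk:small-coiso} via the $GL(n,\Z)$-action of Proposition~\ref{pro:P-L-diffeos}, rather than constructing general Lagrangian tori in $U$ directly.

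For $\subset$ the paper takes a much lighter route than you propose: it argues that $\iota^*=\tau$ forces the composite $S^1\hookrightarrow T^n\xrightarrow{\ \iota\ }S^1\times\R^{2n-2}\to S^1$ to have degree one and reads off $a_1>0$ from this, with no holomorphic curves; the north pole is then excluded by citing an external theorem rather than by any argument internal to the paper. Your proposal for the north pole, however, contains a genuine error. The north pole is the projective class $(1:0:\cdots:0)$, a \emph{nonzero} element of $H^1(T^n,\R)$; it is not the vertex of the cone, and the absence of exact Lagrangians in a symplectization says nothing about it. What must actually be excluded are coisotropic $\iota$ with $\iota^*=\tau$ and $[f\,\iota^*\alpha]\in\R_+\cdot e_1^*$, equivalently Lagrangian tori in the symplectization with $\lambda$-period on that ray---a substantially harder statement, which is precisely what the paper outsources. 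Your parametrization worry is, by contrast, a legitimate subtlety (diffeomorphisms $\phi$ of $T^n$ with $\phi^* e_1^*=e_1^*$ still act nontrivially on the remaining period coordinates and can shift the $e_1^*$-component); the paper's terse degree argument is meant to cut through this, while your holomorphic-curve route would have to resolve it directly.
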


\begin{proof}
By Proposition~1.24 in \cite{eliashberg:gct06}, for any positive numbers $r_1$ and $r_2$ there exists a contact embedding $\varphi \colon S^1 \times B_{r_1}^{2 n - 2} \to S^1 \times B_{r_2}^{2 n - 2}$ that is trivial on the first factor, and thus $\tau \circ \varphi^* = \tau$.
(In fact, for $n > 1$, the induced homomorphism $\varphi^*$ is the identity.)
Thus it suffices to prove the lemma for the case $U = \R^{2 n - 1}$.
Again by Remark~\ref{rmk:small-coiso}, we have $(1 : a_2 : \ldots : a_n) \in \It_C (S^1 \times \R^{2 n - 2}, \tau)$ for all $a_i > 0$, and by Proposition~\ref{pro:P-L-diffeos}, the shape $\It_C (S^1 \times \R^{2 n - 2}, \tau)$ contains the upper hemisphere minus the north pole.

On the other hand, since the canonical map $S^1 = S^1 \times T^{n - 1} \hookrightarrow S^1 \times \R^{2 n - 2} \to S^1$ has degree one for any (coisotropic) embedding $\iota \colon S^1 \times T^{n - 1} \hookrightarrow S^1 \times \R^{2 n - 2}$, the shape $\It_C (S^1 \times \R^{2 n - 2}, \tau)$ does not contain any point $(a_1 : a_2 : \ldots : a_n)$ with $a_1 \le 0$.
Moreover, it does not contain the north pole $(1 : 0 : \ldots : 0)$ \cite[Theorem~1.4]{mueller:hdp17}.
\end{proof}

As another corollary to Theorem~\ref{thm:neighborhood}, we have the following lemma.
The proof is analogous to the proof of Lemma~\ref{lem:open}, and also follows from Proposition~\ref{pro:tilde-contact-shape} by the same argument as in the symplectic case.

\begin{lem} \label{lem:contact-shape-open}
$I_C (W, L, \tau)$ and $\It_C (W, L, \tau)$ are open subsets of $PH^1 (L, \R)$.
\end{lem}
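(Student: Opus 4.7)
The plan is to treat the two shapes separately, using in each case a neighborhood theorem to parametrize nearby projectivized cohomology classes by nearby Lagrangian or coisotropic embeddings.

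For $I_C (W, L, \tau)$, I would simply reduce to Lemma~\ref{lem:open}. By Definition~\ref{dfn:contact-shape} this set is the image of the symplectic shape $I (W \times \R_+, \lambda, L, \tau)$ of the symplectization under the oriented projectivization $H^1 (L, \R) \setminus \{0\} \to PH^1 (L, \R)$. By Lemma~\ref{lem:open} the symplectic shape is open in $H^1 (L, \R)$, and by the remark preceding Definition~\ref{dfn:contact-shape} it does not contain the zero vector (there are no exact Lagrangian embeddings into a symplectization). Since oriented projectivization is an open map on $H^1 (L, \R) \setminus \{0\}$, it follows that $I_C (W, L, \tau)$ is open in $PH^1 (L, \R)$.

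For $\It_C (W, L, \tau)$, I would argue in direct analogy with the proof of Lemma~\ref{lem:open}, with Theorem~\ref{thm:neighborhood} playing the role of the Weinstein Lagrangian Neighborhood Theorem. Let $[z] \in \It_C (W, L, \tau)$ be represented by a coisotropic embedding $\iota \colon L \hookrightarrow W$ and a positive function $f$ on $L$ such that $f \, \iota^* \alpha$ is closed, defines $\iota^* \xi$, and projectivizes to $[z]$. After extending $f$ to a positive function on $W$ and replacing $\alpha$ by $\alpha' = f \alpha$, one can assume that $\iota^* \alpha'$ itself is closed. Theorem~\ref{thm:neighborhood} then provides a contact diffeomorphism $\varphi \colon U \to V$ from a neighborhood $U$ of the section $\beta_0 = p \circ \iota^* \alpha'$ in $ST^* L$ onto a neighborhood $V$ of $\iota (L)$ in $W$, satisfying $\iota = \varphi \circ \beta_0$. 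For every closed one-form $\sigma$ on $L$ whose projectivized section $p \circ \sigma$ lies in $U$, the composition $\iota_\sigma := \varphi \circ (p \circ \sigma)$ is a coisotropic embedding, the pullback $\iota_\sigma^* \alpha'$ is closed and defines $\iota_\sigma^* \xi$, and its cohomology class projectivizes to $[\sigma]$. Moreover, $\iota_\sigma$ is smoothly isotopic to $\iota$ inside $V$, so $\iota_\sigma^* = \iota^* = \tau$ on first cohomology. As $[\sigma]$ ranges over a neighborhood of $[\iota^* \alpha'] = [z]$ in $PH^1 (L, \R)$, this exhibits a neighborhood of $[z]$ inside $\It_C (W, L, \tau)$.

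The main place requiring care is the final identification: unraveling Theorem~\ref{thm:neighborhood}, one needs to check that pulling back $\alpha_\can$ (which coincides with $\alpha'$ on $L$) along the section $p \circ \sigma$ produces a closed one-form with cohomology class on the oriented ray through $[\sigma]$, so that the projectivized periods sweep out an honest neighborhood of $[z]$ in $PH^1 (L, \R)$. This is routine given that $\varphi$ is a contact diffeomorphism restricting to the identity on $L$, so the structural conditions (closedness, compatibility with $\xi$, and prescribed induced map $\tau$) are automatic for $\sigma$ sufficiently close to $\iota^* \alpha'$.
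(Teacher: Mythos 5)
Your proposal is correct and follows essentially the route the paper intends: the paper's proof is a one-line pointer stating that the argument is analogous to that of Lemma~\ref{lem:open} (with Theorem~\ref{thm:neighborhood} playing the role of the Weinstein neighborhood theorem, exactly as you carry out for $\It_C$) and that it also follows from Proposition~\ref{pro:tilde-contact-shape}, while your treatment of $I_C$ via openness of $I(W \times \R_+, \lambda, L, \tau)$ and of the projectivization map is the natural instantiation of the same idea on the symplectization.
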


If $M = T^n \times A \subset T^n \times S^{n - 1}$ is an open and connected subset of the unit cotangent bundle of a torus with its standard contact structure, then the contact shapes can again be calculated completely.

\begin{pro} \label{pro:shape-unit-cotangent-bundle}
If $A \subset S^{n - 1}$ is an open and connected subset, and $a \in A$ is any point, then $\It_C (T^n \times A, \iota_a^*) = I_C (T^n \times A, \iota_a^*) = A$.
\end{pro}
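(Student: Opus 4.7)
The plan is to compute both shapes by passing to the symplectization and invoking Theorem~\ref{thm:torus-shape}. The key identification, recalled just before Definition~\ref{dfn:contact-shape}, is that the symplectization of $T^n \times A \subset ST^*T^n$ is $(T^n \times (\R_+ \cdot A), \omega_\can) \subset T^*T^n$ via $(q,p,t) \mapsto (q, t p)$, and this diffeomorphism interchanges the Liouville form $t \pi^* \alpha_\can$ with $\lambda_\can$. Since $A \subset S^{n-1}$ is open and connected, the cone $\R_+ \cdot A \subset \R^n$ is open and connected as well. Moreover, both $T^n \times A$ and $T^n \times (\R_+ \cdot A)$ deformation retract onto $T^n$, so $\iota_a^*$ agrees with $\iota_0^*$ on first cohomology under the canonical identifications.

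For the original contact shape, it then follows directly from Definition~\ref{dfn:contact-shape} that
\[
I_C(T^n \times A, \iota_a^*) = P \, I(T^n \times (\R_+ \cdot A), \lambda_\can, \iota_0^*).
\]
Theorem~\ref{thm:torus-shape} applied to the open connected set $\R_+ \cdot A$ evaluates the right-hand side to $P(\R_+ \cdot A) = A \subset S^{n-1} = PH^1(T^n, \R)$, yielding $I_C(T^n \times A, \iota_a^*) = A$.

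For the modified contact shape, the inclusion $\It_C(T^n \times A, \iota_a^*) \subset A$ is immediate from Corollary~\ref{cor:contact-shape-rel} combined with the previous paragraph. For the reverse inclusion, given any $b = (b_1, \ldots, b_n) \in A \subset S^{n-1}$, the canonical embedding $\iota_b \colon T^n \hookrightarrow T^n \times \{b\} \subset T^n \times A$ satisfies $\iota_b^* \alpha_\can = \sum_{i=1}^n b_i \, dq_i$, which is closed and nowhere vanishing since $b \neq 0$. Thus $\iota_b$ is coisotropic in the sense of Definition~\ref{dfn:coisotropic} with $f \equiv 1$, its cohomology class is $b$, and $\iota_b^* = \iota_a^*$ as homomorphisms (both reduce to the canonical identification induced by the deformation retract). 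Hence $b \in \It_C(T^n \times A, \iota_a^*)$, and $A \subset \It_C(T^n \times A, \iota_a^*)$ as required.

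The only substantive input is Theorem~\ref{thm:torus-shape}, whose proof relies on Gromov's intersection theorem for exact Lagrangian embeddings into cotangent bundles; all of the remaining steps are formal bookkeeping with the symplectization and the identification of cohomology via deformation retract.
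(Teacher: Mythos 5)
Your proposal is correct and follows essentially the same route as the paper: both compute $I_C$ by applying Theorem~\ref{thm:torus-shape} to the cone over $A$ in $T^*T^n$, obtain $A \subset \It_C$ from the canonical coisotropic sections $T^n \times \{b\}$, and close the argument with the inclusion $\It_C \subset I_C$ of Corollary~\ref{cor:contact-shape-rel}. The only cosmetic difference is that you verify transversality via the non-vanishing of $\iota_b^*\alpha_\can$ where the paper points to the Reeb vector field being tangent to the section; these are equivalent checks.
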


\begin{proof}
As above denote by $\alpha_\can$ the canonical contact form on $ST^* T^n$, that is, the restriction of the canonical one-form $\lambda_\can$ on $T^* T^n$ to $ST^* T^n$.
Its Reeb vector field is $R = \sum_{i = 1}^n p_i \cdot \partial / \partial q_i$.
Thus for a point $a' \in A \subset S^{n - 1}$, the canonical embedding $\iota_{a'} \colon T^n = T^n \times a' \hookrightarrow ST^* T^n$ is transversal.
The one-form $\iota_{a'}^* \alpha_\can$ is closed, and $[\iota_{a'}^* \alpha_\can] = a'$.
Thus $A \subset \It_C (T^n \times A, \iota_a^*)$.
On the other hand \cite{eliashberg:nio91}, let $CA = \{ t a' \mid a' \in A, t \in \R_+ \}$ denote the cone over $A$ without its vertex.
Then $I (T^n \times CA, \lambda_\can, \iota_a^*) = CA$ by Theorem~\ref{thm:torus-shape}, and hence $I_C (T^n \times A, \iota_a^*) = A$ by definition of the contact shape $I_C$.
\end{proof}

\begin{rmk} \label{rmk:capacities-infinite}
In contrast, the capacity of the symplectization is infinite for any contact manifold.
Indeed, let $c (W, \omega)$ denote the Gromov width of a symplectic manifold $(W, \omega)$.
Then $0 < \pi r^2 < c (M \times \R_+, d (t \, \pi^* \alpha))$ by Darboux's Theorem.
On the other hand, the existence of the diffeomorphism $(x, t) \mapsto (x, s \, t)$ implies that $c (M \times \R_+, d (t \, \pi^* \alpha)) = c (M \times \R_+, s \, d (t \, \pi^* \alpha))$ for every $s > 0$ by the monotonicity axiom, and $c (M \times \R_+, s \, d (t \, \pi^* \alpha)) = s^2 \, c (M \times \R_+, d (t \, \pi^* \alpha))$ by the conformality axiom.
Thus $c (M \times \R_+, d (t \, \pi^* \alpha)) > \pi (r / s)^2 \to + \infty$ (as $s \to 0^+$), which proves that $c (M \times \R_+, d (t \, \pi^* \alpha)) = + \infty$.
Since the Gromov width is the smallest capacity, the capacity of the symplectization of any contact manifold is always infinite. \qed
\end{rmk}

The following definition is the contact analog of Definition~\ref{dfn:symp-shape-preserving}.
It likewise plays a crucial role in the proof of Corollary~\ref{cor:rig-contact-emb}.

\begin{dfn} \label{dfn:contact-shape-preserving}
Let $(M_1, \xi_1)$ and $(M_2, \xi_2)$ be cooriented contact manifolds of the same dimension.
We say that an embedding $\varphi \colon M_1 \to M_2$ preserves the shape invariants (or for short, preserves the shape) of two open subsets $U \subset M_1$ and $V \subset M_2$ so that $\overline{U} \subset M_1$ is compact and $\varphi (\overline{U}) \subset V$ if $I_C (U, L, \tau) \subset I_C (V, L, \tau \circ \varphi^*)$ for every $L$ and every homomorphism $\tau \colon H^1 (V, \R) \to H^1 (U, \R)$.
An embedding is said to preserve shape if it preserves the shape of all open subsets $U$ and $V$ as above.
It is said to preserve the modified shape (of two subsets) if $I_C$ is replaced by $\It_C$ in the above definition. \qed
\end{dfn}

\begin{rmk}
By Propositions~\ref{pro:contact-shape} and \ref{pro:tilde-contact-shape}, contact embeddings preserve both shape invariants; of course, for contact embeddings it is again not necessary to make the compactness assumption.
This definition however is preserved under uniform convergence (Proposition~\ref{pro:continuous-contact-shape}).
The restriction of a (modified) shape preserving embedding to an open subset by definition again preserves the (modified) shape.
An embedding that preserves the (modified) shape is contact by Theorem~\ref{thm:contact-shape-preserving}, and thus preserves the (modified) shape of arbitrary subsets. \qed
\end{rmk}

The following analog of Proposition~\ref{pro:conf-symp-shape} is used later to distinguish contact embeddings that preserve coorientation from those that reverse it.

\begin{pro} \label{pro:coorient-contact-shape}
Let $(M_1, \xi_1)$ and $(M_2, \xi_2)$ be cooriented contact manifolds of the same dimension, and $\varphi \colon M_1 \to M_2$ be a contact embedding (that a priori may or may not preserve coorientation).
Then $\varphi$ preserves the (modified) shape invariant if and only if it preserves coorientation.
\end{pro}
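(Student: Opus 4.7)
The forward implication is essentially Proposition~\ref{pro:tilde-contact-shape} (and Proposition~\ref{pro:contact-shape} for $I_C$): if $\varphi^* \alpha_2 = g \alpha_1$ with $g > 0$, then given a coisotropic embedding $\iota \colon L \hookrightarrow M_1$ with $h\iota^*\alpha_1$ closed and $h > 0$, the composition $\varphi \circ \iota$ is coisotropic, and $(h/(g \circ \iota))(\varphi \circ \iota)^* \alpha_2 = h\iota^*\alpha_1$ with positive prefactor $h/(g \circ \iota)$, so the same cohomology class appears in the shape of $M_2$.

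For the converse I would argue the contrapositive. Suppose $\varphi^* \alpha_2 = g \alpha_1$ with $g < 0$ (by connectedness, $g$ has constant sign), and pick any point $x \in M_1$. Using Proposition~\ref{pro:coisotropic-darboux-weinstein}, choose a point $a \in S^{n-1}$ (with $a_2, \dots, a_n$ small and positive) and a small open connected neighborhood $A$ of $a$ such that a neighborhood $U$ of $x$ in $M_1$ is contact-diffeomorphic to $T^n \times A \subset ST^* T^n$, with $x$ lying on the coisotropic torus $T^n \times \{a\}$. Writing $\tau$ for the induced homomorphism of the canonical section embedding, Proposition~\ref{pro:shape-unit-cotangent-bundle} gives $\It_C(U, T^n, \tau) = A$; in particular $a \in \It_C(U, T^n, \tau)$.

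Now examine the image of $T^n \times \{a\}$ under $\varphi$. Since $(\varphi \circ \iota_a)^* \alpha_2 = (g \circ \iota_a)\,\iota_a^* \alpha_1$ with $g < 0$, the \emph{positive} rescaling that produces a closed one-form yields a cohomology class in the oriented projective direction $-a \in S^{n-1} = PH^1(T^n, \R)$. Using Theorem~\ref{thm:neighborhood} I would identify a sufficiently small tubular neighborhood $V$ of $\varphi(\overline{U})$ with $T^n \times A' \subset ST^* T^n$ for $A'$ a small neighborhood of $-a$, chosen (by shrinking $A$ if needed) so that $A$ and $A'$ lie in disjoint open hemispheres; in particular $a \notin A'$. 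A second application of Proposition~\ref{pro:shape-unit-cotangent-bundle} then yields $\It_C(V, T^n, \tau \circ \varphi^*) = A'$. Since $a \in A \setminus A'$, the modified shape is not preserved. The analogous argument for $I_C$ follows from the equality $I_C = \It_C$ in Proposition~\ref{pro:shape-unit-cotangent-bundle}.

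The main obstacle will be the bookkeeping required to identify the induced homomorphism $\tau \circ \varphi^*$ on $V$ with the canonical section homomorphism used in Proposition~\ref{pro:shape-unit-cotangent-bundle} under the Weinstein-type identification $V \cong T^n \times A'$, and to confirm that the sign flip in the coorientation really produces an antipodal shift in the \emph{oriented} projectivization $S^{n-1}$ (rather than no change, as would occur in the unoriented projectivization). Both points rest on the orientation convention built into Definitions~\ref{dfn:contact-shape} and \ref{dfn:tilde-contact-shape} together with the contact invariance of the modified shape under coorientation-preserving contact diffeomorphisms established in Proposition~\ref{pro:tilde-contact-shape}.
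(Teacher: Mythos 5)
Your proposal is correct and follows essentially the same route as the paper: the forward direction via Propositions~\ref{pro:contact-shape} and \ref{pro:tilde-contact-shape}, and the converse by the argument of Proposition~\ref{pro:conf-symp-shape} with the local shape computation supplied by Propositions~\ref{pro:coisotropic-darboux-weinstein} and \ref{pro:shape-unit-cotangent-bundle} (noting $c = \pm 1$ and no translation), so that coorientation reversal sends a shape equal to a small neighborhood of $a \in S^{n-1}$ to a neighborhood of $-a$, which is not contained in it. The bookkeeping you flag (identifying $\tau \circ \varphi^*$ with the canonical section homomorphism under the Theorem~\ref{thm:neighborhood} identification, and the antipodal shift in the oriented projectivization) works out exactly as you anticipate.
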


\begin{proof}
The argument is the same as in the proof of Proposition~\ref{pro:conf-symp-shape}, except that $c = \pm 1$, there is no freedom of translation, and the reference to Theorem~\ref{thm:torus-shape} is replaced by Propositions~\ref{pro:coisotropic-darboux-weinstein} and \ref{pro:shape-unit-cotangent-bundle}.
\end{proof}

The analog of Proposition~\ref{pro:continuous-symp-shape} is the following continuity property of the contact shape invariants.
The proof is verbatim the same and thus is omitted.

\begin{pro} \label{pro:continuous-contact-shape}
Let $(M_1, \xi_1)$ and $(M_2, \xi_2)$ be cooriented contact manifolds of the same dimension.
Suppose that $\varphi_k \colon M_1 \to M_2$ is a sequence of embeddings that converges uniformly on compact subsets to another embedding $\varphi \colon M_1 \to M_2$, and that $\varphi_k$ preserves (modified) shape for every $k$.
Then $\varphi$ preserves (modified) shape.
\end{pro}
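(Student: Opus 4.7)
The plan is to imitate the proof of Proposition~\ref{pro:continuous-symp-shape} essentially verbatim, since the two ingredients it uses are purely homotopy-theoretic and hence carry over without change. The two ingredients are: the fact that the shape invariants $I_C (V, L, \tau')$ and $\It_C (V, L, \tau')$ depend on the second argument $\tau'$ only through its value (so in particular only through the homotopy class of any embedding that induces it); and the elementary fact that two embeddings of a compact set into a manifold that are $C^0$-close are isotopic through a straight-line homotopy in local charts.

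First, fix open subsets $U \subset M_1$ and $V \subset M_2$ with $\overline{U}$ compact and $\varphi (\overline{U}) \subset V$. By uniform convergence on $\overline{U}$, we have $\varphi_k (\overline{U}) \subset V$ for all sufficiently large $k$, so it is legitimate to apply the hypothesis that $\varphi_k$ preserves the (modified) shape of $U$ and $V$, yielding
\[ I_C (U, L, \tau) \subset I_C (V, L, \tau \circ \varphi_k^*) \qquad \text{and} \qquad \It_C (U, L, \tau) \subset \It_C (V, L, \tau \circ \varphi_k^*) \]
for every closed and connected $n$-dimensional manifold $L$ and every homomorphism $\tau \colon H^1 (V, \R) \to H^1 (U, \R)$.

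Second, for $k$ sufficiently large the restrictions $\varphi_k |_{\overline{U}}$ and $\varphi |_{\overline{U}}$ are isotopic as embeddings into $V$ (by the standard straight-line-in-charts argument, using compactness of $\overline{U}$ and uniform convergence). Consequently they induce the same homomorphism $H^1 (V, \R) \to H^1 (U, \R)$, i.e.\ $\varphi_k^* = \varphi^*$ on cohomology. Substituting this into the previous display gives $I_C (U, L, \tau) \subset I_C (V, L, \tau \circ \varphi^*)$ and likewise for $\It_C$, which is what is to be shown.

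There is no real obstacle here; the only point that requires any attention is the coorientation issue, but by Remark~\ref{rmk:coorientation} the limit $\varphi$ automatically preserves coorientation once the approximating sequence does (possibly after restricting to a smaller subset to ensure coorientability of the domain and target), so the contact shape $I_C$ and its modified version $\It_C$ are defined for $\varphi$ on $(U, V)$ as required. The rest of the argument is the same formal shuffling of definitions as in the symplectic case, so the proof indeed goes through verbatim and can be safely omitted.
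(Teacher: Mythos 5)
Your argument is exactly the paper's: the paper states that the proof of Proposition~\ref{pro:continuous-contact-shape} is verbatim the same as that of Proposition~\ref{pro:continuous-symp-shape}, which is precisely the three-step argument you give (uniform convergence puts $\varphi_k(\overline{U})$ inside $V$ for large $k$, apply the hypothesis, and use that $\varphi_k$ is eventually homotopic to $\varphi$ so that $\varphi_k^* = \varphi^*$). The proposal is correct and takes essentially the same approach.
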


\section{$C^0$-characterization of contact embeddings} \label{sec:contact}
This section contains the proof of Theorem~\ref{thm:contact-shape-preserving} and its corollaries.

\begin{rmk}
For the earlier parts of the present section (up to but not including Theorem~\ref{thm:rig-coiso-hom-class}), we have to ignore coorientation.
In particular, a contact embedding may or may not preserve coorientation. \qed
\end{rmk}

\begin{lem} \label{lem:coisotropic}
Let $(M_1, \xi_1)$ and $(M_2, \xi_2)$ be contact manifolds of the same dimension.
Then an embedding $\varphi \colon M_1 \to M_2$ is contact if and only if it preserves coisotropic submanifolds.
The latter means that the image $\varphi (C)$ is coisotropic whenever $C$ is a coisotropic submanifold.
The same statement holds when restricted to embedded coisotropic tori that are contained in an element of any given open cover of $M_1$.
\end{lem}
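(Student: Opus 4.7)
The plan is to mimic Lemma~\ref{lem:lagrangian}, replacing Darboux's theorem (which produced split Lagrangian tori tangent to any prescribed isotropic pair) by Lemma~\ref{lem:existence-coisotropic-tori} (which produces coisotropic tori tangent to any prescribed ``Lagrangian in $\xi$ plus transverse direction''), and then to reduce matters to a pointwise linear algebra question on the contact hyperplanes. For the forward direction, suppose $\varphi^*\alpha_2 = f\alpha_1$ with $f$ nowhere zero. Then $\ker\varphi^*\alpha_2 = \xi_1$, so any submanifold transverse to $\xi_1$ has image transverse to $\xi_2$. Differentiating gives $\varphi^*d\alpha_2 = df\wedge\alpha_1 + f\,d\alpha_1$, whose restriction to $\xi_1$ equals $f\,d\alpha_1|_{\xi_1}$; hence $d\varphi\colon\xi_1 \to \xi_2$ is fiberwise conformally symplectic and therefore sends Lagrangian subbundles to Lagrangian subbundles. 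Together with Lemma~\ref{lem:description-coisotropic} and the identity $\xi_2 \cap T\varphi(C) = d\varphi(\xi_1 \cap TC)$, this shows $\varphi(C)$ is coisotropic whenever $C$ is.

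For the converse, I work pointwise. Fix $x \in M_1$ and set $A = d\varphi_x$; assume for contradiction that $A(\xi_{1,x}) \ne \xi_{2,\varphi(x)}$. Then $H' := A^{-1}(\xi_{2,\varphi(x)}) \subset T_xM_1$ is a hyperplane distinct from $\xi_{1,x}$, so $H := H' \cap \xi_{1,x}$ has codimension one in $\xi_{1,x}$. Any codimension-one subspace of a symplectic vector space contains its own (one-dimensional) symplectic orthogonal and is therefore coisotropic; in particular $H$ contains a Lagrangian subspace $L$ of $(\xi_{1,x}, d\alpha_1|_{\xi_{1,x}})$ (explicitly $L = H^{\perp_\omega} \oplus L'$ for any Lagrangian $L'$ of the symplectic reduction $H / H^{\perp_\omega}$). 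Choosing any $v \in H' \setminus \xi_{1,x}$ and setting $T = L \oplus \langle v \rangle$ produces an $n$-dimensional subspace of $T_xM_1$, transverse to $\xi_{1,x}$, with $T \cap \xi_{1,x} = L$ Lagrangian. By Lemma~\ref{lem:existence-coisotropic-tori} there is a coisotropic torus $C \subset M_1$ through $x$, contained in any prescribed member of the given open cover of $M_1$, with $T_xC = T$. But by construction $A(L) \subset A(H) \subset \xi_{2,\varphi(x)}$ and $A(v) \in \xi_{2,\varphi(x)}$, so $A(T) \subset \xi_{2,\varphi(x)}$; hence $\varphi(C)$ is tangent to $\xi_2$ at $\varphi(x)$ and cannot be transverse to $\xi_2$ there, contradicting coisotropy of $\varphi(C)$. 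Thus $A(\xi_{1,x}) = \xi_{2,\varphi(x)}$ for every $x$, so $\ker\varphi^*\alpha_2 = \ker\alpha_1$ pointwise and $\varphi^*\alpha_2 = f\alpha_1$ for a nowhere vanishing smooth function $f$, which is precisely the condition that $\varphi$ be contact.

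The main obstacle is producing the coisotropic tangent space $T \subset T_xM_1$ whose image under $A$ lands entirely inside $\xi_{2,\varphi(x)}$. The key observation making this routine is that the intersection $H = H' \cap \xi_{1,x}$ is automatically coisotropic in the symplectic space $(\xi_{1,x}, d\alpha_1|_{\xi_{1,x}})$ and hence contains a Lagrangian. Everything else is an immediate application of Lemma~\ref{lem:existence-coisotropic-tori} and the definitions, and the restriction to coisotropic tori in a given open cover costs nothing because Lemma~\ref{lem:existence-coisotropic-tori} already permits the torus to lie in any prescribed neighborhood of~$x$.
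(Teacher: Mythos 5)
Your proof of the converse --- the substantive direction --- is correct and is essentially the paper's own argument in different packaging. The paper writes $\varphi^* \alpha_2 = f \alpha_1 + d\alpha_1 (X, \cdot)$ with $X$ tangent to $\xi_1$, and feeds into Lemma~\ref{lem:existence-coisotropic-tori} the isotropic subspace $W \ni X(x)$ together with the transverse vector $R_1 (x) - f(x) w$, where $d\alpha_1 (X(x), w) = 1$; you instead work directly with the hyperplane $H' = \ker (\varphi^* \alpha_2)_x = d\varphi_x^{-1} (\xi_{2, \varphi(x)})$ and the observation that $H = H' \cap \xi_{1,x}$ is coisotropic in $(\xi_{1,x}, d\alpha_1)$ and hence contains a Lagrangian $L$. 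These are the same construction: on $\xi_{1,x}$ one has $(\varphi^* \alpha_2)_x = d\alpha_1 (X(x), \cdot)$, so $H = X(x)^{\perp}$ and any Lagrangian $L \subset H$ automatically contains $H^{\perp} = \langle X(x) \rangle$, while your $v \in H' \setminus \xi_{1,x}$ plays the role of $R_1(x) - f(x) w$. The reduction to linear algebra, the use of Lemma~\ref{lem:existence-coisotropic-tori}, and the localization into a member of the open cover all match the paper.

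The forward direction as you justify it has a gap. Lemma~\ref{lem:description-coisotropic} characterizes coisotropy by \emph{two} conditions --- that $\xi |_C \cap TC$ be a Lagrangian subbundle of $\xi |_C$ \emph{and} that $TC = (\xi |_C \cap TC) \oplus \langle R \rangle$ for the Reeb field of a suitable contact form --- and you verify only the first. The first alone is not sufficient: in dimension three every closed surface transverse to $\xi$ meets the two-plane field $\xi$ in a line field, which is trivially Lagrangian, yet the paper's example of a convex torus in $T^2 \times \R$ with contact form $d\theta + z \, d\varphi$ is transverse to $\xi$ and not coisotropic (closed coisotropic submanifolds are never immediately displaceable). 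The direction you are proving there is of course the trivial one, and the repair is a one-liner that bypasses Lemma~\ref{lem:description-coisotropic} entirely: if $\beta$ is a closed one-form on $C$ defining $\xi_1 |_C \cap TC$, then $((\varphi |_C)^{-1})^* \beta$ is a closed one-form on $\varphi (C)$ defining $d\varphi (\xi_1 |_C \cap TC) = \xi_2 |_{\varphi (C)} \cap T\varphi (C)$, and transversality of $\varphi (C)$ to $\xi_2$ is immediate from $d\varphi (\xi_1) = \xi_2$. Equivalently, with $\varphi^* \alpha_2 = f \alpha_1$ and $g \alpha_1$ the rescaled form whose restriction to $C$ is closed, the form $((g / f) \circ \varphi^{-1}) \alpha_2$ restricts to a closed one-form on $\varphi (C)$.
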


\begin{proof}
That contact embeddings preserve coisotropic submanifolds is obvious.
We will prove the converse.
In fact, we will prove that if $\varphi$ is not contact at $x$, then there exists a coisotropic submanifold $C$ through $x$ so that $d\varphi (T_x C) \subset (\xi_2)_{\varphi (x)}$.
Our arguments are local in nature, and therefore also apply to contact manifolds that are not coorientable.

Let $\xi_1 = \ker \alpha_1$ and $\xi_2 = \ker \alpha_2$, and define a (not necessarily nowhere vanishing) function $f$ and a one-form $\beta$ on $M_1$ by $f = (\varphi^* \alpha_2) (R_1)$ and $\beta = (\varphi^* \alpha_2) - f \alpha_1$, where $R_1$ denotes the Reeb vector field associated to $\alpha_1$.
Then $\varphi^* \alpha_2 = f \alpha_1 + \beta$ and $\beta (R_1) = 0$.
By non-degeneracy of $d\alpha_1 |_{\xi_1}$ there exists a unique vector field $X$ on $M_1$ that is tangent to $\xi_1$ and such that $\beta = d\alpha_1 (X, \cdot)$ (see Exercise~3.54 in \cite{mcduff:ist98}).

Suppose that $\varphi$ is not contact at a point $x \in M_1$, or equivalently, the vector $v = X (x) \not= 0$ (by the contact condition, $f$ and $X$ cannot vanish simultaneously).
Let $w \in (\xi_1)_x$ be a vector such that $d\alpha_1 (v, w) = 1$.
By Lemma~\ref{lem:existence-coisotropic-tori}, there exists a coisotropic torus $C$ through $x$ that is tangent to both $R_1 (x) - f w$ and $v$, and so that $T_x C / \langle R_1 (x) - f w \rangle \subset (\xi_1)_x$.
But then $(\varphi^* \alpha_2) |_C = 0$ at the point $x$, and thus $\varphi (C)$ is not transversal to $\xi_2$.
\end{proof}

\begin{rmk}
The same proof applies to two contact structures $\xi_1$ and $\xi_2$ on the same smooth manifold $M$.
That is, $\xi_1 = \xi_2$ if and only if every coisotropic submanifold with respect to $\xi_1$ is also a coisotropic submanifold with respect to $\xi_2$.
This statement continuous to hold when restricted to embedded coisotropic tori in an element of any given open cover of $M$.
This fact is not needed anywhere in this paper but is stated for the sake of completeness. \qed
\end{rmk}

We again record the converse statement in a separate lemma.

\begin{lem}
Let $(M_1, \xi_1)$ and $(M_2, \xi_2)$ be contact manifolds of the same dimension.
Suppose an embedding $\varphi \colon M_1 \to M_2$ is not contact at $x \in M_1$, and let $U \subset M_1$ be a neighborhood of $x$.
Then there exists a coisotropic embedding $\iota \colon T^n \hookrightarrow M_1$ through $x$ whose image is contained in $U$, and so that $\varphi \circ \iota \colon T^n \hookrightarrow M_2$ is not coisotropic (at the point $\varphi (x)$).
The embedding $\widehat {\varphi \circ \iota}_f \colon T^n \hookrightarrow M_2 \times \R_+$ given by $x \mapsto ((\varphi \circ \iota) (x), f (x))$ is thus not Lagrangian for any (positive) function $f$ on $T^n$ and any contact forms on $(M_1, \xi_1)$ and $(M_2, \xi_2)$ defining the symplectizations.
\end{lem}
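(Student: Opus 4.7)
The plan is to observe that this lemma is essentially the contrapositive of Lemma~\ref{lem:coisotropic} combined with Proposition~\ref{pro:lift-embedding}, and that the proof of Lemma~\ref{lem:coisotropic} already constructs the required torus locally. The main work is therefore to (a) trace back through that argument to record the coisotropic torus explicitly with the localization constraint, and (b) translate non-coisotropy into non-Lagrangianness of every lift to the symplectization.

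First, I would recall the construction from the proof of Lemma~\ref{lem:coisotropic}. Writing $\varphi^* \alpha_2 = f \alpha_1 + \beta$ with $\beta(R_1) = 0$, and letting $X$ be the unique vector field in $\xi_1$ with $\beta = d\alpha_1(X, \cdot)$, the hypothesis that $\varphi$ fails to be contact at $x$ forces $v := X(x) \neq 0$. Then I would pick $w \in (\xi_1)_x$ with $d\alpha_1(v,w) = 1$, and invoke Lemma~\ref{lem:existence-coisotropic-tori} to produce a coisotropic embedding $\iota \colon T^n \hookrightarrow M_1$ passing through $x$, tangent to $v$ and to $R_1(x) - f(x)w$, with the image lying in the prescribed neighborhood $U$ (this last containment is automatic because Lemma~\ref{lem:existence-coisotropic-tori} allows the coisotropic torus to be produced inside any preassigned open set). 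The direct computation in the proof of Lemma~\ref{lem:coisotropic} then shows $(\varphi^* \alpha_2)|_{T_x C} = 0$, so $d\varphi(T_x C) \subset (\xi_2)_{\varphi(x)}$ and $\varphi \circ \iota$ fails to be transverse to $\xi_2$ at $\varphi(x)$. In particular, $\varphi \circ \iota$ cannot be coisotropic, since transversality to the contact structure is part of Definition~\ref{dfn:coisotropic}.

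The second assertion follows at once from Proposition~\ref{pro:lift-embedding}. If there existed a positive function $f$ on $T^n$ and contact forms $\alpha_1', \alpha_2'$ on $M_1, M_2$ (defining the symplectizations) such that $\widehat{\varphi \circ \iota}_f \colon T^n \hookrightarrow M_2 \times \R_+$ were Lagrangian, then Proposition~\ref{pro:lift-embedding} applied to $\varphi \circ \iota$ (with respect to $\alpha_2'$) would force $\varphi \circ \iota$ itself to be a coisotropic embedding, contradicting what was just established. The independence from the choice of contact form used to form the symplectization is built into the proposition, since any rescaling $\alpha_2'' = h \alpha_2'$ by a positive function $h$ corresponds under the diffeomorphism $\sigma_h(x,t) = (x, t/h(x))$ to simply changing the fiber coordinate.

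I do not expect any real obstacle here. The most delicate point is keeping track of the various rescalings in the second part, so that the statement about \emph{any} choice of defining contact forms really follows from Proposition~\ref{pro:lift-embedding} (which a priori is stated for a fixed $\alpha$); this is a matter of invoking the $\sigma_s$-equivariance of the symplectization used earlier in Section~\ref{sec:contact-shape}. Otherwise the proof is essentially a two-line reduction to Lemma~\ref{lem:coisotropic} and Proposition~\ref{pro:lift-embedding}.
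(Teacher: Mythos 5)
Your proposal is correct and follows exactly the route the paper intends: the lemma is recorded without a separate proof precisely because the construction in the proof of Lemma~\ref{lem:coisotropic} (via Lemma~\ref{lem:existence-coisotropic-tori}, which already localizes the coisotropic torus in any prescribed neighborhood $U$) yields $(\varphi^*\alpha_2)|_{T_xC}=0$ and hence failure of transversality, and the last assertion is the contrapositive of Proposition~\ref{pro:lift-embedding}. Your remark on handling rescalings of the contact form via the fiberwise diffeomorphism $\sigma_h$ is exactly the point built into that proposition, so there is nothing missing.
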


\begin{rmk}
In order to put the forthcoming argument that completes the proof of Theorem~\ref{thm:contact-shape-preserving} in perspective, assume that $C$ is an (embedded, closed, and connected) $n$-dimensional submanifold of a contact manifold $(M, \xi)$ that is not coisotropic.
Then any lift $L$ of $C$ to a symplectization of $M$ is not Lagrangian.
This applies in particular to $C \times 1 \subset M \times \R_+$.
By Laudenbach-Sikorav's Theorem~\ref{thm:displacement}, there exists a Hamiltonian vector field that is nowhere tangent to $L$.
If $C$ is transversal to $\xi$, then the bundle $E = (\xi |_C)^\perp \oplus \langle R \rangle$ over $C$ is (up to identification of $C$ with $C \times 1$) precisely the symplectic orthogonal complement of $T (C \times 1)$ in $T (M \times \R_+)$.
Then the proof by Laudenbach-Sikorav applies directly in the contact setting, without the need to lift to the symplectization.
The contact vector field $X_F$ so derived of course lifts to the Hamiltonian vector field generated by the Hamiltonian function $t \, \pi^* F$.
Let $X$ be a nowhere vanishing section of $E$ so that $dF (X) > 0$ (see the sketch of the proof of Theorem~\ref{thm:displacement} above).
In the contact setting, $dF = dF (R) \, \alpha - d\alpha (X_F, \cdot)$, so that in contrast to the proof of Theorem~\ref{thm:displacement}, this last condition alone does not necessarily imply that $X_F$ is nowhere tangent to $C$.
A more contact topological proof is therefore required for Theorem~\ref{thm:contact-shape-preserving}. \qed
\end{rmk}

\begin{dfn}
A submanifold $C$ of a contact manifold is called immediately displaceable if there exists a contact vector field $X_F$ (defined in a neighborhood of $C$) that is nowhere tangent to $C$. \qed
\end{dfn}

Recall that if $M$ has dimension $3$, such a surface is called convex, see \cite{geiges:ict08} and the references therein.
We will take advantage of the following known facts.
An embedded hypersurface $S \subset M$ is immediately displaceable if and only if there exists an embedding of $S \times \R$ into $M$ that restricts to the inclusion of $S$ on $S \times 0$ and pulls back the contact structure on $M$ to a vertically invariant contact structure on a neighborhood of $S \times 0$ \cite[Lemma~4.6.19]{geiges:ict08} (the proof given there for surfaces in contact $3$-manifolds applies verbatim to hypersurfaces in higher dimensions).
Moreover, a surface $S$ is convex if and only if its characteristic foliation is divided by a collection of embedded circles \cite[Theorem~4.8.5~(a)]{geiges:ict08} (see below for the definitions).
We will prove a generalization of this result to higher dimensions in the course of the proof of Lemma~\ref{lem:imm-displ} below.

\begin{rmk}
Just like Lagrangian submanifolds, a closed coisotropic submanifold $C$ can never be immediately displaceable, and in fact, the argument is quite similar.
Let $F$ be a smooth function defined near $C$, and denote by $X_F = F R + Y_F$ its contact vector field, where $Y_F$ is tangent to $\xi$ and $R$ is a Reeb vector field that is tangent to $C$ (see Lemma~\ref{lem:description-coisotropic}).
Since $C$ is closed, $F$ must have a critical point $x \in C$.
Then $d\alpha (Y_F (x), v) = - dF (v) = 0$ for all $v \in \xi_x \cap T_x C$, i.e.\ $Y_F (x) \in (\xi_x \cap T_x C)^\perp$.
But the latter is Lagrangian, so that $Y_F (x)$ must be tangent to $C$, and since $R (x)$ is also tangent to $C$, the claim follows. \qed
\end{rmk}

\begin{exa}
By Theorem~\ref{thm:displacement}, a closed $n$-dimensional submanifold $L$ of $W$ is either Lagrangian or immediately displaceable.
In contrast, a closed $n$-dimensional submanifold $C$ of $M$ can be neither coisotropic nor immediately displaceable, as this example shows.
In fact, the example can be generalized to any contact manifold.

Consider the contact manifold $M = T^2 \times \R$ with coordinates $(\varphi, \theta)$ on $T^2$ and $z$ on $\R$, and contact form $\alpha = d\theta + z d\varphi$.
Let $\zeta \colon S^1 \to S^1$ be a smooth function.
The surface $S = \{ z = \epsilon \sin (\zeta (\theta)) \}$ is a graph over $T^2 \times 0$, so that we may use the coordinates $(\varphi, \theta)$ on $S$.
If the function $\zeta$ is constant, then $S$ is coisotropic \cite[Example~4.8.4~(2)]{geiges:ict08}.
(The case $\zeta = 0$ is equivalent to the choice $\epsilon = 0$.)
On the other hand, if $\zeta$ is strictly monotone (i.e.\ its derivative $\zeta'$ nowhere vanishes), then $S$ is convex \cite[Example~4.8.10]{geiges:ict08} and \cite[Figure~4.42 (left)]{geiges:ict08} (with $\zeta$ the identity function).
However, if $\zeta$ is constant on some open subset and strictly monotone on another open subset, then $S$ is neither coisotropic nor convex. \qed
\end{exa}

Before stating the main lemma, we provide further details on the aforementioned notions.
Let $S \subset M$ be an embedded oriented hypersurface.
At a point $x \in S$, the vector space $(\xi_x \cap T_x S)^\perp$ is either $\{ 0 \}$ or a one-dimensional subspace of $\xi_x$ that is contained in $\xi_x \cap T_x S$.
The characteristic foliation $\F$ of $S$ (with respect to the contact structure $\xi$) is the singular one-dimensional foliation defined by the distribution $(\xi |_S \cap TS)^\perp$ (with the orientation established below).
In a neighborhood of $S$ (which is identified with $S \times \R$ so that $S$ corresponds to $S \times 0$), a contact form (that defines $\xi$) can be written as $\alpha = \beta_t + u_t \, dt$, where $t$ denotes the coordinate on $\R$, $\beta_t$ is a smooth family of one-forms and $u_t$ a smooth family of functions on $S$.

\begin{dfn}
Let $\Omega$ be a volume form on an embedded oriented hypersurface $S \subset M$.
The characteristic foliation is defined by the vector field $X$ that satisfies $\iota_X \Omega = \beta_0 \wedge (d\beta_0)^{n - 2}$, with the orientation provided by $X$. \qed
\end{dfn}

Here $\iota_X \Omega$ denotes interior multiplication of $\Omega$ with $X$.
The vector field $\lambda X$, where $\lambda$ is a positive function on $S$, defines the same oriented foliation, and $X$ is unique up to this form of rescaling.
See section~2.5.4 in \cite{geiges:ict08} for details.

\begin{dfn}
Let $S$ be an embedded closed surface in a contact $3$-manifold.
A collection $\Gamma$ of embedded circles (in $S$) is said to divide the characteristic foliation $\F$ of $S$ if $\Gamma$ is transverse to $\F$, and there exists an area form $\Omega$ on $S$ and a vector field $X$ that defines $\F$, so that $\mathcal L_X \Omega \not= 0$ on $S \backslash \Gamma$, and the vector field $X$ points out of $S_+ = \{ x \in S \mid \div_\Omega (X) (x) > 0 \}$ along $\Gamma$. \qed
\end{dfn}

Here $\mathcal L_X \Omega$ denotes the Lie derivative and $\div_\Omega (X)$ the divergence.
See section~4.8 in \cite{geiges:ict08} and in particular \cite[Definition~4.8.3]{geiges:ict08} for details.

\begin{lem} \label{lem:imm-displ}
Let $(M_1, \xi_1)$ and $(M_2, \xi_2)$ be contact manifolds of dimension $2 n - 1$.
Suppose an embedding $\varphi \colon M_1 \to M_2$ is not contact at $x \in M_1$, and let $U \subset M_1$ be a neighborhood of $x$.
Then there exists a coisotropic embedding $\iota \colon T^n \hookrightarrow M_1$ through $x$ whose image is contained in $U$, and so that (the image of) the embedding $\varphi \circ \iota \colon T^n \hookrightarrow M_2$ is immediately displaceable.
\end{lem}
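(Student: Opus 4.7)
The plan is to first apply the preceding lemma to obtain a coisotropic embedding $\iota \colon T^n \hookrightarrow M_1$ through $x$ whose image $C$ lies in $U$, and such that $\varphi \circ \iota$ is not coisotropic at $\varphi(x)$. Tracing through the proof of that lemma, we actually obtain the stronger conclusion $d\varphi(T_x C) \subset (\xi_2)_{\varphi(x)}$, i.e.\ $\varphi(C)$ is tangent to $\xi_2$ at $\varphi(x)$. After shrinking $U$ if necessary, we may assume we work inside Darboux charts, so that $M_2$ is standard contact $\R^{2n-1}$ near $\varphi(x)$.

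The core of the argument constructs a contact vector field $X_F$ defined in a neighborhood of $\varphi(C)$ that is nowhere tangent to $\varphi(C)$; writing $X_F = F R + Y_F$ with $R$ the Reeb vector field and $Y_F \in \xi_2$, we observe that at \emph{tangent} points $y \in \varphi(C)$ (where $T_y\varphi(C) \subset (\xi_2)_y$) the Reeb component $F(y) R(y)$ is automatically transverse to $\varphi(C)$, so we only need $F(y) \neq 0$ there. At \emph{transverse} points one adapts the Laudenbach--Sikorav argument (Theorem~\ref{thm:displacement}) contact-topologically, working with the bundle $E = (\xi_2|_{\varphi(C)} \cap T\varphi(C))^\perp \oplus \langle R \rangle$, producing a non-vanishing section $X$ of $E$ so that $\varphi(C)$ admits no compact invariant subset under the flow of $X$, and constructing $F$ with $dF(X) > 0$ near such points.

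An equivalent and perhaps cleaner route is via the symplectization: by Proposition~\ref{pro:lift-embedding}, the lift $\hi_1 \colon T^n \hookrightarrow M_2 \times \R_+$, $y \mapsto (\varphi \circ \iota (y), 1)$, is not Lagrangian. Its normal bundle admits a non-vanishing section (e.g.\ $\partial/\partial t$, or the Reeb direction of $M_2$ pushed forward), so Theorem~\ref{thm:displacement} yields a Hamiltonian vector field $X_H$ nowhere tangent to $\hi_1(T^n)$. The task is to check that the Laudenbach--Sikorav construction can be carried out $\R_+$-equivariantly on $M_2 \times \R_+$, so that $H$ can be chosen of the form $H(y, t) = t F(y)$; the resulting Hamiltonian vector field then projects to the contact vector field $X_F$ on $M_2$, which is consequently nowhere tangent to $\varphi(C)$.

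The main obstacle in either approach is reconciling the two regimes: in the direct contact approach, the bundle $E$ degenerates at tangent points of $\varphi(C)$ with $\xi_2$, and a partition-of-unity argument is needed to patch the Laudenbach--Sikorav-type construction over the transverse locus with the Reeb-automatic transversality over the tangent locus, all while keeping $F$ non-vanishing at tangent points; in the symplectization approach, the difficulty is engineering $\R_+$-equivariance (equivalently, homogeneity $H = t F$) into the Laudenbach--Sikorav construction, which a priori makes no symmetry requirements. In either case one exploits that the set of tangent points is a closed subset on which the Reeb field alone provides transversality, so that the rest of the construction only needs to succeed on the transverse part, where the symplectic-orthogonal bundle is well-defined and the Laudenbach--Sikorav technology is available verbatim.
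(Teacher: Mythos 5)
There is a genuine gap, and it is located exactly where you flag ``the main obstacle'': the paper's own discussion shows that this obstacle cannot be overcome for the torus you start with. Your plan is to take the coisotropic embedding produced by the lemma preceding Lemma~\ref{lem:imm-displ} (whose image $\varphi(C)$ is \emph{tangent} to $\xi_2$ at $\varphi(x)$ and merely non-coisotropic) and then displace it by patching a Laudenbach--Sikorav-type construction over the transverse locus with the Reeb term over the tangent locus. But in the contact setting ``non-coisotropic'' does \emph{not} imply ``immediately displaceable'': the example in section~\ref{sec:contact} (the surface $S=\{z=\epsilon\sin(\zeta(\theta))\}$ in $T^2\times\R$ with $\zeta$ constant on one open set and strictly monotone on another) is a closed $n$-dimensional submanifold that is neither coisotropic nor immediately displaceable. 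So no patching argument can succeed for an arbitrary non-coisotropic image; the conclusion of the lemma depends on \emph{engineering the choice of $\iota$}, not on upgrading the displacement machinery. The same point defeats your symplectization route: the paper's remark before the definition of immediately displaceable observes that for the homogeneous Hamiltonian $t\,\pi^*F$ one has $dF = dF(R)\,\alpha - d\alpha(X_F,\cdot)$, so the Laudenbach--Sikorav condition $dF(X)>0$ no longer forces $X_F$ to be nowhere tangent to $C$; $\R_+$-equivariance does not repair this.

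The paper's actual proof therefore proceeds differently. It does \emph{not} use the torus from the preceding lemma. Instead it chooses the transverse knot underlying the construction of Lemma~\ref{lem:existence-coisotropic-tori} to be transverse to \emph{both} $\xi_1$ and $\varphi^*\xi_2$, so that writing $\varphi^*\alpha_2 = f\alpha_1 + d\alpha_1(X,\cdot)$ one has $f>0$ near $C$ and the characteristic foliation of $C$ with respect to $\varphi^*\xi_2$ is \emph{nonsingular}, directed by $Y=\partial_\theta - v\,\partial_z$. The failure of $\varphi$ to be contact at $x$ enters only through the nonvanishing of $X$ near $x$, which allows one to choose the Lagrangian circle so that $v$ is nowhere locally constant; then for generic choices the zero set of $\div_\Omega(\lambda Y)$ is a collection of circles transverse to the foliation, i.e.\ a dividing set, and Giroux's criterion (Theorem~4.8.5~(a) of \cite{geiges:ict08}, whose proof the paper reruns and extends to higher dimensions) shows $C$ is convex, hence immediately displaceable. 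If you want to repair your write-up, the essential missing idea is this deliberate transversality of the Reeb direction to both contact structures together with the genericity/dividing-set argument, replacing the Laudenbach--Sikorav displacement mechanism entirely.
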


The notions of non-Lagrangian, transversal, and immediately displaceable (and in particular, convex) are all generic, so the heart of the argument is really a matter of carefully choosing the starting coisotropic embedding.

\begin{proof}
The machinery for the aforementioned contact topological proof is mostly developed in dimension $3$, so we handle that case first.

Let $\iota \colon T^2 \hookrightarrow M_1$ be an embedded coisotropic torus; by Lemma~\ref{lem:existence-coisotropic-tori}, these exist in abundance (in a sense made precise there), and we will successively modify this embedding to prove the present lemma.
To simplify notation, we identify $\iota$ with its image $\iota (T^2) = C$, and write $\partial_z$ and $\partial_\theta$ for the vector fields $\iota_* (\partial / \partial z)$ and $\iota_* (\partial / \partial \theta)$, respectively, where $(z, \theta)$ are coordinates on $T^2$.
By the construction of such tori in Lemma~\ref{lem:existence-coisotropic-tori}, we may assume that $\partial_z = R_1$ is the restriction of the Reeb vector field of a contact form $\alpha_1$ on $M_1$ to $C$.
Moreover, we identify the image $\varphi (C)$ with $C$ as a submanifold of $(M_1, \varphi^* \xi_2)$, and to simplify notation further, we write $\xi_2$ for the contact structure $\varphi^* \xi_2$ on $M_1$, and $\alpha_2$ for the pull-back $\varphi^* \alpha_2$ of a contact form on $M_2$.
Our argument thus takes place entirely on the manifold $M_1$ with two different contact structures $\xi_1$ and $\xi_2$ (and with a single submanifold $C$, which is coisotropic with respect to $\xi_1$).

Since we have the freedom to choose the submanifold $C$, we may assume that $\partial_z$ is transverse to both $\xi_1$ and $\xi_2$.
The necessary argument is entirely analogous to the construction of a transverse knot; the circle embedding can be chosen to be transverse to two (hyper-)plane bundles that are nowhere integrable.
Let $\alpha_1$ be a contact form with $\ker \alpha_1 = \xi_1$ so that its Reeb vector field restricts to $\partial_z$ along the transverse knot $S^1$.
This contact form will be fixed for the remainder of the proof.
If $\alpha_2$ is any contact form that defines $\xi_2$, then $\alpha_2 = f \alpha_1 + \beta = f \alpha_1 + d\alpha_1 (X, \cdot)$, with $X$ tangent to $\xi_1$, see the proof of Lemma~\ref{lem:coisotropic}.
By assumption, the function $f$ does not vanish along $S^1$, and by continuity, it is non-vanishing in a neighborhood $U$ of $S^1$.
We may assume that $C \subset U$, again see Lemma~\ref{lem:existence-coisotropic-tori}.
By reversing the orientation of $S^1$ if necessary, we therefore have $f > 0$ in a neighborhood of $C$.

As a consequence of positivity of $f$, there exists a unique smooth function $v$ on $C$ so that the vector field $Y = \partial_\theta - v \, \partial_z$ is tangent to $\xi_2$ everywhere on $C$.
In fact, this function is given by $v = \alpha_1 (\partial_\theta) + d\alpha_1 (X, \partial_\theta) / f$, and $\partial / \partial z \, (\alpha_1 (\partial_\theta)) = 0$.
The vector field $Y$ defines the characteristic foliation $\F$ of $C$ with respect to $\xi_2$, which in this case is non-singular.
Let $\lambda \colon C \to \R$ denote a positive function.
The divergence of the vector field $\lambda Y$ with respect to the area form $\Omega = dz \wedge d\theta$ on $C$ is $\partial \lambda / \partial \theta - v \, \partial \lambda / \partial z - \lambda \, \partial v / \partial z$.
By assumption, $\varphi$ does not preserve the plane bundle $(\xi_1)_x$, so the vector $X (x)$ is non-zero.
By shrinking $U$ if necessary, we may assume that $X$ is nowhere vanishing on $U$.
We may then choose the Lagrangian circle $L = S^1$ (in the notation of Lemma~\ref{lem:existence-coisotropic-tori}, $C = S^1 \times L$) so that $v$ is not locally constant anywhere on $C$.
In fact, for generic choices of $L$ and $\lambda$, the zeroes of the divergence of $\lambda Y$ are non-degenerate, and thus $\div_\Omega (\lambda Y)$ vanishes only on a collection $\Gamma$ of isolated embedded circles, and moreover, $\Gamma$ is transverse to $\F$.
In other words, the collection of embedded circles $\Gamma$ divides the characteristic foliation of $C$, and hence $C$ is convex.

Before giving the proof in the case $\dim M > 3$, we provide details of the proof of Theorem~4.8.5~(a) in \cite{geiges:ict08} that are relevant for generalizing the argument to arbitrary dimensions.
Let $\beta = \iota_Z \Omega$, where $Z = \lambda Y$ and $\Omega = dz \wedge d\theta$ are as above, and define $\alpha = \beta + u \, dt$, where $t$ denotes the coordinate on $\R$ and $u$ is a smooth function on $S$.
It suffices to show that $\alpha$ is a contact form on a neighborhood of $S \times 0$ (which is again identified with $S$) in $S \times \R$, which in turn is equivalent to the condition that $u \, \div_\Omega (Z) - du (Z) > 0$.
The latter is satisfied away from $\Gamma$ with $u = \pm 1$.
One can use the flow of $Z$ to identify a neighborhood of $\Gamma$ with $\Gamma \times [- \epsilon, \epsilon]$ so that $\Gamma$ corresponds to $\Gamma \times 0$.
Then the function $u (p, s) = g (p, s) \cdot h (p, s)$, where $h (p, s) = \exp \left( - \int_0^s \div_\Omega (Z) (p, r) \, dr \right)$, and $g$ satisfies $\partial g / \partial s > 0$ and $g (p, s) = \pm 1 / h (p, s)$ near $s = \pm \epsilon$, satisfies the above contact condition.

Now suppose that $\dim M > 3$ (so that $n > 2$).
Let $C$ be an embedded torus as in the above proof in the dimension $3$ case, and extend $C$ to a coisotropic torus $T^n = S^1 \times T^{n - 1} = C \times T^{n - 2}$ inside $S^1 \times \R^{2 n - 2}$.
We continue to write $(z, \theta)$ for coordinates on $C$.
Denote by $W_{(z, \theta)}$ the ($2 n - 4$)-dimensional subspace of the symplectic orthogonal complement of $\partial_\theta$ at $(z, \theta)$ that is linearly independent of $\partial_\theta$.
Then the above $T^{n - 2}$-factor can be chosen as (or tangent to) any Lagrangian subspace of $W$.
Strictly speaking, the resulting coisotropic torus $T^n = C \times T^{n - 2}$ is a fibered product over $C$, but for simplicity of notation we disregard this subtlety.
The fibered product $S = C \times W$ (or $C \times V$, where $V \subset W$ is a neighborhood of the torus $T^{n - 2}$) is then an oriented embedded hypersurface of $S^1 \times \R^{2 n - 2}$, whose characteristic foliation induced by $\xi_2$ is precisely given by the vector field $Z$ that we constructed in the dimension $3$ case.
(The hypersurface $S$ is of course not compact, but our constructions are all local near $T^n$, and thus this issue does not affect our arguments.)
The set $\Gamma$ of zeroes of its divergence (with respect to an appropriate volume form $\Omega$ on $S$) is a collection of isolated embedded codimension one submanifolds of $S$ (of the form $S^1 \times V$, where $S^1$ belongs to the dividing set in the dimension $3$ case) that are transverse to $\F$.

Let $\beta$ be the restriction of a contact form for $\xi_2$ to $S$ so that $\iota_Z \Omega = \beta \wedge (d\beta)^{n - 2}$ (see the remarks before this lemma).
As before, define a vertically invariant one-form $\alpha = \beta + u \, dt$ on $S$.
In this case, the contact condition for $\alpha$ translates into $u \, \div_\Omega (Z) - (n - 1) du (Z) > 0$ (the computation is step-by-step the same as in dimension $3$, see \cite{geiges:ict08}, and in fact is partly carried out there, so we omit lengthy details).
But then the argument in the dimension $3$ case goes through almost verbatim, except that the exponent in the definition of $h$ must be divided by $n - 1$, and the second condition on $g$ has to be replaced by the requirement that $g (p, s)$ equals the reciprocal of $\pm (n - 1) h (p, s)$ near $s = \pm \epsilon$.
Thus $\alpha$ defines a vertically invariant contact structure on a neighborhood of $S \times 0$ that coincides with $\xi_2$ on $S$, which is equivalent to the existence of a contact vector field that is transverse to $S$.
In particular, this contact vector field is transverse to $T^n$ (which is identified with $(\varphi \circ \iota) (T^n)$), and the proof of the lemma (for arbitrary dimension) is complete.
\end{proof}

\begin{rmk}
Denote by $\Psi$ the embedding of a neighborhood of $S \times 0$ into $M$ from the preceding proof.
Then the vertical vector field $X_F = \Psi_* (\partial / \partial t)$ that is transverse to $T^n$ is in fact strictly contact (i.e.\ it preserves the contact form $\alpha$ on $(M, \xi)$ so that $\Psi^* \alpha = \beta + u \, dt$).
The lift of $X_F$ to a Hamiltonian vector field on the symplectization (with respect to $\alpha$) is then trivial in the $\R$-direction. \qed
\end{rmk}

The following proposition from \cite{mueller:hdp17} is a contact analog of the Laudenbach-Sikorav theorems from section~\ref{sec:symp}, and it is proved similarly by constructing a non-constant holomorphic disk with prescribed coisotropic boundary conditions.

\begin{thm}[{\cite[Theorem~1.3]{mueller:hdp17}}] \label{thm:rig-coiso-hom-class}
Let $\iota \colon L \hookrightarrow (\R^{2 n - 1}, \xi_0)$ be an embedding of a (closed and connected) $n$-dimensional manifold that is immediately displaceable.
Then there exists a neighborhood $N$ of $\iota (L)$ that does not admit any coisotropic embeddings $\jmath \colon L \hookrightarrow N$ so that the homomorphism $\jmath_* \colon H_1 (L, \R) \to H_1 (N, \R)$ is injective.
In particular, the modified shape $\It_C (N, L, \iota^*)$ is empty.
\end{thm}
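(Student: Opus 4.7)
The plan is to adapt the Laudenbach--Sikorav proof sketched earlier for Theorem~\ref{thm:rig-lag-hom-class} to the contact setting, using Proposition~\ref{pro:lift-embedding} to transfer the problem to the symplectization. Argue by contradiction: suppose no such neighborhood $N$ exists. Then we may pick a nested sequence of tubular neighborhoods $N_k$ of $\iota(L)$ shrinking onto $\iota(L)$, and coisotropic embeddings $\jmath_k \colon L \hookrightarrow N_k$ such that $(\jmath_k)_*\colon H_1(L,\R) \to H_1(N_k,\R)$ is injective. Fix a contact form $\alpha_0$ for $\xi_0$; by Proposition~\ref{pro:lift-embedding}, for each $k$ there is a positive function $f_k$ on $L$ so that the lift $\widehat{\jmath}_{f_k}\colon x \mapsto (\jmath_k(x), f_k(x))$ is a Lagrangian embedding into the symplectization $(\R^{2n-1} \times \R_+, d(t\,\pi^*\alpha_0))$. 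Since the cylinder $N_k \times \R_+$ deformation retracts onto $N_k$, injectivity of $(\jmath_k)_*$ on $H_1$ ascends to injectivity of $(\widehat{\jmath}_{f_k})_*$ into $H_1(N_k \times \R_+, \R)$.

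Next, exploit immediate displaceability. By hypothesis there is a contact vector field $X_F$ nowhere tangent to $\iota(L)$; exactly as in the remark following Theorem~\ref{thm:displacement}, one can replace $F$ by $\epsilon \eta F$ for a cut-off $\eta$ supported in an arbitrarily small neighborhood of $\iota(L)$, making the contact Hamiltonian $C^0$-small while still displacing $N_k$ for all large $k$. The lift of this contact isotopy to the symplectization is Hamiltonian, generated by $t\,\pi^* (\epsilon \eta F)$, and displaces $\widehat{\jmath}_{f_k}(L)$ with Hofer energy bounded by $\epsilon \cdot \sup_{L}(f_k) \cdot \|\eta F\|$, which can be made arbitrarily small by first choosing $f_k$ uniformly bounded (using Weinstein's theorem locally at $\iota(L)$ to control the graph height) and then shrinking $\epsilon$.

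To derive a contradiction, apply the Gromov--Sikorav machinery sketched in the proof of Theorem~\ref{thm:rig-lag} to the Lagrangian $\widehat{\jmath}_{f_k}$: the Lagrangian suspension of the displacing Hamiltonian isotopy produces a Lagrangian in $\R^{2n+2}$ on whose boundary lies a non-constant holomorphic disk, and the displacement energy of $\widehat{\jmath}_{f_k}$ is at least half the area of such a disk. Injectivity of $(\widehat{\jmath}_{f_k})_*$ on $H_1$ forces the boundary of this disk to represent a non-trivial class, so by the Gromov--Sikorav Theorem~\ref{thm:sikorav} (applied inside an ambient cylinder of fixed capacity determined by a fixed Darboux chart around $\iota(L)$), the area is bounded below by a positive constant depending only on a fixed reference neighborhood $N_{k_0}$, not on $k$. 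Choosing $\epsilon$ small enough yields a contradiction. The final clause about $\widetilde{I}_C(N,L,\iota^*)$ being empty is then immediate: any coisotropic embedding realizing a shape class has $\jmath^* = \iota^*$, which is injective since $\iota$ and $\jmath$ both represent isomorphisms between $H^1$ of a tubular neighborhood and $H^1(L,\R)$.

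The main obstacle is the uniformity in $k$: unlike in the symplectic case, the auxiliary functions $f_k$ produced by Proposition~\ref{pro:lift-embedding} depend on $\jmath_k$, and one must verify that they remain uniformly bounded (with uniformly bounded reciprocals) so that the Lagrangian lifts $\widehat{\jmath}_{f_k}(L)$ actually remain inside a compact piece of the symplectization and the topological lower bound on disk area does not degenerate. This is the step requiring genuinely new contact-geometric input relative to the symplectic argument, and is where the detailed analysis of \cite{mueller:hdp17} enters.
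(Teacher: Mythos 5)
Your overall strategy --- lift the coisotropic embeddings to Lagrangians in the symplectization via Proposition~\ref{pro:lift-embedding}, displace with the lift of the contact Hamiltonian, and run the Gromov--Sikorav energy--area comparison --- is the natural one, and it is close in spirit to what is actually done: the paper does not prove this theorem here at all, but quotes it from \cite{mueller:hdp17}, noting only that the proof there ``constructs a non-constant holomorphic disk with prescribed \emph{coisotropic} boundary conditions.'' That phrasing is the tell: the cited proof works with the coisotropic boundary condition directly rather than reducing to the Lagrangian case by a black-box application of the symplectic theorems, and the reason is exactly the obstacle you flag in your last paragraph.

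That obstacle is a genuine gap, not a deferrable technicality. The function $f_k$ is determined by $\jmath_k$ only up to a positive constant, so you may normalize $\max f_k = 1$, but nothing in your argument controls $\min f_k$ from below uniformly in $k$; the ratio $\max f_k/\min f_k$ is a nontrivial invariant of the coisotropic embedding (compare the discussion of Reeb foliations of $T^n\times a$ after Theorem~\ref{thm:neighborhood}). If $\min f_k\to 0$, the Lagrangians $\widehat{\jmath}_{f_k}(L)$ escape toward the $t=0$ end of the symplectization, they are not contained in any fixed compact region $N_{k_0}\times[a,b]$, and the lower bound on disk areas --- which in the Laudenbach--Sikorav scheme is a topological invariant of a \emph{fixed} neighborhood --- degenerates, since the relevant periods $[f_k\,\jmath_k^*\alpha_0]$ may tend to zero even though each is nonzero. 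The same lack of control also infects your energy estimate: the lifted Hamiltonian $t\,\pi^*(\epsilon\eta F)$ has unbounded oscillation on the symplectization and must be cut off in the $t$-direction, which again requires knowing in advance a compact $t$-window containing all the lifts. Closing this gap is precisely the new contact-geometric input of \cite{mueller:hdp17}; as written, your proposal reduces the theorem to that input rather than proving it.
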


\begin{rmk}
There is no immediate counterpart of Theorem~\ref{thm:rig-lag-period} for coisotropic embeddings.
Although the property of being rational is preserved by rescaling, the size of the generator $\gamma$ rescales by the same factor.
More importantly, the latter depends on more than just the embedding itself \cite{mueller:hdp17}.
The analog of Theorem~\ref{thm:rig-lag} (with the assumption of uniform convergence) can of course be proved similarly. \qed
\end{rmk}

If a compact submanifold $L$ of a contact manifold is immediately displaceable, then (a neighborhood of) the cone over $L$ is also immediately displaceable.
That observation gives rise to a more direct proof of Theorem~\ref{thm:rig-lag-hom-class} in the case $L = T^n$, which also applies to the original contact shape invariant.

\begin{thm} \label{thm:rig-coiso-arnold-conjecture}
Let $\iota \colon T^n \hookrightarrow M$ be an embedding that is immediately displaceable.
Then there exists a neighborhood $N$ of $\iota (T^n)$ that does not admit any coisotropic embeddings $\jmath \colon T^n \hookrightarrow N$ so that the homomorphism $\jmath_* \colon H_1 (T^n, \R) \to H_1 (N, \R)$ is injective.
In particular, the modified shape $\It_C (N, \iota^*)$ is empty.
In fact, there exists no Lagrangian embedding of $T^n$ into the cone $N \times \R_+$ so that the induced homomorphism $\jmath_* \colon H_1 (T^n, \R) \to H_1 (N, \R)$ is injective, and in particular, the shape $I_C (N, \iota^*)$ is also empty.
\end{thm}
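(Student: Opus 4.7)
The plan is to reduce the statement to the Lagrangian assertion in the cone and then mimic the direct, Arnold-conjecture-based argument sketched in Remark~\ref{rmk:arnold-conjecture}, carried out in the symplectization rather than in $M$ itself.

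First, I would observe that the coisotropic part follows from the Lagrangian part. Given a coisotropic embedding $\jmath\colon T^n\hookrightarrow N$ with $\jmath_*$ injective on $H_1$, Proposition~\ref{pro:lift-embedding} provides a positive function $f$ on $T^n$ so that $\hi_f(x)=(\jmath(x),f(x))$ is a Lagrangian embedding into $N\times\R_+$. Since the projection $N\times\R_+\to N$ is a homotopy equivalence, $(\hi_f)_*$ on $H_1$ agrees with $\jmath_*$ under the induced isomorphism, and is therefore also injective. Hence it suffices to prove the stronger Lagrangian assertion.

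Second, I would construct a Hamiltonian displacement on the symplectization. Since $\iota$ is immediately displaceable and $T^n$ is compact, there exist a compact tubular neighborhood $K$ and an open tubular neighborhood $N\subset K$ of $\iota(T^n)$, together with a contact Hamiltonian $F$ supported in $K$, such that the time-$1$ flow $\phi_F^1$ of the contact vector field $X_F$ displaces $N$ from itself. The lift of $\phi_F^t$ to the symplectization $M\times\R_+$ is the Hamiltonian flow generated by $\tilde H(x,t)=tF(x)$, and has the explicit form $(x,t)\mapsto(\phi_F^t(x),t/g_t(x))$, where $(\phi_F^t)^*\alpha=g_t\,\alpha$. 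This lifted flow sends $N\times\R_+$ into $\phi_F^1(N)\times\R_+$, which is disjoint from $N\times\R_+$. In particular, any Lagrangian embedding $\jmath\colon T^n\hookrightarrow N\times\R_+$ has compact image contained in some $K\times[a,b]$, and after multiplying $\tilde H$ by a bump function in the variable $t$ that equals $1$ on a sufficiently large compact interval (containing $[a,b]$ as well as the $t$-coordinates of the time-$1$ image), one obtains a Hamiltonian compactly supported in $K\times\R_+$ whose time-$1$ flow still displaces $\jmath(T^n)$.

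Third, I would verify the topological hypothesis needed to invoke (Chaperon's case of) the Arnold conjecture. Since $K\times\R_+$ deformation retracts onto $K$, which in turn deformation retracts onto $\iota(T^n)\cong T^n$, we have $\pi_2(K\times\R_+)\cong\pi_2(T^n)=0$ and $\pi_1(K\times\R_+)\cong\Z^n$. The injectivity of $\jmath_*\colon H_1(T^n,\R)\to H_1(N,\R)$ is equivalent to injectivity of the integral version (by flatness of $\R$ over $\Z$ applied to the torsion-free group $\Z^n$), and since $T^n$ is a $K(\Z^n,1)$ this coincides with injectivity of $\jmath_*\colon\pi_1(T^n)\to\pi_1(N\times\R_+)\cong\pi_1(K\times\R_+)$. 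Feeding this into the long exact sequence of the pair $(K\times\R_+,\jmath(T^n))$ exactly as in Remark~\ref{rmk:torus-shape-hyp} yields $\pi_2(K\times\R_+,\jmath(T^n))=0$. By Arnold's conjecture in this setting, $\jmath(T^n)$ cannot be displaced by a Hamiltonian isotopy compactly supported in $K\times\R_+$, contradicting the second step.

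The main obstacle I expect is the technical matter that the natural lifted Hamiltonian $\tilde H=tF$ is not compactly supported on $M\times\R_+$. The cutoff above is designed to handle this, but some care is needed to verify that the truncated flow still agrees with the untruncated one along the trajectories emanating from $\jmath(T^n)$ up to time $1$, so that displacement is preserved. Beyond this point, the argument is structurally identical to the template laid out in Remarks~\ref{rmk:torus-shape-hyp} and \ref{rmk:arnold-conjecture}, transplanted from $K$ to $K\times\R_+$.
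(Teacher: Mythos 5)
Your proposal is correct and follows essentially the same route as the paper: the paper's proof simply notes that the cone over an immediately displaceable submanifold is itself immediately displaceable in the symplectization, where the Arnold conjecture holds, and then invokes the argument of Remark~\ref{rmk:arnold-conjecture} verbatim. You have merely filled in the details the paper leaves implicit (the reduction of the coisotropic case to the Lagrangian one via Proposition~\ref{pro:lift-embedding}, the explicit lift and cutoff of the displacing Hamiltonian, and the $\pi_2$ verification), all of which check out.
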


\begin{proof}
The Arnold conjecture holds for the symplectization of a compact contact manifold, and thus the argument in Remark~\ref{rmk:arnold-conjecture} applies verbatim.
\end{proof}

\begin{rmk} \label{rmk:coiso-arnold-conjecture}
As in Remark~\ref{rmk:arnold-conjecture}, the previous argument extends to arbitrary (closed and connected) manifolds $L$ under the same additional hypotheses on the homotopy type of the coisotropic embedding, and one can again define a suitable contact shape invariant for that purpose. \qed
\end{rmk}

We are finally in a position to complete the proof of Theorem~\ref{thm:contact-shape-preserving}.

\begin{proof}[Proof of Theorem~\ref{thm:contact-shape-preserving}]
We have already verified in Propositions~\ref{pro:contact-shape} and \ref{pro:tilde-contact-shape} that contact embeddings preserve the (modified) shape invariants, so we only need to prove the converse.

By Proposition~\ref{pro:coorient-contact-shape}, we may assume that $\varphi$ is not a contact embedding that reverses coorientation.
As in the proof of Theorem~\ref{thm:symp-shape-preserving}, suppose that $\varphi$ is not contact at $x \in B_r^{2 n - 1}$, and again assume without loss of generality that $W = \R^{2 n - 1}$ with its standard contact structure.
By Lemma~\ref{lem:imm-displ}, there exists a coisotropic embedding $\iota \colon T^n \hookrightarrow B_r^{2 n - 1}$ so that $\varphi \circ \iota \colon T^n \hookrightarrow \R^{2 n - 1}$ is immediately displaceable.
Let $U$ be a neighborhood of $\iota (T^n)$ so that $\It_C (N, \varphi \circ \iota^*)$ is empty, where $\overline{U} \subset B_r^{2 n - 1}$ is compact and $(\varphi \circ \iota) (\overline{U}) \subset N$; this exists by Theorem~\ref{thm:rig-coiso-hom-class} or Theorem~\ref{thm:rig-coiso-arnold-conjecture}.
But $\It_C (U, \iota^*)$ contains at least the (oriented line represented by the) vector $a = [\iota^* \alpha_0]$ (and in fact, by Proposition~\ref{pro:coisotropic-darboux-weinstein}, we may choose the neighborhood $U$ of $\iota (T^n)$ so that $\It_C (U, \iota^*) = A$, where $A \subset S^{n - 1}$.
Therefore $\varphi$ does not preserve the modified shape invariant.
The proof for the original shape invariant $I_C$ is analogous.
\end{proof}

\begin{proof}[Proof of Corollary~\ref{cor:rig-contact-emb}]
The proof is a carbon copy of the proof of Corollary~\ref{cor:rig-symp-emb} with Theorem~\ref{thm:symp-shape-preserving} and Proposition~\ref{pro:symp-shape} replaced by Theorem~\ref{thm:contact-shape-preserving} and Proposition~\ref{pro:contact-shape} or \ref{pro:tilde-contact-shape}.
If the embeddings $\varphi_k$ are not assumed to preserve coorientation, we may pass to a subsequence and if necessary argue as we did for anti-symplectic embeddings.
\end{proof}

\begin{proof}[Proof of Corollary~\ref{cor:rig-contact-diff}]
The proof is virtually the same as the proof of Corollary~\ref{cor:rig-symp-diff} with Corollary~\ref{cor:rig-symp-emb} replaced by Corollary~\ref{cor:rig-contact-emb}.
If $\xi$ is not coorientable or if the diffeomorphisms $\varphi_k$ are not assumed to preserve coorientation, we may again pass to a subsequence and if necessary argue as we did for anti-symplectic embeddings.
\end{proof}

\begin{cor}
Let $\varphi_k \colon B_r^{2 n - 1} \to M$ be a sequence of contact embeddings that reverse coorientation that converges uniformly on compact subsets to an embedding $\varphi \colon B_r^{2 n - 1} \to M$.
Then $\varphi$ is contact but reverses coorientation.
\end{cor}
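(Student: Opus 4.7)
The plan is to mimic the strategy used in the proof of Corollary~\ref{cor:rig-anti-symp-emb}: reduce the coorientation-reversing case to the coorientation-preserving case by pre-composing with a fixed coorientation-reversing contact involution on the source, and then invoke Corollary~\ref{cor:rig-contact-emb}. Since the hypothesis is local on the source (an embedding of a ball), and since by Darboux's theorem a neighborhood of $\varphi (\overline{B^{2n-1}_{r'}})$ in $M$ lies in a standard contact chart for any $r' < r$, the issue of finding an involution on $M$ does not arise and we work entirely on the source.

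First I would exhibit an explicit coorientation-reversing contact involution $\iota$ of $(\R^{2n-1}, \xi_0)$ that preserves the ball $B^{2n-1}_r$. The map $\iota (x_1, \ldots, x_{n-1}, y_1, \ldots, y_{n-1}, z) = (x_1, \ldots, x_{n-1}, -y_1, \ldots, -y_{n-1}, -z)$ does the job: it is an involution preserving the Euclidean norm (and hence $B^{2n-1}_r$), and $\iota^* \alpha_0 = -\alpha_0$, so $\iota$ is a contact diffeomorphism with respect to $\xi_0 = \ker \alpha_0$ that reverses coorientation.

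Next, I would consider the sequence $\varphi_k \circ \iota \colon B^{2n-1}_r \to M$. Since both $\varphi_k$ and $\iota$ reverse coorientation, each $\varphi_k \circ \iota$ is a (coorientation-preserving) contact embedding. The sequence $\varphi_k \circ \iota$ converges uniformly on compact subsets to $\varphi \circ \iota$, because $\iota$ is a fixed diffeomorphism that preserves the ball. By Corollary~\ref{cor:rig-contact-emb}, the limit $\varphi \circ \iota$ is a contact embedding. Therefore $\varphi = (\varphi \circ \iota) \circ \iota$ is a composition of a coorientation-preserving and a coorientation-reversing contact embedding, hence is contact and reverses coorientation.

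There is essentially no obstacle here, since the argument is entirely formal once the involution is in hand; the only point that deserves a brief remark is that the conclusion of Corollary~\ref{cor:rig-contact-emb} is interpreted in the sense of Remark~\ref{rmk:coorientation}, so that coorientation behavior of the limit is inherited from the sequence. Alternatively, if one prefers to work on the target, one can restrict attention to a Darboux chart in $M$ containing the image of $\varphi$ (and, for large $k$, the images $\varphi_k (B^{2n-1}_r)$), and then compose with the analogous involution on the target; the two approaches are equivalent.
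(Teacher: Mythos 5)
Your proof is correct and follows the first of the two routes the paper itself indicates (``argue as for anti-symplectic embeddings using Corollary~\ref{cor:rig-contact-emb}''), merely making explicit the coorientation-reversing involution $\iota(x,y,z)=(x,-y,-z)$ with $\iota^*\alpha_0=-\alpha_0$ that the paper leaves implicit. No gaps.
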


\begin{proof}
One can either argue as for anti-symplectic embeddings using Corollary~\ref{cor:rig-contact-emb}, or use the final corollary of this section and the remark after it.
\end{proof}

\begin{cor}
Let $\varphi_k \colon M \to M$ be contact diffeomorphisms that reverse the given coorientation and converge uniformly on compact subsets to another diffeomorphism $\varphi \colon M \to M$.
Then $\varphi$ is contact but reverses the coorientation.
In other words, the set of diffeomorphisms that preserve $\xi$ but reverse its coorientation is $C^0$-closed in the group $\Diff (M)$ of all diffeomorphisms.
\end{cor}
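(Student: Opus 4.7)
The plan is to mimic the proof of Corollary~\ref{cor:rig-contact-diff} and reduce to the previously established corollary about coorientation-reversing contact embeddings via Darboux's Theorem. Since being contact and reversing coorientation are both pointwise conditions on the differential, it suffices to verify them on arbitrarily small neighborhoods of each point of $M$.

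Concretely, fix $x \in M$. By Darboux's Theorem (applied to the contact structure $\xi$ with its chosen coorientation), I would identify a neighborhood of $x$ in $M$ with an open ball $B_r^{2n-1}$ in $(\R^{2n-1}, \xi_0)$ via a contact diffeomorphism that preserves coorientation. Because $\varphi_k \to \varphi$ uniformly on compact subsets and $\varphi$ is an embedding, for $k$ sufficiently large the restriction of $\varphi_k$ to this ball takes values in a fixed precompact neighborhood of $\varphi(x)$ in $M$. Viewing these restrictions as maps $B_r^{2n-1} \to M$, they form a sequence of contact embeddings that reverse coorientation (since each $\varphi_k$ does, and the chart was chosen to preserve coorientation), converging uniformly on compact subsets to the restriction of $\varphi$. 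The immediately preceding corollary of this section then yields that this restriction of $\varphi$ is contact and reverses coorientation. As $x$ was arbitrary, $\varphi$ maps $\xi$ to $\xi$ and reverses the given coorientation everywhere, so $\varphi \in \Diff(M, \xi)$ and reverses coorientation.

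There is no real obstacle here beyond bookkeeping: the analytic content (contact rigidity under $C^0$-limits and the coorientation-reversing variant) has already been proven in the corollaries about embeddings. The only thing one needs to verify is that the Darboux identification can be chosen compatibly with the coorientation, which is standard since the group of contact diffeomorphisms preserving coorientation acts transitively on each connected component of the set of cooriented contact charts. Alternatively, if one starts with a chart that accidentally reverses coorientation, a local coorientation-reversing contact involution in $\R^{2n-1}$ (such as $(x, y, z) \mapsto (x, -y, -z)$, which pulls $\alpha_0$ back to $-\alpha_0$) can be pre-composed to fix the sign, exactly in the spirit of the anti-symplectic argument used in the proof of Corollary~\ref{cor:rig-anti-symp-emb}.
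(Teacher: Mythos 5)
Your argument is correct, and it is in fact the route the paper only gestures at with the phrase ``alternatively, one may give a proof using the previous corollary.'' The paper's actual proof is a one-line global coset argument: if the set of coorientation-reversing contact diffeomorphisms is non-empty, it equals $\varphi_0 \cdot \Diff_+ (M, \xi)$ for any fixed coorientation-reversing contact diffeomorphism $\varphi_0$ (e.g.\ the first term of the given sequence), and since left composition with the fixed diffeomorphism $\varphi_0$ is a homeomorphism of $\Diff (M)$ in the $C^0$-topology, closedness follows immediately from the $C^0$-closedness of $\Diff_+ (M, \xi)$ already established in Corollary~\ref{cor:rig-contact-diff}. What each approach buys: the coset argument is shorter, requires no chart bookkeeping, and makes transparent that the statement is purely a formal consequence of the closedness of the coorientation-preserving group; your localization argument via coorientation-compatible Darboux charts and the preceding embedding corollary is more self-contained, avoids the (harmless) case split on non-emptiness, and runs entirely parallel to the proof of Corollary~\ref{cor:rig-contact-diff}, so it generalizes directly to situations where one only has local information. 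Your remark on fixing the coorientation of the chart by pre-composing with a local coorientation-reversing contact involution is the right way to handle the only genuinely delicate point.
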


\begin{proof}
Indeed, if this set is non-empty, then it coincides with $\varphi \cdot \Diff_+ (M,\xi)$ for some diffeomorphism $\varphi$ that reverses the coorientation of $\xi$.
Alternatively, one may give a proof using the previous corollary.
\end{proof}

\begin{cor}
An embedding $\varphi \colon B_r^{2 n - 1} \to M$ preserves the contact structures but reverses coorientation if and only if it reverses the shape invariant.
\end{cor}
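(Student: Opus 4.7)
The plan is to reduce the statement to Theorem~\ref{thm:contact-shape-preserving} by composing $\varphi$ with an explicit coorientation-reversing contact involution of the standard Darboux ball. The underlying mechanism is already implicit in Remark~\ref{rmk:shape-no-coorientation}: reversing the coorientation of $\xi$ replaces the shape cone $\It_C(M,L,\tau)$ by its opposite $-\It_C(M,L,\tau)$ in $PH^1(L,\R)$, and likewise for $I_C$ via the symplectization.

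For the forward direction, suppose $\varphi$ is a contact embedding with $\varphi^*\alpha = -f\alpha_0$ for some positive function $f$ on $B_r^{2n-1}$. If $\iota\colon L\hookrightarrow U$ is a coisotropic embedding and $g\,\iota^*\alpha_0$ is a closed one-form with $g>0$ defining $\iota^*\xi_0$, then the positive function $h=g/(f\circ\iota)$ satisfies $h\,(\varphi\circ\iota)^*\alpha = -g\,\iota^*\alpha_0$. Hence $\varphi\circ\iota$ is coisotropic, and the cohomology class of the corresponding closed one-form is the negative of the original. This gives $\It_C(U,L,\tau)\subset -\It_C(V,L,\tau\circ\varphi^*)$, i.e. $\varphi$ reverses the modified shape, and the corresponding assertion for $I_C$ follows from the same computation applied to the lift $\wv(x,t)=(\varphi(x),-t/f(x))$ to the symplectizations, as in the proof of Proposition~\ref{pro:contact-shape}.

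For the converse, I would use the linear involution $j\colon\R^{2n-1}\to\R^{2n-1}$, $(x,y,z)\mapsto(x,-y,-z)$. A direct computation shows $j^*\alpha_0=-\alpha_0$, so $j$ is a coorientation-reversing contact involution; being a Euclidean isometry fixing the origin, it restricts to an involution of $B_r^{2n-1}$. By the forward direction already proved, $j$ reverses the shape. Now if $\varphi$ reverses shape, I chase the definitions through the composition $\varphi\circ j\colon B_r^{2n-1}\to M$: given $\tau\colon H^1(V,\R)\to H^1(L,\R)$ and an open $U$ as in Definition~\ref{dfn:contact-shape-preserving}, reversal for $j$ on $U\to j(U)$ gives $\It_C(U,L,\tau\circ j^*)\subset -\It_C(j(U),L,\tau\circ j^*\circ (j|_U)^*) = -\It_C(j(U),L,\tau)$, and then reversal for $\varphi$ on $j(U)\to V$ yields $\It_C(j(U),L,\tau)\subset -\It_C(V,L,\tau\circ\varphi^*)$. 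The two signs cancel, so $\varphi\circ j$ preserves the shape invariant. Theorem~\ref{thm:contact-shape-preserving} then forces $\varphi\circ j$ to be a contact embedding preserving coorientation, and since $j=j^{-1}$ is a coorientation-reversing contact diffeomorphism, $\varphi=(\varphi\circ j)\circ j$ is a coorientation-reversing contact embedding. The same argument works verbatim with $\It_C$ replaced by $I_C$.

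The only real subtlety is the bookkeeping of signs and induced homomorphisms in the composition of shape-reversals; this is routine and parallels the proof of Corollary~\ref{cor:anti-symp-shape-preserving}. There is no analytic obstacle, because all of the hard work—existence of enough coisotropic tori, immediate displaceability of non-coisotropic submanifolds, and the $J$-holomorphic disk obstructions—has already been absorbed into Theorem~\ref{thm:contact-shape-preserving}.
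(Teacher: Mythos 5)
Your proposal is correct and follows essentially the same route as the paper: the paper's proof is precisely the reduction to Theorem~\ref{thm:contact-shape-preserving} by composing with an explicit coorientation-reversing contact involution, mirroring the anti-symplectic case (Corollaries~\ref{cor:rig-anti-symp-emb} and \ref{cor:anti-symp-shape-preserving}). One cosmetic slip: the lift of a coorientation-reversing contact embedding to the symplectizations should be $\wv(x,t) = (\varphi(x), t/f(x))$ (so that the second coordinate stays in $\R_+$), which already pulls $t \, \pi^* \alpha_2$ back to $-t \, \pi^* \alpha_1$ and hence negates the $\lambda$-periods as required.
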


\begin{proof}
This follows from Theorem~\ref{thm:contact-shape-preserving} by the same argument as for anti-symplectic embeddings, or directly along the same lines as the proof of Theorem~\ref{thm:contact-shape-preserving}.
\end{proof}

\begin{rmk}
Reversing shape is again a property that is preserved by uniform convergence on compact subsets.
The proof is verbatim the same as the proof of Proposition~\ref{pro:continuous-contact-shape}. \qed
\end{rmk}

\begin{rmk}
For all results that ignore coorientation, one may instead work with the version of the (modified) shape that ignores coorientation, see Remark~\ref{rmk:shape-no-coorientation}. \qed
\end{rmk}

\section{Contact forms and strictly contact embeddings} \label{sec:strictly-contact}
The present section is concerned with (contact) embeddings and diffeomorphisms that preserve given contact forms.
Recall that the contact condition for $\xi = \ker \alpha$ can be expressed as $\Omega_\alpha = \alpha \wedge (d\alpha)^{n - 1} \not= 0$.

Suppose that $(M_1, \xi_1 = \ker \alpha_1)$ and $(M_2, \xi_2 = \ker \alpha_2)$ are (cooriented) contact manifolds of the same dimension, and $\varphi \colon M_1 \to M_2$ is a (coorientation preserving) contact embedding.
Then $\varphi^* \alpha_2 = f \alpha_1$ for a positive function $f$ on $M_1$, and thus $\varphi^* \Omega_{\alpha_2} = f^n \, \Omega_{\alpha_1}$.
In particular, the contact embedding $\varphi$ also preserves the contact forms if and only if it in addition preserves the induced volume forms.
Therefore the following theorem is an immediate corollary of Theorem~\ref{thm:contact-shape-preserving}.

\begin{thm} \label{thm:strictly-contact-shape-preserving}
An embedding $\varphi \colon B_r^{2 n - 1} \to M$ is strictly contact if and only if it preserves the (modified) shape invariant and preserves volume.
\end{thm}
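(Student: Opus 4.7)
The plan is to deduce this essentially for free from Theorem~\ref{thm:contact-shape-preserving}, exploiting the identity recorded just before the statement. Namely, if $\varphi \colon (M_1, \xi_1 = \ker \alpha_1) \to (M_2, \xi_2 = \ker \alpha_2)$ is a coorientation preserving contact embedding with $\varphi^* \alpha_2 = f \alpha_1$ for a positive function $f$, then $\varphi^* \Omega_{\alpha_2} = f^n \, \Omega_{\alpha_1}$, where $\Omega_\alpha = \alpha \wedge (d\alpha)^{n - 1}$. So the content of the theorem is just that, once shape preservation has been used to obtain the contact condition, volume preservation pins down the conformal factor.

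For the forward direction, suppose $\varphi \colon B_r^{2 n - 1} \to M$ is strictly contact, i.e.\ $\varphi^* \alpha = \alpha_0$ (so that $f \equiv 1$). Then $\varphi$ is in particular a coorientation preserving contact embedding, so by Propositions~\ref{pro:contact-shape} and \ref{pro:tilde-contact-shape} it preserves both $I_C$ and $\It_C$. Pulling back the defining relation $\varphi^* \alpha = \alpha_0$ immediately gives $\varphi^* \Omega_\alpha = \Omega_{\alpha_0}$, so $\varphi$ preserves the contact volume.

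For the converse, suppose $\varphi$ preserves the (modified) shape invariant and preserves volume. Applying Theorem~\ref{thm:contact-shape-preserving} yields at once that $\varphi$ is a coorientation preserving contact embedding, hence $\varphi^* \alpha = f \alpha_0$ for some smooth positive function $f$ on $B_r^{2 n - 1}$. The volume preserving hypothesis $\varphi^* \Omega_\alpha = \Omega_{\alpha_0}$, combined with the identity $\varphi^* \Omega_\alpha = f^n \, \Omega_{\alpha_0}$ displayed above, forces $f^n \equiv 1$ pointwise; since $f > 0$ this gives $f \equiv 1$, and therefore $\varphi^* \alpha = \alpha_0$, i.e.\ $\varphi$ is strictly contact.

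Since the entire argument is a direct consequence of Theorem~\ref{thm:contact-shape-preserving} together with the elementary observation that the conformal factor $f$ of a contact embedding is uniquely determined by its $n$-th power, no additional obstacle arises; the hard work has already been done in Theorem~\ref{thm:contact-shape-preserving}. One mild subtlety worth flagging is the interpretation of \emph{preserves volume}: it should mean with respect to the contact volume forms $\Omega_{\alpha_0}$ and $\Omega_\alpha$ associated to the chosen contact forms, so that the algebraic identity $\varphi^* \Omega_\alpha = f^n \, \Omega_{\alpha_0}$ is available; under any other convention the statement would need to be rephrased.
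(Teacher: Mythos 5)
Your proposal is correct and follows essentially the same route as the paper: deduce the contact condition from Theorem~\ref{thm:contact-shape-preserving}, then use the identity $\varphi^* \Omega_{\alpha} = f^n \, \Omega_{\alpha_0}$ together with positivity of $f$ and volume preservation to force $f \equiv 1$. The paper records this identity in the paragraph immediately preceding the theorem and treats the result as an immediate corollary, exactly as you do.
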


\begin{proof}
If $\varphi$ is strictly contact then it preserves the (modified) shape and the induced volume forms.
Conversely, if $\varphi$ preserves the (modified) shape, then it is contact by Theorem~\ref{thm:contact-shape-preserving}, and since it also preserves volume, it must be strictly contact.
\end{proof}

\begin{cor}
Let $\varphi_k \colon B_r^{2 n - 1} \to M$ be a sequence of strictly contact embeddings that converges uniformly on compact subsets to an embedding $\varphi \colon B_r^{2 n - 1} \to M$.
Then $\varphi$ is strictly contact, that is, $\varphi^* \alpha = \alpha_0$.
\end{cor}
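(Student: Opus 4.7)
The plan is to reduce to Theorem~\ref{thm:strictly-contact-shape-preserving}, which characterizes strictly contact embeddings as those that preserve the (modified) shape invariant and also preserve the induced volume form $\Omega_\alpha = \alpha \wedge (d\alpha)^{n-1}$ (and, correspondingly, $\Omega_{\alpha_0}$ on $B_r^{2 n - 1}$). Since each $\varphi_k$ satisfies $\varphi_k^* \alpha = \alpha_0$, in particular each $\varphi_k$ is a contact embedding that preserves coorientation (so it preserves the modified shape by Proposition~\ref{pro:tilde-contact-shape}) and is volume preserving (so $\varphi_k^* \Omega_\alpha = \Omega_{\alpha_0}$). It therefore suffices to verify that both of these properties survive passage to the $C^0$-limit $\varphi$.

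First I would invoke Proposition~\ref{pro:continuous-contact-shape}: uniform convergence on compact subsets of shape-preserving embeddings yields a shape-preserving limit, so $\varphi$ preserves the (modified) shape invariant. Combined with Theorem~\ref{thm:contact-shape-preserving} (or directly with Corollary~\ref{cor:rig-contact-emb}), this already gives that $\varphi$ is a (coorientation preserving) contact embedding, so $\varphi^* \alpha = f \alpha_0$ for some smooth positive function $f$ on $B_r^{2n-1}$. Consequently $\varphi^* \Omega_\alpha = f^n \, \Omega_{\alpha_0}$, and it only remains to show $f \equiv 1$.

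To show that $\varphi$ preserves volume, I would use the change of variables formula: for any relatively compact open subset $U \subset B_r^{2n-1}$ whose closure lies inside the domain,
\begin{equation*}
\mathrm{vol}(\varphi_k(U)) = \int_U \varphi_k^* \Omega_\alpha = \int_U \Omega_{\alpha_0} = \mathrm{vol}(U).
\end{equation*}
Because $\varphi_k \to \varphi$ uniformly on $\overline{U}$ and $\varphi$ is an embedding, standard measure-theoretic arguments (e.g.\ comparing characteristic functions of $\varphi_k(U)$ and $\varphi(U)$ on a fixed Darboux chart containing all the images, together with the fact that $\partial \varphi(U)$ has measure zero for generic $U$) show that $\mathrm{vol}(\varphi(U)) = \mathrm{vol}(U)$, hence $\int_U f^n \, \Omega_{\alpha_0} = \int_U \Omega_{\alpha_0}$. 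Letting $U$ shrink to an arbitrary point of $B_r^{2n-1}$ and using continuity of $f$ yields $f^n \equiv 1$, and since $f > 0$, we conclude $f \equiv 1$. Therefore $\varphi$ preserves both the shape and the volume, so by Theorem~\ref{thm:strictly-contact-shape-preserving} it is strictly contact.

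The main obstacle is the clean passage to the limit for the volume condition: a $C^0$-limit of embeddings need not be $C^1$, so one cannot directly equate pointwise Jacobians. The argument goes instead through integration against arbitrarily small test regions, which forces $f^n = 1$ almost everywhere and then everywhere by continuity. Once this is in hand, everything else is bookkeeping in Theorem~\ref{thm:strictly-contact-shape-preserving}.
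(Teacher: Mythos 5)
Your proposal is correct and follows essentially the same route as the paper: pass the shape-preservation to the limit via Proposition~\ref{pro:continuous-contact-shape}, pass the volume-preservation to the limit by a measure-theoretic argument (the paper phrases this as: each $\varphi_k$ preserves the measures induced by $\Omega_{\alpha_0}$ and $\Omega_\alpha$, this survives uniform convergence, and a smooth map preserves measure iff it preserves the volume forms), and conclude via Theorem~\ref{thm:strictly-contact-shape-preserving}. Your intermediate step of first writing $\varphi^*\alpha = f\alpha_0$ and then forcing $f\equiv 1$ by shrinking test regions is just a slightly more explicit unwinding of that same final application.
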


\begin{proof}
By Proposition~\ref{pro:continuous-contact-shape}, the limit $\varphi$ preserves the (modified) shape invariant.
Each $\varphi_k$ preserves the measures induced by the volume forms $\Omega_{\alpha_0}$ and $\Omega_\alpha$ (which is a Radon measure if $M$ is not compact), and this property is also preserved by uniform convergence on compact subsets.
But a smooth map preserves measure if and only if it preserves the corresponding volume forms, and thus $\varphi$ is volume preserving.
Then by the previous theorem, $\varphi$ is a strictly contact embedding.
\end{proof}

One could also argue using Corollary~\ref{cor:rig-contact-emb} to prove this corollary.
Similarly, one may use either the corollary we just proved or Corollary~\ref{cor:rig-contact-diff} to prove the next result.
At this point the proof is straightforward and therefore omitted.

\begin{cor}[\cite{ms:gae14}]
The group $\Diff (M, \alpha)$ of strictly contact diffeomorphisms is closed in the group $\Diff (M)$ of diffeomorphisms of $M$ in the $C^0$-topology.
That is, if $\varphi_k \colon M \to M$ is a sequence of strictly contact diffeomorphisms that converges uniformly on compact subsets to a diffeomorphism $\varphi \colon M \to M$, then $\varphi$ is strictly contact, i.e.\ $\varphi^* \alpha = \alpha$.
\end{cor}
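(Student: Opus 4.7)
The plan is to reduce to the local statement of the preceding corollary on strictly contact embeddings via Darboux's theorem, following precisely the pattern used in the proof of Corollary~\ref{cor:rig-symp-diff} and Corollary~\ref{cor:rig-contact-diff}. Fix an arbitrary point $x \in M$. By Darboux's theorem in contact geometry there is an open neighborhood $V$ of $x$ together with a diffeomorphism $\psi \colon B_r^{2 n - 1} \to V$ satisfying $\psi^* \alpha = \alpha_0$ (and not merely $\psi^* \xi = \xi_0$), so $\psi$ itself is a strictly contact embedding.

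Next I would transfer the convergence to the ball. Since $\varphi_k \to \varphi$ uniformly on compact subsets and $\varphi$ is continuous, the image $\varphi (\psi (\overline{B_{r'}^{2 n - 1}}))$ is a compact subset of $M$ for any $r' < r$, and for all sufficiently large $k$ the compositions $\varphi_k \circ \psi$ are defined on $B_{r'}^{2 n - 1}$ and converge to $\varphi \circ \psi$ uniformly on compact subsets of $B_{r'}^{2 n - 1}$. Each $\varphi_k \circ \psi$ is a strictly contact embedding, because
\[ (\varphi_k \circ \psi)^* \alpha = \psi^* (\varphi_k^* \alpha) = \psi^* \alpha = \alpha_0. \]
By the preceding corollary (the $C^0$-rigidity of strictly contact embeddings $B_r^{2 n - 1} \to M$), the limit $\varphi \circ \psi$ is itself strictly contact, that is, $(\varphi \circ \psi)^* \alpha = \alpha_0$. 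Composing with $\psi^{-1}$ and using $\psi^* \alpha = \alpha_0$, this translates into $\varphi^* \alpha = \alpha$ on the open neighborhood $\psi (B_{r'}^{2 n - 1})$ of $x$.

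Since $x$ was an arbitrary point of $M$ and the identity $\varphi^* \alpha = \alpha$ is pointwise, we conclude $\varphi^* \alpha = \alpha$ on all of $M$, so $\varphi \in \Diff (M, \alpha)$, establishing $C^0$-closedness. There is essentially no genuine obstacle: all of the analytic content (the shape-theoretic rigidity and the volume-preservation argument) has already been absorbed into the previous corollary. The one point requiring a moment of care is that Darboux's theorem must be invoked in its strict form identifying the contact \emph{forms} $\alpha$ and $\alpha_0$ on the nose, rather than merely as a contactomorphism between their kernels; this is exactly what the classical statement provides, so the reduction goes through without modification.
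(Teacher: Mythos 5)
Your proof is correct and is exactly the route the paper intends: the paper omits the argument as ``straightforward,'' noting that the statement follows from the preceding corollary on $C^0$-rigidity of strictly contact embeddings by the same Darboux-localization used to deduce Corollary~\ref{cor:rig-symp-diff} from Corollary~\ref{cor:rig-symp-emb}, which is precisely what you carry out (including the correct observation that the contact Darboux theorem identifies the forms, not merely the structures).
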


\begin{rmk}
The proof in \cite{ms:gae14} uses the fact that the lift of a strictly contact diffeomorphism to the symplectization (with respect to the specific contact form $\alpha$) is symplectic and trivial in the second argument.
The previous corollary is then a direct consequence of Corollary~\ref{cor:rig-symp-diff}, and one can argue similarly for strictly contact embeddings. \qed
\end{rmk}

It is tempting to restrict the definition of the (modified) shape invariant to single out strictly contact embeddings, and based on that invariant give a proof of the previous three results along the same lines as the arguments in section~\ref{sec:contact}.
Here is an ad hoc adaptation of Definition~\ref{dfn:tilde-contact-shape} to an invariant of a contact form $\alpha$.

\begin{dfn} \label{dfn:strictly-contact-shape}
The strictly contact $(\alpha, L, \tau)$-shape $\It_{SC} (M, \alpha, L, \tau) \subset H^1 (L, \R)$ of $M$ is by definition the set of all $z \in H^1 (L, \R)$ such that there exists a coisotropic embedding $\iota \colon L \hookrightarrow M$ so that $\iota^* = \tau$, the one-form $\iota^* \alpha$ is closed and defines the codimension one distribution $\iota^* \xi$, and $z = [\iota^* \alpha]$. \qed
\end{dfn}

\begin{rmk}
The coisotropic embeddings as in this definition lift to Lagrangian embeddings into $M \times 1 \subset M \times \R_+$, where the latter denotes the symplectization with respect to the contact form $\alpha$.
Clearly a strictly contact embedding preserves the strictly contact shape.
However, coisotropic is really a concept related to a contact structure, not a contact form, and embeddings as in the previous definition are not as abundant in general (or fail to exist) to make the line of argument in section~\ref{sec:contact} go through.
For example, the Reeb vector field of the standard contact form $\alpha_0$ on $\R^{2 n - 1}$ is $\partial / \partial z$, and there cannot exist a closed submanifold of $\R^{2 n - 1}$ that is everywhere tangent to $\partial / \partial z$.
Therefore $\It_{SC} (\R^{2 n - 1}, \alpha_0, L, \tau)$ is always empty, and there is no hope to prove Theorem~\ref{thm:strictly-contact-shape-preserving} based on the strictly contact shape. \qed
\end{rmk}

There are some special cases however where the situation is more promising.
We prove the next lemma just for the sake of completeness.
It applies for instance to $S^1 \times \R^{2 n - 2}$ with the contact form considered in section~\ref{sec:contact}, the unit cotangent bundle $S T^* T^n$ with its standard contact form, and to regular contact manifolds (i.e.\ the Reeb vector field induces a free $S^1$-action on $M$).

\begin{lem}
Let $(M, \xi = \ker \alpha)$ be a contact manifold such that for each point $x \in M$ and each vector $v \in \xi_x$ there exists a coisotropic submanifold $C$ through $x$ that is tangent to $v$, and so that the restriction of $\alpha$ to $C$ is closed.
Suppose further that $f$ is a nowhere vanishing smooth function on $M$ so that the restriction of $f \alpha$ to each such coisotropic submanifold is also closed.
Then $f$ is constant.
In other words, the contact form $\alpha$ with the above property is unique up to rescaling by a constant.
If $(M_1, \xi_1 = \ker \alpha_1)$ and $(M_2, \xi_2 = \ker \alpha_2)$ are two contact manifolds as above of the same dimension, and $\varphi \colon M_1 \to M_2$ is a contact embedding that preserves such coisotropic submanifolds, then $\varphi^* \alpha_2 = c \, \alpha_1$ for a constant $c \not= 0$.
\end{lem}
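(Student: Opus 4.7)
The plan is to translate the closedness of $(f\alpha)|_C$ into a pointwise linear equation for $df$, then use the abundance of adapted coisotropic submanifolds to see that this equation forces $df$ to vanish on the contact distribution, and finally invoke the contact (nondegeneracy) condition to conclude that $f$ is locally constant.

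First I would unwind what the hypothesis on $f$ says. Writing $\iota \colon C \hookrightarrow M$ for the inclusion of an adapted coisotropic submanifold, $d(\iota^*(f \alpha)) = d(f \circ \iota) \wedge \iota^* \alpha + (f \circ \iota) \cdot d(\iota^* \alpha)$. Since $\iota^* \alpha$ is closed by the definition of adapted, the second term vanishes, and the hypothesis on $f$ becomes $\iota^*(df \wedge \alpha) = 0$ on $C$.

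Next I would extract a pointwise constraint. At any $x \in C$, Lemma~\ref{lem:description-coisotropic} gives $T_x C = (\xi_x \cap T_x C) \oplus \langle R(x) \rangle$ for a suitable Reeb vector field $R$, so $\alpha$ vanishes on the hyperplane $\xi_x \cap T_x C \subset T_x C$ and is nonzero on a transverse direction. Evaluating $df|_C \wedge \alpha|_C$ on a pair $(v,w)$ with $v \in \xi_x \cap T_x C$ and $w \in T_x C$ satisfying $\alpha(w) \neq 0$ yields $df(v) \cdot \alpha(w) = 0$, hence $df(v) = 0$. The lemma's standing hypothesis guarantees that every $v \in \xi_x$ is tangent at $x$ to some adapted $C$, so $df$ annihilates $\xi$ at every point of $M$, i.e.\ $df = \lambda \alpha$ for a unique smooth function $\lambda$ on $M$. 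Applying $d$ and restricting to $\xi$ kills the summand $d\lambda \wedge \alpha$ and leaves $\lambda \cdot d\alpha|_\xi = 0$; nondegeneracy of $d\alpha|_\xi$ (the contact condition) forces $\lambda \equiv 0$, so $df \equiv 0$, and connectedness of $M$ makes $f$ a nonzero constant.

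For the embedding statement, write $\varphi^* \alpha_2 = g \alpha_1$, where $g$ is nowhere vanishing because $\varphi$ is contact. The assumption that $\varphi$ carries adapted coisotropic submanifolds of $(M_1, \alpha_1)$ to adapted coisotropic submanifolds of $(M_2, \alpha_2)$ says that for every such $C \subset M_1$, both $\alpha_1|_C$ and $\varphi^* \alpha_2|_C = g|_C \cdot \alpha_1|_C$ are closed. Applying the first half of the lemma to the pair $(\alpha_1, g)$ on $M_1$ then gives that $g$ is constant, as required. The only real subtlety is at the very start: one has to recognize that the global hypothesis on $f$ is in fact the pointwise identity $df \wedge \alpha = 0$ evaluated on enough tangent planes, after which the contact condition does the rest of the work.
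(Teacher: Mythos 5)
Your proof is correct, and its first half coincides with the paper's: both reduce the hypothesis on $f$ to the pointwise identity $(df \wedge \alpha)|_C = 0$ and evaluate it on a pair consisting of a vector $v \in \xi_x \cap T_x C$ and a transverse direction in $T_x C$ (the paper takes the Reeb vector $R(x)$, which is tangent to $C$ by Lemma~\ref{lem:description-coisotropic}) to conclude $df|_\xi = 0$. Where you genuinely diverge is in passing from $df|_\xi = 0$ to $f$ constant. The paper argues that a regular level set of $f$ would be a hypersurface everywhere tangent to $\xi$, which is impossible by non-integrability, and then invokes Sard's theorem together with continuity and connectedness to conclude that the image of $f$ is a single point. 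You instead write $df = \lambda \alpha$ with $\lambda = df(R)$ smooth, apply $d$ to obtain $0 = d\lambda \wedge \alpha + \lambda \, d\alpha$, and restrict to $\xi$ to kill the first summand, so that nondegeneracy of $d\alpha|_\xi$ forces $\lambda \equiv 0$ and hence $df \equiv 0$. Both routes ultimately encode the contact condition, but yours is purely pointwise and avoids Sard's theorem altogether, which is arguably the cleaner finish; the only thing to note is that it uses $\dim \xi \ge 2$, i.e.\ $\dim M \ge 3$, so that $d\alpha|_\xi$ is a nonzero form, which is of course the standing assumption. Your treatment of the embedding statement, via $\varphi^* \alpha_2 = g \, \alpha_1$ and an application of the first half to $g$, is identical to the paper's.
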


\begin{proof}
The argument is similar to the discussion before the proof of Theorem~\ref{thm:neighborhood}.
Denote by $R$ the Reeb vector field associated to the contact form $\alpha$.
Since $d (f \alpha) = df \wedge \alpha + f \, d \alpha$, we have by assumption $df (v) = (d (f \alpha) - f \, d \alpha) (v, R) = 0$ for every $v \in \xi$.
That means that for each regular value $c$ of $f$, the preimage $f^{- 1} (c)$ is a codimension one submanifold that is everywhere tangent to $\xi$.
This is of course impossible, so that $f$ possesses no regular values whose preimage is non-empty.
By Sard's Theorem, the image of $f$ then does not contain an open interval, and by continuity, it is comprised of a single point.
The last part of the lemma follows from the first part by considering the contact form $f \alpha_1 = \varphi^* \alpha_2$.
\end{proof}

\begin{rmk}
One can replace the assumption that $\varphi$ is contact in the last part of the lemma by a stronger hypothesis on the existence of the special type of coisotropic submanifolds considered here so that the proof of Lemma~\ref{lem:coisotropic} applies. \qed
\end{rmk}

\section{Shape preserving embeddings of non-exact symplectic manifolds and contact manifolds that are not coorientable} \label{sec:non-exact}
We now extend the definition of shape preserving to symplectic manifolds that are not necessarily exact by making minor changes to Definition~\ref{dfn:symp-shape-preserving}, and likewise for contact manifolds that are not necessarily coorientable.

\begin{dfn}
We call an open subset $U$ of a (not necessarily exact) symplectic manifold $(M, \omega)$ exact if the cohomology class $[\omega]$ belongs to the kernel of the homomorphism $i^* \colon H^1 (M, \R) \to H^1 (U, \R)$, where $i \colon U \to M$ is the inclusion.
By a slight abuse of notation, we call an open subset $U$ of a (not necessarily coorientable) contact manifold $(M, \xi)$ exact if $\xi |_U$ is coorientable. \qed
\end{dfn}

\begin{dfn} \label{dfn:shape-preserving-non-exact}
Let $M_1$ and $M_2$ be (not necessarily exact) symplectic manifolds or (not necessarily coorientable) contact manifolds of the same dimension, and $\varphi \colon M_1 \to M_2$ be an embedding.
Let $U \subset M_1$ and $V \subset M_2$ be two exact open subsets so that $\overline{U} \subset M_1$ is compact, and $\varphi (\overline{U}) \subset V$.
We say that $\varphi$ preserves the (modified) shape invariant of $U$ and $V$ provided that $I (U, L, \tau) \subset I (V, L, \tau \circ \varphi^*)$ (or $I_C (U, L, \tau) \subset I_C (V, L, \tau \circ \varphi^*)$ or $\It_C (U, L, \tau) \subset \It_C (V, L, \tau \circ \varphi^*)$) for every $L$ and for every homomorphism $\tau \colon H^1 (U, \R) \to H^1 (L, \R)$.
An embedding is said to preserve the (modified) shape invariant if it preserves the (modified) shape of all open subsets $U \subset M_1$ and $V \subset M_2$ as above. \qed
\end{dfn}

\begin{rmk}
Recall from section~\ref{sec:symp-shape} that every point in a symplectic manifold has an exact neighborhood, and likewise for contact manifolds, see Remark~\ref{rmk:coorientation}.
The proofs in this paper therefore apply directly to symplectic manifolds that are not necessarily exact and to contact manifolds that are not necessarily coorientable. \qed
\end{rmk}

\section{Shape as sufficient condition for existence \\ of symplectic and contact embedding} \label{sec:shape-preserving}
Proposition~\ref{pro:symp-shape} implies that the shape is an obstruction to the existence of a symplectic embedding, and by Theorem~\ref{thm:symp-shape-preserving}, a given embedding is symplectic if and only if it preserves shape (Definition~\ref{dfn:symp-shape-preserving}).
It is therefore natural to ask, given two exact symplectic manifolds $W_1$ and $W_2$ (of the same dimension) that have the same shape, does this property imply the existence of a symplectic embedding $W_1 \to W_2$?

Recall from Example~\ref{exa:plane} that the shape of an open and connected subset of $(\R^2, \omega_0)$ recovers its area.
It is well known that there exists an area preserving diffeomorphism between two such subsets if and only if the sets are diffeomorphic and have the same (total) area.
Both of the last two conditions are obviously necessary.
The present section discusses similar results in higher dimensions.

The following theorem is due to V.~Benci and Sikorav \cite{sikorav:rsc89}.
We will give an elegant proof in which the difficult parts of the argument are hidden within the previously established properties of the shape invariant.

\begin{thm} [\cite{sikorav:rsc89}] \label{thm:cotangent-symp-rigidity}
Let $U \subset U'$ and $V \subset V'$ be open and connected subsets of $\R^n$ such that $H^1 (U', \Z)$ and $H^1 (V', \Z)$ are trivial.
Then the following three statements are equivalent:
\begin{enumerate}
\item there exists a symplectic diffeomorphism $\varphi \colon T^n \times U' \to T^n \times V'$ that maps $T^n \times U$ to $T^n \times V$,
\item there exists a diffeomorphism $\varphi \colon T^n \times U' \to T^n \times V'$ that maps $T^n \times U$ to $T^n \times V$, and $I (T^n \times U, \iota_0^*) = I (T^n \times V, \iota_0^* \circ \varphi^*)$ (up to translation), and
\item there exists a unimodular matrix $A \in GL (n, \Z)$ and a vector $b \in \R^n$ such that $V = A U + b = A (U + A^{- 1} b)$.
\end{enumerate}
In fact, if statement (1) or (2) holds, the matrix $A$ represents the homomorphism $H^1 (T^n, \Z) \to H^1 (T^n, \Z)$ induced by $\varphi$ (under the identification of $H^1 (T^n, \R)$ with $\R^n$), and the translation vector in (2) is given by $A^{- 1} b$, which corresponds to the cohomology class $\iota_0^* ([\varphi^* \lambda_\can - \lambda_\can])$.
Moreover, we may choose $U' = V' = \R^n$.
\end{thm}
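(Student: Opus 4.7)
The plan is to establish the circular implications $(3) \Rightarrow (1) \Rightarrow (2) \Rightarrow (3)$. The implication $(1) \Rightarrow (2)$ is immediate from Proposition~\ref{pro:symp-shape} applied to the symplectic diffeomorphism $\varphi|_{T^n \times U} \colon T^n \times U \to T^n \times V$, which gives equality of shapes (with the translation vector determined as described in the remark after Proposition~\ref{pro:symp-shape}).

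For $(3) \Rightarrow (1)$, given $A \in GL(n, \Z)$ and $b \in \R^n$, I would exhibit an explicit symplectic diffeomorphism of $T^n \times \R^n = T^* T^n$ by setting $\varphi(q, p) = ((A^t)^{-1} q, A p + b)$. A direct coordinate computation shows $\varphi^* (\sum dp_i \wedge dq_i) = \sum dp_i \wedge dq_i$, since the condition $A^t \cdot (A^t)^{-1} = I$ makes the mixed term transform correctly. This map sends $T^n \times U$ onto $T^n \times V$ because $AU + b = V$, and one may then set $U' = V' = \R^n$ (or restrict to any subsets $U' \subset \R^n$ and $V' = A U' + b$ containing $U$ and $V$ respectively).

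The substantive step is $(2) \Rightarrow (3)$. The key computational input is Theorem~\ref{thm:torus-shape}, which identifies $I(T^n \times U, \lambda_\can, \iota_0^*) = U$ and $I(T^n \times V, \lambda_\can, \iota_0^*) = V$ as genuine subsets of $\R^n \cong H^1(T^n, \R)$. I first need to identify the homomorphism $\iota_0^* \circ \varphi^*$. Because $H^1(U', \Z) = H^1(V', \Z) = 0$, Künneth gives $H^1(T^n \times U', \R) \cong H^1(T^n, \R) \cong H^1(T^n \times V', \R)$, and the isomorphism $\varphi^*$ on these groups is represented by some matrix $A$; since $\varphi$ is a diffeomorphism, $\varphi^*$ restricts to an isomorphism on integral cohomology, so $A \in GL(n, \Z)$. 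The point is then that the composition $\iota_0^* \circ \varphi^*$ restricted to the $H^1(T^n)$-summand of $H^1(T^n \times V, \R) = H^1(T^n, \R) \oplus H^1(V, \R)$ coincides with $A$ (this follows from the commutative diagram relating the cohomology of $T^n \times V$ to that of $T^n \times V'$ via restriction), while the $H^1(V)$-summand is killed by $\iota_0^*$. Thus $\iota_0^* \circ \varphi^* = A \circ \iota_0^*$, and by Proposition~\ref{pro:L-diffeos} together with Theorem~\ref{thm:torus-shape} one obtains
\[ I(T^n \times V, \iota_0^* \circ \varphi^*) = A \cdot I(T^n \times V, \iota_0^*) = A V \]
up to translation. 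Combining with the hypothesis $I(T^n \times U, \iota_0^*) = I(T^n \times V, \iota_0^* \circ \varphi^*)$ and Theorem~\ref{thm:torus-shape} on the left, I conclude $U = A V + c$ for some $c \in \R^n$, which rearranges to $V = A^{-1} U - A^{-1} c$ with $A^{-1} \in GL(n, \Z)$, yielding (3). The identification of the translation vector with $\iota_0^*([\varphi^* \lambda_\can - \lambda_\can])$ comes out of tracking the freedom of translation in Lemma~\ref{lem:translation}.

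The main obstacle I anticipate is the bookkeeping in $(2) \Rightarrow (3)$: one must carefully verify that the factorization $\iota_0^* \circ \varphi^* = A \circ \iota_0^*$ holds with the \emph{same} integer matrix $A$ that represents $\varphi^*$ on $H^1(T^n \times V', \R)$, and that the translation vector extracted from the shape equality is compatible with the one asserted in the theorem. The hypothesis that $U'$ and $V'$ (rather than $U$ and $V$) have trivial first cohomology is precisely what allows this factorization, and is also what makes $A$ a unimodular matrix; without that assumption, $A$ could be a general $GL(n,\R)$ element, and (3) would fail.
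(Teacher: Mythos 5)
Your proposal is correct and follows essentially the same route as the paper: $(1)\Rightarrow(2)$ via Proposition~\ref{pro:symp-shape}, $(2)\Rightarrow(3)$ by identifying $\iota_0^*\circ\varphi^*$ with $A\circ\iota_0^*$ for a unimodular $A$ and then applying Proposition~\ref{pro:L-diffeos} together with Sikorav's Theorem~\ref{thm:torus-shape}, and $(3)\Rightarrow(1)$ via the explicit linear symplectomorphism $(q,p)\mapsto((A^{-1})^tq,\,Ap+b)$. The only cosmetic difference is that your matrix in step $(2)\Rightarrow(3)$ is the inverse of the one the paper calls $A$, which is immaterial since $GL(n,\Z)$ is a group.
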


\begin{rmk}
In \cite{sikorav:rsc89}, Sikorav proved the theorem with $U = U'$ and $V = V'$.
A proof of the other extreme case $U' = V' = \R^n$ can be found in \cite[Section~11.3]{mcduff:ist98}.
Neither version contains statement (2) above. \qed
\end{rmk}

\begin{proof}
That (1) implies (2) follows immediately from Proposition~\ref{pro:symp-shape}.

By the hypotheses on $U'$ and $V'$, we may identify $H^1 (T^n \times U', \R)$ with $H^1 (T^n, \R)$ via the isomorphism $\iota_0^*$, and likewise for $H^1 (T^n \times V', \R)$.
Assuming statement~(2), the diffeomorphism $\varphi$ induces an isomorphism $\Phi \colon H^1 (T^n, \Z) \to H^1 (T^n, \Z)$ so that, under the above identifications, $\iota_0^* \circ \varphi^* = \Phi \circ \iota_0^*$.
Then by Proposition~\ref{pro:L-diffeos}, the shape satisfies $I (T^n \times V, \iota_0^* \circ \varphi^*) = I (T^n \times V, \Phi \circ \iota_0^*) = \Phi (I (T^n \times V, \iota_0^*))$.
By Sikorav's Theorem~\ref{thm:torus-shape}, we have $I (T^n \times U, \iota_0^*) = U$ and $I (T^n \times V, \iota_0^*) = V$.
Then by assumption, $U = \Phi (V - b)$ for some vector $b \in \R^n$, and (3) follows with $A = \Phi^{- 1}$.

To see that (3) implies (1), define a diffeomorphism $\varphi \colon T^n \times \R^n \to T^n \times \R^n$ by $(q, p) \mapsto ((A^{-1})^t q, A p + b)$.
Then $\varphi$ is clearly symplectic, and maps $T^n \times U$ to $T^n \times V$.
Moreover, the translation vector is $\iota_0^* ([\varphi^* \lambda_\can - \lambda_\can]) = A^{-1} b$.
\end{proof}

\begin{rmk}
Sikorav's Theorems~\ref{thm:torus-shape} and \ref{thm:cotangent-symp-rigidity} can be generalized to arbitrary (closed and connected) $n$-dimensional manifolds $L$.
The statements are however more cumbersome than in the case of tori due to the lack of the natural product structure $T^* T^n = T^n \times \R^n$ for general manifolds $L$.
(For submanifolds $L \subset \R^n$, the cotangent bundle $T^*L$ can be naturally identified with a quotient of $L \times \R^n$, see \cite[Exercise~11.22]{mcduff:ist98}.)
In that case, one should consider subsets $A \subset H^1 (L, \R^n)$ that are comprised of the union of the images of harmonic sections $L \to T^* L$ (with respect to some auxiliary Riemannian metric).
For the proof of Theorem~\ref{thm:torus-shape} to continue to apply, one needs to impose that if $A$ intersects the image of such a section, then the entire image is contained in $A$.
The proposed generalization is then straightforward; since we have no need for the precise statements, the details are omitted. \qed
\end{rmk}

\begin{exa}
Consider $(\R^{2 n} - W) \cup H_\epsilon$ with the standard symplectic structure on $\R^{2 n}$, where $W$ is the wall $\{ y_1 = 0 \}$, and $H_\epsilon = \{ x \in \R^{2 n} | \| x \| < \epsilon \}$ (so that $W - H_\epsilon$ is a wall in $\R^{2 n}$ with a hole).
Then $I ((\R^{2 n} - W) \cup H_\epsilon, \omega_0, 0) = \R^n = I (\R^{2 n}, \omega_0, 0)$, but there exists no symplectic diffeomorphism between these symplectic manifolds (a symplectic camel cannot fit through the wall, see e.g.\ \cite[pages 32-33]{mcduff:ist98}). \qed
\end{exa}

These examples lead to a better understanding of the proper formulation of the question posed in the opening paragraph of this section.
If $W_1$ and $W_2$ are two exact symplectic manifolds, and $\Phi \colon H^1 (W_2, \R) \to H^1 (W_1, \R)$ is an isomorphism so that $I (W_1, L, \tau) = I (W_2, L, \tau \circ \Phi)$ for an appropriate closed manifold $L$ and homomorphism $\tau$ (or all $L$ and $\tau$), must there exist a symplectic diffeomorphism $W_1 \to W_2$?
The shape alone may not be able to detect if two manifolds (or subsets of a given manifold) are diffeomorphic, so in general, that may need to be assumed.
It also seems reasonable to assume that the isomorphism $\Phi$ is induced by a diffeomorphism $W_1 \to W_2$.
A more restrictive version of the above question can then be phrased as follows: if $I (W, \omega_1, L, \tau) = I (W, \omega_2, L, \tau)$, must there exist a diffeomorphism $\varphi$ of $W$ so that $\varphi^* \omega_2 = \omega_1$?
For example, does $I (\R^{2 n}, \omega, 0) = \R^n - \{ 0 \}$ imply that $\omega$ is diffeomorphic to the standard symplectic form on $\R^{2 n}$?

At present, with the exception of the special cases discussed above, this question is open.
In each of the examples, the desired symplectic diffeomorphism arises from a corresponding linearized problem.
For the plane, it is the time-one map of a time-dependent vector field, and for cotangent bundles, it comes from a linear map on $H^1 (T^n, \Z)$.
In the latter case, it also sends a distinguished foliation by Lagrangian submanifolds (diffeomorphic to $T^n$) to another such distinguished foliation.

These questions can be translated to the contact case in a straightforward manner.
Suppose that $M_1$ and $M_2$ are two coorientable contact manifolds, and $\Phi \colon PH^1 (M_2, \R) \to PH^1 (M_1, \R)$ is an isomorphism so that the (modified) contact shapes satisfy $I_C (M_1, L, \tau) = I_C (M_2, L, \tau \circ \Phi)$ (or $\It_C (M_1, L, \tau) = \It_C (M_2, L, \tau \circ \Phi)$) for an appropriate closed manifold $L$ and homomorphism $\tau$ (or all $L$ and $\tau$, and similarly if $M_1$ and $M_2$ are diffeomorphic and $\Phi$ is induced by a diffeomorphism).
Then must there exist a contact diffeomorphism $M_1 \to M_2$?
With the exception of the following two results, this question is also open.

Consider the unit sphere $S^3 \subset \mathbb C^2$ with its standard contact structure, and for $0 < r < 1$, denote by $U_r$ the open solid torus $\{ (z_1, z_2) \in \mathbb C^2 \mid | z_1 | < r \} \cap S^3$.
The next theorem is the main result of \cite{eliashberg:nio91}; its proof is based on the shape invariant.
Since the precise argument is not needed here, it is not repeated.

\begin{thm}[\cite{eliashberg:nio91}]
There exists a contact diffeomorphism between $U_{r_1}$ and $U_{r_2}$ if and only if the difference $(1 / r_1^2) - (1 / r_2^2)$ is an integer.
\end{thm}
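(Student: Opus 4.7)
My plan is to combine a $\Z$-family of explicit ``generalized Dehn twists'' (witnessing the \emph{if} direction) with the modified contact shape invariant refined to a fixed homotopy class of Lagrangian embeddings (cf.\ Remark~\ref{rmk:generalized-shape}), which will provide the obstruction.

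For the \emph{if} direction I will work in the coordinates $(z_1, \theta_2)$ on $U_r \subset S^3$, where the standard contact form reads $\alpha = \tfrac{1}{2}(|z_1|^2\, d\theta_1 + (1-|z_1|^2)\, d\theta_2)$. For each integer $k$ I will verify that the map
\[ \Phi_k(z_1, \theta_2) = \left(\tfrac{f(|z_1|)}{|z_1|}\, z_1\, e^{i k \theta_2},\ \theta_2\right), \qquad f(\rho)^2 = \frac{\rho^2}{1 - k \rho^2}, \]
satisfies $\Phi_k^* \alpha = (1 + k f(|z_1|)^2)\, \alpha$, so defines a contact diffeomorphism from $U_{r_1}$ onto $U_{r_2}$ whenever $1/r_2^2 = 1/r_1^2 - k$ (with $k$ chosen of the appropriate sign so that both sides lie in $S^3$; for the opposite sign one applies $\Phi_k^{-1}$). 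The expansion $f(\rho)/\rho \to 1$ as $\rho \to 0$ ensures that $\Phi_k$ extends smoothly across the core circle $\{z_1 = 0\}$ as a Dehn twist.

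For the \emph{only if} direction I will pass to the symplectization. Under $(x, t) \mapsto \sqrt{t}\, x$, the symplectization $\widehat{U}_r := U_r \times \R_+$ becomes the open cone $\{|z_1|/|z_2| < R\} \subset \mathbb C^2 \setminus \{0\}$ with $R = r/\sqrt{1-r^2}$, so that $1/R^2 = 1/r^2 - 1$, carrying the Liouville form $\lambda = \tfrac{1}{2}(|z_1|^2\, d\theta_1 + |z_2|^2\, d\theta_2)$. The standard split Lagrangian tori $\{|z_1| = a_1,\ |z_2| = a_2\}$ (with $a_1/a_2 < R$) have Liouville period $\tfrac{1}{2}(a_1^2, a_2^2)$, and applying the homotopy-class refinement of Remark~\ref{rmk:generalized-shape} to the standard parametrization, I expect these to exhaust the shape, yielding the cone $\mathcal C_r = \{(A_1, A_2) \in \R^2_{>0} : A_1/A_2 < R^2\}$. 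A contact diffeomorphism $\varphi \colon U_{r_1} \to U_{r_2}$ lifts to a $\lambda$-preserving symplectomorphism $\widehat\varphi$, which carries the standard class to the standard class composed with some integer power $k$ of the Dehn twist of $T^2$. Matching $\mathcal C_{r_1}$ against the $k$-twisted cone on the right---obtained from $\mathcal C_{r_2}$ by the shear $(A_1, A_2) \mapsto (A_1, A_2 + kA_1)$---forces $1/R_1^2 = 1/R_2^2 + k$, and hence $1/r_1^2 - 1/r_2^2 = k \in \Z$.

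The hard part will be the homotopy-class refinement: the unrefined shape $\It_C(U_r, \tau)$ is insensitive to Dehn-twist reparametrizations of the standard tori (which share the same image but carry different parametrizations, and together contribute shape classes filling an open semicircle), so without the extra restriction on homotopy type the invariant is too coarse to see the residue $1/r^2 \bmod \Z$. Once the correct refinement is pinned down, computing the refined shape as $\mathcal C_r$ from the standard product tori, checking its equivariance under $\widehat\varphi$, and extracting the integer relation become essentially algebraic.
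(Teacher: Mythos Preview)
The paper does not actually prove this theorem. It is quoted from \cite{eliashberg:nio91} and the text says explicitly: ``its proof is based on the shape invariant. Since the precise argument is not needed here, it is not repeated.'' So there is no proof in the paper to compare your attempt against, beyond the one-line indication that Eliashberg's argument goes through the shape.

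Your outline is in the spirit of that argument. The explicit $\Phi_k$ for the \emph{if} direction is the standard construction. For the \emph{only if} direction, your instinct that some refinement is needed is correct, and your diagnosis of why is accurate: the stabilizer of $\tau_0$ (the homomorphism induced by the standard tori) inside $GL(2,\Z)$ is the infinite group of shears $\begin{pmatrix} \pm 1 & b \\ 0 & 1 \end{pmatrix}$, and by Proposition~\ref{pro:P-L-diffeos} the shape $I_C(U_r,\tau_0)$ is invariant under its transpose action $(p_1,p_2)\mapsto(\pm p_1, bp_1+p_2)$. So the single open cone $\{0<p_1/p_2<R^2\}$ cannot be the whole shape, and some care is required---either a refinement as you propose, or a direct computation of the full (shear-invariant) shape and an analysis of how the endpoints of its constituent arcs transform. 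The identification of the symplectization (off the core circle) with $T^2\times A$ and the appeal to Theorem~\ref{thm:torus-shape} for the hard inclusion are the right ingredients. What remains genuinely nontrivial in your plan is pinning down exactly which invariant you use and verifying that it equals precisely the cone (or the appropriate shear-orbit of it) rather than merely containing it; that step carries the entire obstruction.
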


The second theorem is a straightforward adaptation of Theorem~\ref{thm:cotangent-symp-rigidity} to unit cotangent bundles.

\begin{thm}
Let $U \subset U'$ and $V \subset V'$ be open and connected subsets of $S^{n - 1}$ so that $H^1 (U', \Z)$ and $H^1 (V', \Z)$ are trivial.
Then the following statements are equivalent:
\begin{enumerate}
\item there exists a contact diffeomorphism $\varphi \colon T^n \times U' \to T^n \times V'$ that maps $T^n \times U$ to $T^n \times V$,
\item there exists a diffeomorphism $\varphi \colon T^n \times U' \to T^n \times V'$ that maps $T^n \times U$ to $T^n \times V$, and $I_C (T^n \times U, \iota_a^*) = I_C (T^n \times V, \iota_a^* \circ \varphi^*)$, where $a \in U$ is any point,
\item there exists a diffeomorphism $\varphi \colon T^n \times U' \to T^n \times V'$ that maps $T^n \times U$ to $T^n \times V$, and $\It_C (T^n \times U, \iota_a^*) = \It_C (T^n \times V, \iota_a^* \circ \varphi^*)$, where $a \in U$ is any point, and
\item there exists a matrix $A \in O (n, \Z)$ so that $V = A U$.
\end{enumerate}
In fact, the matrix $A$ represents the homomorphism $PH^1 (T^n, \Z) \to PH^1 (T^n, \Z)$ induced by $\varphi$, and given (4), we may choose $U' = V' = S^{n - 1}$, and there exists a contact diffeomorphism that preserves the canonical contact form $\alpha_\can$.
\end{thm}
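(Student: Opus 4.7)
The plan is to prove the cycle $(1) \Rightarrow (2) \Rightarrow (4) \Rightarrow (1)$ and $(1) \Rightarrow (3) \Rightarrow (4)$, with the nontrivial directions being the shape-to-matrix implications and the final construction. The implications $(1) \Rightarrow (2)$ and $(1) \Rightarrow (3)$ follow immediately from Propositions~\ref{pro:contact-shape} and \ref{pro:tilde-contact-shape}, which say that contact diffeomorphisms preserve $I_C$ and $\It_C$; the compatibility with $\iota_a^*$ is built in because $\varphi$ maps $T^n \times U$ to $T^n \times V$.

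For $(2) \Rightarrow (4)$ (and identically $(3) \Rightarrow (4)$), I would invoke Proposition~\ref{pro:shape-unit-cotangent-bundle}, which gives $I_C(T^n \times U, \iota_a^*) = U$ and $I_C(T^n \times V, \iota_a^*) = V$ as subsets of $S^{n-1} \cong PH^1(T^n, \R)$. The diffeomorphism $\varphi$ induces an automorphism $\Phi$ of $H^1(T^n, \Z) \cong \Z^n$, which by Proposition~\ref{pro:P-L-diffeos} satisfies $I_C(T^n \times V, \iota_a^* \circ \varphi^*) = P\Phi(V)$ (under the identifications made possible by $H^1(U', \Z) = H^1(V', \Z) = 0$, so $\iota_0^*$ identifies $H^1(T^n \times U', \R)$ with $H^1(T^n, \R)$ and similarly for $V'$). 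Setting $A = \Phi^{-1} \in GL(n, \Z)$, the hypothesis becomes $U = P A^{-1}(V)$ as subsets of $S^{n-1}$; writing this in terms of representatives in $\R^n \setminus \{0\}$, we get $V = \R_+ \cdot A U$. Since both $U, V \subset S^{n-1}$, the matrix $A$ must send unit vectors to positive multiples of unit vectors, and combined with $A \in GL(n, \Z)$ this forces $A \in GL(n, \Z) \cap O(n) = O(n, \Z)$, so $V = AU$.

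For $(4) \Rightarrow (1)$, I would give the explicit construction used in the proof of Theorem~\ref{thm:cotangent-symp-rigidity}: the map $\varphi \colon T^n \times \R^n \to T^n \times \R^n$, $(q, p) \mapsto ((A^{-1})^t q, A p)$, is the cotangent lift of the torus diffeomorphism induced by $A$ and preserves $\lambda_\can = \sum p_i \, dq_i$ since $A \in GL(n, \Z)$ acts on $\Z^n$. Because $A \in O(n)$, the map preserves the unit sphere bundle and hence restricts to a diffeomorphism of $T^n \times S^{n-1}$ that preserves $\alpha_\can = \lambda_\can|_{ST^*T^n}$; in particular it is strictly contact. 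Since $V = A U$ (and $V' = A U'$ after choosing $U' = V' = S^{n-1}$), this $\varphi$ maps $T^n \times U$ to $T^n \times V$ as required. The last sentence of the theorem is then automatic from this construction.

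The main obstacle is the step $(2) \Rightarrow (4)$ (equivalently $(3) \Rightarrow (4)$): one must pin down the matrix to lie in the discrete subgroup $O(n, \Z)$ rather than merely $GL(n, \Z)$. This rigidity is supplied by the combination of two inputs, namely the exact computation $I_C(T^n \times A, \iota_a^*) = A$ from Proposition~\ref{pro:shape-unit-cotangent-bundle} (which itself depends on Sikorav's Theorem~\ref{thm:torus-shape} and hence on $J$-holomorphic curve techniques), together with the constraint that both shapes sit inside $S^{n-1}$ rather than in $\R^n \setminus \{0\}$. Note that unlike the symplectic case there is no translation freedom, since the contact shape is a cone and the projectivization eliminates any additive ambiguity; this is why (4) has no translation vector $b$.
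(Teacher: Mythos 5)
Your overall architecture matches the paper's: reduce to the argument of Theorem~\ref{thm:cotangent-symp-rigidity}, use Propositions~\ref{pro:contact-shape} and \ref{pro:tilde-contact-shape} for $(1)\Rightarrow(2),(3)$, Propositions~\ref{pro:shape-unit-cotangent-bundle} and \ref{pro:P-L-diffeos} for the shape-to-matrix step, and the cotangent lift $(q,p)\mapsto((A^{-1})^t q, Ap)$ restricted to the unit sphere bundle for $(4)\Rightarrow(1)$. Those directions, and your remark that projectivization removes the translation freedom present in the symplectic case, are all correct and coincide with the paper.

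The problem is your justification of the key step $(2)\Rightarrow(4)$ (equivalently $(3)\Rightarrow(4)$). From the shape hypothesis you correctly extract the \emph{projectivized} relation $V = PA(U)$ with $A=\Phi^{-1}\in GL(n,\Z)$; under the identification of $PH^1(T^n,\R)$ with $S^{n-1}$ this reads $V=\{Au/|Au| : u\in U\}$, because each oriented line is represented by its unit vector. Your next claim --- that since $U,V\subset S^{n-1}$ the matrix $A$ ``must send unit vectors to positive multiples of unit vectors'' and hence lies in $O(n)$ --- is a non sequitur: \emph{every} invertible matrix sends a unit vector to a positive multiple of a unit vector, and the relation $V=PA(U)$ imposes no orthogonality constraint on $A$ at all. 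Concretely, for $n=2$ and $A=\bigl(\begin{smallmatrix}1&1\\0&1\end{smallmatrix}\bigr)\in GL(2,\Z)$, a short arc $U$ about $(0,1)$ has $PA(U)$ equal to a short arc about $(1,1)/\sqrt{2}$ of different arc length, which is not of the form $A'U$ for any signed permutation matrix $A'\in O(2,\Z)$. Note that the paper's own proof is deliberately careful here: it records that $\Phi$ ``in general may not preserve length'' and concludes only the projectivized identity $U=P\Phi(V)$; it does not derive the literal equality $V=AU$ with $A$ orthogonal from the shape hypothesis. So if statement (4) is to be read literally, an additional argument is needed (or (4) must be read projectively, with $O(n,\Z)$ and the preservation of $\alpha_{\textrm{can}}$ reserved for the strictly contact refinement in the final sentence of the theorem); your proposed derivation of $A\in O(n,\Z)$ does not supply it and, as the example shows, cannot be repaired as stated.
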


\begin{proof}
The proof only requires minor modifications of the proof of Theorem~\ref{thm:cotangent-symp-rigidity}.
The implication (1) implies (3) is again obvious.
In case (3), by the same argument the hypotheses yield an isomorphism $\Phi \colon H^1 (T^n, \Z) \to H^1 (T^n, \Z)$ (which in general may not preserve length) so that $\iota_a^* \circ \varphi^* = \Phi \circ \iota_{\varphi (a)}^*$, and induces a well-defined isomorphism $P\Phi \colon PH^1 (T^n, \Z) \to PH^1 (T^n, \Z)$ so that $U = P\Phi (V)$ (recall that there is no freedom of translation in the contact case).
The definition of the contact diffeomorphism in (1) given statement (4) is the same as before (with $b = 0$), and thus it extends to a (strictly) contact diffeomorphism $T^n \times S^{n - 1} \to T^n \times S^{n - 1}$ (the fact that $H^1 (S^1, \Z)$ is non-zero is irrelevant in the case $n = 2$).
The proof that (1) and (4) are equivalent to (2) is verbatim the same.
\end{proof}

\section{Homeomorphisms that preserve shape} \label{sec:homeos}
In this section we extend the definition of shape preserving to homeomorphisms and derive a few basic properties of such homeomorphisms.
Definition~\ref{dfn:shape-preserving-non-exact} in fact extend verbatim to homeomorphisms of symplectic and contact manifolds, since it does not involve derivatives.
For convenience, we restate the definition here.
In order to not have to duplicate every statement, we again write $M_1$ and $M_2$ for either symplectic or contact manifolds.

\begin{dfn} \label{dfn:homeo-shape-preserving}
A homeomorphism $\varphi \colon M_1 \to M_2$ preserves the (modified) shape of two exact open subsets $U \subset M_1$ and $V \subset M_2$ such that $\overline{U} \subset M_1$ is compact and $\varphi (\overline{U}) \subset V$ if $I (U, L, \tau) \subset I (V, L, \tau \circ \varphi^*)$ (or $I_C (U, L, \tau) \subset I_C (V, L, \tau \circ \varphi^*)$ or $\It_C (U, L, \tau) \subset \It_C (V, L, \tau \circ \varphi^*)$) for every $L$ and every homomorphism $\tau$, and it preserves the (modified) shape if it preserves the (modified) shape of all subsets $U \subset M_1$ and $V \subset M_2$ as above. \qed
\end{dfn}

\begin{pro}
Suppose that $\varphi \colon M_1 \to M_2$ is a homeomorphism that preserves the (modified) shape invariant, and in addition that $\varphi$ is a diffeomorphism.
Then $\varphi$ is a symplectic (respectively contact) diffeomorphism.
\end{pro}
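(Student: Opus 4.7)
The plan is to reduce the global statement to the local characterization theorems (Theorem~\ref{thm:symp-shape-preserving} in the symplectic case and Theorem~\ref{thm:contact-shape-preserving} in the contact case) by exploiting the fact that being symplectic (respectively contact) is a pointwise condition. Given an arbitrary point $x \in M_1$, I would choose a Darboux chart around $x$ identifying a neighborhood of $x$ with the ball $B_r^{2n}$ (respectively $B_r^{2n-1}$) in standard Euclidean space, chosen small enough that its closure is compact in $M_1$ and its image under $\varphi$ lies inside a Darboux chart around $\varphi(x)$ in $M_2$. Call these neighborhoods $U \subset M_1$ and $V \subset M_2$; note that $U$ and $V$ are exact in the sense of section~\ref{sec:non-exact}, since any ball in Euclidean space admits a global primitive for $\omega_0$ (respectively a cooriented contact form).

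Since $\varphi$ is a diffeomorphism by hypothesis, its restriction $\varphi|_U \colon U \to V$ is a smooth embedding of $B_r^{2n}$ (respectively $B_r^{2n-1}$) into the ambient Darboux chart in $M_2$. The shape-preserving hypothesis of Definition~\ref{dfn:homeo-shape-preserving} applied to the pair $(U,V)$ yields precisely the inclusion $I(U, L, \tau) \subset I(V, L, \tau \circ \varphi^*)$ (or the analogous inclusion for $I_C$ and $\It_C$) for every closed connected $n$-manifold $L$ and every homomorphism $\tau$. This is exactly the condition that $\varphi|_U$ preserves shape in the sense of Definition~\ref{dfn:symp-shape-preserving} (respectively Definition~\ref{dfn:contact-shape-preserving}), because the smooth embedding $\varphi|_U$ also preserves the shape of every further pair of nested open subsets of $U$ and $V$ (indeed the hypothesis is imposed for all such pairs).

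Now I apply Theorem~\ref{thm:symp-shape-preserving} (respectively Theorem~\ref{thm:contact-shape-preserving}) to the embedding $\varphi|_U$: this gives $(\varphi|_U)^* \omega = \omega_0$ on $B_r^{2n}$ (respectively $(\varphi|_U)_* \xi_0 = \xi$ on $B_r^{2n-1}$). In particular, $\varphi^*\omega = \omega$ at $x$ in the symplectic case, and $d\varphi$ carries $\xi$ to $\xi$ at $x$ in the contact case. Since $x \in M_1$ was arbitrary and both symplectic and contact are pointwise conditions, the conclusion follows: $\varphi$ is a symplectic diffeomorphism, respectively a contact diffeomorphism. In the contact case, coorientation issues are handled exactly as in Remark~\ref{rmk:coorientation} and the proof of Corollary~\ref{cor:rig-contact-emb}, passing to a local coorientation on each Darboux chart.

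I do not anticipate a real obstacle here; the statement is essentially a formal corollary of the local $C^0$-characterizations. The only point that requires a line of care is verifying that the "preserves shape" hypothesis from Definition~\ref{dfn:homeo-shape-preserving}, which ranges over all exact open pairs with compact closure, implies the corresponding embedding-level hypothesis of Definition~\ref{dfn:symp-shape-preserving} (respectively Definition~\ref{dfn:contact-shape-preserving}) after restriction to the Darboux ball. This is immediate once one observes that the family of admissible pairs $(U',V')$ with $U' \subset U$ and $\varphi(\overline{U'}) \subset V' \subset V$ is a subfamily of the admissible pairs for $\varphi$ as a homeomorphism, so the inclusions are inherited without change.
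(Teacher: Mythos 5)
Your proposal is correct and follows essentially the same route as the paper's own proof: restrict $\varphi$ to a Darboux chart around an arbitrary point, note that the restriction still preserves shape, and apply Theorem~\ref{thm:symp-shape-preserving} (respectively Theorem~\ref{thm:contact-shape-preserving}) to conclude pointwise. The extra care you take in checking that the homeomorphism-level Definition~\ref{dfn:homeo-shape-preserving} restricts to the embedding-level hypothesis is a sound elaboration of a step the paper leaves implicit.
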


\begin{proof}
We give the proof for a homeomorphism of a symplectic manifold.
The proof in the contact case is verbatim the same.

Let $x \in M_1$ and $U \subset M_1$ be a Darboux neighborhood of $x$ that is diffeomorphic to an open ball.
By Theorem~\ref{thm:symp-shape-preserving}, the restriction of $\varphi$ to $U$ is symplectic.
Since $x$ was an arbitrary point in $M_1$, $\varphi$ is symplectic.
\end{proof}

The following proposition is implicitly contained in the proofs of Theorems~\ref{thm:symp-shape-preserving} and \ref{thm:contact-shape-preserving}, but stated here for emphasis.

\begin{pro}
Let $\varphi \colon M_1 \to M_2$ be a homeomorphism that preserves the (modified) shape invariant, and $L \subset M_1$ be an (embedded) Lagrangian (respectively coisotropic) submanifold such that $\varphi (L)$ is smooth.
Suppose that $M_1$ and $M_2$ are open and connected subsets of $\R^k$ (with $k = 2 n$ or $2 n - 1$, respectively), or that $L = T^n$.
Then $\varphi (L)$ is again Lagrangian (respectively not convex).
\end{pro}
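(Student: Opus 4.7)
The plan is a contradiction argument that plays two complementary facts about shape against one another: it is non-empty in a Weinstein (respectively coisotropic) neighborhood of a Lagrangian (respectively coisotropic) $L$, yet it is empty in a sufficiently small neighborhood of a non-Lagrangian (respectively immediately displaceable) submanifold.

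For the symplectic case, suppose for contradiction that $\varphi(L)$ is smooth but non-Lagrangian. Since either $M_2 \subset \R^{2 n}$ or $L = T^n$, we may apply Theorem~\ref{thm:rig-lag-hom-class} (or, in the torus case, the Arnold conjecture argument of Remark~\ref{rmk:arnold-conjecture}) to obtain a tubular neighborhood $V$ of $\varphi(L)$ in $M_2$ with $I(V, L, \iota_V^*) = \emptyset$, where $\iota_V \colon L \hookrightarrow V$ denotes the inclusion after identifying $L$ with $\varphi(L)$ via the homeomorphism $\varphi|_L$. On the other hand, Weinstein's Lagrangian Neighborhood Theorem identifies a tubular neighborhood $U$ of $L$ in $M_1$ with a neighborhood of the zero section in $T^*L$, so by (the proof of) Lemma~\ref{lem:open} the shape $I(U, L, \iota_U^*)$ is a non-empty open neighborhood of the origin.

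By continuity of $\varphi$ and compactness of $L$, I would shrink $U$ so that $\overline{U}$ is compact and $\varphi(\overline{U}) \subset V$. Definition~\ref{dfn:homeo-shape-preserving} then yields $I(U, L, \iota_U^*) \subset I(V, L, \iota_U^* \circ \varphi^*)$, and functoriality of pullback identifies the target with $I(V, L, \iota_V^*)$: indeed, $\varphi \circ \iota_U \colon L \to V$ realizes the inclusion of $\varphi(L)$ into $V$ once $L$ is identified with $\varphi(L)$. This supplies the contradiction $\emptyset = I(V, L, \iota_V^*) \supset I(U, L, \iota_U^*) \neq \emptyset$. The contact case is the direct analog: the coisotropic neighborhood Theorem~\ref{thm:neighborhood} combined with Proposition~\ref{pro:shape-unit-cotangent-bundle} produces a non-empty $\It_C(U, L, \iota_U^*)$ near the coisotropic $L$, while immediate displaceability (\emph{i.e.}, convexity) of $\varphi(L)$ supplies, via Theorem~\ref{thm:rig-coiso-hom-class} or Theorem~\ref{thm:rig-coiso-arnold-conjecture}, a neighborhood $V$ of $\varphi(L)$ with empty modified contact shape.

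The principal obstacle is the identification $\iota_U^* \circ \varphi^* = \iota_V^*$, since $\varphi|_L$ is only a homeomorphism and not \emph{a priori} smooth. The shape invariant, however, is defined entirely in terms of first singular cohomology, and $\varphi|_L$, being a homeomorphism between smooth manifolds, induces the requisite isomorphism; this makes the diagram of homomorphisms $\tau$ consistent on both sides. A secondary subtlety is that Theorems~\ref{thm:rig-lag-hom-class} and \ref{thm:rig-coiso-hom-class} are stated for embeddings into Euclidean space, which is precisely why the second alternative $L = T^n$ is invoked separately in order to be able to appeal to the Arnold-conjecture-based variants valid in arbitrary symplectic or contact manifolds.
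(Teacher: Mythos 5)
Your argument is correct and is essentially the paper's own proof: the paper likewise argues by contradiction, invoking Theorem~\ref{thm:rig-lag-hom-class} (respectively Theorem~\ref{thm:rig-coiso-hom-class} or Theorem~\ref{thm:rig-coiso-arnold-conjecture}) to produce a neighborhood of $\varphi(L)$ with empty shape, against the non-empty shape of a Weinstein (respectively coisotropic) neighborhood of $L$, exactly as in the proofs of Theorems~\ref{thm:symp-shape-preserving} and \ref{thm:contact-shape-preserving}. Your added remarks on why $\varphi|_L$ still induces the right isomorphism on $H^1$ and on why the hypothesis ``$M_i \subset \R^k$ or $L = T^n$'' is needed are accurate and fill in details the paper leaves implicit.
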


\begin{proof}
Arguing by contradiction, assume that $\varphi (L)$ is not Lagrangian (respectively convex).
Then argue as in the proof of Theorem~\ref{thm:symp-shape-preserving} (respectively Theorem~\ref{thm:contact-shape-preserving}) using Theorem~\ref{thm:rig-lag-hom-class} (respectively Theorem~\ref{thm:rig-coiso-hom-class} or Theorem~\ref{thm:rig-coiso-arnold-conjecture}) to conclude that $\varphi$ does not preserve the (modified) shape invariant.
\end{proof}

\begin{rmk}
The preceding theorem can again be extended to arbitrary (closed and connected $n$-dimensional) manifolds $L$ verbatim as in Remark~\ref{rmk:coiso-arnold-conjecture}. \qed
\end{rmk}

\begin{pro} \label{pro:symp-homeo-shape-preserving}
Let $\varphi_k \colon M_1 \to M_2$ be symplectic (respectively contact) diffeomorphisms that converge to a homeomorphism $\varphi \colon M_1 \to M_2$ uniformly on compact subsets.
Then $\varphi$ preserves the (modified) shape invariant.
\end{pro}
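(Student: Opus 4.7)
The plan is to adapt the argument of Proposition~\ref{pro:continuous-symp-shape} (and its contact counterpart Proposition~\ref{pro:continuous-contact-shape}), with the one new ingredient being a $C^0$-homotopy argument that handles the fact that the limit $\varphi$ is only a homeomorphism. Fix two exact open subsets $U \subset M_1$ and $V \subset M_2$ with $\overline{U}$ compact and $\varphi(\overline{U}) \subset V$, together with a closed and connected $n$-dimensional manifold $L$ and a homomorphism $\tau \colon H^1(U, \R) \to H^1(L, \R)$.

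First I would observe that $\varphi_k(\overline{U}) \subset V$ for all sufficiently large $k$, which is immediate from compactness of $\varphi(\overline{U})$, openness of $V$, and uniform convergence on $\overline{U}$. Since each $\varphi_k$ is a symplectic (respectively contact) diffeomorphism, Proposition~\ref{pro:symp-shape} (respectively Proposition~\ref{pro:contact-shape} or Proposition~\ref{pro:tilde-contact-shape}) yields the inclusion $I(U, L, \tau) \subset I(V, L, \tau \circ \varphi_k^*)$ (and the analogous inclusion for $I_C$ or $\It_C$) for all such $k$.

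Next I would show that $\varphi_k^* = \varphi^*$ as homomorphisms $H^1(V, \R) \to H^1(U, \R)$ for $k$ large. Fix an auxiliary Riemannian metric on $M_2$ and choose $\delta > 0$ smaller than the injectivity radius along the compact set $\varphi(\overline{U})$, and small enough that the closed $\delta$-neighborhood of $\varphi(\overline{U})$ is contained in $V$. For $k$ so large that $\sup_{x \in \overline{U}} d(\varphi_k(x), \varphi(x)) < \delta$, the geodesic straight-line homotopy from $\varphi|_{\overline{U}}$ to $\varphi_k|_{\overline{U}}$ is a well-defined continuous map $\overline{U} \times [0, 1] \to V$, and by $C^0$-homotopy invariance of singular cohomology it follows that $\varphi_k^* = \varphi^*$ on $H^1$. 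Combining this with the previous step yields $I(U, L, \tau) \subset I(V, L, \tau \circ \varphi^*)$, and the analogous inclusions in the contact setting.

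The main subtlety is precisely this cohomological step: although $\varphi^*$ is defined (since $\varphi$ is continuous), one must verify that it is compatible with the inclusions obtained for each $\varphi_k$. The geodesic homotopy supplies the compatibility, and no further analytic input is needed, since uniform convergence on the compact set $\overline{U}$ places $\varphi_k$ and $\varphi$ into the same homotopy class of continuous maps $\overline{U} \to V$ for all sufficiently large $k$. Because $U$, $V$, $L$, and $\tau$ were arbitrary, this shows that $\varphi$ preserves the (modified) shape invariant in the sense of Definition~\ref{dfn:homeo-shape-preserving}.
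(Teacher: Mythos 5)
Your proposal is correct and follows essentially the same route as the paper: the paper's proof simply observes that the arguments of Propositions~\ref{pro:continuous-symp-shape} and \ref{pro:continuous-contact-shape} apply verbatim, i.e.\ $\varphi_k(\overline{U}) \subset V$ for large $k$, each $\varphi_k$ gives the shape inclusion, and $\varphi_k^* = \varphi^*$ because $\varphi_k$ is homotopic to $\varphi$ for large $k$. Your geodesic straight-line homotopy is just a careful spelling-out of that last homotopy step, which the paper leaves implicit.
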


\begin{proof}
The proofs of Propositions~\ref{pro:continuous-symp-shape} and \ref{pro:continuous-contact-shape} apply verbatim to show that the limit $\varphi$ preserves the (modified) shape invariant.
\end{proof}

\begin{rmk}
A homeomorphism $\varphi$ (of a symplectic manifold) as in the above proposition is called a symplectic homeomorphism \cite{mueller:ghh07} (see its final section for open manifolds).
The converse implication that a shape preserving homeomorphism is symplectic in the sense of \cite{mueller:ghh07} is not known.
A necessary and sufficient condition for when a given homeomorphism can be approximated uniformly by diffeomorphisms can be found in \cite{mueller:uah14}.
This question as well as a comparison of various other notions of symplectic homeomorphism is work in progress.
The same question for the (modified) contact shape and homeomorphism that can be approximated uniformly by contact diffeomorphisms is also open. \qed
\end{rmk}

\section{Topological Lagrangian submanifolds} \label{sec:lagrangians}
The shape invariant allows us to define what it means for a closed $n$-dimensional topological submanifold to be Lagrangian.
We propose several definitions, and discuss the relationships between them.

Let $L$ as before be a closed and connected smooth $n$-dimensional manifold, and $f \colon L \to W$ be a continuous map.
Assume that there exist tubular neighborhoods $U$ of the zero section $L_0$ in $T^*L$ and $V$ of $f (L)$ in $W$, and a homeomorphism $\varphi \colon U \to V$ such that $\varphi \circ \iota_0 = f$.
We consider the following properties of such a map $f$.
\begin{enumerate}
\item There exists an extension $\varphi$ as above to a symplectic homeomorphism.
\item There exists an extension $\varphi$ as above to a shape preserving homeomorphism.
\item For every tubular neighborhood $N \subset V$ of the image $f (L)$, the shape $I (N, L, \tau)$ is nonempty, where $\tau = f^* \colon H^1 (N, \R) \to H^1 (L, \R)$.
\item For every tubular neighborhood $N_0 \subset V$ of the image $f (L)$, the intersection $\bigcap I (N, L, \tau)$ of shapes is nonempty, where the intersection is over all tubular neighborhoods $N \subset N_0$ of $f (L)$ such that the inclusion $i \colon N \to N_0$ induces an isomorphism on the first cohomology groups and $\tau \circ i^* = f^*$, where $f^* \colon H^1 (N_0, \R) \to H^1 (L, \R)$.
\end{enumerate}

It is obvious that $(1) \Rightarrow (2) \Rightarrow (4) \Rightarrow (3)$.
Indeed, the first implication follows from Proposition~\ref{pro:symp-homeo-shape-preserving}, and the second implication from the definition of shape preserving, while the third implication is the special case $N = N_0$.
Moreover, if $L$ is a smooth submanifold, then each of the four conditions imply that $L$ is Lagrangian.
This follows immediately from Theorem~\ref{thm:rig-lag-hom-class} if $(W, \omega) = (\R^{2 n}, \omega_0)$ or Remark~\ref{rmk:arnold-conjecture} if $L = T^n$.
For arbitrary (closed and connected $n$-dimensional) manifolds $L$, one must again replace the shape invariant above by a more restrictive shape invariant as in Remarks~\ref{rmk:generalized-shape} and \ref{rmk:arnold-conjecture}.
Finally note that if the intersection in (4) contains a single point and $\tau = 0$, one can assign a $\lambda$-period to the map $f \colon L \to W$.

\section{A symplectic capacity from the shape invariant} \label{sec:capacity}
We have seen that in the present context the (symplectic) shape invariants have several advantages over symplectic capacities, see Remarks~\ref{rmk:distinguish-anti-symp} and \ref{rmk:capacities-infinite}.
In this section we observe that a (small) part of the shape invariants defines a symplectic capacity (which is normalized by its value on unit polydisks rather than unit balls).

\begin{dfn}
We define $c (M, \omega)$ as the non-negative number (possibly $\infty$) that assign to each symplectic manifold $(M, \omega)$ the supremum over the positive generators $\gamma$ of rational vectors $z \in H^1 (T^n, \R) = \R^n$ such that there exists a Lagrangian embedding $\iota \colon T^n \hookrightarrow U$ into an exact open subset $U \subset M$ such that the induced homomorphism $\iota^* \colon H^1 (U, \R) \to H^1 (T^n, \R)$ is trivial and $z = [\iota^* \lambda]$, where $\lambda$ is a primitive one-form of $\omega |_U$. \qed
\end{dfn}

\begin{rmk}
In other words, $c (M, \omega)$ is the supremum over all positive generators of vectors that belong to the rational part of the shape $I (M, 0)$.
Since $\tau = 0$, the $\lambda$-periods $[\iota^* \lambda]$, and in particular their rationality, do not depend on the choice of one-form $\lambda$ with $d\lambda = \omega$ on $U \subset M$. \qed
\end{rmk}

\begin{thm}
The number $c (M, \omega)$ is a symplectic capacity.
More precisely, it satisfies the following axioms:
\begin{itemize}
\item (monotonicity) if there exists a symplectic embedding $(M_1, \omega_1) \to (M_2, \omega_2)$ and $\dim M_1 = \dim M_2$, then $c (M_1, \omega_1) \le c (M_2, \omega_2)$,
\item (conformality) $c (M, r \, \omega) = r \, c (M, \omega)$ for any real number $r \not= 0$, and 
\item (non-triviality) $c (B_1^{2 n}, \omega_0) > 0$ and $c (B_1^2 \times \R^{2 n - 2}, \omega_0) < \infty$.
\end{itemize}
Moreover, $c (M, \omega)$ satisfies the normalization axiom
\begin{itemize}
\item (normalization) $c (B_1^2 \times \cdots \times B_1^2, \omega_0) = \pi = c (B_1^2 \times \R^{2 n - 2}, \omega_0)$,
\end{itemize}
where $B_1^2 \times \cdots \times B_1^2 \subset \R^{2 n}$ denotes the polydisk of dimension $2 n$.
\end{thm}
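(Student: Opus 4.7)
The plan is to verify each of the four axioms by reducing it to a statement about the shape invariant $I(U,0)$ and invoking results already established. Throughout, $c(M,\omega)$ is by definition the supremum, over all exact open subsets $U\subset M$ and all Lagrangian embeddings $\iota\colon T^n\hookrightarrow U$ with $\iota^*=0$ and rational $\lambda$-period $z=[\iota^*\lambda]$, of the positive generators $\gamma$ of the subgroup $z(H_1(T^n,\Z))\subset\R$. Note first that because $\iota^*=0$, Lemma~\ref{lem:translation} makes both the value of $z$ and its rationality independent of the choice of primitive $\lambda$; this renders the definition unambiguous.

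\textbf{Monotonicity} is essentially Proposition~\ref{pro:symp-shape}. Given a symplectic embedding $\varphi\colon M_1\to M_2$ and a datum $(U,\iota,\lambda_1)$ witnessing $\gamma\le c(M_1,\omega_1)$, the image $\varphi(U)$ is itself an exact open subset of $M_2$ with primitive $\varphi_*\lambda_1$, the composition $\varphi\circ\iota\colon T^n\hookrightarrow\varphi(U)$ is Lagrangian with trivial induced homomorphism (since $\iota^*=0$), and its $\varphi_*\lambda_1$-period equals $z$. Hence the same $\gamma$ is realized in $M_2$, proving $c(M_1,\omega_1)\le c(M_2,\omega_2)$. \textbf{Conformality} is an algebraic manipulation: rescaling $\omega$ by $r\neq 0$ leaves the set of Lagrangian submanifolds unchanged, replaces any primitive $\lambda$ by $r\lambda$, so each period $z$ becomes $rz$ and its positive generator $\gamma$ becomes $|r|\gamma$. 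Taking suprema yields $c(M,r\omega)=|r|\,c(M,\omega)$, which is the only reading compatible with $c\ge0$ and so the correct interpretation of the printed axiom.

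For the lower bounds in \textbf{non-triviality and normalization} I would use split Lagrangian tori as in the proof of Lemma~\ref{lem:shape-non-empty}: for any $r<1$ the product $S^1(r)\times\cdots\times S^1(r)$ sits inside both $B_1^2\times\cdots\times B_1^2$ and $B_1^2\times\R^{2n-2}$, is Lagrangian with $\lambda_0$-period $(\pi r^2,\ldots,\pi r^2)$, has $\iota^*=0$ since the ambient set is simply connected, and produces positive generator $\pi r^2$ of the subgroup $\pi r^2\,\Z\subset\R$. Letting $r\to 1^-$ gives $c(B_1^2\times\cdots\times B_1^2,\omega_0)\ge\pi$ and $c(B_1^2\times\R^{2n-2},\omega_0)\ge\pi$; the same construction at small $r$ inside $B_1^{2n}$ (say with $nr^2<1$) produces $c(B_1^{2n},\omega_0)>0$.

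The upper bound is the only place a non-trivial input enters: Sikorav's Theorem~\ref{thm:sikorav} with $r=1$ asserts that every rational element of $I(B_1^2\times\R^{2n-2},0)$ has positive generator strictly less than $\pi$, so $c(B_1^2\times\R^{2n-2},\omega_0)\le\pi$. Combined with the previous paragraph this settles $c(B_1^2\times\R^{2n-2},\omega_0)=\pi$, simultaneously providing the second non-triviality bound, and monotonicity applied to the inclusion $B_1^2\times\cdots\times B_1^2\hookrightarrow B_1^2\times\R^{2n-2}$ then forces $c(B_1^2\times\cdots\times B_1^2,\omega_0)\le\pi$, completing normalization. The only real obstacle is the appeal to Sikorav's Theorem, which is cited as a black box; all remaining steps are direct bookkeeping about the positive generator under rescaling and under rationally commensurable coordinates.
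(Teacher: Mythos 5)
Your proof is correct and follows essentially the same route as the paper: monotonicity from Proposition~\ref{pro:symp-shape}, conformality directly from the definition, split Lagrangian tori for the lower bounds, and Sikorav's Theorem~\ref{thm:sikorav} for the upper bound on the cylinder. Your two small additions --- reading the conformality axiom with $|r|$ when $r<0$ (since the subgroup $z(H_1(T^n,\Z))$ is symmetric, its positive generator scales by $|r|$), and deducing the polydisk upper bound from monotonicity applied to the inclusion into the cylinder --- are points the paper leaves implicit, and both are correct.
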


\begin{proof}
The monotonicity axiom follows immediately from Proposition~\ref{pro:symp-shape}, and the conformality axiom is obvious from the definition.
For any real number $0 < \epsilon < 1$, the standard embedding $\iota \colon T^n = S^1 (1 - \epsilon) \times \cdots \times S^1 (1 - \epsilon) \hookrightarrow B_1^2 \times \cdots \times B_1^2$ is Lagrangian, and $[\iota^* \lambda] = (\pi (1 - \epsilon)^2, \ldots, \pi (1 - \epsilon)^2)$ with generator $\pi (1 - \epsilon)^2$, which shows that $c (B_1^2 \times \cdots \times B_1^2, \omega_0) \ge \pi$.
On the other hand, $c (B_1^2 \times \R^{2 n - 2}, \omega_0) \le \pi$ by Sikorav's Theorem~\ref{thm:sikorav}.
\end{proof}

\section*{Acknowledgments}
The recent article \cite{ms:gae14} uses Gromov's alternative and the construction of a diffeomorphism that cannot be approximated uniformly (on compact subsets) by contact diffeomorphisms to prove Corollary~\ref{cor:rig-contact-diff}.
We would like to thank Yasha Eliashberg for explaining to us (at a conference at ETH Z\"urich in June 2013) that his shape invariant can be used to construct such a diffeomorphism on the unit cotangent bundle of a torus.
We would also like to thank both Eliashberg and the anonymous referee of the paper \cite{ms:gae14} for encouraging us to prove a local version of $C^0$-rigidity of contact diffeomorphisms for embeddings, which was the starting point of the present paper.

The results in this paper concerning symplectic embeddings and the symplectic shape invariants were presented at seminars at IAS/Princeton in November 2014, at the University of Illinois at Urbana-Champaign in April 2015, at UGA in November 2015, and at Harvard in February 2017, and the results on contact embeddings and the contact shape invariants were presented at a seminar at Georgia Tech in November 2016; we thank the audiences for their interest.

This paper is dedicated to my sons Leon (4 years) and Lukas (2 years), who have greatly delayed the completion of this work.
Thank you for everything (and for eventually allowing things to return to a new normal).

\bibliography{char-emb}
\bibliographystyle{plain}

\end{document}